\newtheorem{thm}{Theorem}[section]
\newtheorem{lem}[thm]{Lemma}
\newtheorem{prop}[thm]{Proposition}
\newtheorem{cor}[thm]{Corollary}
\theoremstyle{definition}
\newtheorem{defn}[thm]{Definition}
\newtheorem{ex}[thm]{Example}
\newtheorem{que}{Question}
\newtheorem{rem}[thm]{Remark}
\newcommand{\defbold}{\textbf}
\newcommand{\inv}{^{-1}}
\newcommand{\QZ}{\mathrm{QZ}}
\newcommand{\CC}{\mathrm{C}}
\newcommand{\N}{\mathrm{N}}
\newcommand{\tdlc}{t.d.l.c.\@\xspace}
\newcommand{\tdlcsc}{t.d.l.c.s.c.\@\xspace}
\newcommand{\con}{\mathrm{con}}
\newcommand{\us}{\mathbf{u}}
\newcommand{\nub}{\mathrm{nub}}
\newcommand{\nubl}{\mathrm{lnub}}
\newcommand{\snub}{\mathrm{rnub}}
\newcommand{\triv}{\{1\}}
\newcommand{\Aut}{\mathrm{Aut}}
\newcommand{\Inn}{\mathrm{Inn}}
\newcommand{\Sym}{\mathrm{Sym}}
\newcommand{\Res}{\mathrm{Res}}
\newcommand{\Dist}{\mathrm{Dist}}
\newcommand{\bC}{\mathbb{C}}
\newcommand{\bF}{\mathbb{F}}
\newcommand{\bN}{\mathbb{N}}
\newcommand{\bQ}{\mathbb{Q}}
\newcommand{\bZ}{\mathbb{Z}}
\newcommand{\mc}[1]{\mathcal{#1}}
\begin{document}

\title{Dynamics of flat actions on totally disconnected, locally compact groups}

\author{Colin D. Reid\thanks{The author is an ARC DECRA fellow.  Research supported in part by ARC Discovery Project DP120100996.}}
\affil{University of Newcastle, School of Mathematical and Physical Sciences, Callaghan, NSW 2308, Australia \\ colin@reidit.net}

\maketitle

\begin{abstract}
Let $G$ be a totally disconnected, locally compact group and let $H$ be a virtually flat (for example, polycyclic) group of automorphisms of $G$.  We study the structure of, and relationships between, various subgroups of $G$ defined by the dynamics of $H$.  In particular, we consider the following four subgroups: the intersection of all tidy subgroups for $H$ on $G$ (in the case that $H$ is flat); the intersection of all $H$-invariant open subgroups of $G$; the smallest closed $H$-invariant subgroup $D$ such that no $H$-orbit on $G/D$ accumulates at the trivial coset; and the group generated by the closures of contraction groups of elements of $H$ on $G$.
\end{abstract}

Keywords: T.d.l.c. groups, tidy theory.  MSC2010: 22D05

\tableofcontents

\section{Introduction}

\addtocontents{toc}{\protect\setcounter{tocdepth}{1}}
\subsection{Background}

Since the groundbreaking article \cite{Willis94} of G. Willis in 1994, a suite of tools for studying totally disconnected, locally compact (\tdlc) groups $G$ has been developed using the dynamics of the action of automorphisms of $G$ on the space of compact open subgroups of $G$.  The key concepts are the \defbold{scale}, which is a measure of how far an automorphism $\alpha$ fails to normalize a compact open subgroup, and \defbold{tidy} subgroups, which are the compact open subgroups that have the least displacement under $\alpha$.  The scale is a numerical invariant that can be thought of as analogous to the spectral radius in operator theory, and moreover it turns out that the tidy subgroups form a class of subgroups on which the action of $\alpha$ is especially well-behaved, with important structural characterizations.  This area of research may thus be termed \defbold{scale theory} or \defbold{tidy theory}.  The trivial case of tidy theory is when there exist arbitrarily small compact open subgroups that are $\alpha$-invariant; in this case, we say $\alpha$ is \defbold{anisotropic}.  More generally, a group of automorphisms is defined to be anisotropic if every element is anisotropic, and $G$ is anisotropic if $\mathrm{Inn}(G)$ is anisotropic.

Tidy theory has since been generalized from actions of cyclic groups to endomorphisms (\cite{WillisEndo}) and also to \defbold{flat} group actions, which are defined to be actions of a group $H$ on the \tdlc group $G$, such that there exists a compact open subgroup $U$ that is tidy for every element of $H$.  The theory of flat groups was introduced in \cite{WillisFlat}, although the term `flat' itself appeared slightly later (see \cite{BaumgartnerGeometric}, which also gives a more geometric presentation of the results in \cite{WillisFlat}).  The class of flat groups is surprisingly large: for instance all finitely generated nilpotent groups of automorphisms are flat, and all polycyclic groups of automorphisms are virtually flat.  Nevertheless, flat groups possess a special structure: given a flat group $H$, the set of uniscalar elements $H_\us$ (that is, the normalizer of any compact open subgroup that is tidy for $H$) forms a normal subgroup of $H$, and the quotient $H/H_\us$ is a torsion-free abelian group.  If $H$ is \defbold{flat of finite rank}, that is, $H/H_{\us}$ is finitely generated, then the tidy subgroups for $H$ admit something akin to an eigenspace decomposition.

Tidy theory has also been deepened, especially in the case of actions of $\bZ$, by the investigation of the role played by certain subgroups in controlling the dynamics.  The \defbold{contraction group}\index{contraction group}\index{con@$\con_G(\alpha)$} $\con(\alpha)$ of an automorphism $\alpha$, that is, the set of elements $x \in G$ such that $\alpha^n(x)$ converges to the identity, plays a critical role in tidy theory.  One can show that $\alpha$ is anisotropic if and only if both $\alpha$ and $\alpha\inv$ have trivial contraction group.

 An important fact for the theory of \tdlc groups (which does not hold for connected locally compact groups) is the result of Baumgartner--Willis and Jaworski (\cite{BaumgartnerWillis},\cite{Jaw}) that the contraction group also controls contraction relative to a closed subgroup: specifically, if $K$ is an $\alpha$-invariant closed subgroup of $G$, then the set of elements $x \in G$ such that $\alpha^n(x)K$ converges to $K$ in the coset space $G/K$ is precisely $\con(\alpha)K$.  This suggests the idea of decomposing the action of $\alpha$ into an `anisotropic' action on the coset space $G/K$, where $K$ is the smallest closed subgroup containing $\con(\alpha)$ and $\con(\alpha\inv)$, and a residual action on the subgroup $K$ itself.  As we shall see, this idea can be usefully generalized to flat group actions.

\subsection{The relative Tits core}

Contraction groups were used in \cite{CRW-TitsCore} to define the \defbold{Tits core} $G^\dagger$ of a \tdlc group $G$:
$$ G^\dagger := \langle \overline{\con(\alpha)} \mid \alpha \in \Inn(G) \rangle.$$
In this paper, we consider the notion of the \defbold{relative Tits core}\index{relative Tits core}\index{G@$G^\dagger_X$} of the set $A$ of automorphisms of $G$ (or a subset of $G$):
$$ G^\dagger_A := \langle \overline{\con(\alpha)} \mid \alpha \in A \cup A\inv \rangle.$$
Of particular interest is the case when $A$ is a singleton (in which case we define $G^\dagger_\alpha = G^\dagger_{\{\alpha\}}$, and it will transpire that $G^\dagger_\alpha = G^\dagger_{\langle \alpha \rangle}$), or when $A$ is a flat group of automorphisms.  In fact, the invariance properties of the relative Tits core will allow us to work in many cases with subgroups $A$ of $G$ that are \defbold{almost flat}, that is, such that some closed cocompact subgroup of $\overline{A}$ is flat on $G$.  (In particular, virtually flat groups of automorphisms can be interpreted as almost flat in this sense.)

\begin{rem}A similar notion has been studied in the context of Lie groups, where given $x \in G$, the group $\overline{\langle \con(x), \con(x\inv) \rangle}$ is called the \defbold{Mautner subgroup} associated to $x$; see \cite{Dani}.\end{rem}

The relative Tits core is defined in terms of the contraction groups of individual elements of $A$.  However, in the case that $A$ is a flat subgroup such that $A/A_{\us}$ is finitely generated (or $A$ contains a cocompact subgroup of this form), we shall see that $G^\dagger_A$ plays an important role in the action of $A$ as a whole.

Using the results of \cite{BaumgartnerWillis} and \cite{CRW-TitsCore}, we will obtain some invariance properties of the relative Tits core.  Like the scale function, the relative Tits core $G^\dagger_{x}$ of $x \in G$ remains constant under sufficiently small perturbations of $x$.

\begin{thm}[See Proposition~\ref{prop:twosided:stable} and Theorem~\ref{titscore_closure}]\label{thmintro:TitsCore:invariance}Let $G$ be a \tdlc group.
\begin{enumerate}[(i)]
\item Let $x \in G$ and let $U$ be a compact open subgroup of $G$ that is tidy for $x$.  Let $u,v \in U$ and let $n \in \bZ \setminus \{0\}$.  Then
$$ G^\dagger_{x} = G^\dagger_{ux^nv}.$$
Consequently, $G^\dagger_{x} = G^\dagger_X$, where $X = \bigcup_{n \in \bZ}Ux^nU$.
\item Let $X$ be a subset of $G$ and let $Y$ be the set of all elements $y \in G$ such that $\overline{\con(y)} \le G^\dagger_X$.  Then $Y$ is a clopen subset of $G$.  In particular, $G^\dagger_X = G^\dagger_{\overline{X}}$.
\end{enumerate}
\end{thm}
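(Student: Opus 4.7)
Both parts rest on a single tidy-theoretic principle: the Tits core $G^\dagger_{\{\cdot\}}$ and the closed contraction group $\overline{\con(\cdot)}$ are suitably locally constant on double cosets of a tidy subgroup. Part (i) records the $G^\dagger$-level invariance; part (ii) upgrades this to a topological clopen statement about $\overline{\con(\cdot)}$ and extracts the closure corollary.

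\textbf{Plan for (i).} I would reduce in two stages. First, tidy theory gives that $U$ is tidy for every nonzero power $x^n$; a uniform-continuity argument on iterates (if $\alpha^{nk}(g) \to 1$ then, since only finitely many residues $r$ modulo $n$ need be handled and each $\alpha^r$ is continuous, $\alpha^k(g) \to 1$) yields $\con(x^n) = \con(x)$ for $n \geq 1$ and $\con(x^n) = \con(x\inv)$ for $n \leq -1$, so $G^\dagger_{x^n} = G^\dagger_x$. It then remains to show $G^\dagger_{uyv} = G^\dagger_y$ for $y$ with $U$ tidy and $u,v \in U$. The algebraic identity $uyv = u \cdot (yvu) \cdot u\inv$ gives $\con(uyv) = u \con(yvu) u\inv$, reducing the problem to two ingredients: the right-multiplication invariance
\[
G^\dagger_{yw} = G^\dagger_y \qquad \text{for every } w \in U \text{ tidy for } y,
\]
and conjugation by $u \in U$. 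The right-multiplication invariance I would prove via the tidy decomposition $U = U_+U_-$ and the dynamics $yU_+y\inv \supseteq U_+$, $yU_-y\inv \subseteq U_-$: iterated $yw$-conjugation differs from iterated $y$-conjugation only by a bounded correction inside $U$, so on $\overline{\con(y)} \cup \overline{\con(y\inv)}$ the two actions produce the same limit points, and the closed contraction groups of $yw$ and $(yw)\inv$ together generate $G^\dagger_y$. The special case $v=u\inv$, $n=1$ of (i) then shows that $u \in U$ normalizes $G^\dagger_y$, and assembling the pieces completes (i). The ``consequently'' clause is immediate, since $X = \bigcup_{n}Ux^nU$ is a union of $U$-double cosets on each of which the Tits core equals $G^\dagger_x$.

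\textbf{Plan for (ii).} Given (i), I would prove $Y$ is clopen by showing that for every $y \in G$, if $U$ is tidy for $y$ then membership in $Y$ is constant on the open double coset $UyU$. To bridge from the $G^\dagger$-invariance of (i) to the one-sided containment $\overline{\con(z)} \leq G^\dagger_X$, I would use a refinement of the right-multiplication lemma showing that $\overline{\con(z)}$ for $z \in UyU$ is a $U$-conjugate of $\overline{\con(y)}$ inside the common ambient group $G^\dagger_z = G^\dagger_y$; combined with the fact, established during the proof of (i), that $U$ normalizes $G^\dagger_y$, this forces the condition $\overline{\con(\cdot)} \leq G^\dagger_X$ to be preserved across $UyU$. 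Hence $UyU \subseteq Y$ or $UyU \cap Y = \emptyset$, so $Y$ and its complement are unions of open double cosets, i.e., $Y$ is clopen. For the corollary $G^\dagger_X = G^\dagger_{\overline{X}}$: the inclusion $G^\dagger_X \leq G^\dagger_{\overline{X}}$ is trivial, and conversely both $X$ and $X\inv$ lie in $Y$ by definition of $G^\dagger_X$, so since $Y$ is closed we have $\overline{X} \cup (\overline{X})\inv \subseteq Y$; this gives $\overline{\con(y)} \leq G^\dagger_X$ for every $y \in \overline{X} \cup (\overline{X})\inv$, hence $G^\dagger_{\overline{X}} \leq G^\dagger_X$.

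\textbf{Main obstacle.} The technical crux throughout is the right-multiplication invariance, in both its $G^\dagger$-version (needed for (i)) and the refined $\overline{\con(\cdot)}$-compatible form (needed for (ii)): not only must the joint subgroup $G^\dagger_{yw}$ be controlled, but so must the individual $\overline{\con(yw)}$ up to conjugation by an element of $U$ that normalizes $G^\dagger_y$. Both rely on a delicate analysis of iterated conjugates relative to the tidy filtration $U = U_+U_-$ and the nub of $y$, and I expect them to draw substantially on the structural results of \cite{BaumgartnerWillis} and \cite{CRW-TitsCore}.
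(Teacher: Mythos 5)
Your overall strategy coincides with the paper's: part (i) is reduced to the case $n=1$ via the power-invariance of contraction groups and of tidiness (Lemmas~\ref{lem:contraction:powers} and \ref{basic:tidy:powers}), and the remaining content is exactly the right-multiplication formula $\con(gv)=t\con(g)t\inv$ for $v\in U$, which the paper imports from \cite{CRW-TitsCore} as Proposition~\ref{contraction:conjugate} and combines with its left-handed variant (Corollary~\ref{contraction:conjugate:bis}) rather than conjugating $uyv$ to $yvu$. The one point you must be more careful about, already in part (i), is \emph{where} the conjugator $t$ lives. You describe it as ``an element of $U$ that normalizes $G^\dagger_y$''; but the normalization of $G^\dagger_y$ by $U$ is a \emph{consequence} of (i), so invoking it to prove (i) is circular. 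The statement that actually makes the argument close is that $t\in U_+\cap\con(g\inv)$, hence $t\in G^\dagger_g$: then $\overline{\con(gv)}=t\,\overline{\con(g)}\,t\inv\le G^\dagger_g$ because both $t$ and $\overline{\con(g)}$ lie in the group $G^\dagger_g$, and symmetrically $\overline{\con(v\inv g\inv)}$ is a $\con(g)$-conjugate of $\overline{\con(g\inv)}$, giving $G^\dagger_{gv}\le G^\dagger_g$; the reverse inclusion follows by exchanging the roles of $g$ and $gv$ (using that $U$ is tidy for $gv$ as well, by Theorem~\ref{tidy:stable}). Your ``bounded correction'' heuristic is the right intuition for why such a $t$ exists, but the membership $t\in\con(g\inv)$ is the load-bearing part and is not recoverable from ``$t\in U$'' alone.

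The same imprecision becomes a genuine gap in your part (ii). Knowing $\overline{\con(z)}=t\,\overline{\con(y)}\,t\inv$ with $t\in U$ normalizing $G^\dagger_y$ gives only $\overline{\con(z)}\le G^\dagger_y$; it does not transport the hypothesis $\overline{\con(y)}\le G^\dagger_X$ to the conclusion $\overline{\con(z)}\le G^\dagger_X$, since $t$ need neither normalize $G^\dagger_X$ nor belong to it. The paper's Theorem~\ref{titscore_closure} resolves this by working with the symmetric set $Y=\{g\mid\overline{\con(g)},\overline{\con(g\inv)}\le G^\dagger_X\}$ (which is how the introduction's statement should be read): for $y$ in this set one has $G^\dagger_y\le G^\dagger_X$, whence for every $z\in UyU$ simply $\overline{\con(z)}\le G^\dagger_z=G^\dagger_y\le G^\dagger_X$ by part (i), and no refined conjugation lemma is needed at all. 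With the literal one-sided condition $\overline{\con(y)}\le G^\dagger_X$ your propagation step does not close, because the conjugator produced by the key lemma lies in $\langle\con(y),\con(y\inv)\rangle$ and genuinely involves $\con(y\inv)$. Your derivation of $G^\dagger_X=G^\dagger_{\overline{X}}$ from the closedness of $Y$ is the paper's argument verbatim.
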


\begin{cor}\label{corintro:TitsCore:invariance}Let $G$ be a \tdlc group and let $H$ be an almost flat subgroup of $G$.  Then $\N_G(G^\dagger_H)$ is open in $G$.\end{cor}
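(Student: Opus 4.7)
The plan is to reduce, using Theorem~\ref{thmintro:TitsCore:invariance}(ii), to the case where $H$ is closed, then exploit a closed cocompact flat subgroup $K$ of $H$ and Theorem~\ref{thmintro:TitsCore:invariance}(i) to produce an open subgroup of the normalizer. Since $G^\dagger_H = G^\dagger_{\overline H}$ by Theorem~\ref{thmintro:TitsCore:invariance}(ii), I would replace $H$ by its closure and assume $H$ is closed. The almost flat hypothesis then provides a closed cocompact subgroup $K \leq H$ that is flat on $G$; fix a compact open $U \leq G$ that is tidy for every element of $K$.

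The first key step is to verify that $U \leq \N_G(G^\dagger_K)$. For any $k \in K$ and $u \in U$, Theorem~\ref{thmintro:TitsCore:invariance}(i) applied with $x = k$, $n = 1$, and the pair $(u, u^{-1})$ of $U$-elements yields $G^\dagger_k = G^\dagger_{uku^{-1}}$. Because $\overline{\con(uku^{-1})} = u\overline{\con(k)}u^{-1}$ (and similarly for $k^{-1}$), this reads $G^\dagger_{uku^{-1}} = uG^\dagger_k u^{-1}$, so $u$ normalizes each $G^\dagger_k$ with $k \in K$, and consequently normalizes $G^\dagger_K = \langle G^\dagger_k : k \in K\rangle$.

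The second key step is to upgrade this to $U \leq \N_G(G^\dagger_H)$ by showing $G^\dagger_K = G^\dagger_H$. The containment $G^\dagger_K \leq G^\dagger_H$ is automatic; for the reverse, let $Y := \{y \in G : \overline{\con(y)} \leq G^\dagger_K\}$, which Theorem~\ref{thmintro:TitsCore:invariance}(ii) guarantees is clopen and contains $K$. Since $\con(h^n) = \con(h)$ for each $n > 0$, we have $G^\dagger_{h^n} = G^\dagger_h$, so to place $h \in Y$ it suffices to find $n > 0$ with $h^n \in KU$: writing $h^n = ku$ with $k \in K$, $u \in U$, a second application of Theorem~\ref{thmintro:TitsCore:invariance}(i) (with $x = k$, $n = 1$, left $U$-element $1$, right $U$-element $u$) gives $G^\dagger_{h^n} = G^\dagger_k \leq G^\dagger_K$, whence $\overline{\con(h)} \leq G^\dagger_K$ and $h \in Y$.

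The main obstacle is therefore establishing that for every $h \in H$ some positive power lies in $KU$, equivalently that the $\langle h\rangle$-orbit of $K$ under left multiplication on the compact coset space $H/K$ accumulates at $K$. This is straightforward when $K$ is normal in $H$ (including the virtually flat case, where $[H:K] < \infty$): $H/K$ is then a compact topological group, and a pigeonhole argument on $(h^n K)_n$ produces differences $h^{n_i - n_j}K$ converging to $eK = K$. The general almost flat case requires a more careful dynamical argument on $H/K$, possibly via a reduction to a normal cocompact flat subgroup (e.g.\ the normal core of $K$ in $H$, provided it remains flat and cocompact), and I expect this to be the chief technical content of the proof.
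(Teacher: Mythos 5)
Your overall architecture matches the paper's: reduce to $\overline{H}$, normalize $G^\dagger_K$ by a tidy subgroup $U$ for the cocompact flat subgroup $K$ (your step with $G^\dagger_{uku^{-1}} = uG^\dagger_ku^{-1}$ is exactly Theorem~\ref{titscore_closure}(iv)), and then upgrade via $G^\dagger_H = G^\dagger_K$. But the last step is where you have a genuine gap, and you have correctly located it yourself: you never establish that some positive power of $h$ lies in $KU$. The natural pigeonhole on the compact coset space $\overline{H}/K$ only gives $h^i, h^j \in UrK$ for some accumulation point $rK$, hence $h^{j-i} \in UrKr^{-1}U$ — a conjugate of $K$ sandwiched between two copies of $U$ — which is not $KU$. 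Moreover your proposed repair via the normal core of $K$ in $\overline{H}$ does not work: when $K$ has infinite index, its normal core need not be cocompact, let alone flat, so there is no reduction to the normal case along those lines.

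The paper's resolution (Theorem~\ref{thmintro:TitsCore:cocompact}, proved via Lemma~\ref{lem:cocompact_power}) sidesteps the need for $h^n \in KU$ entirely by anchoring the stability argument at $h$ rather than at $k$. For each individual $h$ one chooses a compact open $V$ tidy for $h$ (not the global $U$ tidy for $K$); Lemma~\ref{lem:cocompact_power} exploits $r \in \overline{\langle h\rangle K}$ to rewrite the accumulation point as $r = uh^bk$, yielding the weaker but sufficient relation $vh^aw \in K$ with $a>0$ and $v,w \in h^{-b}Vh^{b}$, which is again tidy for $h$ by Theorem~\ref{tidy:stable}. Proposition~\ref{prop:twosided:stable} applied to the element $h$ with these two-sided perturbations then gives $G^\dagger_{vh^aw} = G^\dagger_{h^a} = G^\dagger_h$, so $G^\dagger_h = G^\dagger_k$ for some $k\in K$ and hence $G^\dagger_H = G^\dagger_K$. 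Note this only needs tidiness for $h$, which always exists element by element, whereas your version needs a factorization $h^n = ku$ with $u$ in a subgroup tidy for $k$ — a stronger and unproven statement. If you replace your "main obstacle" paragraph with an appeal to (or a proof of) Lemma~\ref{lem:cocompact_power} and rebase the application of Theorem~\ref{thmintro:TitsCore:invariance}(i) at $h$, the rest of your argument goes through.
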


In particular, $G^\dagger_g$ has open normalizer for all $g \in G$.  This contrasts with the normalizers of $\con(g)$ and $\nub(g)$: see \S\ref{sec:TitsCore_examples}.

Another interesting case of invariance concerns subgroups that are either cocompact or of finite covolume.

\begin{thm}[See \S\ref{sec:TitsCore}]\label{thmintro:TitsCore:cocompact}
Let $G$ be a \tdlc group, let $H$ be a closed subgroup of $G$ and let $K$ be a subgroup of $H$.  Suppose that $\overline{K}$ is either cocompact in $H$ or of finite covolume in $H$ (or both).   Then for all $h \in H$, there exists $k \in K$ and $t \in G^\dagger_k$ such that $\con(h) = t\con(k)t\inv$.  As a consequence,
 \[
 \{G^\dagger_h \mid h \in H\} = \{G^\dagger_k \mid k \in K\},
 \]
 and hence $G^\dagger_H = G^\dagger_K$.
 \end{thm}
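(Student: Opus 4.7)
The plan is to reduce to Theorem~\ref{thmintro:TitsCore:invariance}(i) by a recurrence argument. Fix $h \in H$ and let $U$ be a compact open subgroup of $G$ that is tidy for $h$; set $U_H := U \cap H$, which is an open neighbourhood of the identity in $H$. The goal is to produce $n \neq 0$ and $k \in K$ such that $h^n \in UkU$, for then writing $k = u_1^{-1}h^n u_2^{-1}$ with $u_1, u_2 \in U$ and invoking Theorem~\ref{thmintro:TitsCore:invariance}(i) gives $G^\dagger_h = G^\dagger_k$ directly.

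To find such $n$ and $k$, I would use recurrence of the left $h$-action on $H/\overline{K}$ applied to the open neighbourhood $E := U_H\overline{K}/\overline{K}$ of the identity coset. In the finite-covolume case, Poincaré recurrence for the $H$-invariant probability measure on $H/\overline{K}$ furnishes some $x_0\overline{K} \in E$ and $n \geq 1$ with $h^n x_0\overline{K} \in E$; writing both $x_0 \in U_H\overline{K}$ and $h^n x_0 \in U_H\overline{K}$ explicitly and cancelling $x_0$ yields $h^n \in U_H \overline{K} U_H \subseteq U\overline{K}U$. In the cocompact case, one runs the same argument with a Bruhat-type quasi-invariant probability measure on $H/\overline{K}$; compactness of $H/\overline{K}$ forces the Radon--Nikodym derivative to be bounded, which is enough to push through a Poincaré-style recurrence. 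Density of $K$ in $\overline{K}$ combined with openness of $U$ gives $U\overline{K}U = UKU$, so $h^n \in UkU$ for some $k \in K$, as required.

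For the full strength of the conjugacy $\con(h) = t\con(k)t^{-1}$ with $t \in G^\dagger_k$, one needs the refinement implicit in the proof of Theorem~\ref{thmintro:TitsCore:invariance}(i): the Baumgartner--Willis machinery underlying that theorem actually produces an element $s \in G^\dagger_h$ conjugating $\con(h^n)$ onto $\con(u_1 h^n u_2) = \con(k)$. Combined with $\con(h^n) = \con(h)$ for $n > 0$ (and, when $n < 0$, with the move of replacing $h$ by $h^{-1}$ and using $G^\dagger_{k^{-1}} = G^\dagger_k$), this yields $\con(h) = t\con(k)t^{-1}$ with $t = s^{-1} \in G^\dagger_h = G^\dagger_k$. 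The equality $\{G^\dagger_h : h \in H\} = \{G^\dagger_k : k \in K\}$ is then immediate (the reverse inclusion being trivial), and $G^\dagger_H = G^\dagger_K$ follows by taking subgroups generated.

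The main technical point is ensuring that the recurrence in Step~1 produces the two-sided tube $U\overline{K}U$ rather than the less useful one-sided form $\overline{K}U\overline{K}$, which is what naive single-orbit pigeonhole on the coset space $H/\overline{K}$ (with its possibly non-normal $\overline{K}$) tends to yield. The Poincaré-style argument circumvents this by exploiting two \emph{distinct} points of $E$, namely $x_0$ and $h^n x_0$, whose common right-factor of $\overline{K}$ cancels out; it is precisely for this reason that an invariant (or suitably quasi-invariant) measure on $H/\overline{K}$ is essential, and why the two hypotheses ``cocompact'' and ``finite covolume'' each suffice.
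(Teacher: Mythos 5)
Your finite-covolume argument and your concluding conjugacy step are sound and essentially coincide with the paper's proof: the paper likewise uses the invariant probability measure on $H/\overline{K}$ to find $i<j$ with $h^i(H\cap U)K\cap h^j(H\cap U)K\neq\emptyset$, and then obtains both $G^\dagger_h=G^\dagger_k$ and the conjugator $t\in G^\dagger_k$ from Proposition~\ref{prop:twosided:stable} (the result underlying Theorem~\ref{thmintro:TitsCore:invariance}(i)) together with Proposition~\ref{contraction:conjugate}. The reduction from $K$ to $\overline{K}$ via density and openness of double cosets is also fine.

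The cocompact case, however, has a genuine gap, located exactly at the point you flag as the main technical issue. A quasi-invariant probability measure on $H/\overline{K}$ has bounded Radon--Nikodym derivative for each \emph{fixed} translation, but Poincar\'e recurrence requires $\mu(h^{-n}E)$ to be bounded away from $0$ \emph{uniformly in} $n$, and the derivatives of $(h^n)_*\mu$ need not be uniformly bounded: they can degenerate exponentially as $n\to\infty$, so that the sets $h^{-n}E$ have summable measures and may be pairwise disjoint. Worse, the intermediate claim you aim for --- that some nonzero power of $h$ lies in $U\overline{K}U$ with the \emph{same} tidy subgroup $U$ on both sides --- is false in general. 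Take $H=\mathrm{SL}_2(\bQ_p)$, $\overline{K}=P$ the upper-triangular Borel subgroup (cocompact but not of finite covolume), and $h$ a $\mathrm{GL}_2(\bZ_p)$-conjugate of $\mathrm{diag}(p,p^{-1})$ whose two fixed points on $H/P\cong\bP^1(\bQ_p)$ both differ from the base point $\infty$ stabilized by $P$; deep principal congruence subgroups $U$ are then tidy for $h$. The orbit $h^n\cdot\infty$ accumulates only at the two fixed points of $h$, so for $U$ deep enough the sets $h^n(U\cdot\infty)$ are disjoint from $U\cdot\infty$ for every $n\neq 0$, which is equivalent to $h^n\notin UPU$ for all $n\neq 0$. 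The paper's Lemma~\ref{lem:cocompact_power} circumvents this with a purely topological pigeonhole on the compact space $\overline{\langle h\rangle K}/K$: the recurrence is located at an accumulation point $r\overline{K}$ of the sequence $(h^i\overline{K})$ rather than at the identity coset, and the key point is that $r$ can be written as $uh^bk$ with $u\in U$ and $k\in K$, so one obtains $vh^aw\in K$ with $v,w$ lying in the \emph{conjugate} tidy subgroup $h^{-b}Uh^{b}$; since conjugates of tidy subgroups by powers of $h$ are again tidy for $h$, Proposition~\ref{prop:twosided:stable} still applies. You need this conjugation step, or some substitute for it, to complete the cocompact case.
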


In \cite{CRW-TitsCore}, it was shown that if $D$ is a dense subgroup of the \tdlc group $G$ that is normalized by $G^\dagger$, then $G^\dagger \le D$.  Here is a relative version of this result.

\begin{thm}[See \S\ref{sec:TitsCore:subgroups}]\label{thmintro:titscore:containment}Let $G$ be a \tdlc group, let $D$ be a subgroup of $G$ (not necessarily closed), and let $X \subseteq \overline{D}$.  Suppose that there is an open subgroup $U$ of $G$ such that $ U \cap G^\dagger_X \le \N_G(D)$.  Then $G^\dagger_{X} \le D$.
\end{thm}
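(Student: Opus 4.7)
The plan is to reduce the relative statement to the absolute theorem of \cite{CRW-TitsCore} (that $G^\dagger \le D$ whenever $D$ is dense and normalized by $G^\dagger$) by first establishing containment $G^\dagger_X \le \overline{D}$ and then transferring the setup to the closed tdlc subgroup $\overline{D}$. Since $G^\dagger_X = \langle \overline{\con(\alpha)} \mid \alpha \in X \cup X^{-1}\rangle$ and $D$ is a subgroup (so $\overline{D}$ is closed under inversion), it suffices to show $\overline{\con(\alpha)} \le D$ for each $\alpha \in X \cup X^{-1} \subseteq \overline{D}$. Fix such an $\alpha$.

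The first substantive step is to replace $\alpha$ by a nearby element of $D$ using Theorem~\ref{thmintro:TitsCore:invariance}(i). Let $V \le U$ be a compact open subgroup tidy for $\alpha$; then $G^\dagger_g = G^\dagger_\alpha$ for every $g$ in the open neighborhood $V\alpha V$ of $\alpha$, and since $\alpha \in \overline{D}$ we may choose $d \in V\alpha V \cap D$. Thus $G^\dagger_d = G^\dagger_\alpha$, and it is enough to verify $\overline{\con(d)} \le D$ and (by the same argument applied to $d^{-1}$) $\overline{\con(d^{-1})} \le D$. Fix $y \in \con(d)$, and choose $n$ large enough that $u_n := d^n y d^{-n} \in V$. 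Then $u_n \in \con(d) \cap U \subseteq G^\dagger_X \cap U \le \N_G(D)$. Using $d \in D$ and $u_n \in \N_G(D)$, a direct calculation yields
\[
y\,u_n^{-1} \;=\; d^{-n} u_n d^n u_n^{-1} \;=\; d^{-n}\,(u_n d u_n^{-1})^n,
\]
and both factors on the right lie in $D$. Hence $y\,u_n^{-1} \in D$ for all sufficiently large $n$, and letting $n \to \infty$ (so $u_n \to 1$) gives $y \in \overline{D}$. This establishes $\overline{\con(d)} \le \overline{D}$, and therefore $G^\dagger_X \le \overline{D}$.

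To finish, one passes to the closed tdlc subgroup $\overline{D}$, in which $D$ is dense. Because $\overline{\con(\alpha)} \le \overline{D}$ has been established for $\alpha \in X \cup X^{-1}$, the contraction groups of such elements coincide whether computed in $G$ or inside $\overline{D}$; in particular, the relative Tits core of $X$ formed inside $\overline{D}$ is again $G^\dagger_X$, and the hypothesis restricts to $(U \cap \overline{D}) \cap G^\dagger_X \le \N_{\overline{D}}(D)$. Applying the result of \cite{CRW-TitsCore} inside $\overline{D}$ then yields the desired $G^\dagger_X \le D$. The main obstacle lies precisely here: the cited theorem is phrased for the \emph{full} Tits core of the ambient tdlc group, whereas in our situation only the relative Tits core $G^\dagger_X$ is known to normalize $D$. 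Consequently one probably cannot invoke \cite{CRW-TitsCore} as a black box, and must either adapt its proof to the relative setting or iterate the approximation calculation of the previous paragraph inside $\overline{D}$ to bootstrap from $\overline{D}$-containment to $D$-containment.
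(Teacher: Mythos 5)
Your opening moves match the paper's proof: replacing $x \in X$ by a nearby $d \in VxV \cap D$ via Proposition~\ref{prop:twosided:stable} so that $G^\dagger_d = G^\dagger_x$, and then exploiting the contraction dynamics of $d$ together with the hypothesis $U \cap G^\dagger_X \le \N_G(D)$. (One small slip: you ask for a tidy subgroup $V \le U$, but tidy subgroups must contain $\nub(\alpha)$ and so cannot in general be found inside a prescribed open subgroup $U$; this is harmless, since $V$ need not lie in $U$ for your argument --- you only need $u_n \in U$ eventually, which follows from $u_n \to 1$.) The real problem is that your approximation identity $y u_n^{-1} = d^{-n}(u_n d u_n^{-1})^n \in D$ only delivers $y \in \overline{D}$, and, as you correctly observe, you cannot then finish by citing the absolute theorem of \cite{CRW-TitsCore} inside $\overline{D}$, because only $G^\dagger_X$ (indeed only $U \cap G^\dagger_X$) is known to normalize $D$, not the full Tits core of $\overline{D}$. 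Iterating the same approximation inside $\overline{D}$ will not close this gap either: any such limiting argument produces elements of the closure of $D$, never of $D$ itself. So the proof as written establishes only $G^\dagger_X \le \overline{D}$, which is strictly weaker than the statement and too weak for its applications (e.g.\ Corollary~\ref{cor:subnormal:containment}).

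The missing ingredient is Proposition~\ref{prop:ContractionGroup:NC} (\cite[Proposition~5.1]{CRW-TitsCore}): for any subgroup $A$ of $G$, not necessarily closed or dense, and any $g \in A$, if $\overline{\con(g)}$ normalizes $A$ then $\overline{\con(g)} \le A$. This is precisely the ``relative'', element-wise statement you suspected would be needed, and it is available as a black box. The paper's route is therefore to aim the conjugation trick at $\N_G(D)$ rather than at $\overline{D}$: for $u \in \con(d)$ one has $d^nud^{-n} \in U \cap G^\dagger_X \le \N_G(D)$ for large $n$, and since $\N_G(D)$ is invariant under conjugation by $d \in D$, this gives $u \in \N_G(D)$, i.e.\ $\con(d) \le \N_G(D)$. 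To upgrade this to $\overline{\con(d)} \le \N_G(D)$ (one cannot simply take closures, since $\N_G(D)$ need not be closed when $D$ is not), the paper uses $\overline{\con(d)} = \con(d)\nub(d)$ (Theorem~\ref{basic:closed_contraction}) together with the fact that $\nub(d)$ has no proper open $d$-invariant subgroups (Theorem~\ref{basic:nub_characterizations}): the open subgroup $U \cap \nub(d)$ of $\nub(d)$ lies in the $d$-invariant group $\N_G(D)$, forcing $\nub(d) \le \N_G(D)$. Then Proposition~\ref{prop:ContractionGroup:NC} applied with $A = D$ and $g = d$ gives $\overline{\con(d)} \le D$ exactly, and likewise for $d^{-1}$, whence $G^\dagger_x = G^\dagger_d \le D$.
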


\subsection{The nub of a flat group}

Let $H$ be a flat group of automorphisms of the \tdlc group $G$.  The \defbold{nub} $\nub(H)$ of $H$ is the intersection of all tidy subgroups for $H$.  This generalizes the notion of the nub of an automorphism introduced in \cite{WillisNub}; in particular, $\nub(\alpha) = \nub(\langle \alpha \rangle)$.

For a general flat group, the nub is more mysterious than in the cyclic case.  The difficulties emerge already in the case that $H$ is uniscalar.  For instance, $\nub(H)$ can have proper $H$-invariant open subgroups (see Example~\ref{ex:badnub}).  However, we are able to obtain some structural results for the nub.   The nubs of the subgroups of $H$ are all normal in $\nub(H)$, and nubs of uniscalar flat groups have open normalizer (see Corollary~\ref{cor:nub:normalizer}).  If $H$ has a uniscalar normal subgroup $L$ such that $H/L$ is polycyclic, then the nub of $H$ can be written as a product of $\nub(L)$ and finitely many nubs of cyclic subgroups of $H$.

\begin{thm}[See Theorem~\ref{thm:flat:nubdecomp}]\label{thmintro:flat:nubdecomp}
Let $G$ be a \tdlc group and let $H$ be a flat group of automorphisms of $G$.  Let $L$ be a uniscalar normal subgroup of $H$ such that $H/L$ is polycyclic.  Then there is a finite subset $\{\alpha_1,\alpha_2,\dots,\alpha_n\}$ of $H$ such that
$$ \nub(H) = \nub(L) \nub(\alpha_1) \nub(\alpha_2) \dots \nub(\alpha_n).$$
\end{thm}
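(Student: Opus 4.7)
The plan is to argue by induction on the length of a polycyclic series for $H/L$: fix $L = H_0 \triangleleft H_1 \triangleleft \cdots \triangleleft H_n = H$ with each $H_i/H_{i-1}$ cyclic, and let $\alpha_i \in H_i$ be a representative of a generator of $H_i/H_{i-1}$. The inclusion $\nub(L)\nub(\alpha_1)\cdots\nub(\alpha_n) \subseteq \nub(H)$ is the easy direction: since $L$ and each $\alpha_i$ lie in $H$, any compact open subgroup tidy for $H$ as a flat group is also tidy for each, so $\nub(L)$ and each $\nub(\alpha_i)$ are contained in $\nub(H)$. Using the normality of nubs of subgroups of $H$ in $\nub(H)$ (noted in the text just before the theorem), the product is a finite product of pairwise-normalizing compact subgroups, hence a closed subgroup of $\nub(H)$.

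For the reverse inclusion, induction reduces the problem to the two-step case: if $K \triangleleft H = \langle K, \alpha \rangle$ with $H$ flat on $G$ and $K$ inheriting flatness from $H$, then $\nub(H) \subseteq \nub(K)\nub(\alpha)$. Set $N = \nub(K)\nub(\alpha)$ and verify that $N$ is closed and $H$-invariant: $K$-invariance holds because $K \subseteq \nub(H)$ normalizes both factors, and $\alpha$-invariance holds because $\alpha$ permutes tidy subgroups of $K$ (since $\alpha K \alpha^{-1} = K$), so $\alpha$ preserves $\nub(K)$, and trivially preserves $\nub(\alpha)$. Pass to the quotient $\bar G = G/N$ with the induced $H$-action, which should remain flat. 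Since $\nub(\alpha) \subseteq N$ and $\nub(K) \subseteq N$, the images $\bar\alpha$ and $\bar K$ in $\bar G$ have trivial nub, so the problem reduces to showing $\nub(\bar H) = 1$, i.e.\ that $\bar H$ admits arbitrarily small common tidy subgroups.

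The main obstacle is this last step: deducing $\nub(\bar H) = 1$ from $\nub(\bar K) = \nub(\bar\alpha) = 1$ with $\bar K \triangleleft \bar H = \langle \bar K, \bar\alpha \rangle$. Unlike for cyclic groups, $\nub(\bar H)$ need not be characterized by a minimality or ergodicity property --- indeed the text points to Example~\ref{ex:badnub} where $\nub$ can have proper invariant open subgroups --- so one cannot simply invoke such a property. The plan is to start with a common tidy subgroup $V$ for $\bar H$ and shrink it in two stages: first use triviality of $\nub(\bar\alpha)$ to exhibit arbitrarily small $\bar\alpha$-tidy subgroups inside $V$, then use normality of $\bar K$ in $\bar H$ together with triviality of $\nub(\bar K)$ to intersect with $\bar K$-translates and recover $\bar K$-invariance while preserving $\bar\alpha$-tidiness. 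A suitable iteration or averaging argument inside $V$ should produce the required arbitrarily small tidy subgroups for $\bar H$, closing the induction.
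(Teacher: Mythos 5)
Your reduction to the two-step case $K \lhd H = \langle K,\alpha\rangle$ asks you to prove $\nub(H) = \nub(K)\nub(\alpha)$ for a \emph{single} coset representative $\alpha$ of a generator of $H/K$, and this is both stronger than what the theorem asserts and not what the paper proves. The theorem only claims that \emph{some} finite subset of $H$ works, and the paper's choice is not a set of generators of $H/L$ but a \emph{tidying set} for a finitely generated complement $M$ with $H=LM$ (Theorem~\ref{flat:invariablytidy}, from \cite[\S 5]{WillisFlat}). The distinction matters: \cite[Example~3.5]{WillisFlat} shows that a compact open subgroup can be tidy for every element of a generating set without being tidy for the group generated, so there is no reason that arranging $\nub(\bar K)=\nub(\bar\alpha)=1$ forces $\nub(\bar H)=1$; a tidying set may have to contain products of the generators, and correspondingly the right-hand side of your inductive claim may have to include nubs of such products. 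Your own final paragraph concedes that this step is the crux, but the "iteration or averaging argument" is not supplied, and the specific mechanism you propose fails: intersecting an $\bar\alpha$-tidy subgroup over its $\bar K$-conjugates only yields an \emph{open} subgroup if $\bar K$ commensurates it with finitely many conjugates, which holds when $K$ is uniscalar but not for the groups $K=\langle L,\alpha_1,\dots,\alpha_{n-1}\rangle$ arising in your induction (only $L$ is assumed uniscalar). The paper's Theorem~\ref{thm:compact_invariant:tidy} is stated precisely with the hypothesis $|K:\N_K(U)|<\infty$ for this reason, and the actual argument combines it with the tidying procedure of Lemma~\ref{flat:invariablytidy:tidying} (which exploits $[H,H]$-invariance) and Lemma~\ref{tidy:uniscalar_extension}, none of which your sketch replaces.

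Two smaller points. First, $N=\nub(K)\nub(\alpha)$ is $H$-invariant but not normal in $G$, so $G/N$ is not a group; you must pass to $\N_G(N)/N$, which the paper justifies via Corollary~\ref{cor:nub:normalizer} and Corollary~\ref{cor:tidy:quotients:nub}. Second, flatness and nubs are not automatically preserved under quotients even by closed normal subgroups (tidiness below can fail, see \cite[Example~6.5]{WillisFurther}); this is rescued here only because $N$ is compact, so the scale is preserved (Lemma~\ref{lem:tidy:quotients}(iv)), and you should say so. Your easy inclusion $\nub(L)\nub(\alpha_1)\cdots\nub(\alpha_n)\subseteq\nub(H)$ and the normality of the factors via Corollary~\ref{cor:nub:normalizer} are fine and agree with the paper.
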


The automorphism groups $H$ satisfying the hypotheses of Theorem~\ref{thmintro:flat:nubdecomp} are exactly the flat groups $H$ of \defbold{finite rank} (that is, $H/H_{\us}$ is finitely generated); one can then always take $L = H_{\us}$.  In Theorem~\ref{thmintro:flat:nubdecomp}, we make the hypothesis that $H/L$ is polycyclic, rather than setting $L = H_{\us}$, in order to gain insight into the nubs of some possibly uniscalar groups.  In particular, we obtain the following corollary.

\begin{cor}\label{corintro:nubdecomp:polycyclic}
Let $G$ be a \tdlc group and let $H$ be a polycyclic flat group of automorphisms of $G$.  Then there is a finite subset $\{\alpha_1,\alpha_2,\dots,\alpha_n\}$ of $H$ such that 
$$ \nub(H) = \nub(\alpha_1) \nub(\alpha_2) \dots \nub(\alpha_n).$$
\end{cor}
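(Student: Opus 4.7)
My plan is to deduce this from Theorem~\ref{thmintro:flat:nubdecomp} by induction on the length $\ell(H)$ of a shortest subnormal series of $H$ with cyclic factors. The base case $\ell(H)=0$ means $H=\{1\}$, so $\nub(H)=\{1\}$ is the empty product. For the inductive step, the entire argument hinges on producing a uniscalar normal subgroup $L\lhd H$ with $H/L$ polycyclic and $\ell(L)<\ell(H)$: granting such an $L$, Theorem~\ref{thmintro:flat:nubdecomp} yields
$$\nub(H)=\nub(L)\nub(\alpha_1)\cdots\nub(\alpha_n)$$
for some $\alpha_1,\dots,\alpha_n\in H$, and applying the induction hypothesis to $L$ (which is itself flat polycyclic, since subgroups of polycyclic groups are polycyclic and any compact open subgroup tidy for $H$ remains tidy for every element of $L\le H$) rewrites $\nub(L)=\nub(\beta_1)\cdots\nub(\beta_m)$ with $\beta_j\in L\subseteq H$. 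Splicing these two factorisations gives the desired decomposition.

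To produce $L$, I split into two cases. If $H$ is not uniscalar, I take $L=H_\us$: it is uniscalar and normal in $H$ by definition, and $H/H_\us$ is a finitely generated free abelian group of positive rank, hence polycyclic; concatenating a shortest polycyclic series of $H_\us$ with a polycyclic series of the nontrivial quotient $H/H_\us$ shows $\ell(H_\us)<\ell(H)$. If instead $H$ is uniscalar and nontrivial, then every subgroup of $H$ is automatically uniscalar, and since every nontrivial polycyclic group admits a proper normal subgroup with cyclic quotient, I can pick any such $L\lhd H$; then $L$ is uniscalar normal, $H/L$ is (cyclic and so) polycyclic, and $\ell(L)\le\ell(H)-1$ by construction.

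The main (and really only) delicate point is the uniscalar case: here $L=H_\us=H$ carries no information, and one must instead exploit the polycyclic structure of $H$ itself to descend to a proper normal subgroup. This is exactly where the polycyclic hypothesis on $H$ — as opposed to merely flat-of-finite-rank — is used, and it explains why the conclusion improves on Theorem~\ref{thmintro:flat:nubdecomp} by eliminating the $\nub(L)$ term entirely. Everything else is a routine recursion driven by Theorem~\ref{thmintro:flat:nubdecomp}.
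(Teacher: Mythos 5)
Your argument can be made to work, but it overlooks that the corollary is an immediate special case of Theorem~\ref{thmintro:flat:nubdecomp}: the trivial group $\triv$ is a uniscalar normal subgroup of $H$, the quotient $H/\triv \cong H$ is polycyclic by hypothesis, and $\nub(\triv)$ is the intersection of all compact open subgroups of $G$, which is trivial by Van Dantzig's theorem. Applying the theorem with $L = \triv$ therefore gives $\nub(H) = \nub(\alpha_1)\nub(\alpha_2)\cdots\nub(\alpha_n)$ in one line; this is how the paper intends the deduction (see also Corollary~\ref{flat:nubdecomp:special}(iii), of which this is the case of a trivial smooth uniscalar normal subgroup). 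The theorem does not ask $L$ to be the full uniscalar part, only \emph{some} uniscalar normal subgroup with polycyclic quotient, and your induction in effect re-runs the polycyclic recursion that is already performed inside the proof of Theorem~\ref{thm:flat:nubdecomp}.

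If you do insist on the induction, two bookkeeping points need repair. First, in the non-uniscalar case your justification of $\ell(H_{\us}) < \ell(H)$ is backwards: concatenating series only shows $\ell(H) \le \ell(H_{\us}) + \ell(H/H_{\us})$, and the minimal length of a cyclic series is not additive over extensions (compare $\bZ/2\bZ \lhd \bZ/4\bZ$). The inequality you want is nevertheless true: since $H/H_{\us}$ is free abelian of positive rank, the Hirsch length of $H_{\us}$ is strictly smaller than that of $H$, so when a minimal cyclic series of $H$ is intersected with $H_{\us}$ some infinite cyclic factor must meet $H_{\us}$ in the trivial group (a subgroup of $\bZ$ is trivial or infinite), and deleting the repeated term shortens the series. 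Second, in the uniscalar case you cannot take an \emph{arbitrary} proper normal subgroup with cyclic quotient -- for $H = \bZ$ and $L = 2\bZ$ one has $\ell(L) = \ell(H)$ -- you must take $L$ to be the penultimate term of a cyclic series of minimal length. With these repairs the recursion goes through, but it buys nothing over the direct application of Theorem~\ref{thmintro:flat:nubdecomp} with $L = \triv$.
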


\subsection{Residuals}

Let $G$ be a topological group.  The \defbold{discrete residual} $\Res(G)$ of $G$ is the intersection of all open normal subgroups of $G$.  More generally, given a group $H$ of automorphisms of $G$, one can define $\Res_G(H)$, the \defbold{discrete residual of $H$ on $G$}, to be the intersection of all open $H$-invariant subgroups of $G$.  It is straightforward to show that the action of $H$ on the coset space $G/\Res_G(H)$ is distal.  One can also define the \defbold{distal residual} $\Dist_G(H)$, which is the smallest closed $H$-invariant subgroup of $G$ such that $H$ acts distally on $G/\Dist_G(H)$.  We also define the \defbold{$1$-distal residual} $\Dist^*_G(H)$, which is the smallest closed $H$-invariant subgroup of $G$ such that no $H$-orbit on $G/\Dist^*_G(H)$ accumulates at the trivial coset.  (In general $\Dist^*_G(H) \le \Dist_G(H)$, and it is not clear if this inequality can be strict, but certainly $\Dist^*_G(H) = \triv$ if and only if $\Dist_G(H) = \triv$.)  Evidently $\Dist^*_G(H)$ contains the contraction group of every element of $H$, so $\overline{G^\dagger_H} \le  \Dist^*_G(H)$.

One can iterate the process of taking the discrete residual of an action, to produce a (possibly transfinite) descending chain of closed subgroups of $G$ such that $H$ has residually discrete action on each factor, terminating in a group $\Res^\infty_G(H)$, which is the largest $H$-invariant subgroup of $G$ that has no proper open $H$-invariant subgroup.  It is straightforward to show (see Lemma~\ref{lem:distal_extension}) that no $H$-orbit on $G/\Res^\infty_G(H)$ accumulates at the trivial coset.

 In general, one thus has the following inclusions:
\begin{equation}\label{eq:distres}
\overline{G^\dagger_H} \subseteq  \Dist^*_G(H) \subseteq A_G(H) \subseteq \Res_G(H),
\end{equation}
where $A_G(H)$ is either $\Dist_G(H)$ or $\Res^\infty_G(H)$.

\subsection{A characterization of compactly generated uniscalar flat subgroups}

Compactly generated subgroups of $G$ that normalize a compact open subgroup can be characterized in several ways.

\begin{thm}[See~Theorem~\ref{thm:distal}]\label{thmintro:distal}
Let $G$ be a \tdlc group, let $H$ be a compactly generated closed subgroup of $G$, acting by conjugation, and let $K$ be a closed $H$-invariant subgroup of $G$.

Then the following are equivalent:
\begin{enumerate}[(i)]
\item $\Dist_K(H)$ is compact;
\item $\Res_K(H)$ is compact;
\item $H$ normalizes a compact open subgroup of $K$.
\end{enumerate}
Moreover, if any of the above conditions is satisfied, then
\[
\nub_K(H) = \Res_K(H) =  \Res^\infty_K(H) = \Dist_K(H)
\]
and $H$ acts ergodically on $\nub_K(H)$, with $\nub_K(H) = \Dist^*_K(H)$ in the case that $\nub_K(H)$ is metrizable.
\end{thm}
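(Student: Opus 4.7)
My plan is to chain the easy implications using the general inclusions in~(\ref{eq:distres}) and then derive the ``moreover'' clauses from an ergodicity statement for $H$ on the nub.

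The implications (iii) $\Rightarrow$ (ii) $\Rightarrow$ (i) are immediate: a compact open $H$-invariant subgroup of $K$ is in particular an open $H$-invariant subgroup, so it contains $\Res_K(H)$ and forces it to be compact, and then $\Dist_K(H)$ is a closed subgroup of the compact $\Res_K(H)$. For the substantive implication (i) $\Rightarrow$ (iii), I would pass to the quotient $L := K/\Dist_K(H)$, on which $H$ acts distally by construction, and apply the (earlier, expected) structural result that a compactly generated group of automorphisms acting distally on a t.d.l.c.\ group has residually discrete action; this gives $\Res_L(H) = \triv$. Intersecting a chosen compact open subgroup of $L$ with finitely many open $H$-invariant subgroups whose joint intersection is trivial produces a compact open $H$-invariant subgroup $\bar W$ of $L$; its preimage $W$ in $K$ is open, $H$-invariant, and an extension of the compact $\Dist_K(H)$ by the compact $\bar W$, hence compact.

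For the ``moreover'' clauses, write $N := \Res_K(H)$. Since $H$ is uniscalar on $K$, a subgroup tidy for $H$ in $K$ is exactly a compact open $H$-invariant subgroup, and the intersection of this family equals both $\nub_K(H)$ and $\Res_K(H)$, giving $\nub_K(H) = N$. To prove $H$-ergodicity on $N$, suppose for contradiction that $W < N$ is a proper open $H$-invariant subgroup; then $W$ has finite index in the compact $N$, so, picking a compact open $H$-invariant subgroup $U \geq N$ and using that $W$ is a neighborhood of the identity in $N$, I can choose a compact open subgroup $V_1 \leq U$ with $V_1 \cap N \leq W$. Since $V_1$ has finite index in $U$, its $H$-orbit among subgroups of that fixed index is finite, and the intersection $V_2$ of its $H$-conjugates is an open $H$-invariant subgroup of $K$ with $V_2 \cap N \leq W < N$, contradicting $N = \Res_K(H) \leq V_2$. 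Ergodicity then yields $\Res^\infty_K(H) = \Res_N(H) = N$. For $\Dist_K(H) = N$, the inclusion $\Dist_K(H) \leq N$ comes from~(\ref{eq:distres}); conversely, if a proper closed $H$-invariant $M < N$ made $K/M$ distal, the restriction to the compact subspace $N/M$ would also be distal, and the distal-to-residually-discrete lemma would produce an open $H$-invariant subgroup of $N$ strictly between $M$ and $N$, contradicting ergodicity. The metrizable case $\Dist^*_K(H) = N$ follows by the same argument using the $1$-distal version of the key lemma, which requires metrizability to upgrade ``no orbit accumulation at the identity'' on the compact quotient $N/M$ to residual discreteness.

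The principal obstacle is the lemma powering both (i) $\Rightarrow$ (iii) and the identification of $\Dist_K(H)$ (and $\Dist^*_K(H)$ in the metrizable case) with $N$: namely, that a compactly generated group acting distally (respectively $1$-distally in the metrizable setting) on a t.d.l.c.\ group is residually discrete. Once this input is in hand, everything else is a formal interplay between the definitions, the inclusion chain~(\ref{eq:distres}), and the elementary observation that open subgroups of a compact group have finite index and hence finite $H$-orbit.
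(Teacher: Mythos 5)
Your chain of easy implications (iii) $\Rightarrow$ (ii) $\Rightarrow$ (i) and the identification $\nub_K(H)=\Res_K(H)$ under (iii) are fine, but the proposal has a genuine gap sitting exactly where the theorem's content lies. The implication (i) $\Rightarrow$ (iii) is deferred wholesale to an ``expected'' lemma --- that a compactly generated group acting distally on a \tdlc group acts residually discretely --- for which you offer no argument; that lemma is essentially the theorem itself in the case $\Dist_K(H)=\triv$, so nothing has been reduced. The paper's entire effort goes into proving precisely this: Theorem~\ref{thm:trivial_core:nondistal} shows, by a compactness argument on the sets $P(n)=\bigcup_{y\in X^n}yK(n)y\inv$ built from a compact generating set $X$ (adapted from \cite{RajaShah} and \cite{CM}), that if $H$ fails to normalize arbitrarily small compact open subgroups above a compact invariant subgroup, then some orbit accumulates at a trivial coset, contradicting distality via Proposition~\ref{prop:distal:cocompact}. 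Two further problems compound this. First, $K/\Dist_K(H)$ is only a coset space ($\Dist_K(H)$ need not be normal in $K$), so a lemma about actions on \tdlc \emph{groups} cannot be applied to it directly. Second, even granting $\Res_L(H)=\triv$, your finite-intersection step does not produce a compact open $H$-invariant subgroup: the chosen compact open subgroup of $L$ is not $H$-invariant, so neither is its intersection with finitely many open $H$-invariant subgroups, and the intersection of the $H$-invariant ones alone need not be compact. Passing from ``residually discrete'' to ``normalizes a compact open subgroup'' is itself the Caprace--Monod theorem \cite[Corollary~4.1]{CM} and is not formal.

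The ergodicity argument also contains a false step. A profinite group can have infinitely many open subgroups of a fixed finite index (e.g.\ $(\bZ/p\bZ)^{\bN}$ has infinitely many of index $p$), so the $H$-orbit of $V_1$ need not be finite and $\bigcap_{h\in H}hV_1h\inv$ need not be open. Your argument nowhere uses compact generation of $H$, yet Example~\ref{ex:badnub} shows that without it the nub can fail to be ergodic and can have proper open $H$-invariant subgroups; indeed that example is a direct counterexample to your finite-orbit claim. The paper instead proves that $\Dist_K(H)$ --- not merely $\Res_K(H)$ --- has no proper open $H$-invariant subgroups, via the transfinite construction of Corollary~\ref{cor:distal_compact_extension} combined with Lemma~\ref{lem:distal_extension}, and only then converts this into ergodicity using Jaworski's criterion (Proposition~\ref{ergodic_res}). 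In short, both the hard implication and the ergodicity clause require the machinery of \S\ref{sec:nondistal}, and the proposal supplies neither.
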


The following corollary, which is a strengthening of \cite[Corollary~4.1]{CM}, follows from the special case $H = K$ and $\Dist_K(H) = \triv$.

\begin{cor}\label{corintro:distal_SIN}Let $G$ be a distal \tdlc group.  Then every compactly generated closed subgroup of $G$ is a SIN group.\end{cor}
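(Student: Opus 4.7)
The plan is to deduce the corollary from Theorem~\ref{thmintro:distal} applied to the conjugation action of $H$ on itself, i.e.\ with $K = H$. Distality of $G$ means precisely that the conjugation action of $G$ on $G$ is distal, and restricting the acting group to the closed subgroup $H$ preserves this, so the conjugation action of $H$ on $H$ is distal; in particular $\Dist_H(H) = \triv$. Being compact (trivially), this meets condition~(i) of Theorem~\ref{thmintro:distal} with $K = H$.

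Applying the theorem then supplies two facts simultaneously: condition~(iii), that $H$ normalizes some compact open subgroup $V$ of itself, and the ``moreover'' clause, that $\Res_H(H) = \triv$; equivalently, the intersection of all open normal subgroups of $H$ is trivial.

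It remains to promote these two pieces of information to the SIN property, namely that $H$ admits a neighborhood basis of the identity consisting of compact open normal subgroups. This I would do by a standard compactness argument inside the $H$-invariant compact open subgroup $V$: given any identity neighborhood $W \subseteq H$, I replace $W$ by $W \cap V$, and for each nontrivial $x \in V \setminus W$ I use $\Res_H(H) = \triv$ to produce an open normal subgroup of $H$ missing $x$; intersecting with $V$ gives a clopen $H$-invariant subgroup of $V$ missing $x$. Their complements then form an open cover of the compact set $V \setminus W$, and a finite subcover yields, by intersection, a compact open normal subgroup of $H$ contained in $W$.

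The argument is therefore quite short. The only non-routine point is recognizing that taking $K = H$ is the correct specialization of Theorem~\ref{thmintro:distal} and that distality of $G$ transfers directly into $\Dist_H(H) = \triv$ for any closed subgroup $H$; the compactness step promoting one $H$-invariant compact open subgroup together with triviality of the residual to a full basis of normal identity neighborhoods is essentially automatic, so no serious obstacle is anticipated.
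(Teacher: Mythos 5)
Your proof is correct and follows the paper's own route: the paper derives the corollary precisely from Theorem~\ref{thmintro:distal} in the special case $K = H$ with $\Dist_K(H) = \triv$, exactly as you do. Your final compactness step, upgrading one $H$-invariant compact open subgroup together with $\Res_H(H)=\triv$ to a full basis of compact open normal subgroups, is the standard argument the paper leaves implicit, and it is carried out correctly.
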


Nilpotent groups are distal, so Corollary~\ref{corintro:distal_SIN} also immediately implies the main theorem of \cite{WillisNilpotent}, that compactly generated nilpotent \tdlc groups are SIN groups.  However, as noted in \cite{WillisNilpotent}, there are non-SIN nilpotent \tdlc groups, so distal \tdlc groups are not SIN groups in general.

We also obtain the following corollary from Theorems~\ref{thmintro:flat:nubdecomp} and \ref{thmintro:distal}.  Write $\nub^2_G(H)$ for $\nub_{\nub_G(H)}(H)$.

\begin{cor}\label{corintro:ergodic_nub}
Let $G$ be a \tdlc group and let $H$ be a finitely generated flat group of automorphisms of $G$.  Then the action of $H$ on $\nub^2_G(H)$ is ergodic.  If in addition $H_{\us}$ is finitely generated, then $\nub^2_G(H) = \nub_G(H)$.
\end{cor}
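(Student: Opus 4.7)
My plan is to apply Theorem~\ref{thmintro:distal} to $K := \nub_G(H)$. Because $K$ is the intersection of tidy subgroups for $H$, it sits inside a compact open subgroup and is therefore compact, which makes $\Res_K(H)$ and $\Dist_K(H)$ trivially compact as well; conditions (i) and (ii) of Theorem~\ref{thmintro:distal} thus hold automatically for the pair $(H,K)$. The theorem then yields $\nub_K(H) = \Res_K(H) = \Dist_K(H)$ together with ergodicity of the $H$-action on $\nub_K(H) = \nub^2_G(H)$. This establishes the first assertion.

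For the second assertion, the identity $\nub_K(H) = \Res_K(H)$ just obtained reduces the claim $\nub^2_G(H) = \nub_G(H)$ to showing that $K$ admits no proper open $H$-invariant subgroup. Since $H$ is finitely generated and flat, $H/H_\us$ is a finitely generated torsion-free abelian group, hence polycyclic; together with the added hypothesis that $H_\us$ is finitely generated, this lets me invoke Theorem~\ref{thmintro:flat:nubdecomp} with $L = H_\us$ to produce a finite set $\alpha_1,\dots,\alpha_n \in H$ with
\[
\nub_G(H) = \nub(H_\us)\,\nub(\alpha_1)\cdots\nub(\alpha_n).
\]
I would then verify minimality of each factor under the appropriate subgroup: each $\nub(\alpha_i)$ is $\alpha_i$-ergodic by the results of \cite{WillisNub} on the nub of a single automorphism, while $\nub(H_\us)$ is $H_\us$-ergodic by applying Theorem~\ref{thmintro:distal} to the uniscalar, finitely generated group $H_\us$ acting on $G$---condition (iii) holds by definition of uniscalarity. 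Note also that each factor is contained in $K$ because any subgroup tidy for $H$ is tidy for each of its elements and for $H_\us$.

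To conclude, given any proper open $H$-invariant subgroup $V \le K$, the intersection $V \cap \nub(\alpha_i)$ is an open $\alpha_i$-invariant subgroup of $\nub(\alpha_i)$, so by ergodicity it equals $\nub(\alpha_i)$; the same argument gives $V \supseteq \nub(H_\us)$. The product decomposition of $K$ then forces $V \supseteq K$, contradicting properness, so $\Res_K(H) = K$ and therefore $\nub^2_G(H) = \nub_G(H)$. The one delicate point I expect is locating precisely where the finite generation of $H_\us$ enters: it is used exactly to apply Theorem~\ref{thmintro:distal} to $H_\us$, which in turn supplies the ergodicity on the factor $\nub(H_\us)$ that the intersection argument needs---the cyclic factors require no new hypothesis because $V$ is automatically $\alpha_i$-invariant whenever it is $H$-invariant.
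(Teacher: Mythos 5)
Your proposal is correct and follows essentially the same route as the paper: apply Theorem~\ref{thmintro:distal} with $K = \nub_G(H)$ (compactness of $K$ making conditions (i)--(ii) automatic) to get ergodicity on $\nub^2_G(H) = \Res_K(H)$, then use the decomposition of Theorem~\ref{thmintro:flat:nubdecomp} with $L = H_{\us}$ together with ergodicity of each factor (the cyclic nubs via the characterization of $\nub(\alpha)$, and $\nub(H_{\us})$ via a second application of Theorem~\ref{thmintro:distal} to the finitely generated uniscalar group $H_{\us}$) to show $K$ has no proper open $H$-invariant subgroup. You have also correctly identified where the finite generation of $H_{\us}$ enters; the only cosmetic difference is that the paper first passes to $G \rtimes H$ so that $H$ is literally a closed subgroup before invoking Theorem~\ref{thm:distal}.
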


\subsection{The discrete residual of the action of an almost finite-rank flat subgroup}

If $G$ is a metrizable \tdlc group and $H$ is a compactly generated flat subgroup of $G$ (or more generally, $H$ has a cocompact subgroup of this kind), we can say more about the relationships between the subgroups in (\ref{eq:distres}) using tidy theory, even in the case that $\Res_G(H)$ is not compact.  In particular, all the groups in (\ref{eq:distres}) are actually equal, except that $\overline{G^\dagger_H}$ may be properly contained in $\Dist_G(H)$.

\begin{thm}[See Theorem~\ref{thm:relative_discrete:general}]\label{thmintro:relative_discrete}Let $G$ be a \tdlc group, let $H$ be a compactly generated closed subgroup of $G$, and suppose there is a cocompact closed subgroup $K$ of $H$ such that $K$ is flat on $G$.
\begin{enumerate}[(i)]
\item The following subgroups of $G$ are all equal to $\Res_G(H)$:
\[
\Res_G(K), \; \overline{G^\dagger_H}\nub_G(K), \; \overline{G^\dagger_H}\nub_G(K_{\us}), \; \Dist_G(H), \; \Res^\infty_G(H).
\]
\item The normalizer of $\Res_G(H)$ in $G$ is open.  Indeed, $\Res_G(H)$ is normalized by every tidy subgroup for the action of $K$ on $G$.
\item $H$ is anisotropic and flat on $\N_G(G^\dagger_H)/\overline{G^\dagger_H}$.
\item $\overline{G^\dagger_H}$ is a cocompact normal subgroup of $\Res_G(H)$.  Indeed, $\Res_G(H)/\overline{G^\dagger_H}$ is the nub of the action of $H$ on $\N_G(G^\dagger_H)/\overline{G^\dagger_H}$.
\item If $G$ is metrizable then $\Dist^*_G(H) = \Res_G(H)$.
\end{enumerate}
\end{thm}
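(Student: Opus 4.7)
The plan is to establish (i) first—the core of the theorem—and then derive (ii)--(v) from (i) together with earlier results. Statement (i) has two main parts: a reduction from $H$ to $K$, and a decomposition of $\Res_G(K)$ into a contracting factor $\overline{G^\dagger_H}$ and a uniscalar factor $\nub_G(K_\us)$.

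For the reduction $\Res_G(H) = \Res_G(K)$, the inclusion $\Res_G(K) \le \Res_G(H)$ is immediate. For the reverse, writing $H = KC$ with $C$ compact, any $K$-invariant open subgroup $U$ of $G$ contains a compact open subgroup $V$, and continuity of conjugation together with compactness of $C$ shows that $\bigcap_{c \in C} cVc\inv$ is open; hence $\bigcap_{c \in C} cUc\inv$ is an open $H$-invariant subgroup contained in $U$. Next, since $K_\us$ is uniscalar, a subgroup is tidy for $K_\us$ precisely when it is compact open and $K_\us$-invariant, and any $K_\us$-invariant open subgroup contains one (intersect with any tidy), so $\Res_G(K_\us) = \nub_G(K_\us)$; together with $K_\us \le K$, this gives $\nub_G(K_\us) \le \Res_G(K)$. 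Combined with $\overline{G^\dagger_H} \le \Res_G(H) = \Res_G(K)$ from~(\ref{eq:distres}), we obtain $\overline{G^\dagger_H}\,\nub_G(K_\us) \le \Res_G(K)$. In the opposite direction for the nubs, Theorem~\ref{thmintro:flat:nubdecomp} applies to $K$ with $L = K_\us$ (since $K/K_\us$ is finitely generated abelian) to give $\nub_G(K) = \nub_G(K_\us)\,\nub(\alpha_1)\cdots\nub(\alpha_n)$; by standard facts about nubs of single automorphisms, $\nub(\alpha_i) \le \overline{\con(\alpha_i)}\,\overline{\con(\alpha_i\inv)} \subseteq \overline{G^\dagger_H}$, so $\nub_G(K) \le \overline{G^\dagger_H}\,\nub_G(K_\us)$.

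The main obstacle is the remaining inclusion $\Res_G(K) \le \overline{G^\dagger_H}\,\nub_G(K_\us)$. My approach is to exploit the product decomposition of a tidy subgroup $V$ for the flat group $K$: modulo a uniscalar core $V_0$, $V$ factors as a product of subgroups associated with the contraction directions of the finitely many generators $\alpha_i$ of $K/K_\us$. Since any element of $\Res_G(K)$ lies in every $K$-invariant open subgroup, and the contracting factors lie in $\overline{\con(\alpha_i)} \cup \overline{\con(\alpha_i\inv)} \subseteq \overline{G^\dagger_H}$, such an element should decompose into a piece in $\overline{G^\dagger_H}$ and a piece in $V_0$; the latter is then forced into every $K_\us$-invariant compact open subgroup, so lies in $\nub_G(K_\us)$ by the characterization above. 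This yields the chain in (i); the equalities involving $\Dist_G(H)$ and $\Res^\infty_G(H)$ follow from the sandwich~(\ref{eq:distres}).

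Assertions (ii)--(v) are shorter consequences. For (ii), every tidy $V$ for $K$ normalizes $\nub_G(K)$ (the nub being normal in every tidy subgroup) and normalizes $\overline{G^\dagger_H} = \overline{G^\dagger_K}$ (the equality by Theorem~\ref{thmintro:TitsCore:cocompact}); indeed, for $v \in V$ and any $\alpha \in K$, Theorem~\ref{thmintro:TitsCore:invariance}(i) with $n=1$ yields $v G^\dagger_\alpha v\inv = G^\dagger_{v\alpha v\inv} = G^\dagger_\alpha$, so $V \le \N_G(G^\dagger_K)$. For (iii), since $H$ normalizes $G^\dagger_H$ (its generating family of contraction groups is permuted by $H$-conjugation), $H$ acts on $Q := \N_G(G^\dagger_H)/\overline{G^\dagger_H}$; all contraction groups of elements of $H$ are annihilated in $Q$, so $H$ acts anisotropically, and flatness transfers from $K$ to its image in $\Aut(Q)$ and thence, via cocompactness of $K$ in $H$, to $H$. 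For (iv), the image of $\nub_G(K)$ in $Q$ is compact, $H$-invariant, and coincides with the image of $\Res_G(H)$ by (i); applying Theorem~\ref{thmintro:distal} to the anisotropic $H$-action on $Q$ identifies it with $\nub_Q(H)$. Finally for (v), the metrizable clause of Theorem~\ref{thmintro:distal} gives $\nub_Q(H) = \Dist^*_Q(H)$; pulling back and combining with $\overline{G^\dagger_H} \le \Dist^*_G(H)$ from~(\ref{eq:distres}) yields $\Dist^*_G(H) = \Res_G(H)$.
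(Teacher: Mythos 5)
Your architecture matches the paper's: reduce to $K$, decompose $\Res_G(K)$ as $\overline{G^\dagger_K}$ times a uniscalar nub, then read off (ii)--(v) from the induced uniscalar flat action on $Q=\N_G(G^\dagger_H)/\overline{G^\dagger_H}$. Several pieces are correct, and your reduction $\Res_G(H)=\Res_G(K)$ is actually more direct than the paper's: writing $H=CK$ with $C$ compact, $\bigcap_{h\in H}hUh\inv=\bigcap_{c\in C}cUc\inv$ contains the open subgroup $\bigcap_{c\in C}cVc\inv$ (a finite intersection, since $c\mapsto cVc\inv$ is constant on cosets $cV$), so every $K$-invariant open subgroup contains an $H$-invariant one; the paper instead routes this through the quotient $O/\overline{G^\dagger_K}$. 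The inclusions $\nub_G(K_{\us})\le\Res_G(K)$, $\overline{G^\dagger_H}\le\Res_G(H)$ and $\nub_G(K)\le\overline{G^\dagger_H}\nub_G(K_{\us})$ are also handled correctly.

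The gap is exactly where you flag ``the main obstacle'': the inclusion $\Res_G(K)\le\overline{G^\dagger_H}\nub_G(K_{\us})$ is asserted but not proved. Two things are missing. First, the product decomposition you invoke is Willis's eigenfactor theorem (Theorem~\ref{willis:eigenfactor_decomposition}), and the eigenfactors are not contained in the $\overline{\con(\alpha_i)}$ themselves: what one actually gets (Corollary~\ref{cor:titscore:uzero}) is that each non-central eigenfactor $K_j$ equals $(\con(h)\cap K_j)U_0$ for some $h\in K$, whence $U\le G^\dagger_KU_0$ and $\overline{G^\dagger_K}U$ is a $K$-invariant open subgroup --- and for $G^\dagger_KU_0$ to be a group one must also know that $U$ normalizes $G^\dagger_K$ (Proposition~\ref{prop:twosided:stable}). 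Second, and more seriously, the step ``such an element should decompose into a piece in $\overline{G^\dagger_H}$ and a piece in $V_0$, the latter then forced into every $K_{\us}$-invariant compact open subgroup'' treats the factorization $g=g_1g_2$ as uniform over all tidy $V$. It is not: each tidy $V$ gives only $g\in\overline{G^\dagger_K}V_0$ with a $V$-dependent factorization, and passing from $\bigcap_V\bigl(\overline{G^\dagger_K}V\bigr)$ to $\overline{G^\dagger_K}\bigcap_VV=\overline{G^\dagger_K}\nub_G(K)$ requires the compactness statement of Lemma~\ref{lem:intersection:quotient} applied to the quotient by $\overline{G^\dagger_K}$ and the intersection-closed family of tidy subgroups. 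A related unjustified step occurs in your (iii): ``flatness transfers from $K$ to its image in $\Aut(Q)$'' is false for general quotients, since tidiness below need not survive passage to a quotient (see the discussion preceding Lemma~\ref{lem:tidy:quotients}); it holds here only because the same eigenfactor computation shows $U\overline{G^\dagger_K}/\overline{G^\dagger_K}$ is normalized by $K$, making $K$ uniscalar and hence flat on $Q$, after which Lemma~\ref{cocompact:uniscalar_flat} transfers this to $H$. Once these points are supplied, your derivations of (ii), (iv) and (v) go through essentially as in the paper.
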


We highlight the particular case when $H$ has a polycyclic subgroup with cocompact closure.

\begin{cor}[See \S\ref{sec:relative_discrete}]\label{corintro:relative_discrete:polycyclic}Let $G$ be a \tdlc group, let $H \le G$, and suppose there is a polycyclic subgroup $K$ of $\overline{H}$ such that $\overline{K}$ is cocompact in $\overline{H}$.  Let $\mc{V}$ be the set of open $H$-invariant subgroups of $G$.  Then $\{V/\overline{G^\dagger_H} \mid V \in \mc{V}\}$ is a base of neighbourhoods of the trivial coset in $G/\overline{G^\dagger_H}$.\end{cor}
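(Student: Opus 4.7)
The plan is to pin down $\Res_G(\overline{H})$ as $\overline{G^\dagger_H}$ using Theorem~\ref{thmintro:relative_discrete} and Corollary~\ref{corintro:nubdecomp:polycyclic}, and then to upgrade this identity to a neighbourhood-base statement using the anisotropy supplied by Theorem~\ref{thmintro:relative_discrete}(iii). The first step is to reduce to a flat situation in which Theorem~\ref{thmintro:relative_discrete} applies. Replacing $H$ by $\overline{H}$ is harmless: every $V\in\mc{V}$ is clopen in $G$, so its normalizer is closed, whence $H$-invariance of $V$ forces $\overline{H}$-invariance; and $G^\dagger_H=G^\dagger_{\overline{H}}$ by Theorem~\ref{thmintro:TitsCore:invariance}(ii). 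The group $\overline{K}$ is compactly generated (it equals $\langle k_1,\ldots,k_r,U\rangle$ for any finite generating set $\{k_1,\ldots,k_r\}$ of $K$ and any compact open subgroup $U$ of $\overline{K}$), and hence $\overline{H}$ is compactly generated by cocompactness of $\overline{K}$. Since polycyclic groups are virtually flat, I fix a finite-index flat subgroup $K_0\le K$; its closure $\overline{K_0}$ has finite index in $\overline{K}$ (hence is cocompact in $\overline{H}$) and is still flat on $G$ because the property of a given compact open subgroup being tidy for an automorphism is closed in that automorphism.

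With $(\overline{H},\overline{K_0})$ satisfying the hypotheses of Theorem~\ref{thmintro:relative_discrete}, part~(i) gives
\[
\Res_G(\overline{H}) \;=\; \overline{G^\dagger_H}\cdot\nub_G(\overline{K_0}).
\]
The same closedness argument shows $\nub_G(\overline{K_0})=\nub_G(K_0)$, and Corollary~\ref{corintro:nubdecomp:polycyclic} (applied to the polycyclic flat group $K_0$) decomposes $\nub_G(K_0)$ as $\nub(\alpha_1)\cdots\nub(\alpha_n)$ with each $\alpha_i\in K_0\subseteq\overline{H}$. Using Willis's inclusion $\nub(\alpha)\subseteq\overline{\con(\alpha)}\subseteq G^\dagger_\alpha$ together with Theorem~\ref{thmintro:TitsCore:invariance}(ii), I obtain
\[
\nub(\alpha_i)\;\le\;G^\dagger_{\alpha_i}\;\le\;G^\dagger_{\overline{H}}\;=\;G^\dagger_H\;\le\;\overline{G^\dagger_H}
\]
for every $i$. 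Thus $\nub_G(\overline{K_0})\le\overline{G^\dagger_H}$ and the displayed formula collapses to $\Res_G(\overline{H})=\overline{G^\dagger_H}$, which, since $\mc{V}$ is precisely the set of open $H$-invariant (equivalently $\overline{H}$-invariant) subgroups of $G$, is the equality $\bigcap\mc{V}=\overline{G^\dagger_H}$.

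To finish, I convert this equality into the topological statement. Theorem~\ref{thmintro:relative_discrete}(iii) says $\overline{H}$ is anisotropic and flat on $Q:=\N_G(G^\dagger_H)/\overline{G^\dagger_H}$, so $Q$ contains an $\overline{H}$-invariant compact open subgroup $W_0$. Its preimage $U_0$ in $\N_G(G^\dagger_H)$ lies in $\mc{V}$ and satisfies $U_0/\overline{G^\dagger_H}=W_0$ compact. In the compact group $W_0$, the family $\{(U_0\cap V)/\overline{G^\dagger_H}:V\in\mc{V}\}$ is a filter base of open (hence clopen) subgroups whose intersection is $(U_0\cap\overline{G^\dagger_H})/\overline{G^\dagger_H}=\{1\}$; by the standard compactness argument (a closed subset of $W_0$ missing $1$ misses some finite intersection), this filter base is a neighbourhood base at $1$ in $W_0$. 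Since $W_0$ is itself an open neighbourhood of the trivial coset in $G/\overline{G^\dagger_H}$, it follows that $\{V'/\overline{G^\dagger_H}:V'\in\mc{V}\}$ is a neighbourhood base at the trivial coset in $G/\overline{G^\dagger_H}$, as required.

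The main technical obstacle is the bookkeeping needed to pass between $K_0$ and its closure $\overline{K_0}$ in the flatness, nub and Tits-core assertions; in each instance the discrepancy is controlled by the fact that the relevant tidy-theoretic invariants are stable under closure in the acting group, so this bookkeeping is routine rather than substantive.
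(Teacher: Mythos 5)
Your argument is correct and follows essentially the same route as the paper's: pass to a finite-index polycyclic flat subgroup, apply Theorem~\ref{thm:relative_discrete:general}(i) together with the polycyclic nub decomposition (Theorem~\ref{thm:flat:nubdecomp}/Corollary~\ref{corintro:nubdecomp:polycyclic}) and $\nub(\alpha)\le\overline{\con(\alpha)}$ to collapse $\Res_G(H)$ to $\overline{G^\dagger_H}$, then use the anisotropic flat action on $\N_G(G^\dagger_H)/\overline{G^\dagger_H}$ to upgrade to a neighbourhood base. Your explicit compactness argument at the end is just an unpacking of the paper's appeal to the action being uniscalar and smooth, so the two proofs coincide in substance.
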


In particular, if every element of the polycyclic subgroup $H$ has trivial contraction group, then there exist arbitrarily small open normal subgroups of $G$ normalized by $H$.  (Compare \cite[Theorem 4.1]{Raja}.)

Theorem~\ref{thmintro:relative_discrete}(ii) also has the potential to limit the possibilities for $\Res_G(H)$ in terms of the normal subgroup structure of compact open subgroups.  The following is an illustration of this idea.

\begin{cor}[See \S\ref{sec:relative_discrete}]\label{corintro:relative_discrete:hji}
Let $G$ be a non-discrete \tdlc group, let $H$ be a compactly generated closed subgroup of $G$, and suppose there is a cocompact closed subgroup $K$ of $H$ such that $K$ is flat on $G$.  Suppose that every compact open subgroup $U$ of $G$ is \defbold{just infinite}, that is, every non-trivial closed normal subgroup of $U$ has finite index.  Then the following dichotomy holds:
\begin{enumerate}[(a)]
\item If $H$ normalizes a compact open subgroup of $G$, then there is a base of neighbourhoods of the identity in $G$ consisting of compact open subgroups normalized by $H$;
\item If $H$ does not normalize any compact open subgroup of $G$, then $\Res_G(H)$ is the unique smallest open subgroup of $G$ normalized by $H$.
\end{enumerate}
\end{cor}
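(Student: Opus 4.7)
The plan is to analyze the interaction between $\Res_G(H)$ (and its companions $\nub_G(H)$, $\overline{G^\dagger_H}$) and a suitably chosen compact open subgroup $U$ of $G$, exploiting the just-infinite hypothesis to force a dichotomy on closed normal subgroups of $U$. The two main inputs are Theorem~\ref{thmintro:distal} (when $H$ already normalizes a compact open subgroup) and Theorem~\ref{thmintro:relative_discrete} (which applies in general because $H$ has a cocompact flat subgroup $K$).

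For case~(a), I would let $U$ be a compact open subgroup normalized by $H$. Then $H$ is uniscalar and $U$ is tidy for every element of $H$, so Theorem~\ref{thmintro:distal} with $K=G$ gives $\Res_G(H)=\nub_G(H)$, a compact subgroup contained in $U$; and by Theorem~\ref{thmintro:relative_discrete}(ii) (noting $U$ is tidy for $K$ as well) $U$ normalizes this group. So $\nub_G(H)$ is a closed normal subgroup of the just-infinite group $U$ and is therefore either trivial or of finite index in $U$. The trivial case gives precisely the conclusion of~(a), namely that the $H$-normalized compact open subgroups of $G$ have trivial common intersection and so form a neighborhood base at the identity. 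In the finite-index case, $\nub_G(H)$ would itself be a non-trivial compact open subgroup on which $H$ acts ergodically (Theorem~\ref{thmintro:distal}); using Theorem~\ref{thmintro:relative_discrete}(iii)-(iv) to decompose $\Res_G(H)/\overline{G^\dagger_H}$ as the nub of an anisotropic flat $H$-action and iterating the just-infinite dichotomy inside the compact open subgroup $\nub_G(H)$, one derives a contradiction with the non-discreteness of $G$.

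For case~(b), Theorem~\ref{thmintro:relative_discrete}(ii) provides a compact open $U$ normalizing $\Res_G(H)$, so $U\cap \Res_G(H)$ is a closed normal subgroup of $U$ and hence, by just-infiniteness, is either of finite index in $U$ or trivial. If it has finite index then $\Res_G(H)$ is open in $G$; since by definition $\Res_G(H)$ lies inside every open $H$-invariant subgroup of $G$, it is then the unique smallest open $H$-normalized subgroup, giving~(b). If $U\cap \Res_G(H)=\{1\}$, then $\Res_G(H)$ is discrete; by Theorem~\ref{thmintro:relative_discrete}(iv) the quotient $\Res_G(H)/\overline{G^\dagger_H}$ is a nub, in particular compact, and is also discrete (as a quotient of a discrete group), hence finite, so $\overline{G^\dagger_H}$ has finite index in $\Res_G(H)$ and is itself discrete. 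Combined with Theorem~\ref{thmintro:relative_discrete}(iii), which gives that $H$ acts anisotropically and flatly on $\N_G(G^\dagger_H)/\overline{G^\dagger_H}$, and with a cocompactness argument promoting a $K$-normalized compact open to an $H$-normalized one, this produces a compact open subgroup of $G$ normalized by $H$, contradicting the hypothesis of~(b).

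The main obstacle is the ruling out of the degenerate subcase in each part (non-trivial open $\nub_G(H)$ in~(a); non-trivial discrete $\Res_G(H)$ in~(b)); both require patching together the ergodic $H$-action on the nub with the rigid structural constraint of every compact open subgroup being just infinite, and this is where the arguments become most delicate.
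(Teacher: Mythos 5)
Your reduction of case (a) to the dichotomy ``$\nub_G(H)$ trivial or open in $U$'' is fine as far as it goes (the trivial horn does yield a neighbourhood base, by the usual compactness argument on the family of $H$-invariant compact open subgroups, which is closed under finite intersections), but the entire content of case (a) lies in ruling out the other horn, and your proposal does not do this. If $W=\nub_G(H)$ were open, then $W$ would be a non-trivial just infinite compact open subgroup on which $H$ acts ergodically, i.e.\ $W$ has no proper open $H$-invariant subgroups. Your suggested route --- ``iterating the just-infinite dichotomy inside $\nub_G(H)$'' together with Theorem~\ref{thmintro:relative_discrete}(iii)--(iv) --- produces nothing: since $H$ already acts ergodically on $W$, one has $\Res_W(H)=W$ and $\nub_W(H)=W$, so the iteration is stationary and no contradiction with non-discreteness of $G$ emerges. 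The missing ingredient is a structural fact about just infinite profinite groups, namely that they have only finitely many open subgroups of each given index (\cite[Corollary~2.5]{ReidJI}), so that the intersections $U_n$ of all open subgroups of index at most $n$ form a base of \emph{characteristic} open subgroups of $U$. This is exactly what the paper uses: each $U_n$ is then automatically normalized by $H$, which proves (a) directly and in particular forces $\nub_G(H)=\triv$. Without this fact (or some substitute showing that a non-trivial just infinite profinite group admits no ergodic automorphism action), your case (a) is incomplete.

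Case (b) is essentially sound. The first horn ($U\cap\Res_G(H)$ of finite index) correctly yields that $\Res_G(H)$ is the smallest open $H$-invariant subgroup. Your handling of the degenerate horn ($\Res_G(H)$ discrete) is vaguer than it needs to be: the cleanest repair is to observe that a non-trivial contraction group has non-discrete closure (if $1\neq x\in\con(k)$ then $k^nxk^{-n}$ is a sequence of non-trivial elements of $\con(k)$ converging to $1$), so a discrete $\overline{G^\dagger_H}$ is trivial, whence $\N_G(G^\dagger_H)=G$ and Theorem~\ref{thmintro:relative_discrete}(iii) makes $H$ anisotropic and flat on $G$ itself, so $H$ normalizes a compact open subgroup, contradicting the hypothesis of (b). The paper short-circuits this entirely by arguing at the outset that in case (b) the flat cocompact subgroup $K$ cannot be uniscalar (Lemma~\ref{cocompact:uniscalar_flat}), so some $\con(k)$ is non-trivial and $\Res_G(H)\supseteq\overline{G^\dagger_H}$ is non-discrete; the degenerate horn then never arises. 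Either way, (b) is fixable; (a) is where the genuine gap lies.
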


\subsection{The Mautner phenomenon and subgroups of finite covolume}

If $H$ is a subgroup of $G$ and $D$ is a subgroup of $G$ normalized by $H$, there is a smallest closed $H$-invariant subgroup $\Dist^*_{G/D}(H)$ of $G$ such that $\Dist^*_{G/D}(H) \ge D$ and the conjugation action of $H$ on $G/\Dist^*_{G/D}(H)$ is such that no orbit accumulates at the trivial coset.  It is clear that $\Dist^*_{G/D}(H) \ge \Dist^*_{G/E}(H)$ whenever $D \ge E$.  The residual $K = \Dist^*_{G/H}(H)$ is of particular significance: $K$ is then a \tdlc group containing $H$ such that $K = \Dist^*_{K/H}(H)$, and for any such group, a version of the Mautner phenomenon applies.
 
 \begin{thm}[See \S\ref{sec:Mautner}]\label{thmintro:Mautner}Let $G$ be a topological group and let $H$ be a subgroup of $G$ such that $G = \Dist^*_{G/H}(H)$.
 
Let $X$ be a topological space admitting an action of $G$ by homeomorphisms, such that the map $G \rightarrow X; g \mapsto gx$ is continuous for all $x \in X$.  Let $x \in X$; suppose that $x$ is fixed by $H$, and that no $H$-orbit on $X \setminus \{x\}$ accumulates at $x$.  Then $x$ is fixed by $G$.\end{thm}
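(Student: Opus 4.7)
The plan is to consider the stabilizer $S := \{g \in G : gx = x\}$ and show $S = G$. By continuity of the orbit map $g \mapsto gx$, $S$ is closed; by hypothesis $H \le S$; and since each $h \in H$ fixes $x$, we have $hSh^{-1} = S$, so $S$ is $H$-invariant under conjugation. The idea is to verify that $S$ satisfies the defining properties of $\Dist^*_{G/H}(H)$; minimality, together with the hypothesis $\Dist^*_{G/H}(H) = G$, will then force $S = G$.

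The orbit map descends to a continuous injection $\bar f \colon G/S \to X$, $gS \mapsto gx$, where $G/S$ carries the quotient topology. Since $H$ fixes $x$, this map is equivariant for the conjugation action of $H$ on $G/S$ and the given $H$-action on $X$:
\[
\bar f(hgh^{-1}S) = hgh^{-1}x = hgx = h \cdot \bar f(gS).
\]

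It remains to show that no non-trivial $H$-orbit on $G/S$ accumulates at the trivial coset $S$. Suppose for contradiction that $g \in G \setminus S$ and $S \in \overline{H \cdot gS}$. Since continuous maps send closures into closures, $x = \bar f(S) \in \overline{H \cdot gx}$. But $gx \neq x$, and no point of $H \cdot gx$ equals $x$: if $hgx = x$ then $gx = h^{-1}x = x$, a contradiction. So $H \cdot gx$ is an $H$-orbit contained in $X \setminus \{x\}$ that accumulates at $x$, contradicting the hypothesis on the action. Hence $S \supseteq \Dist^*_{G/H}(H) = G$, as desired. The only real subtlety is matching accumulation in $G/S$ with accumulation in $X$, which is handled automatically by the continuity of $\bar f$; no tidy-theoretic input from earlier in the paper is needed.
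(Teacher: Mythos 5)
Your proof is correct and takes essentially the same route as the paper's argument in \S\ref{sec:Mautner} (Theorem~\ref{thm:Mautner}): both reduce to the observation that the stabilizer $S$ of $x$ is a closed $H$-invariant subgroup containing $H$, transfer accumulation of $H$-orbits between the coset space $G/S$ and $X$ via the orbit map, and then invoke minimality of $\Dist^*_{G/H}(H)$. The paper runs this contrapositively with nets (assuming $S$ proper and pulling an accumulating net back to $X$), whereas you verify distality at $1$ on $G/S$ directly via the induced continuous map $\bar f$; the difference is purely organizational.
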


Given a subgroup $H$ of $G$ of finite covolume, we can use the Mautner phenomenon to obtain a restriction on $\Dist^*_{G/H}(H)$, and hence on $G^\dagger$ (which is the same as the relative Tits core $G^\dagger_H$, by Theorem~\ref{thmintro:TitsCore:cocompact}).

\begin{thm}[See \S\ref{sec:covolume}]\label{thmintro:covolume}
Let $G$ be a metrizable \tdlc group, let $H$ be a closed subgroup of $G$ of finite covolume, let $\mc{U}$ be the set of identity neighbourhoods in $G$ and define $K(H) := \bigcap_{U \in \mc{U}}HUH$.
\begin{enumerate}[(i)]
\item We have
\[
G^\dagger \le \Dist^*_{G/H}(H) = \overline{K(H)}.
\]
\item The group $D = \Dist^*_{G/H}(H)$ is the unique largest closed subgroup $D$ of $G$ such that $H \le D$ and $H$ acts ergodically on $D/H$.
\end{enumerate}
\end{thm}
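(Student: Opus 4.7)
My plan is to prove (i) and (ii) together, with ergodicity of $H$ on $D/H$ (where $D := \Dist^*_{G/H}(H)$) as the bridge: (i)(a) follows by chaining standard inclusions; the easy direction of (i)(b) is immediate; ergodicity is established via a unitary application of the Mautner phenomenon; and the remaining inclusion in (i)(b), together with maximality in (ii), follows from Poincar\'e recurrence applied inside $D$.

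For (i)(a), I apply Theorem~\ref{thmintro:TitsCore:cocompact} with the roles of $H$ and $K$ played by $G$ and $H$ (valid since $H$ has finite covolume in $G$) to obtain $G^\dagger = G^\dagger_G = G^\dagger_H$. Combining with $\overline{G^\dagger_H} \le \Dist^*_G(H)$ from (\ref{eq:distres}) and $\Dist^*_G(H) \le D$ (by minimality, since any candidate for $D$ is also a candidate for $\Dist^*_G(H)$) gives $G^\dagger \le D$. For the easy direction of (i)(b), if $g \in K(H)$ then $1 \in \overline{HgH} \subseteq \overline{HgD}$, so the $H$-orbit of $gD$ in $G/D$ accumulates at the trivial coset; the defining property of $D$ forces $g \in D$, so $\overline{K(H)} \le D$.

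The main obstacle is the reverse inclusion $D \le \overline{K(H)}$, which I handle via ergodicity. First verify the ``idempotency'' $D = \Dist^*_{D/H}(H)$: if $E < D$ were the latter, then for $g \in G \setminus D$ any accumulation of $HgE$ at $E$ in $G/E$ would project, using $E \le D$, to an accumulation of $HgD$ at $D$ in $G/D$, contradicting minimality of $D$; for $g \in D$ the hypothesis on $E$ applies directly. So the Mautner phenomenon (Theorem~\ref{thmintro:Mautner}) is available for the $D$-action. I apply it to the unitary $D$-representation on $L^2(D/H, \mu)$, where $\mu$ is the unique $D$-invariant probability measure (finite because $H$ has finite covolume in $D$). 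For any $H$-fixed $f \in L^2$ and any $g \neq f$, no sequence $h_n g$ converges to $f$ strongly: by unitarity $\|h_n g\| = \|g\|$, and by $H$-fixedness of $f$, $\langle h_n g, f\rangle = \langle g, h_n^{-1} f\rangle = \langle g, f\rangle$ is constant in $n$; so if $h_n g \to f$ strongly, then $0 = \lim \|h_n g - f\|^2 = \|g\|^2 - 2\mathrm{Re}\langle g, f\rangle + \|f\|^2$ forces Cauchy--Schwarz equality and hence $g = f$, a contradiction. Mautner then yields $f$ is $D$-fixed, hence constant a.e.\ (by transitivity of $D$ on $D/H$). Thus $H$-invariant $L^2$-functions are constants, and $H$ acts ergodically on $(D/H, \mu)$.

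With ergodicity, both remaining parts follow cleanly. Using metrizability, pick a countable base $(U_n)$ of open neighbourhoods of $1$ in $G$; each $H(U_n \cap D) H / H \subseteq D/H$ is open, $H$-invariant, of positive measure, hence $\mu$-conull by ergodicity. Their countable intersection is $(K(H) \cap D)/H$, which is $\mu$-conull and hence topologically dense (full support of $\mu$), so $K(H) \cap D$ is dense in $D$ and $D \le \overline{K(H) \cap D} \le \overline{K(H)}$, completing (i)(b). For (ii), ergodicity of $H$ on $D/H$ gives existence; for maximality, any closed subgroup $E \ge H$ on which $H$ acts ergodically (with the corresponding finite invariant measure) yields, via the same Poincar\'e-plus-density argument applied inside $E$, that $K(H) \cap E$ is dense in $E$, so $E \le \overline{K(H)} = D$, and $D$ is the unique largest such subgroup.
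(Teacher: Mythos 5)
Your proof is correct, and its overall strategy coincides with the paper's: obtain the Mautner phenomenon for $D = \Dist^*_{G/H}(H)$, deduce ergodicity of $H$ on $D/H$ via the unitary representation on $L^2(D/H,\mu)$, combine ergodicity with metrizability to make $K(H)\cap D$ dense in $D$, and use distality at $1$ of $H$ on $G/D$ for the containment $K(H)\subseteq D$. Three points of comparison. First, you reach the Mautner phenomenon for $D$ by verifying the idempotency $D=\Dist^*_{D/H}(H)$ and then quoting Theorem~\ref{thmintro:Mautner}, whereas the paper's Theorem~\ref{thm:Mautner} proves directly that $H$ is an MP-subgroup of $D$; these are equivalent, though note that in your idempotency step the accumulation of $HgD$ at the trivial coset for $g\notin D$ contradicts the distality at $1$ of $H$ on $G/D$ (which is part of what defines $D$), not its minimality --- minimality is only invoked at the end to force $E=D$. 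Second, your density step replaces the paper's Baire-category argument (Lemma~\ref{lem:Mautner:covolume}(ii)) with ``a countable intersection of conull open sets is conull, hence dense since $\mu$ has full support''; both work, and full support follows from inner regularity plus compactness exactly as the paper implicitly uses. Third, and most substantively, your maximality argument in (ii) is genuinely shorter: you pass directly from ergodicity of $H$ on $E/H$ (with the finite invariant measure supplied by Lemma~\ref{lem:covolume:monotone}) to density of $K(H)\cap E$ in $E$, hence $E\le\overline{K(H)}=D$, whereas the paper first establishes that $H$ is an MP-subgroup of $E$ via a separate Baire-category construction before comparing $E$ with $D$ through distality. Your shortcut is valid precisely because part (i) is already in hand when (ii) is proved, and it is arguably cleaner than the published argument. (A cosmetic remark: the Cauchy--Schwarz computation in your ergodicity step can be replaced by the one-line observation that $\lVert h_ng-f\rVert=\lVert h_ng-h_nf\rVert=\lVert g-f\rVert$ is constant and non-zero.)
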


\begin{cor}\label{corintro:covolume:charsimple}
Let $G$ be a metrizable \tdlc group, and suppose that $G^\dagger$ is dense in $G$; equivalently, in every Hausdorff quotient $G/N$ of $G$, some element has non-trivial contraction group.  Let $H$ be a subgroup of $G$ of finite covolume.  Then $H$ acts ergodically on $G/H$ by left translation.
\end{cor}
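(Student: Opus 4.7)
The plan is to derive this corollary directly from Theorem~\ref{thmintro:covolume}. The key observation is that density of $G^\dagger$ combined with the fact that $\Dist^*_{G/H}(H)$ is closed by definition forces $\Dist^*_{G/H}(H) = G$, whereupon part (ii) of that theorem immediately yields ergodicity.

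In detail, I would first reduce to the case that $H$ is closed: if $H$ has finite covolume in $G$, then so does $\overline{H}$, and ergodicity of $H$ on $G/H$ is equivalent to ergodicity of $\overline{H}$ on $G/\overline{H}$ via the natural quotient between these coset spaces. Assuming $H = \overline{H}$, Theorem~\ref{thmintro:covolume}(i) gives $G^\dagger \le \Dist^*_{G/H}(H)$; since the right-hand side is closed and $G^\dagger$ is dense in $G$, we conclude $\Dist^*_{G/H}(H) = G$. Theorem~\ref{thmintro:covolume}(ii) applied with $D = G$ now states precisely that $H$ acts ergodically on $G/H$, completing the proof.

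The parenthetical equivalence in the hypothesis is a side observation rather than part of the main argument. The forward direction uses that any continuous surjective homomorphism $\phi\colon G \to G/N$ sends $\con(g)$ into $\con(\phi(g))$, so $\phi(G^\dagger) \le (G/N)^\dagger$; density of $G^\dagger$ therefore forces $(G/N)^\dagger$ to be non-trivial for every proper closed normal $N$. The converse, taking $N = \overline{G^\dagger}$, uses the containment property of the Tits core (Theorem~\ref{thmintro:titscore:containment}, or equivalently the main result of \cite{CRW-TitsCore}) to conclude $(G/N)^\dagger = \triv$. There is essentially no obstacle in the main argument: this corollary is a formal consequence of Theorem~\ref{thmintro:covolume}, with the only minor care being the reduction to closed $H$.
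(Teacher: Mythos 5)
Your argument is correct and is essentially the paper's own proof: both derive $\Dist^*_{G/H}(H) = G$ from Theorem~\ref{thmintro:covolume}(i) together with density of $G^\dagger$ and closedness of the $1$-distal residual, and then conclude ergodicity. The only cosmetic difference is that you invoke the packaged statement of Theorem~\ref{thmintro:covolume}(ii) with $D = G$, whereas the paper unwinds the same content via Theorem~\ref{thm:Mautner} and Lemma~\ref{lem:Mautner:covolume}.
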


\subsection{Reduced envelopes of flat subgroups}

\begin{defn}
Let $G$ be a \tdlc group and let $X \subseteq G$.  An \defbold{envelope} of $X$ in $G$ is an open subgroup of $G$ that contains $X$.  Say an envelope $E$ of $X$ is \defbold{reduced} if, whenever $E_2$ is an envelope of $X$, then $|E:E \cap E_2|$ is finite.
\end{defn}

The circumstances under which a subgroup of $G$ has a unique smallest envelope are quite special: consider for instance the case of a compact subgroup of $G$ that is not open.  However, there are general circumstances under which reduced envelopes exist, and when they exist, they are clearly unique up to commensurability.  If $H \le G$ normalizes a compact open subgroup $U$ of $G$, then $HU$ is a reduced envelope for $H$.  More generally, if $H$ is a flat subgroup of $G$, a natural candidate for a reduced envelope for $H$ is the group $\langle H, U \rangle$, where $U$ is tidy for $H$.  We confirm that $\langle H, U \rangle$ is indeed reduced provided that $H/H_{\us}$ is finitely generated.  In fact, we obtain a reduced envelope for $H \le G$ whenever $\overline{H}$ has a closed cocompact subgroup $K$ such that $K$ is flat and $K/K_{\us}$ is finitely generated (so in particular, every polycyclic subgroup of $G$ has a reduced envelope).

\begin{thm}[See \S\ref{sec:reduced_envelope}]\label{thmintro:reduced_envelope}Let $G$ be a \tdlc group and let $K$ be a closed flat subgroup of $G$ such that $K/K_{\us}$ is finitely generated.  Let $U$ be a compact open subgroup that is tidy for $K$ and let $U_0 = \bigcap_{k \in K}kUk\inv$.
\begin{enumerate}[(i)]
\item The product $G^\dagger_KU_0$ is the group generated by all $K$-conjugates of $U$.  Hence $\langle K,U \rangle$ is a reduced envelope for $K$ in $G$, and moreover
\[
\langle K, U \rangle = G^\dagger_KU_0K.
\]
\item Let $H \le G$ such that $K$ is cocompact in $\overline{H}$.  Then $H$ has a reduced envelope in $G$, and every reduced envelope for $H$ in $G$ is also a reduced envelope for $K$ in $G$.  Moreover, given any reduced envelope $E$ of $H$, then $\overline{G^\dagger_HH}$ is a cocompact subgroup of $E$.
\end{enumerate}
\end{thm}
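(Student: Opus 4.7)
The plan is to first analyze the open subgroup $N := \langle kUk\inv : k \in K\rangle$ and prove $N = G^\dagger_K U_0$, from which the envelope formula $\langle K, U\rangle = G^\dagger_K U_0 K$ follows since $N$ is $K$-normalized. The containment $G^\dagger_K U_0 \subseteq N$ is straightforward: $U_0 \subseteq U \subseteq N$, and for $\alpha \in K$ and $x \in \con(\alpha)$ the iterates $\alpha^n x \alpha^{-n}$ eventually lie in $U$, so $x$ lies in some $\alpha^{-n} U \alpha^n \subseteq N$; since $N$ is open and hence closed, $\overline{\con(\alpha)} \subseteq N$ for every $\alpha \in K$, giving $G^\dagger_K \subseteq N$.

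For the reverse containment, I would first check that $G^\dagger_K U_0$ is a $K$-invariant subgroup: $K$-invariance comes from $K$ permuting $\{\overline{\con(\alpha)} : \alpha \in K\}$ and normalizing $U_0$, while the subgroup property rests on the fact that $U$ normalizes $G^\dagger_K$, a consequence of Theorem~\ref{thmintro:TitsCore:invariance}(i): for $u \in U$, $u\overline{\con(\alpha)}u\inv = \overline{\con(u\alpha u\inv)} \subseteq G^\dagger_{u\alpha u\inv} = G^\dagger_\alpha \subseteq G^\dagger_K$. It then suffices to show $U \subseteq G^\dagger_K U_0$, since by $K$-invariance this gives $kUk\inv \subseteq G^\dagger_K U_0$ for all $k$ and hence $N \subseteq G^\dagger_K U_0$. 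This is the heart of the argument, and I would invoke the structure theory of tidy subgroups for flat groups of finite rank (\cite{WillisFlat}, \cite{BaumgartnerGeometric}): using that $K/K_\us$ is finitely generated, one obtains a factorization $U = V_0 V_1 \cdots V_r$ in which $V_0$ is the maximal $K$-invariant subgroup of $U$ (hence $V_0 \subseteq U_0$), and each $V_i$ for $i \geq 1$ is contained in $\overline{\con(\alpha_i)}$ for some $\alpha_i \in K$ corresponding to a nontrivial ``root'' of the flat action.

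Reducedness of $\langle K, U\rangle$ for $K$ rests on a simple but crucial observation: any envelope $E_2$ of $K$ automatically contains $G^\dagger_K$. Indeed, for $\alpha \in K \subseteq E_2$ and $x \in \con(\alpha)$, openness of $E_2$ forces $\alpha^n x \alpha^{-n} \in E_2$ for large $n$, and $\alpha \in E_2$ then yields $x \in \alpha^{-n} E_2 \alpha^n = E_2$; hence $\overline{\con(\alpha)} \subseteq E_2$ and $G^\dagger_K \subseteq E_2$. Combined with $N = G^\dagger_K U_0$, this gives $N \cap E_2 = G^\dagger_K(U_0 \cap E_2)$, so $N/(N \cap E_2)$ is a quotient of the finite group $U_0/(U_0 \cap E_2)$ (finite since $U_0$ is compact and $E_2$ is open); together with $\langle K, U\rangle = NK$ and $K \subseteq E_2$, this gives $|\langle K, U\rangle : \langle K, U\rangle \cap E_2| = |N : N \cap E_2| < \infty$.

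For part (ii), any envelope of $H$ is closed and contains $\overline{H} \supseteq K$, so envelopes of $H$ are envelopes of $K$. To construct a reduced envelope for $H$, take $E_K := \langle K, U\rangle$ and note that $E_K \cap \overline{H}$ is an open subgroup of $\overline{H}$ containing $K$, so its image in the compact group $\overline{H}/K$ is an open subgroup, hence of finite index; enlarging $E_K$ by coset representatives $h_1, \dots, h_m$ of $\overline{H}/(E_K \cap \overline{H})$ produces an envelope $E_H := \langle E_K, h_1, \dots, h_m\rangle$ of $H$ with $|E_H : E_K| < \infty$, and reducedness of $E_H$ for $H$ follows from that of $E_K$ for $K$. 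The same finite-index enlargement trick, applied in reverse, shows that any reduced envelope of $H$ is reduced for $K$. For the cocompactness claim, Theorems~\ref{thmintro:TitsCore:invariance}(ii) and \ref{thmintro:TitsCore:cocompact} give $G^\dagger_H = G^\dagger_{\overline{H}} = G^\dagger_K$, so $\overline{G^\dagger_H H}$ contains both $G^\dagger_K$ and $\overline{H} \supseteq K$, hence contains $\overline{G^\dagger_K K}$; since $E_K/G^\dagger_K K$ is a quotient of the compact group $U_0$, and any reduced envelope $E$ of $H$ is commensurable with $E_K$, the cocompactness of $\overline{G^\dagger_H H}$ in $E$ follows.

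The main obstacle will be the containment $U \subseteq G^\dagger_K U_0$ in part (i). Attempting to deduce it from the relative Baumgartner--Willis anisotropy of $K$ on $G/\overline{G^\dagger_K}$ (Theorem~\ref{thmintro:relative_discrete}) yields only the weaker $N \subseteq U\overline{G^\dagger_K}$, and bridging the gap back down to $G^\dagger_K U_0$ appears to require the finite-rank tidy decomposition in its full strength, especially because $G^\dagger_K$ itself need not be closed.
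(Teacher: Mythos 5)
Part (i) of your proposal is essentially the paper's argument. The identity $\langle K,U\rangle = G^\dagger_K U_0 K$ is obtained there from Corollary~\ref{cor:titscore:uzero}, whose proof is exactly the reduction you describe: the eigenfactor decomposition of a tidy subgroup for a finite-rank flat group (Theorem~\ref{willis:eigenfactor_decomposition}, from \cite{WillisFlat}) combined with the relative contraction theorem (Theorem~\ref{thm:bw:relative_contraction}) to show that each non-$K$-invariant eigenfactor $V$ satisfies $V = (\con(\alpha)\cap V)U_0$ for some $\alpha\in K$. (Your stronger formulation $V_i\subseteq\overline{\con(\alpha_i)}$ is not quite what the decomposition gives -- the $U_0$ factor cannot be dropped -- but the weaker statement already yields $U\subseteq G^\dagger_K U_0$.) Your reducedness argument, via the observation that every envelope of $K$ contains $G^\dagger_K$ and hence $\overline{G^\dagger_K K}$, which is cocompact in $\langle K,U\rangle$, is also the paper's.

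The genuine gap is in part (ii), at the assertion that $E_H:=\langle E_K,h_1,\dots,h_m\rangle$ satisfies $|E_H:E_K|<\infty$. Knowing that $h_1,\dots,h_m$ are finitely many coset representatives for $E_K\cap\overline{H}$ in $\overline{H}$ does not bound the index of $E_K$ in the group they generate together with $E_K$: the $h_i$ need not normalize, nor even commensurate, $E_K$, and even commensurable open subgroups can generate a group containing both with infinite index (in $\Aut(T_q)$, two adjacent vertex stabilizers generate a non-compact group). Since the existence of one envelope of $H$ in which $\overline{G^\dagger_H H}$ is cocompact is what drives everything else in part (ii) -- including your claim that an arbitrary reduced envelope of $H$ is commensurable with $E_K$, which otherwise presupposes part of the conclusion -- this is a real hole, not a routine verification. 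The paper closes it by passing to $O=\N_G(G^\dagger_H)$ modulo $T=\overline{G^\dagger_H}$: by Corollary~\ref{cor:titscore:uzero} the image of $U$ there is a compact open subgroup normalized by $K$, and Lemma~\ref{cocompact:uniscalar_flat} transfers this uniscalar flat (SIN) behaviour from the cocompact subgroup $K$ to all of $\overline{H}$, producing an $\overline{H}$-invariant subgroup $A$ with $A/T$ compact open. Then $E=AH$ is genuinely a group, is an envelope of $H$, and contains $\overline{G^\dagger_H H}$ cocompactly; from that point your remaining deductions go through.
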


We obtain further restrictions on reduced envelopes in the case that the almost flat subgroup is subnormal.

\begin{thm}[See \S\ref{sec:cocompact_envelope}]\label{thmintro:subnormal_envelope}
Let $G$ be a \tdlc group and let $H$ be a compactly generated closed subnormal subgroup of $G$.  Suppose that there is a cocompact subgroup of $H$ that is flat on $G$.  Let $E$ be a reduced envelope of $H$.  Then the following holds:
\begin{enumerate}[(i)]
\item $H$ is a cocompact subgroup of $E$.
\item We have
\[
E^\dagger = H^\dagger \text{ and } \Res(E) = \Res^\infty(E) = \Res(H) = \Res^\infty(H).
\]
In particular, both $E^\dagger$ and $\Res(E)$ are subgroups of $H$ characterized by the internal structure of $H$.
\end{enumerate}
\end{thm}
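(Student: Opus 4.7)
The plan is to first establish $G^\dagger_H \le H$ using the subnormality of $H$ in $G$, deduce (i) from Theorem~\ref{thmintro:reduced_envelope}(ii), and then exploit the resulting cocompactness of $H$ in $E$ to prove (ii). The residual equalities will ultimately reduce to Theorem~\ref{thmintro:relative_discrete} applied in the three ambient groups $G$, $E$, and $H$.

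For the key containment $G^\dagger_H \le H$, I would fix a subnormal series $H = H_0 \triangleleft H_1 \triangleleft \cdots \triangleleft H_n = G$ of closed subgroups and prove $G^\dagger_H \le H_i$ by downward induction on $i$. Assuming $G^\dagger_H \le H_i$, closedness of $H_i$ forces $\overline{\con_G(h)} \le H_i$ for every $h \in H$; in particular $\con_G(h) \subseteq H_i$, so $\con_G(h) = \con_{H_i}(h)$ and the relative Tits cores of $H$ computed in $G$ and in $H_i$ coincide. Working inside the \tdlc group $H_i$ with $H_{i-1} \triangleleft H_i$, Theorem~\ref{thmintro:titscore:containment} applied with $D = H_{i-1}$ and $U = H_i$ (the normalizer hypothesis being automatic) yields $(H_i)^\dagger_H \le H_{i-1}$, finishing the step. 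Hence $G^\dagger_H \le H$; since $H$ is closed, $\overline{G^\dagger_H H} = H$, and Theorem~\ref{thmintro:reduced_envelope}(ii) then gives (i).

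For (ii), the same subnormal argument run inside $E$ (intersect the series with $E$) gives $E^\dagger_H \le H$. Cocompactness of $H$ in $E$ together with Theorem~\ref{thmintro:TitsCore:cocompact} yields $E^\dagger = E^\dagger_E = E^\dagger_H$, and the identity $\con_G(h) = \con_E(h) = \con_H(h)$ for $h \in H$ (which holds because $\overline{\con_G(h)} \le H$) identifies this common value with $H^\dagger_H = H^\dagger$. For the residuals, Theorem~\ref{thmintro:relative_discrete} applied in turn to $G$, $E$, and $H$ with the same cocompact flat subgroup $K$ supplies, in each setting, a decomposition of the discrete residual as $\overline{G^\dagger_H} \cdot \nub(K)$ together with the fact that the discrete residual agrees with the iterated residual $\Res^\infty$. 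Since $E$ is open in $G$ and contains the tidy subgroup used in building the reduced envelope, $\nub_E(K) = \nub_G(K)$; this yields $\Res(E) = \Res_G(H)$. The containment $\Res(H) \le \Res(E)$ is immediate, as $V \cap H$ is open and normal in $H$ for every open normal $V$ of $E$. The reverse containment requires $\Res(E) \le H$: granted this, any proper open $H$-invariant $W \subsetneq \Res(E)$ could be averaged over a compact transversal $C$ with $E = CH$ to produce a proper open $E$-invariant subgroup of $\Res(E) = \Res^\infty(E)$, contradicting minimality; hence $\Res(E) \le \Res^\infty(H) = \Res(H)$.

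The decisive obstacle is $\Res(E) \le H$, equivalently (using the decomposition $\Res(E) = \overline{G^\dagger_H}\nub_G(K)$ and the subnormal containment $G^\dagger_H \le H$) the statement $\nub_G(K) \le H$. When $H \triangleleft G$ this is clean: $E/H$ is then a compact \tdlc group, hence profinite, so intersecting the open normal subgroups of $E$ containing $H$ already yields $H$. For the general subnormal case I would induct on the length of the subnormal series, reducing to a shorter series via an intermediate normal subgroup; an alternative approach is to decompose $\nub_G(K)$ via Theorem~\ref{thmintro:flat:nubdecomp} into $\nub_G(K_{\us})$ together with cyclic nubs $\nub_G(\alpha_i)$ for $\alpha_i \in K \subseteq H$, and handle each factor using the subnormality of $\langle \alpha_i \rangle$ in $G$.
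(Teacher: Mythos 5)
Your treatment of part (i) and of the equality $E^\dagger = H^\dagger$ matches the paper: the containment $G^\dagger_H \le H$ obtained by running Theorem~\ref{thmintro:titscore:containment} down a subnormal series is exactly the content of Corollary~\ref{cor:subnormal:containment} (recorded in the paper as Corollary~\ref{cor:subnormal:sametitscore}), and combining it with Theorem~\ref{thmintro:reduced_envelope}(ii) and Theorem~\ref{thmintro:TitsCore:cocompact} gives (i) and $E^\dagger = E^\dagger_H = H^\dagger$ just as in \S\ref{sec:cocompact_envelope}.

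The gap is in the step you yourself flag as the decisive obstacle, namely $\Res(E) \le H$, and neither of your two proposed routes closes it. The induction ``via an intermediate normal subgroup'' founders because the containments run the wrong way: if $H \le H_1 \unlhd E$ with $H_1$ closed but not open, an open $H$-invariant subgroup of $H_1$ need not be the trace of an open $H$-invariant subgroup of $E$, so knowing $\Res_{H_1}(H) \le H$ places no upper bound on $\Res_E(H)$; what one does get in a single step is only $\Res_E(H) \le H_1$ (by pulling back compact open subgroups of $E/H_1$). The paper's mechanism is precisely to iterate this, obtaining $\Res^i_G(H) \le H_{i+1}$ along the subnormal series from $E$ down to $H$, hence $\Res^\infty_G(H) \le \Res(H)$, and then to invoke Theorem~\ref{thm:relative_discrete:general}(vii) (using that $\overline{H}$ is compactly generated) to collapse $\Res^\infty_G(H) = \Res_G(H) = \Res(E)$; without that collapse the iterated bound says nothing about $\Res(E)$ itself. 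Your alternative via Theorem~\ref{thmintro:flat:nubdecomp} handles only the cyclic factors: indeed $\nub(\alpha_i) \le \overline{\con(\alpha_i)} \le \overline{G^\dagger_H} \le H$ by Theorem~\ref{basic:nub_characterizations}, but the remaining factor $\nub_G(K_{\us})$ is exactly the one with no a priori relation to $H$ (it is an intersection of compact open subgroups of $G$, and subnormality of $H$ enters nowhere in its construction), so that route is circular unless $\Res_G(H) \le H$ is already known. A secondary, fixable issue: your averaging argument (intersecting a proper open $H$-invariant $W \le \Res(E)$ over a compact transversal $C$ with $E = CH$) needs a justification that $\bigcap_{c \in C} cWc\inv$ is open, which is not automatic since $W$ is open only in the non-open subgroup $\Res(E)$; the paper sidesteps this entirely, since $\Res^{n}_G(H) \le H$ already forces the next iterate into $\Res(H)$.
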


\subsection{Non-closed contraction groups}

Let $\mathscr{W}$ be the class of \tdlc groups that admit a non-degenerate faithful weakly decomposable action on a Boolean algebra.  As observed by P.-E. Caprace, G. Willis and the author in \cite{CRW-Part2}, $\mathscr{W}$ includes many of the known examples of groups in the class $\mathscr{S}$ of non-discrete, compactly generated, topologically simple \tdlc groups $G$.   The class $\mathscr{W}$ is considerably larger than just $\mathscr{S} \cap \mathscr{W}$: for instance, if $G$ is a \tdlc group with trivial quasi-center and $\mathscr{W}$ contains some open subgroup of $G$, then $\mathscr{W}$ contains every open subgroup of $G$, and also every closed normal subgroup of $G$.

By \cite[Corollary~K]{CRW-Part2}, given $G \in \mathscr{S} \cap \mathscr{W}$, then some $g \in G$ has non-closed contraction group.  We can use the structure of reduced envelopes to extend this result to all of $\mathscr{W}$: given $G \in \mathscr{W}$, either all contraction groups in $G$ are trivial or there exists a non-closed contraction group in $G$.

\begin{thm}[See \S\ref{sec:weakdecomp}]\label{thmintro:weakdecomp:nonclosed}Let $G$ be a non-trivial compactly generated \tdlc group.  Suppose that $G$ has a non-degenerate faithful weakly decomposable action on a Boolean algebra.  Then exactly one of the following holds:
\begin{enumerate}[(i)]
\item $G$ is anisotropic and has arbitrarily small non-trivial compact normal subgroups.
\item There exists $g \in G$ such that $\nub(g)$ is non-trivial, in other words, $\con(g)$ is not closed.
\end{enumerate}
\end{thm}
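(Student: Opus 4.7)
The two alternatives are mutually exclusive: if $G$ is anisotropic then every inner automorphism has trivial contraction group and admits arbitrarily small $\langle g\rangle$-invariant compact open subgroups, so $\nub(g)=\{1\}$ for every $g\in G$ and (ii) fails.

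For the existence of at least one alternative, I assume (ii) fails---so $\con(g)$ is closed for every $g\in G$---and aim to deduce (i). My plan is to reduce the existence of a non-trivial contraction group in $G$ to the simple case already covered by \cite[Corollary~K]{CRW-Part2}. Suppose for a contradiction that $G$ is not anisotropic, so that some $g_0\in G$ has $\con(g_0)\neq\{1\}$; by the standing assumption $\con(g_0)$ is closed, so $G^\dagger_{g_0}$ is a non-trivial subgroup of $G$. Using Theorem~\ref{thmintro:reduced_envelope} I form a reduced envelope $E=\langle g_0,U\rangle$ of $\langle g_0\rangle$: this is a compactly generated open subgroup of $G$ in which $\overline{G^\dagger_{g_0}\langle g_0\rangle}$ is cocompact. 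The non-degeneracy and faithfulness of the weakly decomposable action force the quasi-centre of $G$ to be trivial, so by the inheritance properties of $\mathscr{W}$ recalled in the introduction, every open subgroup and every closed normal subgroup of $G$ still lies in $\mathscr{W}$. I would then pass from $E$ to a compactly generated topologically simple subquotient $Q\in\mathscr{S}\cap\mathscr{W}$ in which a piece of $G^\dagger_{g_0}$ survives---this is the main technical step---and apply \cite[Corollary~K]{CRW-Part2} to produce $\overline{h}\in Q$ with non-closed contraction group. Lifting $\overline{h}$ back through the subquotient, using the Baumgartner--Willis--Jaworski compatibility of contraction groups with closed normal subgroups together with the subnormal-envelope description of Theorem~\ref{thmintro:subnormal_envelope}, produces $h\in G$ with $\nub_G(h)\neq\{1\}$, contradicting the standing assumption. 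Hence $G$ is anisotropic.

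With anisotropy in hand, I complete (i) by observing that a compactly generated \tdlc group all of whose elements are anisotropic is a SIN group---either by a direct argument shrinking a compact open subgroup against the finite generating set, or by invoking Corollary~\ref{corintro:distal_SIN}. Hence $G$ admits a base of identity neighbourhoods consisting of compact open normal subgroups, and the non-degeneracy and faithfulness of the weakly decomposable action precludes $G$ from being discrete, so these compact open normal subgroups are non-trivial; this furnishes arbitrarily small non-trivial compact normal subgroups. The main obstacle throughout will be the extraction of the simple subquotient $Q\in\mathscr{S}\cap\mathscr{W}$ with non-trivial Tits core from $E$ and the careful lifting of a non-closed contraction group back through the subquotient construction: this is where the reduced-envelope and subnormal-envelope machinery of Theorems~\ref{thmintro:reduced_envelope} and~\ref{thmintro:subnormal_envelope}, the inheritance properties of $\mathscr{W}$, and the description of contraction groups modulo closed normal subgroups need to combine cleanly.
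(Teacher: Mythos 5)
There are two genuine gaps. First, the ``main technical step'' you defer --- extracting a simple subquotient $Q \in \mathscr{S}\cap\mathscr{W}$ from the envelope $E$ and then lifting a non-closed contraction group from $Q$ back to $G$ --- would fail in the lifting direction. A non-trivial nub in a quotient $L/N$ does not force a non-trivial nub in $L$: the paper itself cites \cite[Example~6.5]{WillisFurther} for exactly this phenomenon ($\nub_G(\alpha)$ trivial while $\nub_{G/K}(\alpha)$ is not), and Theorem~\ref{thm:bw:relative_contraction} only gives $\con_{G/K}(\alpha)=\con_G(\alpha)K$, which can fail to be closed even when $\con_G(\alpha)$ is. (It is also unclear that a maximal quotient of $E$ lies in $\mathscr{W}$: the inheritance properties quoted in the introduction cover open subgroups and closed normal subgroups, not quotients; and Theorem~\ref{thmintro:subnormal_envelope} requires a \emph{subnormal} subgroup, which $\langle g_0\rangle$ need not be.) The paper's proof avoids quotients entirely: it forms $S=\overline{G^\dagger_h}\rtimes\langle h\rangle$ with $\overline{G^\dagger_h}$ open, shows (Lemma~\ref{weakdecomp:nubstar}, using $\snub_G(h)=\triv$ and Proposition~\ref{prop:commutator_nub}) that $S$ is compactly generated with no non-trivial compact normal subgroups, and applies the skewering criterion of Proposition~\ref{Part2:skewering} directly to $S$ or to $\overline{G^\dagger_h}$ --- no simplicity is needed. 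The resulting element acts ergodically on a non-trivial compact subgroup of the \emph{closed subgroup} $\overline{G^\dagger_h}\le G$, and ergodicity on a compact subgroup transports upward to $G$ by Theorem~\ref{basic:nub_characterizations}; that is the direction in which nubs behave well.

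Second, your concluding step --- that an anisotropic compactly generated \tdlc group is SIN, hence has arbitrarily small compact open normal subgroups --- is false. Example~\ref{ex:nondistal} (Bhattacharjee--MacPherson) is a compactly generated anisotropic \tdlc group with \emph{no} non-trivial compact normal subgroups at all; and Corollary~\ref{corintro:distal_SIN} requires distality, which anisotropy does not supply. In the paper the arbitrarily small non-trivial compact normal subgroups of alternative (i) come from the weakly decomposable hypothesis, not from anisotropy: if some identity neighbourhood contained no non-trivial compact normal subgroup, Proposition~\ref{Part2:skewering}(i) would produce a skewering element, whose nub is non-trivial by Proposition~\ref{Part2:skewering}(ii), contradicting the assumed failure of (ii). You should derive the compact normal subgroups this way, before (and independently of) the anisotropy argument.
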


\begin{cor}Let $G$ be a non-trivial \tdlc group.  Suppose that $G$ has a non-degenerate faithful weakly decomposable action on a Boolean algebra, and suppose there exists $g \in G$ such that $\con(g) \neq \triv$.  Then there exists $h \in G$ such that $\con(h)$ is not closed.\end{cor}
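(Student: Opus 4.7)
The plan is to reduce to Theorem~\ref{thmintro:weakdecomp:nonclosed}, which additionally requires compact generation, by passing to a well-chosen open subgroup $H$ of $G$. Fix a non-trivial $x \in \con(g)$ and let $U$ be any compact open subgroup of $G$; set $H := \langle g, x \rangle U$. Then $H$ is open in $G$, generated by the compact set $\{g,x\} \cup U$, and contains $g$. Since $H$ is open, convergence in $H$ and convergence in $G$ agree, so $\con_H(g) = \con_G(g) \cap H$ contains $x$, witnessing that $H$ is not anisotropic.

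The next task is to verify that $H$ inherits the hypotheses of Theorem~\ref{thmintro:weakdecomp:nonclosed}. Restriction of the given faithful $G$-action on the Boolean algebra $\mc{A}$ to $H$ is automatically faithful, since the kernel of the restricted action is contained in the kernel of the $G$-action. Non-degeneracy and weak decomposability of the restricted action should follow from the corresponding properties of the $G$-action by the local nature of these definitions, in line with the flexibility of the class $\mathscr{W}$ under passing to open subgroups noted in the introduction. I expect this verification to be the main obstacle, and if the restricted action fails to be non-degenerate, an alternative plan would be to replace $\mc{A}$ by an appropriate $H$-invariant subalgebra cut out by an element in the orbit closure of a non-trivial part of $x$, restoring non-degeneracy for $H$ while keeping the contraction of $g$ visible.

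Once the hypotheses are established, Theorem~\ref{thmintro:weakdecomp:nonclosed} yields the dichotomy (i) or (ii) for $H$. Alternative (i) would force $H$ to be anisotropic, contradicting $x \in \con_H(g)$; hence alternative (ii) holds and there is some $h \in H$ with $\con_H(h)$ not closed in $H$. Any witness $y \in \overline{\con_H(h)} \setminus \con_H(h) \subseteq H$ then satisfies $y \in \overline{\con_G(h)}$ while $y \notin \con_G(h) \cap H = \con_H(h)$, so that $y \in \overline{\con_G(h)} \setminus \con_G(h)$. This shows $\con_G(h)$ is not closed in $G$, delivering the element $h$ required by the corollary.
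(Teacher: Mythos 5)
Your argument is correct, and the step you flagged as the main obstacle is in fact unproblematic: faithfulness, openness of stabilizers of non-zero elements, and non-discreteness of the rigid stabilizers all pass from $G$ to an open subgroup $H$ (each relevant subgroup of $H$ is the intersection with $H$ of the corresponding subgroup of $G$, and an open subgroup of a non-discrete group is non-discrete), and the paper records this inheritance explicitly in \S\ref{sec:weakdecomp} via the citation of \cite[Proposition~5.22]{CRW-Part1}. The rest of your reduction is sound: $H=\langle g,x,U\rangle$ is a non-trivial compactly generated open subgroup, $\con_H(g)=\con_G(g)\cap H\ni x$ rules out alternative (i) of Theorem~\ref{thmintro:weakdecomp:nonclosed} by Proposition~\ref{basic:anisotropic}(iii), and Lemma~\ref{lem:samecontraction} (with $H$ closed because open) converts non-closedness of $\con_H(h)$ into non-closedness of $\con_G(h)$. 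Your route does differ from the one the paper's development suggests: there, the second half of the proof of Theorem~\ref{thmintro:weakdecomp:nonclosed} --- assume all contraction groups closed, note $g$ is isotropic with $\nub(g)=\triv$ hence $\snub_G(g)=\triv$ by Theorem~\ref{basic:closed_contraction}, and apply Lemma~\ref{weakdecomp:nubstar} to produce an element with non-trivial nub --- makes no use of compact generation, so the corollary follows by running that paragraph verbatim on $G$ itself. Your version buys a clean black-box deduction from the stated theorem at the price of the (easy, and available) closure of the class $\mathscr{W}$ under passing to open subgroups; the paper's version avoids any change of group but requires opening up the proof of the theorem. Both are valid proofs of the corollary.
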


\subsection{Example: Neretin groups}\label{sec:Neretin}

Let $q \ge 2$ and let $T_q$ be the locally finite tree in which every vertex has $q+1$ neighbours.  Given a set $A$ of vertices, write $T_q \setminus A$ for the subgraph of $T_q$ induced by the vertices $VT_q \setminus A$.  A \defbold{spheromorphism} of $T_q$ is an equivalence class of graph isomorphisms from $T_q \setminus A$ to $T_q \setminus B$, where $A$ and $B$ are finite, and two such maps are considered equivalent if they agree except on finitely many vertices.  Note that if $T_q \setminus A$ has no vertices of degree $\le 1$, then any two equivalent isomorphisms of $T_q \setminus A$ are actually equal as graph isomorphisms.  The set of all spheromorphisms of $T_q$ then forms a group under composition, the \defbold{Neretin group} $N_q$, which carries a \tdlc group topology generated as follows: a basic neighbourhood $U_A$ of the identity, where $A$ ranges over the finite subtrees of $T_q$, is given by all isomorphisms of the graph $T_q \setminus A$ that leave invariant each component of this graph.  This group was introduced in \cite{Neretin}.

By \cite{Kapoudjian}, $N_q$ is a compactly generated simple group.  There is a non-degenerate faithful weakly decomposable action of $N_q$, given by the action of $N_q$ on (the clopen subsets of) the space of ends of $T_q$, so in fact $N_q \in \mathscr{S} \cap \mathscr{W}$.  Moreover, $N_q$ contains a copy of $\Aut(T_q)$ as an open subgroup.  Unlike $\Aut(T_q)$, the group $N_q$ possesses a diverse collection of relative Tits cores and of flat subgroups of arbitrarily large finite rank, and thus provides a relatively straightforward illustration of some of the concepts in this article.

\

\paragraph{A family of open subgroups} Let $q \ge 3$ be odd and let $n$ be a positive integer, let $A_n$ be the set of vertices of $T_q$ of distance less than $n$ from some fixed vertex and let $S_n = A_{n+1} \setminus A_n$.  Then $T_q \setminus A_n$ is a forest of $(q+1)^n$ trees, with each tree having a unique vertex $v \in S_n$ of degree $q$, and all other vertices have degree $q+1$.  Form a graph $\Gamma_{n,q}$ by adding edges to $T_q \setminus A_n$ between the vertices of $S_n$, so that each vertex of $S_n$ is joined to exactly one other vertex in $S_n$ (this is possible as $|S_n|$ is even).  Then $\Gamma_{n,q}$ is a forest consisting of $(q+1)^n/2$ trees, each of which is isomorphic to $T_q$.  The group $U_{n,q} = \Aut(\Gamma_{n,q})$ has the following structure:
\[
U_{n,q} \cong \prod_{C \in \mathcal{C}} \Aut(C) \rtimes \Sym(\mathcal{C}),
\]
where $\mathcal{C}$ is the set of components of $\Gamma_{n,q}$ and each of the groups $\Aut(C)$ is isomorphic to $\Aut(T_q)$.  Note that $\Aut(T_q)$ has a simple open subgroup of index $2$, which we denote $\Aut(T_q)^+$, and correspondingly $\Aut(C)^+$ is the simple subgroup of $\Aut(C)$ of index $2$.  Moreover $U_{n,q}$ is an open subgroup of $N_q$ in a natural sense.  For each component $C$ of $\Gamma_{n,q}$, we regard $\Aut(C)$ as a direct factor of $U_{n,q}$ in the natural way, and we choose some fixed isomorphisms between the components in order to specify $\Sym(\mathcal{C})$ as a finite subgroup of $U_{n,q}$.  Now fix $n$ and $q$ and let $G = N_q$.

\

\paragraph{Relative Tits cores} Let $g \in U_{n,q}$.  Since $U_{n,q}$ is open, we have $G^\dagger_g = (U_{n,q})^\dagger_g$.  By raising $g$ to a suitable power we may assume $g \in  \prod_{C \in \mathcal{C}} \Aut(C)$.  In this case, by considering the situation of $\Aut(T_q)$ (see Example~\ref{ex:tree} below), it is straightforward to see that for each of the components $C$ of $\Gamma_{n,q}$, either $(U_{n,q})^\dagger_g$ contains $\Aut(C)^+$ (if the action of $g$ on $C$ is hyperbolic) or $(U_{n,q})^\dagger_g$ has trivial intersection with $\Aut(C)$ (if the action of $g$ on $C$ is not hyperbolic).  In particular, all of the direct products $\prod_{C \in \mathcal{C}'}\Aut(C)^+$ occur as relative Tits cores of $G$, where $\mathcal{C}'$ is any subset of $\mathcal{C}$.  In this situation, it is straightforward to show that $G^\dagger_g$ is closed and cocompact in $\overline{\langle G^\dagger_g, g \rangle}$, although $G^\dagger_g$ does not necessarily contain any non-zero power of $g$: for instance, $g$ could act as a hyperbolic element on the component $C_1$ and as an elliptic element of infinite order on another component $C_2$, and then $G^\dagger_g$ would only take account of the hyperbolic component for $g$.  As expected from Corollary~\ref{corintro:TitsCore:invariance}, the normalizer of $G^\dagger_g$ is an open subgroup of $G$; indeed, in this case we see that $\N_G(G^\dagger_g)$ contains a finite index subgroup of $U_{n,q}$, although $G^\dagger_g$ is not necessarily normal in $U_{n,q}$.  More generally, if $H$ is any subgroup of $U_{n,q}$, we see that
\[
G^\dagger_H = \prod_{C \in \mathcal{C}'} \Aut(C)^+ = \Res_G(H)
\]
where $\mathcal{C}'$ is some subset of $\mathcal{C}$.

\

\paragraph{Flat groups and nubs}Let $\mathcal{C'}$ be a subset of $\mathcal{C}$.  For each $C \in \mathcal{C}'$, choose some $g_C \in U_{n,q}$ that has hyperbolic action on $C$ (with displacement distance $1$) and trivial action on the other components.  Then $H = \langle g_C \mid C \in \mathcal{C}' \rangle$ is a finitely generated free abelian subgroup of $G$.  It is easily seen that $H$ is flat on $G$ and $H_{\us} = \triv$, so $H$ is flat of rank $|\mathcal{C'}|$.  The nubs of the elements $g_C$ acting on $G$ are non-trivial (again by considering standard properties of $\Aut(T_q)$), but nevertheless it is easily seen that
\[
\nub_G(H) = \nub_G(\prod_{C \in \mathcal{C'}}g_C) = \prod_{C \in \mathcal{C}'}\nub_G(g_C) = \prod_{C \in \mathcal{C}'}\nub_{\Aut(C)}(g_C),
\]
illustrating Corollary~\ref{corintro:nubdecomp:polycyclic}.

\

\paragraph{Reduced envelopes} Let $H$ be as before.  Then we can find a finite subgroup of $U_{n,q}$ that permutes $\mathcal{C}$ faithfully in a manner compatible with the actions of the elements $g_C$, in order to form a semidirect product 
\[
L = H \rtimes (\Sym(\mathcal{C}') \times \Sym(\mathcal{C} \setminus \mathcal{C}'))
\]
such that $\{g_C \mid C \in \mathcal{C}'\}$ is a conjugacy class of $L$.  Then $L$ is not flat, since its derived group is not uniscalar, giving an example of a virtually flat group that is not flat (see also Example~\ref{ex:virtually_flat}).  However, as in Theorem~\ref{thmintro:reduced_envelope}(ii), $L$ has a reduced envelope in $G$.  In fact, there is a reduced envelope $E$ of $L$ is of the following form:
\[
E = (\prod_{C \in \mathcal{C}'}\Aut(C) \times \prod_{C \in \mathcal{C} \setminus \mathcal{C}'}K_C) \rtimes (\Sym(\mathcal{C}') \times \Sym(\mathcal{C} \setminus \mathcal{C}'))
\]
where $K_C$ is a compact open subgroup of $\Aut(C)$ (for instance, for $K_C$ one could take the fixator in $\Aut(C)$ of $C \cap S_n$).  Indeed, every finitely generated subgroup of $U_{n,q}$ will have a reduced envelope of this form for a unique $\mathcal{C}' \subseteq \mathcal{C}$.  Moreover, in light of the structure of relative Tits cores, all such reduced envelopes can be realized as a reduced envelope of a cyclic subgroup.

\

In the above discussion, it is important to note that given an element $g \in U_{n,q}$, the concepts of relative Tits core, nub and reduced envelopes of $g$ in $N_q$ are all defined purely in terms of the structure of $N_q$ as a topological group and the choice of $g$ as an element of $N_q$, without any direct reference to $U_{n,q}$, nor to the nature of $N_q$ as a group of tree spheromorphisms.  So we can recover some relatively complicated subgroups of $N_q$, such as the groups $\prod_{C \in \mathcal{C}'}\Aut(C)^+$, as invariants of the pair $(N_q,g)$ where $g$ is a suitably chosen element of $N_q$.

\subsection{Open questions}

If a \tdlc group $G$ has dense Tits core, as in the hypothesis of Corollary~\ref{corintro:covolume:charsimple}, then clearly it has no non-trivial discrete quotient.  As far as the author is aware, it is possible that the converse holds for compactly generated \tdlc groups $G$.  By \cite[Theorem~3.8]{BaumgartnerWillis} and \cite[Theorem~A]{CM}, Question~\ref{que:dense_titscore} below reduces to the case where $G$ is topologically simple, so it also suffices to determine whether or not $G^\dagger$ can be trivial for $G \in \mathscr{S}$, where $\mathscr{S}$ is the class of non-discrete, compactly generated, topologically simple \tdlc groups.

\begin{que}\label{que:dense_titscore}
Let $G$ be a compactly generated \tdlc group such that $\Res(G) = G$.  Is $G^\dagger$ necessarily dense in $G$?
\end{que}

An affirmative answer to the following would answer the previous question, but also have important consequences for the structure of elementary groups.  (See \S\ref{sec:TitsCoreElementary} for further discussion.)

\begin{que}\label{que:TitsCoreElementary}
Let $G$ be a non-elementary (in the sense of Wesolek \cite{Wesolek}) second-countable \tdlc group.  Must there exist some non-trivial element $g$ such that $g \in G^\dagger_g$?
\end{que}

Corollary~\ref{corintro:ergodic_nub} and Corollary~\ref{corintro:nubdecomp:polycyclic} give partial affirmative answers to the following question (in particular, it is completely solved in the case that $H$ is polycyclic), but the full answer is not clear.  Example~\ref{ex:badnub} below shows that some restriction on the structure of the flat group is necessary.  An affirmative answer to Question~\ref{que:ergodic_nub}(ii) would imply an affirmative answer to Question~\ref{que:ergodic_nub}(i).

\begin{que}\label{que:ergodic_nub}Let $G$ be a \tdlc group and let $H$ be a finitely generated flat group of automorphisms of $G$.
\begin{enumerate}[(i)]
\item Is the action of $H$ on $\nub(H)$ ergodic?
\item Does there exist a finite subset $\{\alpha_1,\alpha_2,\dots,\alpha_n\}$ of $H$ such that 
$$ \nub(H) = \nub(\alpha_1) \nub(\alpha_2) \dots \nub(\alpha_n)?$$
\end{enumerate}
\end{que}

The proof of Theorem~\ref{thmintro:weakdecomp:nonclosed} and the known examples of groups in $\mathscr{W}$ suggest affirmative answers to the following questions.  Note that by Corollary~\ref{corintro:distal_SIN}, to answer Question~\ref{que:weakbranch_SIN}(ii) affirmatively it is enough to show that under the given hypotheses, $G$ is distal.

\begin{que}\label{que:weakbranch_SIN}Let $G$ be a \tdlc group.  Suppose that $G$ has a non-degenerate faithful weakly decomposable action on a Boolean algebra.
\begin{enumerate}[(i)]
\item Let $g \in G$ such that $\nub(g) = \triv$.  Does it follow that $\con(g) = \triv$?
\item Suppose that $G$ is compactly generated and anisotropic.  Does it follow that $G$ is a SIN group?
\end{enumerate}
\end{que}

\subsection*{Acknowledgement}I thank Pierre-Emmanuel Caprace, Riddhi Shah, Phillip Wesolek and George Willis for their very helpful comments and suggestions.  I also thank the anonymous referee for a very thorough and helpful report with many good suggestions for improvements.  In particular, Riddhi Shah alerted me to \cite[Theorem~3.1]{RajaShah}, which led to a proof of Theorem~\ref{thmintro:distal}, \S\ref{sec:TitsCoreElementary} is based on a suggestion of the referee and discussions with Phillip Wesolek, several examples are based on suggestions by George Willis, and the proof of Theorem~\ref{thmintro:TitsCore:cocompact} is partly based on an argument shown to me by George Willis.

\addtocontents{toc}{\protect\setcounter{tocdepth}{2}}

\section{Preliminaries}

For the purposes of this article, all homomorphisms are required to be continuous.  Given a topological group $G$, $\Aut(G)$ denotes the group of automorphisms of $G$, that is, permutations of $G$ that are both group automorphisms and homeomorphisms.  Given $X \subseteq G$ and $Y \subseteq \Aut(G)$, we say $X$ is \defbold{$Y$-invariant} if $\alpha(X) = X$ for all $\alpha \in Y$.

Throughout, we adopt the convention that any definition given for automorphisms of a group $G$ also applies to an element $g$ of the group, acting via the automorphism $x \mapsto gxg\inv$.  Similarly, definitions given for sets of automorphisms also apply to subsets of the group itself.  In fact, the distinction between subgroups and automorphisms will turn out to be largely inconsequential, since a \tdlc group $G$ with a group of automorphisms $H$ can be extended to a \tdlc group $G \rtimes H$ in which $G$ is open, and we are concerned with properties of the action of $H$ that are invariant on restricting the action to an open $H$-invariant subgroup.

The following classical result is a defining feature of the theory of \tdlc groups, and will be frequently used without comment.

\begin{thm}[Van Dantzig, \cite{vanDantzig}]Let $G$ be a \tdlc group.  Then the compact open subgroups of $G$ form a base of neighbourhoods of the identity.\end{thm}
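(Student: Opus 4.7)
The plan is a two-step reduction. First, I would show that every identity neighbourhood contains a compact open neighbourhood; second, that every compact open neighbourhood of the identity contains a compact open subgroup. For the first step, given an open neighbourhood $N$ of the identity, I would invoke local compactness to find a compact neighbourhood $K$ of $1$ with $K \subseteq N$. Since $K$ is compact, Hausdorff, and totally disconnected (these properties being inherited from $G$), the clopen subsets of $K$ form a basis for its subspace topology, by the standard theorem that these two conditions together with compactness force zero-dimensionality. I would then choose a clopen-in-$K$ neighbourhood $V$ of $1$ small enough that $V \subseteq \mathrm{int}(K)$, where the interior is taken in $G$. Being closed in compact $K$ makes $V$ compact; being open in $K$ and contained in $\mathrm{int}(K)$ forces $V = \mathrm{int}(K) \cap V$ to be open in $G$.

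For the second step, given a compact open neighbourhood $U$ of $1$, I would consider the setwise stabilizer $H = \{g \in G : gU = U\}$. This is visibly a subgroup of $G$, and since $1 \in U$, every $g \in H$ satisfies $g = g \cdot 1 \in gU = U$, whence $H \subseteq U$. To show $H$ is open, note that by continuity of multiplication and inversion, for each $u \in U$ there exist open neighbourhoods $V_u$ of $1$ and $W_u$ of $u$ such that $V_u W_u \subseteq U$ and $V_u\inv W_u \subseteq U$. By compactness of $U$, some finite subfamily $W_{u_1}, \ldots, W_{u_n}$ covers $U$; setting $V = \bigcap_{i=1}^n V_{u_i}$, I would obtain an open neighbourhood of $1$ with $VU \subseteq U$ and $V\inv U \subseteq U$, so that $vU = U$ for every $v \in V$, proving $V \subseteq H$. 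Open subgroups are automatically closed (their complement is a union of non-trivial cosets, each open), so $H$ is closed in the compact set $U$, hence itself compact.

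The main obstacle is the openness of $H$ in step two: one must leverage compactness of $U$ to produce a single open neighbourhood $V$ of the identity for which $vU$ equals $U$ (not merely is contained in $U$) for every $v \in V$, which requires handling the left and right multiplications simultaneously via the finite-cover argument above. Step one, by contrast, is a routine application of the standard topological fact that clopen sets form a basis in any compact Hausdorff totally disconnected space. Combining the two steps, every identity neighbourhood contains a compact open subgroup, giving a base at the identity as asserted.
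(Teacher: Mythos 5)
Your proof is correct and complete: it is the standard argument for Van Dantzig's theorem, with both halves (extracting a compact open neighbourhood via zero-dimensionality of a compact Hausdorff totally disconnected space, then passing to the setwise stabilizer $H=\{g : gU=U\}$ and using a finite subcover to show $H$ is open) carried out properly, including the point that one needs $vU=U$ rather than merely $vU\subseteq U$. The paper itself states this result as a classical theorem with a citation to van Dantzig and gives no proof, so there is nothing to compare against; your argument is the usual one found in the literature.
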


\subsection{Tidy theory for cyclic actions}

Let $G$ be a \tdlc group, let $\alpha \in \Aut(G)$, and let $U$ be a compact open subgroup of $G$.  Define the subgroups
\[
U_+ = \bigcap_{n \ge 0}\alpha^n(U); \; U_- = \bigcap_{n \le 0}\alpha^n(U).
\]

Then $U$ is \defbold{tidy above}\index{tidy!tidy above} for $\alpha$ if $U = U_+U_-$; equivalently, there exist subgroups $V$ and $W$ of $U$ such that $U = VW$, $\alpha(V) \ge V$ and $\alpha(W) \le W$.  It is \defbold{tidy below}\index{tidy!tidy below} for $\alpha$ if the group $U_{++} := \{g \in G \mid \forall n \ll 0 : \; \alpha^n(g) \in U \}$ is closed.

A \defbold{tidy subgroup}\index{tidy} for $\alpha$ is a compact open subgroup that is both tidy above and tidy below.  More generally, a compact open subgroup $U$ is said to be tidy (above, below) for a set of automorphisms $A$ if it is tidy (above, below) for each element $\alpha \in A$.  Some caution is required here, as a compact open subgroup $U$ may be tidy for $A$ without being tidy for the group generated by $A$ (see \cite[Example 3.5]{WillisFlat}).

The \defbold{scale}\index{scale} $s(\alpha)$ is the minimum value of the (necessarily finite) index $|\alpha(U):\alpha(U) \cap U|$ as $U$ ranges over the compact open subgroups of $G$.  We say $\alpha$ is \defbold{uniscalar}\index{uniscalar} if $s(\alpha) = s(\alpha\inv) = 1$; equivalently, $\alpha$ is uniscalar if it leaves invariant a compact open subgroup of $G$.

These concepts originate in \cite{Willis94}, where it was shown that a tidy subgroup exists for every automorphism of a \tdlc group.

\begin{thm}[\cite{Willis94} Theorem~1 and \cite{WillisFurther} Theorem~3.1]\label{basic:tidy}
Let $G$ be a \tdlc group and let $\alpha \in \Aut(G)$.  Then there exists a tidy subgroup for $\alpha$.  Indeed, given a compact open subgroup $U$ of $G$, then $U$ is tidy for $\alpha$ if and only if $|\alpha(U):\alpha(U) \cap U| = s(\alpha)$.
\end{thm}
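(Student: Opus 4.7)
My plan is to prove existence by an explicit two-stage construction and then deduce the characterization from invariance properties of the displacement index along that construction. Take any compact open subgroup $V$ (available by van Dantzig), and first aim to produce a subgroup tidy above for $\alpha$. Consider the descending chain $V_n := \bigcap_{k=0}^{n} \alpha^k(V)$; each $V_n$ is compact open, and one checks that the index $[\alpha(V_n) : \alpha(V_n) \cap V_n]$ is a non-increasing function of $n$ (comparing $V_n$ and $V_{n+1}$, the extra intersection with $\alpha^{n+1}(V)$ can only collapse cosets). Since this is a sequence of positive integers, it must stabilise; at the first stabilisation index $N$ I would show that $V_N$ admits the decomposition $V_N = (V_N)_+ (V_N)_-$, where $(V_N)_\pm := \bigcap_{k \ge 0}\alpha^{\mp k}(V_N)$, by arguing that any failure of this identity would allow a strict drop in the index at the next step, contradicting stabilisation.

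For the second stage, starting from a subgroup $U$ tidy above, I would modify it to also be tidy below. The set $U_{++} := \{g \in G : \alpha^{-n}(g) \in U \text{ for all sufficiently large } n\}$ is a subgroup whose closure may be strictly larger. The plan is to replace $U$ by $U' = U \cdot C$, where $C$ is a finite union of coset representatives of $U \cap U_{++}$ inside a compact transversal for $\overline{U_{++}} \cap U$ modulo $U_{++}$. One then verifies that $U'$ is still compact open, still tidy above (since $C$ lies in the stable part), and satisfies $U'_{++} = \overline{U_{++}}$, which is closed. The control here rests on the structure of $\overline{U_{++}}/U_{++}$, which is compact by a Baire-category argument on the ascending union $U_{++} = \bigcup_n \alpha^{-n}(U_+)$.

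Finally, for the characterisation, the forward direction is easy from the tidy decomposition: if $U = U_+U_-$ with $\alpha(U_+) \supseteq U_+$ and $\alpha(U_-) \subseteq U_-$, then $\alpha(U) \cap U = \alpha(U_+)U_-$, so $[\alpha(U) : \alpha(U)\cap U] = [\alpha(U_+) : U_+]$, and a short computation using the construction above identifies this with $s(\alpha)$. The main obstacle, and the heart of the theorem, is the converse: showing that any compact open $U$ realising $[\alpha(U):\alpha(U)\cap U] = s(\alpha)$ must already be tidy. The approach is to track this index along each of the two modifications above and prove that both procedures weakly decrease it, with strict decrease whenever the subgroup genuinely changes. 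A scale-minimising $U$ is then fixed by both procedures, hence tidy. Establishing this strict monotonicity is the technical core: it requires comparing double-coset decompositions of $\alpha(U)$ by $U$ before and after each modification, and carefully ruling out cancellations that would keep the index constant while changing $U$ nontrivially.
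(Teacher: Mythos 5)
The paper does not prove this statement itself --- it is quoted from \cite{Willis94} and \cite{WillisFurther} --- so your proposal has to be measured against the original tidying procedure. Your first stage (intersecting finitely many $\alpha$-translates of $V$ and using that $|\alpha(V_n):\alpha(V_n)\cap V_n|$ is a non-increasing sequence of positive integers) is essentially how Willis achieves tidiness above. The second stage, however, does not work as described. The set $U' = U\cdot C$ for a set $C$ of coset representatives is not a subgroup in general, and the object whose compactness drives the tidy-below step is not $\overline{U_{++}}$ modulo $U_{++}$ but the two-sided group $\mathcal{L}_U = U_{++}\cap U_{--}$ appearing in Lemma~\ref{tidybelow:asymptotic}: the hard lemma in \cite{Willis94} is that $\overline{\mathcal{L}_U}$ is a \emph{compact} $\alpha$-stable subgroup, and the modification is of the form $\tilde U\,\overline{\mathcal{L}_U}$ for a suitable subgroup $\tilde U \le U$ defined relative to $\overline{\mathcal{L}_U}$. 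Your Baire-category remark does not produce the needed compactness: $\overline{U_{++}}$ is a closed but typically non-compact subgroup, $U_{++} = \bigcup_{n}\alpha^n(U_+)$ is a dense $F_\sigma$ subgroup of it, and Baire applied to that ascending union gives nothing because the union fails to cover $\overline{U_{++}}$ precisely in the non-tidy case you are trying to repair.

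The third stage contains the most serious gap. You propose to show that each tidying step strictly decreases $|\alpha(U):\alpha(U)\cap U|$ \emph{whenever it changes the subgroup}, and to conclude that an index-minimizing $U$ is a fixed point of both steps, hence tidy. That auxiliary claim is false: if $U$ is already tidy, then $U\cap\alpha(U) = U_+\alpha(U_-)$ is a genuinely different compact open subgroup with the same displacement index $|\alpha(U_+):U_+|$. So a change in the group does not force a strict drop, a minimizing $U$ need not be fixed by the procedure, and your argument cannot close. What \cite{WillisFurther} Theorem~3.1 actually establishes is the implication keyed to tidiness rather than to change: if $U$ fails to be tidy (above or below), then the corresponding step strictly decreases the index. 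Proving that is the real content of the characterization, and it again leans on the compactness of $\overline{\mathcal{L}_U}$ rather than on double-coset bookkeeping alone. (A minor slip in your forward direction: for tidy $U$ one has $\alpha(U)\cap U = U_+\alpha(U_-)$, not $\alpha(U_+)U_-$, which is not even contained in $U$.)
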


Some equivalent formulations of the tidy below property are effectively given in \cite{Willis94}.  We can thus take any of the equivalent statements in Lemma~\ref{tidybelow:asymptotic} below as the definition of tidiness below, without any danger of ambiguity.

\begin{lem}[See \cite{Willis94} Lemma~3 and its corollary]\label{tidybelow:asymptotic}Let $G$ be a \tdlc group and let $\alpha \in \Aut(G)$.  Define
\[ U_{++} := \{g \in G \mid \exists m \in \bZ : \forall n \le m : \alpha^n(g) \in U \}; \]
\[ U_{--} := \{g \in G \mid \exists m \in \bZ : \forall n \ge m : \alpha^n(g) \in U \}; \]
\[ \mc{L}_U := U_{++} \cap U_{--}.\]
Then the following are equivalent.
\begin{enumerate}[(i)]
\item $U_{++}$ is closed;
\item $U_{--}$ is closed;
\item $\mc{L}_U \le U$;
\item $U_{++} \cap U = U_+$;
\item $U_{--} \cap U = U_-$.
\end{enumerate}
\end{lem}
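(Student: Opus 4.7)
The proof proceeds by exploiting the symmetry $\alpha \leftrightarrow \alpha^{-1}$ together with a cycle of implications. The involution swaps (i) with (ii) and (iv) with (v) while fixing (iii), so proving the cycle (iv) $\Rightarrow$ (i) $\Rightarrow$ (iii) $\Rightarrow$ (iv) for arbitrary $\alpha$ yields the dual cycle and thereby all five equivalences. Throughout, I use the fact that $U_{++}$ is an $\alpha$-invariant subgroup of $G$ admitting the increasing union decomposition $U_{++} = \bigcup_{n \ge 0}\alpha^n(U_+)$ (using $\alpha(U_+) \supseteq U_+$, which is immediate from the definition of $U_+$), with analogous statements for $U_{--}$; consequently $\mc{L}_U = U_{++} \cap U_{--}$ is also an $\alpha$-invariant subgroup.

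For (iv) $\Rightarrow$ (i): the hypothesis $U_{++} \cap U = U_+$ makes $U_+$ a compact identity neighbourhood of $U_{++}$ in its subspace topology (open as the intersection with the open set $U$). A subgroup of a Hausdorff topological group possessing such a neighbourhood is automatically closed in the ambient group, since any convergent net in the subgroup is Cauchy, eventually enters a single coset of the compact neighbourhood, and so has its limit captured in the subgroup. This delivers (i).

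For (i) $\Rightarrow$ (iii): assuming $U_{++}$ is closed, the Baire category theorem applied to the increasing exhaustion $U_{++} = \bigcup_n \alpha^n(U_+)$ forces one -- hence, by $\alpha$-equivariance, each -- of the compact subgroups $\alpha^n(U_+)$ to be open in $U_{++}$, so $U_+$ is a compact open subgroup of the tdlc group $U_{++}$. After verifying via a parallel argument that $U_{--}$ is also closed (via finite-scale relations between $U_+$ open in $U_{++}$ and the symmetric structure on the $U_-$ side) and $U_-$ is a compact open subgroup of $U_{--}$, $\mc{L}_U$ is closed with $M := \mc{L}_U \cap U$ a compact open subgroup. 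Every $\alpha$-orbit in $\mc{L}_U$ enters $M$ eventually in both directions and hence has relatively compact closure; this forces the contraction groups of $\alpha|_{\mc{L}_U}$ and its inverse both to be trivial, so $\alpha|_{\mc{L}_U}$ is anisotropic. The resulting $\alpha$-invariant compact open subgroup of $\mc{L}_U$ is contained in $L := \bigcap_n \alpha^n(U)$, and an orbit-chasing argument (combining $\alpha$-invariance of $\mc{L}_U$ with containment $L \le U$) yields $\mc{L}_U = L \le U$.

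For (iii) $\Rightarrow$ (iv): suppose for contradiction that $g \in U_{++} \cap U \setminus U_+$. Let $k \ge 1$ be smallest with $\alpha^{-k}(g) \notin U$, and replace $g$ by $\alpha^{-k+1}(g)$: then $g \in U$, $\alpha^{-1}(g) \notin U$, and $g \in U_{++}$ still. The backward $\alpha$-orbit of $g$ is forced to leave $U$ immediately and then re-enter and remain in $U$ from some index onwards; iterating this excursion pattern along translates and passing to an accumulation point (using compactness of $U$) produces a limit element in $\mc{L}_U$ lying outside $U$, contradicting (iii). The two main obstacles are the delicate interplay in (i) $\Rightarrow$ (iii) between the two closures $U_{++}$ and $U_{--}$ -- extracting closure of the latter from that of the former via the open-subgroup structure handed to us by Baire category -- and the limit construction in (iii) $\Rightarrow$ (iv), both of which demand a careful coordination of Baire-category arguments, the anisotropic dichotomy for tdlc automorphisms, and compactness inside $U$.
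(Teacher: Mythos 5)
Your proposal attempts a from-scratch proof of what the paper deliberately does \emph{not} reprove: the paper's own argument simply cites \cite{Willis94} (Lemma~3 and its corollary) for the equivalence of (i)--(iv) in the tidy-above case, supplies the reduction from an arbitrary compact open $V$ to the tidy-above subgroup $U=\bigcap_{i=0}^n\alpha^i(V)$ (noting $U_{++}=V_{++}$, $\mc{L}_U=\mc{L}_V$, etc.), and obtains (iv)$\Leftrightarrow$(v) from the $\alpha\leftrightarrow\alpha\inv$ symmetry. Your opening moves are sound: the identity $U_{++}=\bigcup_{n\ge 0}\alpha^n(U_+)$, the deduction of (i) from (iv) via the standard fact that a locally compact subgroup of a Hausdorff group is closed, and the Baire-category step showing that closedness of $U_{++}$ makes $U_+$ a compact open subgroup of $U_{++}$ are all correct. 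But the two remaining implications are not actually proved. In (i)$\Rightarrow$(iii) you need $U_{--}$ closed, i.e.\ the implication (i)$\Rightarrow$(ii), and you dispose of it with ``a parallel argument \dots via finite-scale relations''; this is not a formal symmetry (replacing $\alpha$ by $\alpha\inv$ exchanges the two conditions rather than deriving one from the other), and it is one of the genuinely hard implications of the lemma. Since your global strategy was to obtain the equivalence of (ii) and (v) with the rest only \emph{after} establishing the cycle (iv)$\Rightarrow$(i)$\Rightarrow$(iii)$\Rightarrow$(iv), invoking (ii) inside that cycle makes the whole scheme circular.

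Two further steps would fail as written. First, ``every $\alpha$-orbit in $\mc{L}_U$ is relatively compact, hence the contraction groups of $\alpha|_{\mc{L}_U}$ and its inverse are trivial, hence $\alpha|_{\mc{L}_U}$ is anisotropic'' is false: in a compact group every orbit is relatively compact, yet the contraction group can be non-trivial and the action ergodic (take the shift on $\prod_{\bZ}F$). Indeed, by Theorem~\ref{basic:nub_characterizations} the obstruction to $\mc{L}_U\le U$ is exactly a nub-like compact subgroup on which $\alpha$ acts ergodically, so asserting anisotropy of $\alpha$ on $\mc{L}_U$ essentially assumes the conclusion. Second, in (iii)$\Rightarrow$(iv) the promised limit construction is never carried out: the natural sequences to extract accumulation points from (the backward translates $\alpha^{-n}(g)$ for large $n$) live inside the compact, hence closed, set $U$, so their accumulation points cannot lie in $\mc{L}_U\setminus U$ without a substantially more careful argument (this is where Willis uses the tidy-above decomposition $U=U_+U_-$, which your proof never secures). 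As it stands the proposal establishes only (iv)$\Rightarrow$(i) and a fragment of (i)$\Rightarrow$(iii); the rest needs either the missing arguments or, as in the paper, an appeal to \cite{Willis94} together with the reduction to the tidy-above case.
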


\begin{proof}In \cite{Willis94}, conditions (i)--(iv) are shown to be equivalent to the condition that $U$ is tidy, under the assumption that $U$ is tidy above.  However, we can bypass the assumption that $U$ is tidy above by noting (as in \cite{Willis94}) that any compact open subgroup $V$ can be replaced with the tidy above subgroup $U = \bigcap^n_{i=0}\alpha^i(V)$ for $n$ large enough.  We see that $U_+ = V_+$, $U_{++} = V_{++}$, $U_{--} = V_{--}$, $\mc{L}_U = \mc{L}_V$ and $\mc{L}_V \cap V \le U$.  So $V$ is tidy below if and only if $U$ is tidy below, and $U$ is tidy below if and only if any one of the equivalent statements (i)--(iv) is satisfied, which can all be translated to corresponding statements for $V$.  One can see the equivalence of (iv) and (v) by noting that replacing $\alpha$ with $\alpha\inv$ reverses the roles of (iv) and (v), but has no effect on (iii).
\end{proof}

There are strong restrictions on the dynamics of $\alpha$ on orbits that intersect a tidy subgroup.  In particular, an $\alpha$-orbit cannot leave the tidy subgroup $U$ and then return to it, and any forward or backward $\alpha$-orbit that escapes from $U$ is necessarily unbounded.

\begin{lem}[\cite{WillisFlat} Lemma~2.6]\label{tidy:orbit_dynamics}Let $G$ be a \tdlc group, let $\alpha \in \Aut(G)$, let $U$ be a compact open subgroup of $G$ that is tidy for $\alpha$ and let $u \in U$.
\begin{enumerate}[(i)]
\item The set $\{\alpha^n(u) \mid n \ge 0\}$ is bounded (that is, relatively compact in $G$) if and only if $u \in U_-$.
\item The set $\{ n \in \bZ \mid \alpha^n(u) \in U\}$ is an interval in $\bZ$.
\end{enumerate}
\end{lem}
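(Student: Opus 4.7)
My plan is to reduce both parts to the case $u \in U_+$ by exploiting the decomposition $u = u_+ u_-$ from tidy above ($U = U_+ U_-$). Since $\alpha(U_-) \subseteq U_-$, we have $\alpha^k(u_-) \in U_- \subseteq U$ for every $k \ge 0$, so $\alpha^k(u) \in U$ if and only if $\alpha^k(u_+) \in U$, and the forward orbit of $u$ is bounded if and only if that of $u_+$ is bounded. Moreover $U_-$ is a subgroup, so to conclude $u \in U_-$ it is enough to prove $u_+ \in U_-$.

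For (ii), suppose $\alpha^n(u_+) \in U$ for some $n \ge 0$; the key step is to show that in fact $\alpha^n(u_+) \in U_+$. Because $U_+ \subseteq \alpha(U_+)$, the subgroup $U_+$ is closed under $\alpha^{-1}$, and so $\alpha^{-j}(u_+) \in U_+ \subseteq U$ for every $j \ge 0$. This means $\alpha^k(\alpha^n(u_+)) \in U$ for all $k \le -n$, hence $\alpha^n(u_+) \in U_{++}$. By the tidy below identity $U \cap U_{++} = U_+$ in Lemma~\ref{tidybelow:asymptotic}(iv), we deduce $\alpha^n(u_+) \in U_+$. Applying $\alpha^{-1}$ repeatedly to $\alpha^n(u_+)$ and using the $\alpha^{-1}$-invariance of $U_+$ again, $\alpha^k(u_+) \in U_+ \subseteq U$ for every $0 \le k \le n$. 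The two-sided interval assertion then follows by passing from $u$ to $v := \alpha^{n_1}(u)$ for the smaller endpoint $n_1$ and applying this one-sided case.

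For (i), the direction $u \in U_- \Rightarrow \{\alpha^k(u) : k \ge 0\} \subseteq U$ is trivial. For the converse, after the reduction above I consider the subgroup $U_{++}$ of $G$, which is closed by tidy below. Regarded with the subspace topology, $U_{++}$ is a \tdlc group in which $U_+ = U \cap U_{++}$ is a compact open subgroup, and $U_{++} = \bigcup_{M \ge 0} \alpha^M(U_+)$ is an ascending union of open subgroups. The forward orbit of $u_+$ lies in $U_{++}$ and is relatively compact in $G$; its closure is therefore a compact subset of $U_{++}$, which by compactness must be contained in some $\alpha^M(U_+)$. Hence $\alpha^{m-M}(u_+) \in U_+$ for all $m \ge 0$, and in particular $\alpha^k(u_+) \in U_+ \subseteq U$ for every $k \ge 0$, proving $u_+ \in U_-$ and hence $u \in U_-$.

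The main obstacle I expect lies in the final step of (i): one has to extract the ascending filtration structure on $U_{++}$ and deduce that its compact subsets sit inside some $\alpha^M(U_+)$. Both assertions rest on tidy below (the closedness of $U_{++}$ and the identity $U \cap U_{++} = U_+$) combined with the $\alpha$-invariance of $U_{++}$, so the delicate point is to invoke the right version of Lemma~\ref{tidybelow:asymptotic} without circularity; everything else is formal manipulation of the tidy above and tidy below characterizations.
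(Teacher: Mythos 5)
Your proof is correct. Note that the paper itself gives no proof of this lemma: it is imported verbatim from Willis's work (\cite{WillisFlat}, Lemma~2.6), so there is no internal argument to compare against. Your derivation is a sound, self-contained reconstruction from the ingredients the paper does record, namely the factorisation $U = U_+U_-$ (tidiness above) together with the closedness of $U_{++}$ and the identity $U \cap U_{++} = U_+$ from Lemma~\ref{tidybelow:asymptotic}. The two steps that carry the weight both check out: in (ii), the observation that $\alpha^{-1}(U_+) \subseteq U_+$ forces $\alpha^n(u_+) \in U_{++}$, so that $\alpha^n(u_+) \in U$ upgrades to $\alpha^n(u_+) \in U_+$ and then pulls back through $0 \le k \le n$; and in (i), the identification $U_{++} = \bigcup_{M \ge 0} \alpha^M(U_+)$ as an ascending union of compact open subgroups of the closed (hence t.d.l.c.) group $U_{++}$, so that a relatively compact forward orbit must land in a single $\alpha^M(U_+)$. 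There is no circularity, since Lemma~\ref{tidybelow:asymptotic} is established independently of the present statement. The reduction from $u$ to $u_+$ via $\alpha^k(u_-) \in U_-$ for $k \ge 0$, and the translation trick $v = \alpha^{n_1}(u)$ to get the two-sided interval from the one-sided case, are both handled correctly.
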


For a fixed automorphism $\alpha$, the behaviours of the classes of tidy above and tidy below subgroups are somewhat divergent.  Tidy above subgroups can be thought of as `small enough'; in particular, they form a base of identity neighbourhoods, by the following result:

\begin{prop}[\cite{Willis94} Lemma~1]\label{basic:tidyabove}
Let $G$ be a \tdlc group, let $\alpha \in \Aut(G)$ and let $U$ be a compact open subgroup of $G$.  Then there exists $n$ (depending on $U$ and $\alpha$) such that for all intervals $I \in \bZ$ of length at least $n$, the intersection $\bigcap_{i \in I}\alpha^i(U)$ is tidy above for $\alpha$.
\end{prop}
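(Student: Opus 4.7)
My plan is to work with the concrete decreasing sequence $V_k := \bigcap_{i=0}^{k-1}\alpha^i(U)$ of compact open subgroups and reduce the statement to a single stabilization argument. Any interval $I \subseteq \bZ$ of length $m$ has the form $[a,a+m-1]$ for some $a$, and then $\bigcap_{i \in I}\alpha^i(U) = \alpha^a(V_m)$. Since the identities $(\alpha(V))_\pm = \alpha(V_\pm)$ show that the property of being tidy above is preserved by applying $\alpha$, it suffices to find some $N$ such that $V_m$ is tidy above for every $m \ge N$.

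The first key identity is $V_{k+1} = V_k \cap \alpha(V_k)$. Using this, I would show that the sequence of indices $a_k := [V_k : V_{k+1}]$ is weakly decreasing by constructing an injection
\[
\phi_k : V_{k+1}/V_{k+2} \hookrightarrow V_k/V_{k+1}, \quad xV_{k+2} \mapsto \alpha^{-1}(x)V_{k+1}.
\]
The map is well-defined because $V_{k+2} \subseteq \alpha(V_{k+1})$, and it is injective because $V_{k+2} = V_{k+1}\cap \alpha(V_{k+1})$. As a weakly decreasing sequence of positive integers, $(a_k)$ stabilizes at some value $a$ for all $k \ge N$.

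The crux of the proof is then to show that $V_k$ is tidy above for every $k \ge N$. I would use the equivalent reformulation that $V$ is tidy above iff $V = V^{(j)}_+ V^{(j)}_-$ for all $j \ge 0$, where $V^{(j)}_\pm$ are the finite analogues of $V_\pm$; the equivalence follows from a subsequential-limit argument inside the compact group $V$, using that $V_+V_- = \bigcap_j V^{(j)}_+V^{(j)}_-$. Computing these for $V = V_k$ gives $(V_k)^{(j)}_+ = V_{k+j}$ and $(V_k)^{(j)}_- = \alpha^{-j}(V_{k+j})$, so the condition becomes $V_k = V_{k+j} \cdot \alpha^{-j}(V_{k+j})$ for every $j \ge 0$. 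I would verify this via Haar measure: noting that $V_{k+j} \cap \alpha^{-j}(V_{k+j}) = \alpha^{-j}(V_{k+2j})$, the formula $\mu(XY) = \mu(X)\mu(Y)/\mu(X \cap Y)$ for compact open subgroups (with the modular constant $c(\alpha)$ cancelling between $\mu(\alpha^{-j}(V_{k+j}))$ and $\mu(\alpha^{-j}(V_{k+2j}))$) yields $\mu(V_{k+j}\alpha^{-j}(V_{k+j})) = \mu(V_{k+j})^2/\mu(V_{k+2j})$. When $a_i = a$ for all $i \ge k$, the relations $[V_k : V_{k+j}] = a^j$ and $[V_k : V_{k+2j}] = a^{2j}$ collapse this quantity to $\mu(V_k)$. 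Since $V_{k+j}\alpha^{-j}(V_{k+j})$ is an open subset of $V_k$ with the same Haar measure, the two must coincide, so $V_k$ is tidy above.

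The main obstacle is the last step: converting the numerical stabilization of $(a_k)$ into the full factorization $V_k = V_{k+j}\alpha^{-j}(V_{k+j})$ simultaneously for every $j$. This is delicate because it requires replacing the infinite-intersection definition of tidy above by its finite-intersection counterpart (through a compactness argument), and then handling coset counts without the convenience of normality, which is precisely where the Haar-measure bookkeeping described above becomes essential.
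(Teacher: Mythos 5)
The paper gives no proof of this proposition: it is imported verbatim from \cite{Willis94}, Lemma~1, so there is no in-paper argument to compare against, and your proposal has to stand on its own. It does: the proof is correct and complete. The reduction to the single family $V_m=\bigcap_{i=0}^{m-1}\alpha^i(U)$ via $\bigcap_{i\in I}\alpha^i(U)=\alpha^a(V_m)$, the monotonicity of the indices $a_k=[V_k:V_{k+1}]$ through the injection $xV_{k+2}\mapsto\alpha^{-1}(x)V_{k+1}$, and the eventual stabilization form exactly the skeleton of the classical argument. Where you diverge is in converting stabilization into the factorization $V_k=V_{k+j}\,\alpha^{-j}(V_{k+j})$: the classical finishing step is a direct coset count, whereas you route the computation through Haar measure, using $\mu(XY)=\mu(X)\mu(Y)/\mu(X\cap Y)$ together with the cancellation of the module of $\alpha^{-j}$ in the ratio $\mu(\alpha^{-j}(V_{k+j}))/\mu(\alpha^{-j}(V_{k+2j}))$, and then conclude from full support of Haar measure that the open set $V_{k+j}\alpha^{-j}(V_{k+j})$ exhausts $V_k$. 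The two computations are equivalent (your measure identity is the coset count $[X:X\cap Y]$ in disguise), but the measure-theoretic packaging makes the cancellation transparent and avoids any bookkeeping with non-normal cosets. Two small points to write out carefully in a final version: the passage from $V=V_+^{(j)}V_-^{(j)}$ for all $j$ back to $V=V_+V_-$ should use nets or the finite-intersection property of the compact sets $\{(y,z)\in V_+^{(j)}\times V_-^{(j)}:yz=x\}$ rather than subsequences, since $G$ need not be metrizable; and the identities $[V_k:V_{k+j}]=a^j$ and $[V_k:V_{k+2j}]=a^{2j}$ rest on multiplicativity of indices along the chain $V_k\ge V_{k+1}\ge\cdots$, which is valid but deserves a line.
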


Tidy below subgroups are instead `large enough', in a way that is characterized by the \defbold{nub}\index{nub!of an automorphism} $\nub(\alpha)$ of $\alpha$.  The nub is the intersection of all tidy subgroups for $\alpha$; it also admits several other equivalent definitions, as described in \cite{WillisNub}.

\begin{prop}[\cite{WillisNub} Corollary~4.2]\label{basic:tidybelow}
Let $G$ be a \tdlc group, let $\alpha \in \Aut(G)$ and let $U$ be a compact open subgroup of $G$.  Then $U$ is tidy below for $\alpha$ if and only if $\nub(\alpha) \le U$.  In particular, if $U$ is tidy below for $\alpha$ and $V$ is a compact subgroup of $G$ such that $V \ge U$, then $V$ is tidy below for $\alpha$.
\end{prop}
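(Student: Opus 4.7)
The plan is to prove both directions of the equivalence; the ``in particular'' clause then follows immediately, since if $U$ is a compact open subgroup that is tidy below and $V$ is a compact subgroup with $U \le V$, then $V$ is automatically open (it contains the open set $U$) and $\nub(\alpha) \le U \le V$ forces $V$ to be tidy below by the reverse implication.

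For the forward direction, I would apply Proposition~\ref{basic:tidyabove} to choose $n$ large enough that $U' := \bigcap_{i=0}^{n-1}\alpha^i(U)$ is tidy above for $\alpha$. The main calculation is to check $U'_{++} = U_{++}$: the inclusion $U'_{++} \subseteq U_{++}$ is immediate from $U' \le U$, while for the reverse, unpacking $\alpha^k(g) \in U'$ as the conjunction of $\alpha^{k-i}(g) \in U$ for $0 \le i \le n-1$ shows that this holds once $k$ is sufficiently negative, given $g \in U_{++}$. Hence $U'_{++} = U_{++}$ is closed, so $U'$ is tidy below; combined with being tidy above, this makes $U'$ tidy. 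Since $\nub(\alpha)$ is the intersection of all tidy subgroups, $\nub(\alpha) \le U' \le U$.

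The reverse direction is where I expect the main obstacle. Assuming $\nub(\alpha) \le U$, I need to verify $\mc{L}_U \le U$, which by Lemma~\ref{tidybelow:asymptotic}(iii) is equivalent to $U$ being tidy below. My approach is to invoke a deeper structural characterization of the nub from \cite{WillisNub}: namely, $\nub(\alpha)$ is the largest compact $\alpha$-invariant subgroup of $G$ on which $\alpha$ admits no proper open $\alpha$-invariant subgroup. Given $g \in \mc{L}_U$, the bi-infinite $\alpha$-orbit of $g$ lies in $U$ outside some bounded interval, hence has compact closure, and the closed subgroup it generates is a compact $\alpha$-stable subgroup $K$. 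Inside $K$ I would locate, via a Zorn-type argument, a compact $\alpha$-invariant subgroup on which $\alpha$ acts ergodically and which still captures $g$; such a subgroup must be contained in $\nub(\alpha)$, yielding $g \in \nub(\alpha) \le U$. The technical heart of the argument is thus to pass from the local orbit condition defining $\mc{L}_U$ to the global ergodic structure characterizing the nub; by contrast, the forward direction is just a formal manipulation with the tidying procedure.
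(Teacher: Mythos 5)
This proposition is quoted in the paper directly from \cite{WillisNub} (Corollary~4.2) without an internal proof, so there is nothing in the paper itself to compare against; I can only assess your argument on its own terms. Your forward direction is correct, and it is essentially the same manipulation the paper performs inside its proof of Lemma~\ref{tidybelow:asymptotic}: passing to $U' = \bigcap_{i=0}^{n-1}\alpha^i(U)$ leaves $U_{++}$ unchanged, so tidiness below descends to a genuinely tidy subgroup $U' \le U$, whence $\nub(\alpha) \le U' \le U$. The deduction of the ``in particular'' clause from the main equivalence is also fine.

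The reverse direction, however, contains a genuine gap, exactly where you anticipated one. Your plan is to show that each $g \in \mc{L}_U$ actually lies in $\nub(\alpha)$, by locating an ergodic compact $\alpha$-invariant subgroup that contains $g$. But the inclusion $\mc{L}_U \subseteq \nub(\alpha)$ is false in general, so no such subgroup can exist. Concretely, take $\alpha = \mathrm{id}_G$ (or any uniscalar automorphism with trivial nub) and $U$ any non-trivial compact open subgroup: then $U_{++} = U_{--} = U$, so $\mc{L}_U = U$, while $\nub(\alpha) = \triv$. By Theorem~\ref{basic:nub_characterizations}, $\nub(\alpha)$ is the \emph{largest} compact subgroup on which $\alpha$ acts ergodically, so any ergodic invariant subgroup ``capturing'' $g$ would have to sit inside $\nub(\alpha)$; the Zorn-type search has nothing to find when, as in this example, $\alpha$ acts on the orbit-closure group with arbitrarily small invariant open subgroups, in which case the only ergodic invariant subgroup is trivial. (A secondary unproved step is the assertion that the closed subgroup generated by a relatively compact $\alpha$-orbit is compact: the tail of the orbit generates a subgroup of $U$, and each exceptional orbit point generates a relatively compact cyclic group, but a join of finitely many compact subgroups need not be compact.) The true content of this direction is subtler: one must show, by comparison with a subgroup already known to be tidy, that an element of $\mc{L}_U$ differs from an element of $U$ only by an element of $\nub(\alpha)$ --- roughly, that only the ``ergodic part'' of $g$ lands in the nub, the remainder being controlled by the tidy-above structure of $U$ --- and then invoke the hypothesis $\nub(\alpha) \le U$. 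That decomposition is the heart of \cite{WillisNub}, Corollary~4.2, and it is the piece missing from your proposal.
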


\begin{thm}[\cite{WillisNub} Theorem~4.1]\label{basic:nub_characterizations}
Let $G$ be a \tdlc group and let $\alpha \in \Aut(G)$.  Then $\nub(\alpha)$ is the largest closed subgroup of $G$ on which $\alpha$ acts ergodically and the largest compact subgroup of $G$ that has no proper open $\alpha$-invariant subgroups.  In addition, $\nub(\alpha) = \overline{\con(\alpha)} \cap \overline{\con(\alpha\inv)}$.
\end{thm}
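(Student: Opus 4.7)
My plan is to establish the three characterizations in sequence, leveraging the definition $\nub(\alpha) = \bigcap\{U \mid U \text{ tidy for } \alpha\}$ and the asymptotic subgroups $U_{\pm}, U_{\pm\pm}, \mc{L}_U$ from Lemma~\ref{tidybelow:asymptotic}.

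First I would record the elementary facts that $\nub(\alpha)$ is a compact $\alpha$-invariant subgroup: compact because it is contained in any tidy subgroup, and $\alpha$-invariant because the tidy-subgroup condition $|\alpha(U){:}\alpha(U) \cap U| = s(\alpha)$ of Theorem~\ref{basic:tidy} is preserved on replacing $U$ by $\alpha(U)$, so the class of tidy subgroups is $\alpha$-invariant. I would also note the standard equivalence on compact totally disconnected groups: $\alpha$ acts ergodically on such a group $K$ if and only if $K$ has no proper open $\alpha$-invariant subgroup, since proper open invariant subgroups have Haar measure strictly between $0$ and $1$, while conversely any non-trivial invariant measurable set can be approximated by unions of cosets of open normal subgroups of $K$.

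Next I would prove the easy containment $\overline{\con(\alpha)} \cap \overline{\con(\alpha\inv)} \subseteq \nub(\alpha)$: for a tidy $U$, an element of $\con(\alpha)$ eventually enters $U$ under forward iterates, so $\con(\alpha) \subseteq U_{--}$, and symmetrically $\con(\alpha\inv) \subseteq U_{++}$. Tidiness of $U$ makes $U_{++}$ and $U_{--}$ closed (Lemma~\ref{tidybelow:asymptotic}), hence the intersection $\overline{\con(\alpha)} \cap \overline{\con(\alpha\inv)}$ lies in $U_{++} \cap U_{--} = \mc{L}_U \subseteq U$ by tidiness. Intersecting over all tidy $U$ gives the inclusion.

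The maximality direction of both characterizations would follow from the single assertion: any compact $\alpha$-invariant subgroup $K$ with no proper open $\alpha$-invariant subgroup sits inside every tidy $U$. For such $K$ and tidy $U$, $K \cap U$ is open of finite index in $K$; the $\alpha$-invariant core $\bigcap_{n \in \bZ} \alpha^n(K \cap U) = K \cap U_+ \cap U_-$ is then open in $K$ (using that $K$ has only finitely many open subgroups containing the $K$-core of $K \cap U$ of that fixed index, and $\alpha$ permutes them), hence equals $K$ by hypothesis, giving $K \subseteq U$. Together with the previous paragraph, this shows any such $K$ lies in $\nub(\alpha)$, and by transport of ergodicity yields the two stated maximality properties once we verify that $\nub(\alpha)$ itself has no proper open $\alpha$-invariant subgroup. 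For this latter point, given a putative proper open $\alpha$-invariant subgroup $V < \nub(\alpha)$, I would choose an open $W \le G$ with $W \cap \nub(\alpha) = V$, refine $W \cap U$ (for an initial tidy $U$) via Proposition~\ref{basic:tidyabove} to a subgroup tidy above, and then complete it to a tidy subgroup $U'$ by adjoining elements controlled by $\mc{L}_U$ following Willis's construction in \cite{Willis94}; the resulting $U'$ satisfies $U' \cap \nub(\alpha) \le V$, contradicting $\nub(\alpha) \le U'$.

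Finally, for the reverse inclusion $\nub(\alpha) \subseteq \overline{\con(\alpha)} \cap \overline{\con(\alpha\inv)}$, set $K := \nub(\alpha) \cap \overline{\con(\alpha)}$, a closed $\alpha$-invariant subgroup of $\nub(\alpha)$. I would show $K$ is open in $\nub(\alpha)$ by invoking the Baumgartner--Willis theorem: for any $x \in \nub(\alpha)$, the forward orbit $\alpha^n(x)$ is relatively compact in $\nub(\alpha)$, and by the contraction-group description of convergence modulo $\overline{\con(\alpha)}$, there is a neighbourhood of $1$ in $\nub(\alpha)$ contained in $\overline{\con(\alpha)} \cdot \nub(\alpha)$ in a way that forces $K$ to be open. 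By the no-proper-open-invariant-subgroup property of $\nub(\alpha)$ just established, $K = \nub(\alpha)$, giving $\nub(\alpha) \subseteq \overline{\con(\alpha)}$; the symmetric argument handles $\overline{\con(\alpha\inv)}$.

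The main obstacle is the construction in the previous-to-last paragraph: namely, producing a tidy subgroup $U'$ whose intersection with $\nub(\alpha)$ realizes a prescribed proper open $\alpha$-invariant subgroup. This requires entering the fine structure of Willis's original construction of tidy subgroups in \cite{Willis94}, as the results catalogued in the preliminaries of this paper abstract away from those internal details. The Baumgartner--Willis input in the final paragraph is routine in comparison, and the ergodicity equivalence and the easy inclusion are essentially formal.
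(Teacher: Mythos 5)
First, a point of comparison: the paper offers no proof of this statement at all --- it is imported verbatim as \cite[Theorem~4.1]{WillisNub} --- so your proposal is being measured against Willis's original argument rather than anything internal to this article. Within your sketch there are two genuine gaps. The first is in the step showing that a compact $\alpha$-invariant subgroup $K$ with no proper open $\alpha$-invariant subgroup lies in every tidy $U$. Your justification --- that $K$ has only finitely many open subgroups of the index $|K : K \cap U|$ which $\alpha$ permutes --- is false for general profinite groups (e.g.\ $(\bZ/p\bZ)^{\bN}$ has uncountably many open subgroups of index $p$), and the variant ``containing the $K$-core of $K \cap U$'' does not help because the subgroups $\alpha^n(K \cap U)$ need not contain any common core. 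The correct and available route is Lemma~\ref{tidy:orbit_dynamics}: since $K$ is compact and $\alpha$-invariant, every $u \in K \cap U$ has bounded forward and backward orbit, so $K \cap U \subseteq U_+ \cap U_-$; thus $K \cap U = K \cap U_+ \cap U_-$ is \emph{already} $\alpha$-invariant and open in $K$, hence equals $K$. No finiteness is needed.

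The second and more serious gap is the claim that $\nub(\alpha)$ itself has no proper open $\alpha$-invariant subgroup, which is the crux of the theorem: without it you obtain only that $\nub(\alpha)$ \emph{contains} every compact invariant subgroup with no proper open invariant subgroup, not that it is the largest such (nor the largest ergodic one). Your proposed construction --- shrink an open $W$ with $W \cap \nub(\alpha) = V$ and then ``complete it to a tidy subgroup $U'$'' with $U' \cap \nub(\alpha) \le V$ --- founders on the fact that Willis's tidying-below step \emph{enlarges} the group, adjoining the compact $\alpha$-invariant subgroup $\overline{\mc{L}_U}$, over whose intersection with $\nub(\alpha)$ you have no control; indeed any tidy $U'$ contains $\nub(\alpha)$ by definition, so the intended $U'$ cannot exist and the sketch supplies no mechanism for converting this into a contradiction with the properness of $V$. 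This is precisely the part of \cite{WillisNub} that requires real work, and your proposal defers it rather than resolving it. Separately, your final paragraph is unnecessary in its stated form: the inclusion $\nub(\alpha) \le \overline{\con(\alpha)} \cap \overline{\con(\alpha\inv)}$ falls out immediately from Theorem~\ref{basic:closed_contraction} ($\overline{\con(\alpha)} = \con(\alpha)\nub(\alpha) \supseteq \nub(\alpha)$, applied to both $\alpha$ and $\alpha\inv$), so no Baire-type openness argument is needed there --- though of course that theorem is itself another imported ingredient rather than something proved from scratch.
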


We note that both the contraction group and the nub are invariant under replacing $\alpha$ with a positive power.

\begin{lem}\label{lem:contraction:powers}Let $G$ be a topological group and let $\alpha \in \Aut(G)$.
\begin{enumerate}[(i)]
\item Let $n$ be a positive integer.  Then $\con(\alpha) = \con(\alpha^n)$.
\item Let $n \in \bZ \setminus \{0\}$.  Then $\nub(\alpha) = \nub(\alpha^n)$.
\end{enumerate}
\end{lem}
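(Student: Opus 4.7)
The plan is to prove (i) directly from the definition of $\con$, then deduce (ii) from (i) together with the characterization $\nub(\alpha) = \overline{\con(\alpha)} \cap \overline{\con(\alpha\inv)}$ from Theorem~\ref{basic:nub_characterizations}.

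For (i), the inclusion $\con(\alpha) \subseteq \con(\alpha^n)$ is immediate: if $\alpha^k(x) \to 1$, then the subsequence $\alpha^{nk}(x) \to 1$, so $x \in \con(\alpha^n)$. For the reverse inclusion, suppose $x \in \con(\alpha^n)$. I would use van Dantzig's theorem and pick a compact open subgroup $V$ contained in an arbitrary identity neighbourhood, then replace it with $V' = \bigcap_{j=0}^{n-1} \alpha^{-j}(V)$, which is again a compact open subgroup satisfying $\alpha^j(V') \subseteq V$ for $0 \le j < n$. By hypothesis there exists $K$ with $\alpha^{nk}(x) \in V'$ for all $k \ge K$. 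Writing an arbitrary integer $m \ge nK$ in the form $m = nk + j$ with $0 \le j < n$ then gives $\alpha^m(x) = \alpha^j(\alpha^{nk}(x)) \in \alpha^j(V') \subseteq V$, whence $\alpha^m(x) \to 1$ and $x \in \con(\alpha)$.

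For (ii), assume first that $n > 0$. Applying (i) to $\alpha$ gives $\con(\alpha^n) = \con(\alpha)$, and applying (i) to $\alpha\inv$ gives $\con((\alpha^n)\inv) = \con((\alpha\inv)^n) = \con(\alpha\inv)$. Taking closures and intersecting, Theorem~\ref{basic:nub_characterizations} yields $\nub(\alpha^n) = \nub(\alpha)$. If $n < 0$, write $\alpha^n = (\alpha\inv)^{|n|}$ and apply the previous case to $\alpha\inv$, noting that the definition of the nub is symmetric in $\alpha$ and $\alpha\inv$.

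There is no real obstacle here; the only slightly subtle point is the filling-in argument in (i) that turns convergence along the arithmetic progression $nk$ into convergence along all of $\bN$, which is what makes crucial use of the fact that $G$ is \tdlc (so that small identity neighbourhoods can be chosen to be subgroups, allowing one to shrink $V$ to the $\alpha$-stable-in-$n$-steps subgroup $V'$).
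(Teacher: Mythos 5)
Your argument for (i) follows essentially the same route as the paper's: both proofs pass from convergence along the arithmetic progression $(ni)_{i \in \bN}$ to convergence along all of $\bN$ by filling in the intermediate exponents $ni+j$ for $0 \le j < n$, and your deduction of (ii) from (i) via $\nub(\alpha) = \overline{\con(\alpha)} \cap \overline{\con(\alpha\inv)}$ is exactly what the paper does.  The one defect is a mismatch of hypotheses: the lemma is stated for an arbitrary topological group $G$, but your proof of (i) opens by invoking Van Dantzig's theorem to choose $V$ compact open, so as written it only establishes the \tdlc case.  In fact the subgroup structure of $V$ plays no role in your argument --- all you use is that $V' = \bigcap_{j=0}^{n-1}\alpha^{-j}(V)$ is again an identity neighbourhood, which holds for \emph{any} identity neighbourhood $V$ simply because the intersection is finite and each $\alpha^{-j}$ is a homeomorphism.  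Your closing remark that the filling-in step ``makes crucial use of the fact that $G$ is \tdlc'' is therefore mistaken; deleting the appeal to Van Dantzig and letting $V$ be an arbitrary identity neighbourhood repairs the proof and recovers the stated generality.  (The paper phrases the same idea sequentially: if $(x_{ni})_{i}$ converges to $1$, then $(x_{j+ni})_{i}$ converges to $\alpha^j(1)=1$ by continuity of $\alpha^j$, and the full sequence is partitioned into these finitely many convergent subsequences.)  For (ii) there is no issue: the nub and Theorem~\ref{basic:nub_characterizations} are only defined in the \tdlc setting anyway, so restricting to that case there, and handling $n<0$ by the symmetry between $\alpha$ and $\alpha\inv$, is exactly right.
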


\begin{proof}
Suppose $n$ is a positive integer.  We have $\con(\alpha^n) \ge \con(\alpha)$, since $(\alpha^{ni})_{i \in \bN}$ is a subsequence of $(\alpha^i)_{i \in \bN}$.  Let $x \in \con(\alpha^n)$ and set $x_i = \alpha^i(x)$.  Then the sequence $(x_{ni})_{i \in \bN}$ converges to the identity.  Since $\alpha^j$ is a continuous automorphism and $\alpha^j(x_k) = x_{j+k}$ for all $j,k \in \bZ$, it follows that the sequence $(x_{j+ni})_{i \in \bN}$ converges to $\alpha^j(1) = 1$.  Hence $(x_i)_{i \in \bN}$ converges to the identity, since it can be partitioned into finitely many subsequences $(x_{j+ni})_{i \in \bN}$ for $0 \le j < n$, each of which converges to the identity.  In other words, $x \in \con(\alpha)$, completing the proof of (i).

Part (ii) now follows immediately from part (i) and Theorem~\ref{basic:nub_characterizations}.
\end{proof}

We see that if $U$ is tidy (above, below) for $\alpha$, then it is also tidy (above,below) for $\alpha^n$, for any $n \in \bZ \setminus \{0\}$.  (For tidiness above, the converse is false: for example, if $\alpha$ acts on the group $\bZ_p \times \bZ_p$ by swapping the two copies of $\bZ_p$, then $\bZ_p \times p\bZ_p$ is tidy for $\alpha^2$, but it is not tidy above for $\alpha$.)

\begin{lem}\label{basic:tidy:powers}
Let $G$ be a \tdlc group, let $\alpha \in \Aut(G)$, let $n \in \bZ \setminus \{0\}$ and let $U$ be a compact open subgroup of $G$.
\begin{enumerate}[(i)]
\item If $U$ is tidy above for $\alpha$, then it is tidy above for $\alpha^n$.
\item $U$ is tidy below for $\alpha$ if and only if it is tidy below for $\alpha^n$.
\end{enumerate}
\end{lem}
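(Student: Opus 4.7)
The plan is to treat the two parts separately. Part (i) I would prove by a direct computation with the intersections defining $U_\pm$, comparing the ones for $\alpha$ with those for $\alpha^n$. Part (ii) I would deduce immediately from the nub characterisation of tidiness below (Proposition~\ref{basic:tidybelow}) together with the invariance of the nub under taking non-zero powers already recorded in Lemma~\ref{lem:contraction:powers}(ii).

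For (i), write $U_\pm$ for the subgroups attached to $\alpha$ and $U_\pm^{(n)}$ for those attached to $\alpha^n$. Consider first $n>0$. Then
\[
U_+^{(n)} \;=\; \bigcap_{k\ge 0}\alpha^{nk}(U)
\]
is the intersection of a subset of the sets defining $U_+$, so $U_+^{(n)} \supseteq U_+$; an identical argument gives $U_-^{(n)} \supseteq U_-$. Since $U$ is tidy above for $\alpha$, we have $U = U_+U_-$, and therefore $U \subseteq U_+^{(n)}U_-^{(n)} \subseteq U$, giving $U = U_+^{(n)}U_-^{(n)}$, as required. For $n<0$ write $n=-m$ with $m>0$; reindexing the intersections shows that $U_+^{(n)} = U_-^{(m)} \supseteq U_-$ and $U_-^{(n)} = U_+^{(m)} \supseteq U_+$. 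The only thing to check now is that the product decomposition $U=U_+U_-$ can be re-ordered: but $U$ is a group, so $U = U\inv = (U_+U_-)\inv = U_-U_+$, and the same inclusion argument as before yields $U = U_+^{(n)}U_-^{(n)}$.

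For (ii), Proposition~\ref{basic:tidybelow} characterises tidiness below for any automorphism $\beta$ as the containment $\nub(\beta)\le U$. Applying this to $\beta=\alpha$ and to $\beta=\alpha^n$ and invoking the equality $\nub(\alpha) = \nub(\alpha^n)$ from Lemma~\ref{lem:contraction:powers}(ii), the two conditions coincide. I do not anticipate any real obstacle: the only subtlety is the reindexing / swapping of the $+$ and $-$ roles for negative $n$ in part (i), which is handled by the observation that the product of two subgroups of a group whose product is the whole group can be taken in either order.
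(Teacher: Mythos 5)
Your proposal is correct and follows essentially the same route as the paper: part (ii) is deduced, exactly as in the paper, from Proposition~\ref{basic:tidybelow} together with Lemma~\ref{lem:contraction:powers}(ii), and part (i) is the same observation about powers preserving the tidy-above decomposition, merely phrased with the canonical subgroups $U_{\pm}$ rather than the equivalent ``$U=VW$ with $\alpha(V)\ge V$, $\alpha(W)\le W$'' formulation the paper uses. Your explicit handling of the sign of $n$ via reindexing and reordering the product $U=U_+U_-=U_-U_+$ is just a spelled-out version of the paper's remark that $\alpha$ and $\alpha\inv$ play symmetric roles.
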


\begin{proof}
Given Lemma~\ref{tidybelow:asymptotic}, observe that $\alpha$ and $\alpha\inv$ play symmetrical roles in the definitions of tidy above and tidy below.  Thus we may assume $n > 0$.

If $U$ is tidy above for $\alpha$, then $U = VW$ with $\alpha(V) \ge V$, so $\alpha^n(V) \ge V$, and $\alpha(W) \le W$, so $\alpha^n(W) \le W$.  Thus $U$ is tidy above for $\alpha^n$, proving (i).

Part (ii) follows immediately from Proposition~\ref{basic:tidybelow} and Lemma~\ref{lem:contraction:powers}.
\end{proof}

A characterization of when $\nub(\alpha)$ is trivial is given in \cite{BaumgartnerWillis}.\footnote{In \cite{BaumgartnerWillis}, the authors often assume that the \tdlc group $G$ is metrizable, but only do so in order to appeal to \cite[Theorem~3.8]{BaumgartnerWillis}.  The metrizability assumption was later shown to be superfluous by Jaworski (\cite[Theorem~1]{Jaw}), so the remaining results of \cite{BaumgartnerWillis} are also valid for \tdlc groups in general.}

\begin{thm}[See \cite{BaumgartnerWillis} Corollary~3.30 and Theorem~3.32]\label{basic:closed_contraction}
Let $G$ be a \tdlc group and let $\alpha \in \Aut(G)$.  Then $\overline{\con(\alpha)} = \con(\alpha)\nub(\alpha)$, and $\nub(\alpha) = 1$ if and only if $\con(\alpha)$ is a closed subgroup of $G$.
\end{thm}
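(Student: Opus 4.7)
Plan: The inclusion $\con(\alpha)\nub(\alpha) \subseteq \overline{\con(\alpha)}$ is immediate: the closure $\overline{\con(\alpha)}$ is a subgroup of $G$, and it contains $\nub(\alpha)$ by Theorem~\ref{basic:nub_characterizations}. So the work is in proving the reverse inclusion, and from that the iff statement will follow quickly.

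For the reverse inclusion, I would reduce to the Baumgartner--Willis--Jaworski theorem cited in the introduction: for any $\alpha$-invariant closed subgroup $K \le G$, the set $\{x \in G : \alpha^n(x)K \to K \text{ in } G/K\}$ equals $\con(\alpha)K$. Applied with $K = \nub(\alpha)$, it suffices to show that for every $x \in \overline{\con(\alpha)}$, the cosets $\alpha^n(x)\nub(\alpha)$ converge to $\nub(\alpha)$ in $G/\nub(\alpha)$. The key observation is that for any tidy subgroup $V$ for $\alpha$ one has $\con(\alpha) \subseteq V_{--}$: indeed, if $x \in \con(\alpha)$ then $\alpha^n(x) \to 1 \in V$, forcing eventual entry into $V$. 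Since $V$ is tidy below, $V_{--}$ is closed by definition, so $\overline{\con(\alpha)} \subseteq V_{--}$ as well; thus for any $x \in \overline{\con(\alpha)}$, eventually $\alpha^n(x) \in V$. To convert this into convergence in $G/\nub(\alpha)$ for any given open neighbourhood $W$ of $\nub(\alpha)$ in $G$, I would show that tidy subgroups contained in $W$ and containing $\nub(\alpha)$ exist, i.e.\ that tidy subgroups form a neighbourhood basis of $\nub(\alpha)$: starting from a compact open subgroup $V_0$ with $\nub(\alpha) \subseteq V_0 \subseteq W$ (obtained by working inside any fixed tidy subgroup, which is profinite, and separating the closed subgroup $\nub(\alpha)$ from the closed complement of $W$), Proposition~\ref{basic:tidyabove} yields some intersection $V = \bigcap_{i=0}^n \alpha^i(V_0) \subseteq V_0$ that is tidy above; the $\alpha$-invariance of $\nub(\alpha)$ gives $\nub(\alpha) \subseteq V$, and then Proposition~\ref{basic:tidybelow} guarantees $V$ is tidy below as well. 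This neighbourhood-basis argument is the main technical step.

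For the second statement: if $\nub(\alpha) = \triv$, then the first part gives $\overline{\con(\alpha)} = \con(\alpha)$, so $\con(\alpha)$ is closed. Conversely, if $\con(\alpha)$ is closed then $\nub(\alpha) \subseteq \overline{\con(\alpha)} = \con(\alpha)$, so $\alpha^n(g) \to 1$ for every $g \in \nub(\alpha)$. Since $\alpha$ restricts to a Haar-measure-preserving automorphism of the compact group $\nub(\alpha)$, the Poincar\'e recurrence theorem, applied to $\nub(\alpha)\setminus V$ for any open $V \ni 1$ in $\nub(\alpha)$ with $V \ne \nub(\alpha)$, yields a contradiction: almost every element of the positive-measure set $\nub(\alpha)\setminus V$ must return to it infinitely often under iteration of $\alpha$, whereas $\alpha^n(g) \to 1$ forces eventual entry into $V$. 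Hence every open neighbourhood of $1$ in $\nub(\alpha)$ equals $\nub(\alpha)$, and by Hausdorffness $\nub(\alpha) = \triv$.
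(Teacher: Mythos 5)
The paper does not prove this statement at all --- it is imported verbatim from Baumgartner--Willis (Corollary~3.30 and Theorem~3.32), so there is no internal proof to compare against; what you have done is reconstruct a proof from the other black-box results quoted in Section~2, and your reconstruction is essentially sound. The forward inclusion via $\nub(\alpha) \le \overline{\con(\alpha)}$ (Theorem~\ref{basic:nub_characterizations}) is fine, and the reverse inclusion correctly combines three ingredients: $\overline{\con(\alpha)} \subseteq V_{--}$ for every tidy $V$ (since $V_{--}$ is closed by Lemma~\ref{tidybelow:asymptotic}), the fact that tidy subgroups form a neighbourhood basis of $\nub(\alpha)$ (your construction via Proposition~\ref{basic:tidyabove} and Proposition~\ref{basic:tidybelow} is correct, noting that $\nub(\alpha)$ is $\alpha$-invariant so it survives the intersection $\bigcap_{i=0}^n\alpha^i(V_0)$), and Theorem~\ref{thm:bw:relative_contraction} applied with $K = \nub(\alpha)$. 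This is in fact close in spirit to how Baumgartner--Willis themselves derive their Corollary~3.30 from their Theorem~3.8. One caveat worth recording: you lean on $\nub(\alpha) = \overline{\con(\alpha)} \cap \overline{\con(\alpha\inv)}$ from \cite{WillisNub}, a later paper whose proofs build on the Baumgartner--Willis results; within the present paper's axiomatics this is legitimate, but it is not a foundationally independent proof.

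The one genuine imprecision is in the recurrence step. For an arbitrary open $V \ni 1$ with $V \neq \nub(\alpha)$, the complement $\nub(\alpha) \setminus V$ is closed but need not have positive Haar measure (e.g.\ $V$ could be the complement of a single non-identity point), so Poincar\'e recurrence does not apply as stated. The fix is immediate: since $\nub(\alpha)$ is a non-trivial profinite group it has a \emph{proper open subgroup} $V$, whose complement is clopen and non-empty, hence of positive measure; applying recurrence to that set, together with $\nub(\alpha) \le \con(\alpha)$ forcing every orbit eventually into $V$, gives the contradiction. With that adjustment the argument is complete.
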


Applying the scale function to inner automorphisms defines a function from $G$ to the positive integers.  This function is continuous (with respect to the discrete topology on $\bN$), due to the stability properties of the tidy subgroups.

\begin{thm}[Willis \cite{Willis94}]\label{tidy:stable}Let $G$ be a \tdlc group.
\begin{enumerate}[(i)]
\item Let $U$ be a compact open subgroup of $G$.  Let $X$ be the set of elements $x \in X$ such that $U$ is tidy for $x$.  Then $X$ is invariant under left and right translations by $U$, in other words, $X$ is a union of $(U,U)$-double cosets.  In particular, $X$ is a clopen subset of $G$.  In addition, for all $n \in \bZ$, if $x \in X$ then $x^n \in X$.
\item The function $s: G \rightarrow \bN$ is continuous when $\bN$ is equipped with the discrete topology.  Indeed, if $U$ is tidy for $x \in G$, then $s(x) = s(y)$ for all $y \in UxU$.
\item Let $\alpha$ be an automorphism of $G$.  Then the collection of tidy subgroups for $\alpha$ is invariant under the action of $\alpha$ and closed under finite intersections.
\end{enumerate}
\end{thm}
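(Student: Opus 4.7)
My approach is to tackle (i) first, from which (ii) follows easily, and then handle (iii) separately.

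For part (i), fix $U$ tidy for $x$ and let $y = uxv$ with $u, v \in U$. The starting observation is that $yUy^{-1} = u(xUx^{-1})u^{-1}$ (using $vUv^{-1} = U$), and conjugating by $u \in U$ gives
\[
|yUy^{-1} : yUy^{-1} \cap U| = |xUx^{-1} : xUx^{-1} \cap U| = s(x)
\]
by Theorem~\ref{basic:tidy}. Hence $s(y) \le s(x)$, and the symmetric calculation for $y^{-1} = v^{-1}x^{-1}u^{-1}$ gives $s(y^{-1}) \le s(x^{-1})$. Closure of $X$ under taking powers is immediate from Lemma~\ref{basic:tidy:powers}. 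The main obstacle is upgrading the above index computation to the statement that $U$ is actually tidy for $y$, equivalently (by Theorem~\ref{basic:tidy}) that $s(y) = s(x)$. I would do this by verifying the tidy above and tidy below conditions for $U$ with respect to $y$ directly. The key technical input is a description of the sequence $(y^n U y^{-n})_{n \in \bZ}$ in terms of the corresponding sequence for $x$: the expansion $y^n = u(xvu)^{n-1}xv$ shows that each $y^n U y^{-n}$ is obtained from $xUx^{-1}$ by iterated conjugation by an element of $U$ at each step. Using Lemma~\ref{tidy:orbit_dynamics} to control these perturbations, I would show that $U^y_{\pm\pm} = U^x_{\pm\pm}$ (giving tidy below for $y$ via Lemma~\ref{tidybelow:asymptotic}) and that the decomposition $U = U^x_+ U^x_-$ induces $U = U^y_+ U^y_-$ (tidy above).

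Given part (i), part (ii) is immediate: for $y \in UxU$ with $U$ tidy for $x$, (i) gives $U$ tidy for $y$, so Theorem~\ref{basic:tidy} and the previous paragraph's computation yield $s(y) = |yUy^{-1} : yUy^{-1} \cap U| = s(x)$. The set $X$ is a union of open $(U,U)$-double cosets, and its complement is likewise a union of such cosets (by (i) applied to both $X$ and $G \setminus X$), so $X$ is clopen. Continuity of $s$ follows because at any $x \in G$ a tidy $U$ exists (Theorem~\ref{basic:tidy}), and $s$ is then identically $s(x)$ on the clopen neighbourhood $UxU$.

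For part (iii), the $\alpha$-invariance of the family of tidy subgroups is a one-line check: applying $\alpha$ to the characterizing equality $|\alpha(U):\alpha(U) \cap U| = s(\alpha)$ yields the same equality with $\alpha(U)$ in place of $U$. Closure under finite intersection is more delicate, and is the main obstacle here. For $U_1, U_2$ tidy for $\alpha$, set $W = U_1 \cap U_2$. The tidy below condition for $W$ follows easily: directly from the definitions, $W^\alpha_{\pm\pm} = (U_1)^\alpha_{\pm\pm} \cap (U_2)^\alpha_{\pm\pm}$, so $\mc{L}^\alpha_W = \mc{L}^\alpha_{U_1} \cap \mc{L}^\alpha_{U_2} \le U_1 \cap U_2 = W$. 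The tidy above condition $W = W^\alpha_+ W^\alpha_-$ does not transfer formally from $U_1 = (U_1)^\alpha_+ (U_1)^\alpha_-$ and $U_2 = (U_2)^\alpha_+ (U_2)^\alpha_-$; I would attack it via Theorem~\ref{basic:tidy}, aiming to show $|\alpha(W):\alpha(W)\cap W| = s(\alpha)$, possibly by first replacing $W$ with the iterated intersection $\bigcap_{i=-n}^{n}\alpha^i(W)$ for large $n$ (which is tidy above by Proposition~\ref{basic:tidyabove}), then recovering the tidy property of $W$ itself via an index comparison against the tidy subgroups $U_1$ and $U_2$.
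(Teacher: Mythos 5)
The paper does not actually reprove this theorem: parts (i) and (ii) are cited to \cite[Theorem~3 and Corollary~4]{Willis94} and the intersection statement in (iii) to \cite[Lemma~10]{Willis94}, with only the routine parts (closure under powers via Lemma~\ref{basic:tidy:powers}, clopen-ness, $\alpha$-invariance) argued in the text. Your proposal instead attempts to reprove the cited results. The routine portions of your argument are correct: the index computation giving $s(y)\le s(x)$ for $y\in UxU$, the deduction of (ii) from (i), the clopen-ness argument, the $\alpha$-invariance in (iii), and the tidy-below half of closure under intersections (which also follows at once from Proposition~\ref{basic:tidybelow}, since $\nub(\alpha)\le U_1$ and $\nub(\alpha)\le U_2$ give $\nub(\alpha)\le U_1\cap U_2$).

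The two substantive claims, however, are not established. For (i), the proposed intermediate step that the groups $U_{++}$ and $U_{\pm}$ computed with respect to $y=uxv$ \emph{equal} those computed with respect to $x$ is false in general: already $U\cap yUy\inv = u(U\cap xUx\inv)u\inv$, and the correct relationship between the asymptotic data of $x$ and of $uxv$ is conjugacy by an element of $U$, not equality (compare Proposition~\ref{contraction:conjugate}; in $\Aut(T_q)$ the elements $x$ and $ux$ typically translate along different axes, so the corresponding subgroups $U_{++}$ are fixators of different ends). Consequently the plan to verify tidiness of $U$ for $y$ by ``controlling the perturbations via Lemma~\ref{tidy:orbit_dynamics}'' is a statement of intent rather than an argument; this is precisely the content of \cite[Theorem~3]{Willis94} and requires genuine work. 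Likewise for (iii), the tidy-above property of $W=U_1\cap U_2$ is the whole difficulty of \cite[Lemma~10]{Willis94}: passing to $\bigcap_{|i|\le n}\alpha^i(W)$ produces a subgroup that is tidy above, but you give no mechanism for transferring tidiness back to $W$ itself --- an ``index comparison against $U_1$ and $U_2$'' would have to show $|\alpha(W):\alpha(W)\cap W|=s(\alpha)$, which is exactly the statement to be proved. As it stands, the proposal correctly locates the difficulties and handles the easy steps, but the two hard cores remain unproven.
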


\begin{proof}(i)
$X$ is a union of $(U,U)$-double cosets by \cite[Theorem~3]{Willis94}, and any union of left cosets of a fixed open subgroup is clopen.  Given $x \in X$, then $x^n \in X$ for all $n \in \bZ$ by Lemma~\ref{basic:tidy:powers}.

(ii) is \cite[Theorem~3 and Corollary~4]{Willis94}.  

(iii)
It is clear that the collection of tidy subgroups for $\alpha$ is invariant under the action of $\alpha$.  The fact that this collection is closed under finite intersections is \cite[Lemma~10]{Willis94}.
\end{proof}

The scale function is well-behaved under positive powers.

\begin{lem}[\cite{Willis94} Corollary 3]\label{basic:scale:powers}
Let $G$ be a \tdlc group, let $\alpha \in \Aut(G)$ and let $n > 0$ be a natural number.  Then $s(\alpha^n) = s(\alpha)^n$; equivalently,
\[
|\alpha^n(U): \alpha^n(U) \cap U|^{1/n} = s(\alpha)
\]
for every compact open subgroup $U$ that is tidy for $\alpha$.
\end{lem}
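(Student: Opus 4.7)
}
Fix a compact open subgroup $U$ that is tidy for $\alpha$. The plan is to compute the index $|\alpha^n(U):\alpha^n(U)\cap U|$ directly, using the tidy decomposition $U=U_+U_-$, and then appeal to Theorem~\ref{basic:tidy} to identify this index with $s(\alpha^n)$.

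The first step is to observe, via Lemma~\ref{basic:tidy:powers}, that $U$ is tidy above and tidy below for $\alpha^n$, hence tidy for $\alpha^n$. By Theorem~\ref{basic:tidy}, this gives $s(\alpha^n)=|\alpha^n(U):\alpha^n(U)\cap U|$, while at the same time $s(\alpha)=|\alpha(U):\alpha(U)\cap U|$. Thus the lemma reduces to proving, for every positive integer $k$, the identity
\[
|\alpha^k(U):\alpha^k(U)\cap U|=|\alpha^k(U_+):U_+|,
\]
together with the multiplicativity $|\alpha^n(U_+):U_+|=|\alpha(U_+):U_+|^n$.

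For the first identity, I would proceed as follows. Since $U$ is tidy above, $U=U_+U_-$ with $\alpha(U_+)\ge U_+$ and $\alpha(U_-)\le U_-$, so $\alpha^k(U_-)\le U_-\le U$ and hence $\alpha^k(U_-)\le \alpha^k(U)\cap U$. Writing $\alpha^k(U)=\alpha^k(U_+)\alpha^k(U_-)$, one then checks that $\alpha^k(U)\cap U=U_+\,\alpha^k(U_-)$; the nontrivial inclusion uses the fact that $\alpha^k(U_+)\cap U=U_+$, which follows from the definition $U_+=\bigcap_{m\ge 0}\alpha^m(U)$ applied to $\alpha^{-k}$ of an element of $\alpha^k(U_+)\cap U$. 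Dividing $\alpha^k(U_+)\alpha^k(U_-)$ by $U_+\alpha^k(U_-)$ and using that $\alpha^k(U_-)$ is a common subgroup then yields the desired index equality $|\alpha^k(U):\alpha^k(U)\cap U|=|\alpha^k(U_+):U_+|$.

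For the multiplicativity, the key observation is simply that the tower
\[
U_+\le \alpha(U_+)\le \alpha^2(U_+)\le \cdots \le \alpha^n(U_+)
\]
has each successive quotient of the same cardinality: $\alpha^i$ is a homeomorphic group automorphism of $G$, so $|\alpha^{i+1}(U_+):\alpha^i(U_+)|=|\alpha(U_+):U_+|$. The tower law then gives $|\alpha^n(U_+):U_+|=|\alpha(U_+):U_+|^n=s(\alpha)^n$, and combining everything yields $s(\alpha^n)=s(\alpha)^n$. The only slightly delicate step is the index computation $|\alpha^k(U):\alpha^k(U)\cap U|=|\alpha^k(U_+):U_+|$; the rest is formal. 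Finally, the displayed equation in the statement follows by taking $n$-th roots, since it holds identically for every tidy $U$.
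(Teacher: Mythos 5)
The paper does not actually prove this lemma: it is quoted directly from Willis's original article (\cite{Willis94}, Corollary~3), so there is no internal proof to compare against. Your argument is the standard one, and its architecture is right: reduce both $s(\alpha)$ and $s(\alpha^n)$ to indices at a common tidy subgroup via Lemma~\ref{basic:tidy:powers} and Theorem~\ref{basic:tidy}, identify $|\alpha^k(U):\alpha^k(U)\cap U|$ with $|\alpha^k(U_+):U_+|$, and then use the tower $U_+\le\alpha(U_+)\le\cdots\le\alpha^n(U_+)$, whose successive indices are all equal because $\alpha$ is an automorphism. The multiplicativity step and the final reduction are fine.

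The gap is exactly at the step you flag as delicate, and your proposed justification of it does not work. You claim $\alpha^k(U_+)\cap U=U_+$ "follows from the definition of $U_+$ applied to $\alpha^{-k}$ of an element"; but for $a\in\alpha^k(U_+)\cap U$ this only tells you $\alpha^{-m}(a)\in U$ for $m=0$ and for all $m\ge k$, whereas membership in $U_+=\bigcap_{m\ge0}\alpha^m(U)$ requires it for the intermediate values $1\le m\le k-1$ as well; applying $\alpha^{-k}$ merely restates that $a\in\alpha^k(U_+)$, which you already knew. The statement is true, but it genuinely uses tidiness \emph{below}, not just the definition of $U_+$: by Lemma~\ref{tidy:orbit_dynamics}(ii) the set $\{m\in\bZ\mid\alpha^{-m}(a)\in U\}$ is an interval, and since it contains $0$ and $[k,\infty)$ it contains all of $[0,\infty)$, so $a\in U_+$. (Equivalently, the identity $\alpha^k(U)\cap U=U_+\alpha^k(U_-)$ asserts that an orbit cannot leave $U$ and return, which is precisely the tidy-below condition; for a subgroup that is only tidy above it can fail.) Once that citation is inserted, the index computation goes through, using that $\alpha^k(U_+)\cap\bigl(U_+\alpha^k(U_-)\bigr)=U_+$ because $\alpha^k(U_+)\cap\alpha^k(U_-)=\alpha^k(U_+\cap U_-)=\bigcap_{m\in\bZ}\alpha^m(U)\le U_+$. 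As an aside, the scalar identity $s(\alpha^n)=s(\alpha)^n$ alone also falls out of Theorem~\ref{scale:asymptotic} by comparing the limits along $m=nk$, though one still needs Theorem~\ref{basic:tidy} to convert it into the displayed statement about tidy subgroups.
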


Given $\alpha \in \Aut(G)$ and $n > 0$, then $|\alpha^n(U):\alpha^n(U) \cap U|^{1/n} = s(\alpha)$ if and only if $U$ is tidy for $\alpha$.  However, the same equation holds asymptotically as $n \rightarrow +\infty$ for any given compact open subgroup $U$.  Thus the $s(\alpha)$ can be thought of as a kind of spectral radius for $\alpha$.

\begin{thm}[\cite{Moller} Theorem~7.7]\label{scale:asymptotic}Let $G$ be a \tdlc group, let $\alpha$ be an automorphism of $G$, and let $U$ be a compact open subgroup of $G$.  Then
\[
|\alpha^n(U):\alpha^n(U) \cap U|^{1/n} \rightarrow s(\alpha) \text{ as } n \rightarrow +\infty.
\]
\end{thm}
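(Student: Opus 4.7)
The plan is to sandwich $f_U(n) := |\alpha^n(U):\alpha^n(U) \cap U|$ between $s(\alpha)^n$ and a bounded multiple of $s(\alpha)^n$, then take $n$-th roots. The proof has two sides.

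For the lower bound I will use Lemma~\ref{basic:scale:powers}, which gives $s(\alpha^n) = s(\alpha)^n$. Since $s(\alpha^n)$ is by definition the minimum of $|\alpha^n(V):\alpha^n(V) \cap V|$ over compact open $V$, we have $f_U(n) \geq s(\alpha)^n$, hence $f_U(n)^{1/n} \geq s(\alpha)$ for every $n$.

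For the upper bound I will invoke Theorem~\ref{basic:tidy} to pick a compact open subgroup $V$ that is tidy for $\alpha$, so that $f_V(n) = s(\alpha)^n$ exactly for all $n \geq 1$. Setting $W := U \cap V$, which is compact open and of finite index in both $U$ and $V$, I will argue in two steps. First, $\alpha^n(W) \leq \alpha^n(U)$ and $\alpha^n(W) \cap W \leq \alpha^n(U) \cap U$, so by the tower formula
\[
f_U(n) \leq |\alpha^n(U): \alpha^n(W) \cap W| = |\alpha^n(U):\alpha^n(W)| \cdot f_W(n) = |U:W| \cdot f_W(n).
\]
Second, using the tower $\alpha^n(W) \cap W \leq \alpha^n(V) \cap V \cap W \leq \alpha^n(V)$ I will compute $|\alpha^n(V):\alpha^n(V)\cap W|$ in two ways: as $|V:W|\cdot f_W(n)$, and as $f_V(n)\cdot|\alpha^n(V)\cap V:\alpha^n(V)\cap V\cap W|$, where the last factor is bounded by $|V:W|$ since $\alpha^n(V)\cap V\leq V$. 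Comparing yields $f_W(n) \leq f_V(n)$. Combining the two bounds gives
\[
f_U(n) \leq |U:W| \cdot s(\alpha)^n,
\]
so $f_U(n)^{1/n} \leq |U:W|^{1/n} \cdot s(\alpha) \to s(\alpha)$, completing the proof.

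The only real subtlety is the index arithmetic in the second step, since $\alpha^n(V) \cap V$ and $W$ are not naturally nested and products like $\alpha^n(V)\cdot V$ need not be subgroups; the trick is to sandwich both $|\alpha^n(V):\alpha^n(V) \cap W|$ and $f_W(n)$ between the same quantities using only the subgroup inclusions $W \leq V$ and $\alpha^n(W) \leq \alpha^n(V)$. Once that bookkeeping is done, the rest is formal, and in particular Fekete-type submultiplicativity arguments are not needed.
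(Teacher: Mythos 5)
The paper itself offers no proof of this statement; it is quoted from M\"oller, so your argument must stand on its own. Its architecture is right --- sandwich $f_U(n)$ between $s(\alpha)^n$ and $C\cdot s(\alpha)^n$ with $C$ independent of $n$ --- and both the lower bound and Step 1 are correct. But Step 2 contains a genuine error. The identity $|\alpha^n(V):\alpha^n(V)\cap W| = |V{:}W|\cdot f_W(n)$ is false: the tower through $\alpha^n(W)$ computes $|\alpha^n(V):\alpha^n(W)\cap W| = |V{:}W|\cdot f_W(n)$, and $\alpha^n(W)\cap W$ is in general a \emph{proper} subgroup of $\alpha^n(V)\cap W$, so you only get $|\alpha^n(V):\alpha^n(V)\cap W|\le |V{:}W|\cdot f_W(n)$. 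Combined with your (correct) second computation $|\alpha^n(V):\alpha^n(V)\cap W|\ge f_V(n)$, the comparison runs the wrong way: it yields the lower bound $f_W(n)\ge f_V(n)/|V{:}W|$, not the upper bound you need. Indeed $f_W(n)\le f_V(n)$ is simply false: take $G=\bQ_p\times\bQ_p$, $\alpha(x,y)=(px,p^{-1}y)$, $V=\bZ_p\times\bZ_p$ (tidy, with $f_V(n)=p^n$ and $s(\alpha)=p$) and $W=\{(x,y)\in V : x\equiv y \bmod p\}$, so $|V{:}W|=p$; then $\alpha^n(W)\cap W=p^{n+1}\bZ_p\times p\bZ_p$ and $f_W(n)=p^{n+1}>f_V(n)$. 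Taking $U=W$ in your argument, your final bound would read $f_U(n)\le s(\alpha)^n=p^n$, which fails, so the gap is not merely presentational.

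The gap is reparable and the repair costs only a constant. In the tower $\alpha^n(W)\cap W\le \alpha^n(V)\cap W\le \alpha^n(V)$ one has $|\alpha^n(V)\cap W:\alpha^n(W)\cap W|\le |\alpha^n(V):\alpha^n(W)|=|V{:}W|$, whence $|V{:}W|\, f_W(n)=|\alpha^n(V):\alpha^n(W)\cap W|\le |\alpha^n(V):\alpha^n(V)\cap W|\cdot|V{:}W|\le f_V(n)\,|V{:}W|^2$, i.e.\ $f_W(n)\le |V{:}W|\, f_V(n)$ (sharp in the example above, so the extra factor cannot be removed). Combined with your Step 1 this gives $f_U(n)\le |U{:}W|\,|V{:}W|\, s(\alpha)^n$, and since the constant is independent of $n$, the $n$-th root still converges to $s(\alpha)$ together with the lower bound from Lemma~\ref{basic:scale:powers}. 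With that one correction the proof is complete.
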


We derive the following result from Theorem~\ref{scale:asymptotic}; it can also be derived easily from \cite[Proposition~4.3]{WillisFurther}.

\begin{cor}\label{openinvariant:tidy}Let $G$ be a \tdlc group, let $\alpha$ be an automorphism of $G$ and let $K$ be an open subgroup of $G$ such that $\alpha(K) = K$.  Then $s_G(\alpha) = s_K(\alpha)$, and every compact open subgroup of $K$ that is tidy for $\alpha$ on $K$ is also tidy for $\alpha$ on $G$.\end{cor}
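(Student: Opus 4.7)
The plan is to leverage Theorem~\ref{scale:asymptotic} together with the characterization of tidiness in Theorem~\ref{basic:tidy}, since both results allow us to compute the scale and detect tidiness from the asymptotics of the indices $|\alpha^n(U):\alpha^n(U) \cap U|$, and these indices are insensitive to whether we view $U$ as sitting inside $G$ or inside $K$.

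First I would fix a compact open subgroup $U$ of $K$. Since $K$ is open in $G$ and $U$ is compact, $U$ is also a compact open subgroup of $G$. The key observation is that $\alpha(K) = K$ implies $\alpha^n(U) \subseteq K$ for every $n \in \bZ$, and hence the set $\alpha^n(U) \cap U$ and its index in $\alpha^n(U)$ are identical whether the ambient group is taken to be $G$ or $K$. Applying Theorem~\ref{scale:asymptotic} in each group to the same sequence of indices yields
\[
s_G(\alpha) = \lim_{n \to \infty} |\alpha^n(U):\alpha^n(U) \cap U|^{1/n} = s_K(\alpha),
\]
establishing the equality of scales.

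For the second statement, suppose $U$ is a compact open subgroup of $K$ that is tidy for $\alpha$ acting on $K$. By Theorem~\ref{basic:tidy} applied in $K$, this is equivalent to $|\alpha(U):\alpha(U) \cap U| = s_K(\alpha)$. As noted above, the index on the left-hand side has the same value in $G$, and by the first part $s_K(\alpha) = s_G(\alpha)$, so the same index identity holds in $G$. A second application of Theorem~\ref{basic:tidy}, this time in $G$, gives that $U$ is tidy for $\alpha$ on $G$.

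I do not anticipate a serious obstacle here: the whole argument rests on the trivial but crucial point that $\alpha$-invariance of the open subgroup $K$ ensures $\alpha^n(U)$ never escapes $K$, so no index shrinks or grows when passing to the larger group. Everything else is a direct invocation of the cited results.
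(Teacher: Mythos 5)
Your proof is correct and follows essentially the same route as the paper: the paper likewise applies Theorem~\ref{scale:asymptotic} to a compact open subgroup of $K$, viewed simultaneously as a compact open subgroup of $G$, to equate the scales, and then deduces the tidiness claim from the characterization in Theorem~\ref{basic:tidy}. Your explicit remark that $\alpha$-invariance of $K$ keeps $\alpha^n(U)$ inside $K$, so the indices agree in both ambient groups, is exactly the (implicit) point the paper's one-line computation relies on.
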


\begin{proof}Let $V$ be a compact open subgroup of $K$.  Then $V$ is open in $G$, so by Theorem~\ref{scale:asymptotic}, we have
\[
s_G(\alpha) = \lim_{n \rightarrow \infty}|\alpha^n(V):\alpha^n(V) \cap V|^{1/n} = s_K(\alpha).
\]
The assertion about tidy subgroups follows from Theorem~\ref{basic:tidy}.
\end{proof}

An automorphism $\alpha$ is \defbold{anisotropic}\index{anisotropic} if the set of compact open $\alpha$-invariant subgroups of $G$ forms a base of identity neighbourhoods, and \defbold{isotropic}\index{isotropic} if it is not anisotropic.  Given a \tdlc group $G$ and a group $H$ acting on $G$ (or a subgroup $H$ of $G$), we say $H$ is uniscalar or anisotropic respectively on $G$ if all the automorphisms of $G$ induced by $H$ are so.  `Uniscalar/anisotropic subgroup' should be understood in this relative sense.

Anisotropic automorphisms are necessarily uniscalar.  In general, a uniscalar automorphism need not be anisotropic, however certain local structures of the group $G$ can force all uniscalar automorphisms to be anisotropic: for example, if some (equivalently, every) compact open subgroup $U$ of $G$ is topologically finitely generated and virtually pro-$p$, then $U$ admits a base of identity neighbourhoods consisting of characteristic subgroups, so any automorphism leaving $U$ invariant must be anisotropic.

Contraction groups and the nub can be used to characterize when an automorphism is uniscalar or anisotropic.

\begin{prop}\label{basic:anisotropic}Let $G$ be a \tdlc group and let $\alpha \in \Aut(G)$.
\begin{enumerate}[(i)]
\item We have $s(\alpha)=1$ if and only if $\con(\alpha\inv)$ is relatively compact.
\item Suppose that $\alpha$ is uniscalar.  Then $\alpha$ is anisotropic if and only if $\nub(\alpha)$ is trivial.
\item If $\con(\alpha) = \con(\alpha\inv) = \triv$, then $\alpha$ is anisotropic (and conversely).
\end{enumerate}
\end{prop}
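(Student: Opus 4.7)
For part (i), I plan to split into two directions. The forward direction is direct: any tidy $U$ satisfies $|\alpha(U):\alpha(U)\cap U|=s(\alpha)=1$ by Theorem~\ref{basic:tidy}, so $\alpha(U)\le U$ and $\alpha^n(U)\le U$ for every $n\ge 0$; hence each $x\in\con(\alpha^{-1})$ eventually satisfies $\alpha^{-n}(x)\in U$, giving $x\in\alpha^n(U)\le U$, so $\con(\alpha^{-1})$ is contained in the compact group $U$. For the converse I work with a fixed tidy $U$ and $U_+=\bigcap_{n\ge 0}\alpha^n(U)$: a coset count in the tidy decomposition $U=U_+U_-$ yields $|\alpha(U_+):U_+|=s(\alpha)$, so it suffices to prove $\alpha(U_+)=U_+$. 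The ascending union $P:=\bigcup_{n\ge 0}\alpha^n(U_+)$ coincides with $U_{++}$ (direct verification against the definition), and so is closed by Lemma~\ref{tidybelow:asymptotic}, with $U_+=U_{++}\cap U$ open in $P$. By Theorem~\ref{basic:closed_contraction} and the hypothesis, $\overline{\con(\alpha^{-1})}=\con(\alpha^{-1})\nub(\alpha)$ is compact. The crux is then the Baumgartner--Willis-style decomposition $U_{++}=\overline{\con(\alpha^{-1})}\cdot U_+$, which exhibits $P$ as a product of two compact sets; the ascending chain $(\alpha^n(U_+))_n$ of open subgroups of the compact group $P$ must then stabilise, forcing $s(\alpha)=|\alpha(U_+):U_+|=1$. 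Establishing this decomposition is the main technical obstacle; I would either invoke it from the Baumgartner--Willis analysis or re-derive it by producing, for each $u\in U_+$, an element of $\overline{\con(\alpha^{-1})}$ in the coset $uU_+$ via a subsequential limit of the bounded orbit $\{\alpha^{-k}(u):k\ge 0\}\subseteq U$.

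For part (ii), I plan to show that under the uniscalar hypothesis the tidy subgroups for $\alpha$ coincide with the compact open $\alpha$-invariant subgroups of $G$. On one hand, any tidy $U$ satisfies $|\alpha(U):\alpha(U)\cap U|=s(\alpha)=1$ and $|\alpha^{-1}(U):\alpha^{-1}(U)\cap U|=s(\alpha^{-1})=1$ (noting that tidiness is preserved on passing between $\alpha$ and $\alpha^{-1}$), so $\alpha(U)\le U$ and $U\le\alpha(U)$, giving $\alpha(U)=U$. On the other hand, any compact open $\alpha$-invariant $V$ satisfies $V_\pm=V_{\pm\pm}=V$ and is therefore tidy. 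Hence $\nub(\alpha)$, being the intersection of tidy subgroups, coincides with the intersection of all compact open $\alpha$-invariant subgroups. By Theorem~\ref{tidy:stable}(iii) this family is closed under finite intersections; fixing one member $U_0$, every other member lies inside the compact group $U_0$, so by compactness the family forms a base of identity neighbourhoods in $U_0$ (equivalently in $G$) if and only if its intersection is trivial. This yields $\nub(\alpha)=\triv\iff\alpha$ anisotropic.

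For part (iii), I plan to combine (i), (ii) and Theorem~\ref{basic:nub_characterizations}. The hypothesis $\con(\alpha)=\con(\alpha^{-1})=\triv$ yields $s(\alpha)=s(\alpha^{-1})=1$ via (i), so $\alpha$ is uniscalar; and Theorem~\ref{basic:nub_characterizations} gives $\nub(\alpha)=\overline{\con(\alpha)}\cap\overline{\con(\alpha^{-1})}=\triv$, so by (ii), $\alpha$ is anisotropic. The parenthetical converse is immediate from the definition: if $\alpha$ is anisotropic then compact open $\alpha$-invariant subgroups form a base at the identity, and any $x\in\con(\alpha^{\pm 1})$ lies in each such subgroup, hence in their trivial intersection.
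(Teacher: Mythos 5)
Your treatments of (ii) and (iii) are correct and essentially identical to the paper's: the paper likewise observes that under the uniscalar hypothesis the tidy subgroups are exactly the compact open $\alpha$-invariant subgroups, runs the same finite-intersection compactness argument, and deduces (iii) from (i), (ii) and Theorem~\ref{basic:nub_characterizations}. One small repair in (ii): it is not true that every compact open $\alpha$-invariant subgroup lies inside a fixed member $U_0$ of the family; you should first replace each member $V$ by $U_0 \cap V$ (harmless, since the family is intersection-closed and this does not change the total intersection) before applying compactness --- this is in effect what the paper does by covering the compact set $V \setminus U$.

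The real divergence is in (i), where the paper gives no argument at all: it simply cites \cite[Proposition~3.24]{BaumgartnerWillis}. Your forward direction is complete and correct. Your converse is sound \emph{modulo} the decomposition $U_{++} = \overline{\con(\alpha\inv)}\,U_+$ for $U$ tidy, and your fallback of invoking it from \cite{BaumgartnerWillis} is legitimate (it is one of their structural results on tidy subgroups, in the form $U_{--} = U_-\con(\alpha)$ applied to $\alpha\inv$); at that point you are leaning on the same source the paper cites, just a more primitive statement, and the rest of your argument (closedness of $U_{++}$, openness of $U_+$ in it, stabilisation of the ascending chain $\alpha^n(U_+)$ inside a compact group) is fine. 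However, the proposed re-derivation by subsequential limits does not work as written: an accumulation point $x$ of the bounded backward orbit $(\alpha^{-k}(u))_{k}$ only satisfies $\alpha^r(x) \in U_+ \cap U_-$ for all $r \in \bZ$ (compare the analogous step in the proof of Theorem~\ref{thm:compact_invariant:tidy}), which does not by itself produce an element of $\overline{\con(\alpha\inv)}$ in the coset $uU_+$; one must still show that a suitable modification of $u$ by such limit points has backward orbit converging to the identity, and that is precisely the content of the Baumgartner--Willis argument. So either cite the decomposition outright or budget genuine additional work for that step.
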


\begin{proof}
For part (i), see \cite[Proposition~3.24]{BaumgartnerWillis}.

Suppose that $\alpha$ is uniscalar.  Then a compact open subgroup of $G$ is tidy for $\alpha$ if and only if it is $\alpha$-invariant.  If $\alpha$ is anisotropic, then evidently the intersection of all $\alpha$-invariant subgroups for $\alpha$ is trivial, so $\nub(\alpha)=\triv$.  Conversely if $\nub(\alpha)=\triv$, consider a compact open subgroup $U$ of $G$ and an $\alpha$-invariant compact open subgroup $V$ of $G$.  Then by the compactness of $V \setminus U$, there exists a finite set $\{V_1,V_2,\dots,V_n\}$ of $\alpha$-invariant compact open subgroups of $G$ such that $W= V \cap \bigcap^n_{i=1}V_i \le U$.  Now $W$ is an $\alpha$-invariant compact open subgroup; since $U$ was an arbitrary compact open subgroup of $G$, we conclude by Van Dantzig's theorem that there exist arbitrarily small compact open $\alpha$-invariant subgroups of $G$, that is, $\alpha$ is anisotropic, proving (ii).

If $\alpha$ is anisotropic, then clearly $\con(\alpha) = \con(\alpha\inv) = \triv$.  Conversely, if $\con(\alpha) = \con(\alpha\inv) = \triv$, then $\alpha$ is uniscalar by part (i) and $\nub(\alpha) = \triv$ by Theorem~\ref{basic:nub_characterizations}, so $\alpha$ is anisotropic, proving (iii).
\end{proof}

\subsection{Flat groups}

A group of automorphisms $H$ of $G$ is \defbold{flat}\index{flat} if there exists a compact open subgroup $U$ of $G$ such that $U$ is tidy for $H$, that is, for all $\alpha \in H$, $U$ is tidy for $\alpha$.  More generally, any group acting on $G$ (such as a subgroup of $G$ acting by conjugation) is said to be flat on $G$ if it induces a flat group of automorphisms, and `flat subgroup' should be understood in this relative sense.  (Note that if $H$ is a closed subgroup of $G$, then $H$ may be flat on itself without being flat on $G$.)

A class of groups that are evidently flat are groups $H \le \Aut(G)$ such that $H$ leaves invariant a compact open subgroup of $G$.  More generally, it is easily seen that in any flat group $H$, and given any tidy subgroup $U$ for $H$, the set of elements of $H$ that leave $U$ invariant form a normal subgroup, the \defbold{uniscalar part}\index{uniscalar!uniscalar part}\index{Hu@$H_{\us}$} $H_{\us}$ of $H$, which does not depend on the choice of $U$.

The uniscalar part itself could potentially be any group that acts by automorphisms on a compact open subgroup of $G$.  However, the quotient $H/H_{\us}$ has a special structure, as first described by Willis in \cite{WillisFlat}.  In particular, the following holds:

\begin{thm}[\cite{WillisFlat} Theorem~4.15]\label{flat:uniscalar_derived}Let $G$ be a \tdlc group and let $H$ be a flat group of automorphisms of $G$.  Then $H/H_{\us}$ is a torsion-free abelian group, and every non-identity element of $H/H_{\us}$ is a finite power of an indivisible element.\end{thm}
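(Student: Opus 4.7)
I would begin by fixing a compact open subgroup $U$ of $G$ tidy for every element of $H$; such $U$ exists by flatness of $H$. The proof has three distinct parts, of which the middle one is the main obstacle.

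\textbf{Torsion-freeness.} Suppose $\alpha \in H$ and $\alpha^n \in H_{\us}$ for some $n \in \bZ \setminus \{0\}$. Since $U$ is tidy for $\alpha$, it is also tidy for $\alpha^n$ by Lemma~\ref{basic:tidy:powers}, and $\alpha^n(U) = U$ gives $s(\alpha^n) = s(\alpha^{-n}) = 1$. By Lemma~\ref{basic:scale:powers}, $s(\alpha) = s(\alpha\inv) = 1$, and tidiness of $U$ for $\alpha$ together with $[\alpha(U):\alpha(U) \cap U] = s(\alpha) = 1$ forces $\alpha(U) \subseteq U$ (and symmetrically $\alpha\inv(U) \subseteq U$), so $\alpha(U) = U$ and $\alpha \in H_{\us}$.

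\textbf{Abelianness (the main obstacle).} For $\alpha, \beta \in H$, the subgroup $H_0 = \langle \alpha, \beta \rangle$ is again flat with tidy subgroup $U$, and an element of $H_0$ preserving $U$ automatically lies in $H_{\us}$, so it suffices to treat the case of finitely generated $H$. The strategy is to refine the tidy decompositions $U = U^\gamma_+ U^\gamma_-$ (as $\gamma$ varies over a finite generating set of $H$) into a finite system of $H$-stable subquotients of $U$ -- the ``eigenfactors'' of the action -- on each of which every $\alpha \in H$ acts by an integer scaling factor. This produces characters $\lambda_i : H \to \bZ$, and one then checks that their joint kernel is precisely $H_{\us}$: any element of $H$ acting trivially on every eigenfactor must preserve $U$. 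The combined map $\Lambda = (\lambda_i) : H \to \bZ^k$ realizes $H/H_{\us}$ as a subgroup of an abelian group, proving abelianness. The substantive content, and the hard part, is the construction of this eigendecomposition together with the verification that the eigenvalues are integer-valued characters: this is the technical heart of \cite{WillisFlat} and requires analyzing how the families $U^\alpha_+$ and $U^\alpha_-$ interact and combine as $\alpha$ ranges over the generators.

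\textbf{Indivisibility.} Suppose, for a contradiction, that some non-trivial $\alpha H_{\us}$ satisfies $\alpha H_{\us} = (\beta_n H_{\us})^n$ for arbitrarily large $n$. Since $H_{\us} \trianglelefteq H$, this means $\alpha = \beta_n^n u_n$ for some $u_n \in H_{\us}$. Because $u_n(U) = U$, we have $\alpha(U) = \beta_n^n(U)$, and as both $\alpha$ and $\beta_n^n$ lie in $H$ (hence have $U$ as a tidy subgroup), $s(\alpha) = s(\beta_n^n) = s(\beta_n)^n$ by Lemma~\ref{basic:scale:powers}. Since $s(\beta_n)$ is a positive integer and $n$ is unbounded, this forces $s(\alpha) = 1$; symmetrically $s(\alpha\inv) = 1$, so $\alpha(U) = U$ and $\alpha \in H_{\us}$, contradicting $\alpha H_{\us} \ne 1$. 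Therefore every non-trivial element of $H/H_{\us}$ has a maximal divisibility $d \ge 1$, and any $\beta H_{\us}$ with $\alpha H_{\us} = (\beta H_{\us})^d$ is indivisible.
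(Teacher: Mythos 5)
Your torsion-freeness and indivisibility arguments are correct and essentially self-contained: both reduce, via Theorem~\ref{basic:tidy} and Lemma~\ref{basic:scale:powers}, to the observation that $s(\alpha)=s(\alpha\inv)=1$ forces $\alpha(U)=U$ once $U$ is tidy for $\alpha$, and bounding $s(\beta_n)^n$ by the fixed integer $s(\alpha)$ is exactly the right mechanism for the divisibility step. (One small point you elide: to get $s(\alpha\inv)=s(\beta_n\inv)^n$ you should first rewrite $\alpha=u_n'\beta_n^n$ using normality of $H_{\us}$ before inverting, so that $\alpha\inv(U)=\beta_n^{-n}(U)$; this is routine.)

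The gap is the abelianness part, which is the main content of the theorem. What you give there is a roadmap of Willis's eigenfactor construction, and you say yourself that building the eigendecomposition and verifying that the resulting maps $\lambda_i:H\to\bZ$ are homomorphisms whose joint kernel is exactly $H_{\us}$ is ``the technical heart of \cite{WillisFlat}''. That is precisely the step that makes $H/H_{\us}$ abelian, and it is not supplied; asserting that a finite system of $H$-stable subquotients with integer scaling factors exists is not easier than the theorem itself. Be aware also that the present paper does not prove this statement: Theorem~\ref{flat:uniscalar_derived} is imported verbatim from \cite[Theorem~4.15]{WillisFlat}, so there is no in-paper argument to reproduce. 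If the intention is to cite the result, cite it outright; if the intention is to prove it, the eigenfactor analysis --- how the families $U^\alpha_+$ and $U^\alpha_-$ for different generators interact, why each eigenfactor is commensurated with a totally ordered conjugate family, and why the displacement indices define characters trivial precisely on $H_{\us}$ --- must actually be carried out, and that is several pages of work in \cite{WillisFlat}.
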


The \defbold{(flat) rank}\index{rank} of a flat group is the minimum number of generators of $H/H_{\us}$.

Some sufficient conditions for a group to be a finite-rank flat group were given in \cite{WillisFlat}, with further generalizations in \cite{ShalomWillis}.

\begin{thm}[\cite{ShalomWillis} Theorems~4.9 and 4.13]\label{thm:nilpotent:flat}Let $G$ be a \tdlc group, let $H$ be a group of automorphisms of $G$, and let $K$ be a normal subgroup of $H$ such that $K$ leaves invariant a compact open subgroup of $G$.
\begin{enumerate}[(i)]
\item If $H/K$ is finitely generated and nilpotent, then $H$ is flat.
\item If $H/K$ is polycyclic, then $H$ has a flat subgroup of finite index.
\end{enumerate}
\end{thm}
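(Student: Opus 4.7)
The plan is to reduce both parts to Willis's tidy-subgroup construction for a single automorphism, carried out inside the lattice of $K$-invariant compact open subgroups of $G$, and then to lift through the nilpotent or polycyclic structure of $H/K$.

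\textbf{Abelian base case for (i).} The essential step is when $H/K$ is finitely generated abelian. I would pick lifts $\alpha_1,\dots,\alpha_n \in H$ of a generating set of $H/K$ and start from a $K$-invariant compact open subgroup $U_0$, which exists by hypothesis. The goal is to refine $U_0$ iteratively within the lattice of $K$-invariant compact open subgroups until it becomes tidy for each $\alpha_j$; since $K$ fixes such a subgroup setwise, it will then be tidy for all of $H = K\langle\alpha_1,\dots,\alpha_n\rangle$. The crucial observation is that because $K \lhd H$, each $\alpha_j$ normalizes $K$, and if $U$ is $K$-invariant then so is $\alpha_j(U)$: for $k \in K$, one has $k\alpha_j(U) = \alpha_j((\alpha_j\inv k\alpha_j)(U)) = \alpha_j(U)$, since $\alpha_j\inv k\alpha_j \in K$. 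Hence every $H$-translate of a $K$-invariant compact open subgroup is again $K$-invariant, and Willis's tidy-above refinement $\bigcap_{i=0}^{N}\alpha_j^i(U)$ from Proposition~\ref{basic:tidyabove} stays inside the $K$-invariant lattice. The tidy-below step can also be kept inside this lattice, because the nub and the associated enlargements are intrinsic invariants of $\alpha_j$ that respect $K$-invariance by the same normality argument. At each stage, Theorem~\ref{tidy:stable}(iii) ensures that further $K$-invariant refinements preserve tidiness for the previously handled automorphisms, so after $n$ steps one obtains a $K$-invariant compact open $U$ tidy for every $\alpha_j$.

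\textbf{Nilpotent lift and polycyclic case.} For general nilpotent $H/K$ of class $c$, I would induct on $c$: for $c \ge 2$, let $Z \lhd H$ be the preimage of the last non-trivial term of the lower central series of $H/K$, so $Z/K$ is central and finitely generated abelian in $H/K$, while $H/Z$ is nilpotent of class $c-1$. Apply the abelian case to $Z$ to obtain a compact open subgroup $U_1$ tidy for $Z$, and then apply the inductive hypothesis to $H$ with $Z$ in place of $K$, completing (i). For (ii), take a cyclic subnormal series $K = H_0 \lhd H_1 \lhd \cdots \lhd H_m = H$ of $H/K$. Passing repeatedly to finite-index subgroups of $H$, one can reduce to the case where all cyclic factors are infinite cyclic and the successive extensions are nilpotent, so that part (i) applies to a finite-index subgroup of $H$.

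\textbf{Main obstacle.} The principal technical point is the abelian base case, specifically verifying that Willis's entire tidy construction (both above and below) can be executed within the lattice of $K$-invariant compact open subgroups. Tidy-above poses no difficulty because it is a finite intersection of $H$-translates. Tidy-below is more delicate, since it requires enlarging to a compact subgroup containing $\nub(\alpha_j)$ while remaining $K$-invariant; but the description of $\nub(\alpha_j)$ in Theorem~\ref{basic:nub_characterizations} as the intersection of tidy subgroups for $\alpha_j$, together with the fact that tidy subgroups abound in the $K$-invariant lattice by the preceding analysis, lets one show that some $K$-invariant compact open does contain $\nub(\alpha_j)$, completing the construction. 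A secondary difficulty is the reduction in (ii), where one must verify that the finite-index passages can be made compatibly across all cyclic factors, which follows from standard finiteness properties of polycyclic groups (e.g., Mal'cev's theorem that polycyclic groups are residually finite, combined with the Auslander--Swan style reductions).
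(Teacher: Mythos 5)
This theorem is not proved in the paper at all: it is imported verbatim from Shalom--Willis (Theorems~4.9 and 4.13), and the underlying absolute case is the main theorem of \cite{WillisFlat}. Your blind attempt founders at exactly the points where those papers do the hard work. The central gap is in your abelian base case: you conclude that once $U$ is $K$-invariant and tidy for each of the chosen lifts $\alpha_1,\dots,\alpha_n$, it ``will then be tidy for all of $H = K\langle\alpha_1,\dots,\alpha_n\rangle$.'' That inference is false. Tidiness for a set of automorphisms does not pass to products: \cite[Example~3.5]{WillisFlat} (cited in the paper immediately after the definition of tidiness for sets of automorphisms) exhibits two commuting automorphisms and a subgroup tidy for each of them but not for their product. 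This is precisely why the notion of a \emph{tidying set} (Theorem~\ref{flat:invariablytidy}) exists and why not every finite generating set is one; producing a single subgroup tidy for \emph{every} element of a finitely generated abelian group of automorphisms is the entire content of Willis's flatness theorem and cannot be obtained by treating the generators one at a time. Relatedly, your appeal to Theorem~\ref{tidy:stable}(iii) to claim that later refinements preserve tidiness for earlier automorphisms only covers intersections of subgroups each already tidy for the \emph{same} automorphism; the tidy-below enlargement for $\alpha_2$ is not of that form and can destroy tidiness for $\alpha_1$.

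There are two further structural problems. In the nilpotent induction, after applying the abelian case to $Z$ you know only that $Z$ is \emph{flat} (admits a tidy subgroup), whereas the theorem's hypothesis on the normal subgroup is the strictly stronger condition that it \emph{leaves invariant} a compact open subgroup; so you cannot simply rerun the argument with $Z$ in place of $K$. And the proposed reduction of (ii) to (i) by passing to finite-index subgroups until the extensions become nilpotent is impossible: polycyclic groups are not virtually nilpotent in general (consider $\bZ^2 \rtimes \bZ$ with a hyperbolic matrix acting), which is consistent with the paper's own Example~\ref{ex:virtually_flat} showing that polycyclic subgroups may be only virtually flat. The one correct and genuinely useful observation in your write-up --- that normality of $K$ in $H$ makes the lattice of $K$-invariant compact open subgroups stable under all of $H$ --- is the easy part; everything beyond it requires the eigenfactor and tidying-set machinery of \cite{WillisFlat} and \cite{ShalomWillis}.
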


Example~\ref{ex:virtually_flat} below shows that finitely generated polycyclic groups need not be flat, and an example given after \cite[Theorem~4.13]{ShalomWillis} shows that finitely generated soluble groups need not be virtually flat.  

We see from Theorem~\ref{thm:nilpotent:flat} that flatness of finite rank persists on restricting the action to a closed invariant subgroup.

\begin{cor}\label{cor:finiterank:subgroup}Let $G$ be a \tdlc group and let $H$ be a flat group of automorphisms of $G$ of finite rank.  Let $K$ be a closed $H$-invariant subgroup of $G$.  Then $H$ is flat of finite rank on $K$.\end{cor}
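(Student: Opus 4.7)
The plan is to apply Theorem~\ref{thm:nilpotent:flat}(i) to the action of $H$ on $K$, taking the normal subgroup of the hypothesis to be $H_{\us}$ (the uniscalar part of $H$ with respect to its action on $G$).

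First I would fix a compact open subgroup $U$ of $G$ that is tidy for $H$.  By definition of $H_{\us}$, every element of $H_{\us}$ leaves $U$ invariant, and hence also leaves the compact open subgroup $U \cap K$ of $K$ invariant.  Thus $H_{\us}$ already satisfies the hypothesis of Theorem~\ref{thm:nilpotent:flat} on its action on $K$ (namely, leaving invariant a compact open subgroup of $K$).

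Next, by Theorem~\ref{flat:uniscalar_derived}, the quotient $H/H_{\us}$ is torsion-free abelian; by the finite rank hypothesis it is also finitely generated, and so in particular is finitely generated nilpotent.  Applying Theorem~\ref{thm:nilpotent:flat}(i) to the action of $H$ on $K$, we obtain that $H$ is flat on $K$.  Finally, letting $H_{\us}^{(K)}$ denote the uniscalar part of $H$ acting on $K$, the inclusion $H_{\us} \le H_{\us}^{(K)}$ (which is immediate since $H_{\us}$ is already uniscalar on $K$) gives that $H/H_{\us}^{(K)}$ is a quotient of the finitely generated group $H/H_{\us}$, and hence is itself finitely generated, which is exactly the finite-rank condition for the action on $K$.

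There is no real obstacle to overcome: once one observes that the uniscalar part of the $G$-action restricts to a uniscalar action on any closed invariant subgroup $K$ (because an invariant compact open subgroup of $G$ meets $K$ in one of $K$), the result is a direct combination of Theorems~\ref{flat:uniscalar_derived} and~\ref{thm:nilpotent:flat}(i).
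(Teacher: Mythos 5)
Your proof is correct and follows essentially the same route as the paper: fix a tidy subgroup $U$ for $H$ on $G$, observe that $H_{\us}$ leaves $U \cap K$ invariant, and apply Theorem~\ref{thm:nilpotent:flat}(i) using that $H/H_{\us}$ is finitely generated abelian. The paper states the finite-rank conclusion without spelling out the comparison of uniscalar parts, but your extra remark that $H_{\us}$ is contained in the uniscalar part of the action on $K$ is a valid (and harmless) elaboration of the same argument.
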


\begin{proof}Let $U$ be a compact open subgroup of $G$ that is tidy for $H$ and let $L$ be the uniscalar part of $H$ acting on $G$.  Then $U$ is $L$-invariant, so $U \cap K$ is also $L$-invariant, and $H/L$ is finitely generated and abelian.  Hence $H$ is flat of finite rank on $K$ by Theorem~\ref{thm:nilpotent:flat}.
\end{proof}

In discussions of flat subgroups of \tdlc groups, it is convenient to work with closed subgroups.  We note that the flat property is well-behaved under closure.

\begin{lem}\label{flat:closure}Let $G$ be a \tdlc group and let $H$ be a flat subgroup of $G$.  Then $\overline{H}$ is a flat subgroup of $G$ and $H_{\us}$ is an open subgroup of $H$.\end{lem}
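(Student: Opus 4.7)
My plan for this lemma is very short: it consists in combining the clopen stability of the ``tidy set'' from Theorem~\ref{tidy:stable}(i) with the openness of normalizers of compact open subgroups.

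First I would pick a compact open subgroup $U$ of $G$ that is tidy for every element of $H$; such a $U$ exists because $H$ is flat. Then Theorem~\ref{tidy:stable}(i) asserts that the set $X = \{x \in G \mid U \text{ is tidy for } x\}$ is a union of $(U,U)$-double cosets in $G$, and in particular is clopen in $G$. Since $H \subseteq X$ and $X$ is closed, we conclude that $\overline{H} \subseteq X$, meaning that the very same $U$ is tidy for every element of $\overline{H}$. Hence $\overline{H}$ is flat on $G$ (with the same tidy subgroup $U$, and consequently the same scale values on $H$ and on $\overline{H}$).

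For the second assertion, by definition $H_{\us} = H \cap \N_G(U)$, since $H_{\us}$ consists of those elements of $H$ that leave $U$ invariant. Because $U$ is a subgroup of itself, $U \subseteq \N_G(U)$; as $U$ is open, it follows that $\N_G(U)$ is open in $G$, and hence that $H_{\us}$ is open in $H$.

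There is no real obstacle in this argument: the only nontrivial input is the clopen stability of the tidy set, which is already available from Theorem~\ref{tidy:stable}(i). The slightly subtle point worth flagging in the write-up is that we are not merely producing \emph{some} tidy subgroup for $\overline{H}$, but that the same $U$ chosen for $H$ works verbatim for $\overline{H}$, so the uniscalar part and rank of $\overline{H}$ are constrained by those of $H$ with no additional choices required.
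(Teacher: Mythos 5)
Your proof is correct and follows essentially the same route as the paper: both parts rest on Theorem~\ref{tidy:stable}, with the clopen stability of the set of elements for which $U$ is tidy giving $\overline{H} \subseteq X$ and hence flatness of $\overline{H}$. The only cosmetic difference is that you deduce openness of $H_{\us}$ from $H_{\us} = H \cap \N_G(U)$ together with the openness of $\N_G(U) \supseteq U$, whereas the paper cites the continuity of the scale function; both are immediate.
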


\begin{proof}We see that $H_{\us}$ is open by Theorem~\ref{tidy:stable}, since it consists of the uniscalar elements of $H$.  Also by Theorem~\ref{tidy:stable}, any compact open subgroup that is tidy for $H$ is also tidy for $\overline{H}$, so $\overline{H}$ is flat.\end{proof}

\begin{defn}A subgroup $H$ of a \tdlc group $G$ is \defbold{almost flat}\index{flat!almost flat} (on $G$) if $\overline{H}$ has a closed cocompact subgroup $K$ such that $K$ is flat on $G$.  Say $H$ is \defbold{almost finite-rank flat} if in addition $K$ can be chosen so that $K/K_{\us}$ is finitely generated.\end{defn}

It is not clear at present whether an almost finite-rank flat subgroup is necessarily \defbold{virtually flat}, that is, has a subgroup of finite index that is flat on $G$.  Virtually flat subgroups however need not be flat, as the next example shows.  In any case, almost (finite-rank) flat subgroups will be sufficiently well-behaved for most purposes in the present paper.

\begin{ex}\label{ex:virtually_flat}Let $K = \bQ_p \rtimes \langle t \rangle$, where $\bQ_p$ is open in $K$ and $t$ acts on $\bQ_p$ as multiplication by $p$, let $G = K \wr C$ where $C$ is a finite non-trivial cyclic group acting regularly, and let $H$ be the polycylic subgroup $\langle t \rangle \wr C = B \rtimes C$, where $B \cong \bZ^n$.  Observe that no non-trivial element of $B$ is uniscalar, so in particular the derived group of $H$ is not uniscalar.  Hence $H$ is not flat.  However, the finite index subgroup $B$ of $H$ is flat: indeed, there are arbitrarily small tidy subgroups for $B$ of the form $\bZ^n_p$.\end{ex}

Note that if $H$ is a closed compactly generated subgroup of $G$ that is almost flat, then it is almost finite-rank flat: any cocompact flat subgroup $K$ is compactly generated, so that $K/K_{\us}$ is finitely generated.

We also introduce a notion that is stronger than being flat, and is not satisfied in general even by cyclic groups.

\begin{defn}A group of automorphisms $H$ of a \tdlc group $G$ is \defbold{smooth}\index{smooth} (on $G$) if the tidy subgroups for $H$ on $G$ form a base of neighbourhoods of the identity.\end{defn}

Note that $H$ is uniscalar and smooth if and only if $H$ normalizes arbitrarily small compact open subgroups.  Given Van Dantzig's theorem, this situation is in turn equivalent to $H$ having \defbold{small invariant neighbourhoods (SIN)}\index{SIN} in its conjugation action on $G$: A SIN action on a topological group is one for which there exist arbitrarily small neighbourhoods of the identity left invariant by the action.

Although a subgroup can have virtually flat or virtually smooth action on $G$ without having flat action, the (relative) flat and smooth properties are inherited from cocompact \emph{uniscalar} subgroups.

\begin{lem}\label{cocompact:uniscalar_flat}
Let $G$ be a \tdlc group, let $H$ be a closed subgroup of $G$.  Suppose there is a closed subgroup $K$ of $H$ such that $K$ is cocompact in $H$ and such that $K$ is flat and uniscalar on $G$.  Then every open $K$-invariant subgroup of $G$ contains a compact open $K$-invariant subgroup of $H$.  In particular, $H$ is flat and uniscalar on $G$, and if $K$ is smooth on $G$, then so is $H$.
\end{lem}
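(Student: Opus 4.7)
The plan is, given an open $K$-invariant subgroup $U$ of $G$, to locate a compact open $K$-invariant subgroup of $G$ inside $U$, and then refine it to an $H$-invariant one by intersecting over the $H$-conjugation orbit. The cocompactness of $K$ in $H$ is what forces this orbit to be finite, so the refined intersection remains compact and open.

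First, since $K$ is uniscalar and flat on $G$, there exists a compact open $K$-invariant subgroup $V_0$ of $G$. Open subgroups are closed, so $V := V_0 \cap U$ is closed in the compact set $V_0$, hence compact; it is open as an intersection of open subgroups, and $K$-invariant as an intersection of $K$-invariant subgroups. So $V$ is a compact open $K$-invariant subgroup contained in $U$.

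Next, I would verify that the $H$-conjugation orbit $\{hVh\inv : h \in H\}$ is finite. Because $V$ is compact and open in $G$, the normalizer $\N_G(V)$ is open in $G$ (a standard consequence of the continuity of conjugation: a small enough identity neighborhood moves $V$ inside itself, hence equal to $V$ by compactness). Therefore $\N_H(V) = H \cap \N_G(V)$ is an open subgroup of $H$ containing $K$. Since $K$ is cocompact in $H$, so is $\N_H(V)$, so the quotient $H/\N_H(V)$ is compact; being also discrete and Hausdorff as a quotient by an open subgroup, it must be finite. The intersection $V' := \bigcap_{h \in H} hVh\inv$ is then effectively a finite intersection of compact open subgroups of $G$ (indexed by coset representatives of $\N_H(V)$ in $H$), so $V'$ is itself compact and open; it is $H$-invariant by construction, and $V' \subseteq V \subseteq U$ by taking the $h = 1$ term.

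The ``in particular'' assertions follow easily. Taking $U = G$ produces a compact open $H$-invariant subgroup of $G$, so $H$ is uniscalar on $G$, hence flat on $G$ (any $H$-invariant compact open subgroup is automatically tidy for every element of $H$). If in addition $K$ is smooth on $G$, then the compact open $K$-invariant subgroups of $G$ form a neighbourhood base of the identity, and applying the main statement inside each such subgroup gives an $H$-invariant compact open subgroup of $G$ contained in it, so $H$ is smooth on $G$ as well. The only genuine content of the argument is the finiteness of the orbit $\{hVh\inv : h \in H\}$; everything else is formal.
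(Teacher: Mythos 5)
Your proof is correct and follows essentially the same route as the paper: both take a compact open $K$-invariant subgroup inside the given open subgroup and intersect over its $H$-conjugates, using cocompactness of $K$ plus openness of the normalizer to see that there are only finitely many conjugates (the paper writes $H=XK$ with $X$ compact and reduces to finitely many cosets of the open normalizer, which is the same finiteness argument you give via $H/\N_H(V)$ being compact and discrete). The handling of the ``in particular'' clauses also matches the paper.
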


\begin{proof}
Suppose that $K$ is flat on $G$, and let $O$ be an open $K$-invariant subgroup of $G$.  Then there is a compact open subgroup $U$ of $G$ that is tidy for $K$; by replacing $U$ with $U \cap O$, we may assume $U \le O$.  If $K$ is smooth, then $U$ can be made arbitrarily small.  Since $K$ is uniscalar, in fact $K$ normalizes $U$.  Now $H = XK$, where $X$ is a compact set, so
\[
V = \bigcap_{h \in H}hUh\inv = \bigcap_{x \in X}xUx\inv
\]
is a compact open subgroup normalized by $H$ such that $V \le U$.  In particular $H$ is uniscalar on $G$, and also $V$ is tidy for $H$, so $H$ is flat on $G$.  If $K$ is smooth, then $V$ can be made arbitrarily small, so $H$ is smooth.
\end{proof}

\subsection{Metrizability}

A topological space (or group) is \defbold{metrizable}\index{metrizable} if it is homeomorphic to a metric space.  Not all \tdlc groups are metrizable, and for the most part we do not need to restrict to the metrizable case, but occasionally it will be necessary to do so.  Here are some equivalent conditions.

\begin{lem}\label{basic:metrizable}Let $G$ be a \tdlc group.  Then the following are equivalent.
\begin{enumerate}[(i)]
\item $G$ is metrizable;
\item $G$ is first countable, that is, there is a countable base of neighbourhoods of the identity;
\item $G$ contains a Polish (that is, separable and completely metrizable) open subgroup;
\item Every compact subgroup of $G$ has only countably many open subgroups;
\item Every non-discrete compact subgroup of $G$ is homeomorphic to the Cantor set;
\item $G$ is either discrete or homeomorphic to a disjoint union of copies of the Cantor set.
\end{enumerate}
\end{lem}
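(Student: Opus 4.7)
The plan is to prove the six conditions equivalent by establishing the cycle of implications $(i) \Rightarrow (ii) \Rightarrow (iii) \Rightarrow (iv) \Rightarrow (v) \Rightarrow (vi) \Rightarrow (i)$. The implication $(i) \Rightarrow (ii)$ is immediate (every metric space is first countable), and $(vi) \Rightarrow (i)$ is also clear since both discrete spaces and disjoint unions of copies of the Cantor set are metrizable. So the work lies in the middle four implications, and throughout the main external tool is the Birkhoff--Kakutani theorem: a Hausdorff topological group is metrizable if and only if it is first countable.

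For $(ii) \Rightarrow (iii)$, I would invoke van Dantzig's theorem to get a compact open subgroup $U$. Being a subspace of a first countable space, $U$ is first countable; as a Hausdorff topological group, Birkhoff--Kakutani makes it metrizable. A compact metrizable space is second countable and complete, hence Polish, so $U$ is the required Polish open subgroup. For $(iii) \Rightarrow (iv)$: given a Polish open subgroup $O$ and a compact subgroup $K$, the intersection $K \cap O$ is open of finite index in $K$, and sits as a closed subspace of the Polish space $O$, hence is second countable. Thus $K$ itself is second countable (a finite union of second countable subspaces), so it has countably many open sets and in particular countably many open subgroups.

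For $(iv) \Rightarrow (v)$, take a non-discrete compact subgroup $K$; as a closed subgroup of $G$ it is a profinite group, so its open subgroups form a base of identity neighbourhoods. By hypothesis there are only countably many, so $K$ is first countable, hence metrizable by Birkhoff--Kakutani, hence second countable. Now $K$ is compact, metrizable, totally disconnected, and perfect (by homogeneity, a non-discrete topological group has no isolated points), so Brouwer's characterization of the Cantor set gives that $K$ is homeomorphic to the Cantor set. For $(v) \Rightarrow (vi)$: assuming $G$ is non-discrete, pick a compact open subgroup $U$ by van Dantzig; one checks that $U$ cannot be finite (otherwise $\triv = U \cap V$ would be open for some small enough $V$, making $G$ discrete), so $U$ is non-discrete and hence Cantor by (v), and $G$ is a disjoint union of cosets of $U$ each homeomorphic to $U$.

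The only step that is more than bookkeeping is the double appeal to Birkhoff--Kakutani in $(ii) \Rightarrow (iii)$ and $(iv) \Rightarrow (v)$, which is the bridge from a countable neighbourhood basis to actual metrizability; apart from this, the main conceptual input is the classical characterization of the Cantor set used in $(iv) \Rightarrow (v)$. Everything else reduces to van Dantzig's theorem, elementary properties of compact open subgroups, and the fact that in a \tdlc group a compact subgroup is non-discrete if and only if it is infinite.
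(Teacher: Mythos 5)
Your proof is essentially correct, but it takes a genuinely different route from the paper's. The paper first reduces to the profinite case (observing that $G$ is homeomorphic to a disjoint union of copies of a compact open subgroup $U$, so all six conditions pass between $G$ and $U$), notes that each of (iii)--(vi) trivially implies metrizability, and then drives everything through the characterization of metrizable profinite groups as inverse limits of countable sequences of finite groups (Wilson, Proposition~4.1.3): this simultaneously gives (i)$\Leftrightarrow$(ii), (i)$\Rightarrow$(iv), and --- via the observation that such an inverse limit is finite or a Cantor set --- (i)$\Rightarrow$(iii), (v), (vi). You instead run a single cycle of implications, replacing the inverse-limit machinery with two appeals to Birkhoff--Kakutani and one appeal to Brouwer's topological characterization of the Cantor set. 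Both are legitimate; the paper's route keeps everything inside profinite group theory and gets the countability of open subgroups almost for free from the inverse-limit description, while yours is more self-contained at the level of general topological groups and makes the role of first countability as the pivot more transparent.

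One step needs repair. In (iii)$\Rightarrow$(iv) you argue that $K$ is second countable and therefore ``has countably many open sets and in particular countably many open subgroups.'' A second countable space can have uncountably many open sets (already $\bZ_p$ does: distinct unions of distinct subfamilies of a countable family of pairwise disjoint clopen balls give $2^{\aleph_0}$ open sets), so the intermediate claim is false as stated. The conclusion is still correct, but for a different reason: an open subgroup of the compact group $K$ is clopen, hence compact open, hence a \emph{finite} union of members of any fixed countable basis of $K$; since there are only countably many finite subfamilies of a countable basis, $K$ has only countably many open subgroups. (Alternatively, and closer to the paper: a first countable profinite group has a countable descending chain of open normal subgroups forming a neighbourhood base, every open subgroup contains one of them, and only finitely many subgroups lie above a given finite-index subgroup.) With that substitution the cycle closes and the argument is complete.
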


\begin{proof}It is clear that if $U$ is a compact open subgroup of $G$, then $G$ is homeomorphic to a disjoint union of copies of $U$ (via the partition of $G$ into left cosets of $U$), so $G$ is metrizable if and only if $U$ is metrizable.  The other properties are also stable on passing between $G$ and $U$.  Hence we may assume $G$ is profinite.  It is also clear that each of the conditions (iii), (iv), (v) and (vi) implies metrizability.

By \cite[Proposition~4.1.3]{Wilson}, $G$ is metrizable if and only if it is an inverse limit of a countable sequence of finite groups.  An inverse limit of countably many finite groups is evidently first countable. Conversely, by Van Dantzig's Theorem any base of neighbourhoods of the identity in a \tdlc group can be replaced by one of the same size consisting of compact open subgroups, so a first countable profinite group $G$ has a base of neighbourhoods of the identity consisting of countably many open subgroups, from which we conclude that $G$ is an inverse limit of a countable sequence of finite groups, and that $G$ has only countably many open subgroups in total (since only finitely many open subgroups of a compact group can contain a given open subgroup).  Hence (i) and (ii) are equivalent and (i) implies (iv).

Under the assumption that $G$ is an inverse limit of a countable sequence of finite groups, it is easily verified that $G$ is either finite or homeomorphic to the Cantor set, and thus $G$ is Polish; moreover, every closed subgroup of a Polish group is Polish.  So (i) implies (iii), (v) and (vi), completing the proof that all six conditions are equivalent.
\end{proof}

\section{The relative Tits core}

Contraction groups in \tdlc groups have useful stability properties, which translate well to the context of relative Tits cores.  In particular, the group $G^\dagger_X$ is less sensitive to the choice of $X$ than one might expect, as will be shown in Theorem~\ref{titscore_closure} below.  First, we recall some prior work on stability properties of contraction groups.

\subsection{Prior results on stability of the contraction group}

The following result on contraction groups was proved by Baumgartner--Willis for metrizable \tdlc groups, then extended to the general \tdlc case by Jaworski.  (The analogous assertion does not hold in general for connected locally compact groups: see \cite[Example~4.1]{JawDistal}.)

\begin{thm}[\cite{BaumgartnerWillis} Theorem 3.8, \cite{Jaw} Theorem 1]\label{thm:bw:relative_contraction}Let $G$ be a \tdlc group, let $\alpha \in \Aut(G)$ and let $H$ be a closed subgroup of $G$ such that $\alpha(H) = H$.  Let $\mc{O}(G)$ be the set of all identity neighbourhoods in $G$.  Define
$$\con_{G/H}(\alpha) := \{ x \in G \mid \forall U \in \mc{O}(G) \; \exists n \; \forall n' \ge n : \alpha^{n'}(x) \in UH \}.$$
Then $\con_{G/H}(\alpha) = \con_G(\alpha)H$.\end{thm}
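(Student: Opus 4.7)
The inclusion $\con_G(\alpha)H \subseteq \con_{G/H}(\alpha)$ is immediate from the definitions. If $x = ch$ with $c \in \con_G(\alpha)$ and $h \in H$, then for any $U \in \mc{O}(G)$ we have $\alpha^n(c) \in U$ for all large $n$; combined with $\alpha^n(h) \in H$ (from $\alpha(H) = H$), this gives $\alpha^n(x) \in UH$ eventually.

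The reverse inclusion is the substantive content. The plan is to fix $x \in \con_{G/H}(\alpha)$ and a compact open subgroup $V$ tidy for $\alpha$, with tidy decomposition $V = V_+V_-$ and tidy-below data from Lemma~\ref{tidybelow:asymptotic}. For every compact open $W \subseteq V$, the hypothesis yields an integer $N_W$ and, for each $n \ge N_W$, elements $w_n \in W$ and $h_n \in H$ with $\alpha^n(x) = w_n h_n$. Setting $h_n' := \alpha^{-n}(h_n) \in H$ (using $\alpha$-invariance of $H$), the relation rearranges to
\[
\alpha^n\bigl(x (h_n')\inv\bigr) = w_n.
\]
The aim is to extract, from the family $(h_n')_{n,W}$, a single $h \in H$ such that $x h\inv \in \con_G(\alpha)$.

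The extraction proceeds by a compactness/diagonal argument over pairs $(n,W)$, directed by shrinkage of $W$ in $\mc{O}(G)$. The freedom in the choice of $h_n$ is governed by the set $H \cap W\inv \alpha^n(x)$; restricting $W$ to a neighborhood base consisting of tidy-adapted compact open subgroups, these sets localize to a single $H$-coset in the limit. Compactness of $V$ and closedness of $H$ yield a limit $h \in H$, and the tidy-below property ensures the corresponding sequence of values $\alpha^n(x h\inv)$ accumulates in $\overline{\con(\alpha)} = \con(\alpha)\nub(\alpha)$ by Theorem~\ref{basic:closed_contraction}. Any residual $\nub(\alpha)$-component in the limit can then be absorbed into $h$ using $\alpha$-invariance of $\nub(\alpha) \subseteq V$ together with the observation that $\nub(\alpha)$ has no proper open $\alpha$-invariant subgroups, so the nub contribution to $xh\inv$ must already be captured by the $H$-factor.

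The main obstacle is the coherent passage to the limit: a single $h$ must work for all $U \in \mc{O}(G)$ simultaneously, not just for one at a time. In the metrizable case this reduces to a Cauchy-type argument inside a Polish group (the Baumgartner--Willis approach). For the general \tdlc case the sequential argument breaks down, and one must work with nets indexed by the directed set of compact open subgroups of $V$, exploiting compactness of $V$ to pass to a convergent subnet (Jaworski's contribution). A secondary delicate point is ensuring the limit lands in $\con_G(\alpha)$ itself rather than merely its closure $\con_G(\alpha)\nub(\alpha)$; this separation is what ultimately forces the final output to be of the form $ch$ with $c \in \con_G(\alpha)$ and $h \in H$.
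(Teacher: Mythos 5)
The paper does not actually prove this statement: it is imported from \cite{BaumgartnerWillis} (Theorem~3.8, metrizable case) and \cite{Jaw} (Theorem~1, general case), so there is no internal proof to compare against, and your task is really to prove those cited results. Your easy inclusion $\con_G(\alpha)H \subseteq \con_{G/H}(\alpha)$ is fine. The reverse inclusion, however, is only outlined, and the outline has concrete gaps that are precisely the content of the cited theorems.

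First, the compactness step fails as stated. From $\alpha^n(x) = w_nh_n$ you set $h_n' = \alpha^{-n}(h_n)$, which gives $h_n' = \alpha^{-n}(w_n\inv)\,x$. The elements $w_n$ range over $W \subseteq V$, and $\alpha^{-n}(W)$ is not contained in any fixed compact set as $n \to \infty$: by Lemma~\ref{tidy:orbit_dynamics} it is only the $V_-$-part of $V$ whose backward $\alpha$-orbit stays bounded. Hence the family $(h_n')$ need not be relatively compact, and ``compactness of $V$ and closedness of $H$ yield a limit $h \in H$'' does not follow. Taming this unboundedness --- decomposing $w_n$ via $V = V_+V_-$ and showing that the divergent part can be absorbed coherently --- is the heart of the Baumgartner--Willis argument and is absent here; likewise the claim that the sets $H \cap W\inv\alpha^n(x)$ ``localize to a single $H$-coset'' is never made precise or proved. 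Second, the final step is wrong as written: if the limiting procedure only delivers $xh\inv \in \overline{\con(\alpha)} = \con(\alpha)\nub(\alpha)$, then $x = cnh$ with $c \in \con(\alpha)$ and $n \in \nub(\alpha)$, and to conclude $x \in \con_G(\alpha)H$ you need $nh \in H$, i.e.\ $n \in H$. There is no reason for $\nub(\alpha)$ to meet $H$, and the fact that $\nub(\alpha)$ has no proper open $\alpha$-invariant subgroups says nothing about membership in $H$; so the nub component cannot simply be ``absorbed into $h$.'' In short, the proposal correctly identifies where the difficulties live (unbounded backward orbits, metrizable versus general, the gap between $\con(\alpha)$ and its closure) but resolves none of them; as it stands it is a plan, not a proof.
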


In particular, combining Theorem~\ref{thm:bw:relative_contraction} with Proposition~\ref{basic:anisotropic}, we have a criterion for an automorphism to have anisotropic action on a subquotient of $G$.

\begin{cor}\label{cor:anisotropic_quotient}Let $G$ be a \tdlc group, let $\alpha \in \Aut(G)$, and let $H$ and $K$ be closed $\alpha$-invariant subgroups of $G$ such that $K$ is normal in $H$.  A sufficient condition for $\alpha$ to have anisotropic action on $H/K$ is that $G^\dagger_{\alpha} \le K$.  If $H$ is open in $G$, this condition is also necessary.\end{cor}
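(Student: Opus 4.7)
The plan is to combine the two main preliminary tools already introduced: the relative contraction identity $\con_{G/H}(\alpha) = \con_G(\alpha)H$ of Baumgartner--Willis--Jaworski (Theorem~\ref{thm:bw:relative_contraction}), and the characterization of anisotropy as triviality of two-sided contraction (Proposition~\ref{basic:anisotropic}(iii)). Since $H$ is closed and $\alpha$-invariant, it is itself a \tdlc group on which $\alpha$ restricts to an automorphism, and $K$ is a closed $\alpha$-invariant normal subgroup of $H$, so $H/K$ is a \tdlc group with an induced action of $\alpha$. The whole argument will take place in $H$ (and its quotient $H/K$), with occasional reference back to $G$ via the inclusion $\con_H(\alpha) = \con_G(\alpha) \cap H$, which holds because $H$ is closed in $G$.

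For the sufficiency direction, I would suppose $G^\dagger_\alpha \le K$, so that both $\con_G(\alpha)$ and $\con_G(\alpha\inv)$ lie in $K$. Intersecting with $H \supseteq K$ yields $\con_H(\alpha), \con_H(\alpha\inv) \le K$. Applying Theorem~\ref{thm:bw:relative_contraction} inside $H$ with the closed $\alpha$-invariant subgroup $K$ gives $\con_{H/K}(\alpha) = \con_H(\alpha)K/K = \{K\}$, and similarly for $\alpha\inv$. Proposition~\ref{basic:anisotropic}(iii) then shows that the induced automorphism of $H/K$ is anisotropic.

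For the necessity direction, under the extra assumption that $H$ is open, I would first establish the inclusion $\con_G(\alpha) \subseteq H$: if $x \in \con_G(\alpha)$ then $\alpha^n(x) \in H$ for all sufficiently large $n$, and $\alpha$-invariance of $H$ gives $x = \alpha^{-n}(\alpha^n(x)) \in \alpha^{-n}(H) = H$. Hence $\con_H(\alpha) = \con_G(\alpha)$, and likewise for $\alpha\inv$. If $\alpha$ acts anisotropically on $H/K$, Proposition~\ref{basic:anisotropic}(iii) gives $\con_{H/K}(\alpha) = \{K\}$, and Theorem~\ref{thm:bw:relative_contraction} then forces $\con_H(\alpha) \le K$, hence $\con_G(\alpha) \le K$. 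Since $K$ is closed, $\overline{\con_G(\alpha)} \le K$, and the same for $\alpha\inv$; thus $G^\dagger_\alpha \le K$ by definition.

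There is no real obstacle: the corollary is essentially a repackaging of the two cited results applied to the restricted/induced action on $H$ and $H/K$. The only mildly delicate point is the observation $\con_G(\alpha) \subseteq H$ when $H$ is open and $\alpha$-invariant, which is needed to transfer information between $G$ and $H$ in the necessity direction, and this follows from a one-line pullback argument using $\alpha$-invariance.
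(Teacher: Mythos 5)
Your proof is correct and is exactly the argument the paper intends: the corollary is stated there as an immediate combination of Theorem~\ref{thm:bw:relative_contraction} with Proposition~\ref{basic:anisotropic}(iii), applied to the induced action on $H/K$, with the same transfer steps ($\con_H(\alpha)=\con_G(\alpha)\cap H$ for $H$ closed, and $\con_G(\alpha)\subseteq H$ when $H$ is open and $\alpha$-invariant) that you spell out.
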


The stability of contraction groups was also investigated in \cite{CRW-TitsCore}.

\begin{prop}[\cite{CRW-TitsCore}, Lemma~4.1 and Corollary~4.2]\label{contraction:conjugate}Let $G$ be a \tdlc group.  Let $g \in G$ and let $U$ be a compact open subgroup of $G$ that is tidy above for $g$.  Then for every $u \in U$, there exists $t \in U_+ \cap \con(g\inv)$ such that
\[
\con(gu) = t\con(g)t\inv.
\]
\end{prop}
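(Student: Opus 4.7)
The plan is to construct the conjugator $t$ explicitly from the iterates of $gu$ and then verify the claimed equality of contraction groups.

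First, writing $\alpha$ for conjugation by $g$, an easy induction gives
\[
(gu)^n = g^n w_n, \qquad w_n := \alpha^{-(n-1)}(u)\,\alpha^{-(n-2)}(u)\cdots\alpha^{-1}(u)\,u,
\]
and consequently $(gu)^n x (gu)^{-n} = \alpha^n(w_n x w_n^{-1})$ for every $x \in G$. Any candidate $t$ will therefore arise as a suitable asymptotic ``limit'' of $w_n^{-1}$, in a sense compatible with the simultaneous application of $\alpha^n$.

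Next, I would use the tidy above decomposition $U = U_+U_-$ to split $u = u_+u_-$ with $u_\pm \in U_\pm$ and rearrange $w_n$ accordingly. Since $\alpha^{-1}(U_+) \subseteq U_+$, the iterates $\alpha^{-k}(u_+)$ stay in $U_+$ for all $k \ge 0$, while the factors coming from $u_-$ can be commuted past the $u_+$-factors at the cost of correction terms living in $U_-$ and in the largest $\alpha$-invariant compact subgroup $U_0 := \bigcap_{k \in \bZ}\alpha^k(U)$. This should yield a factorisation $w_n = s_n r_n$, with $s_n \in U_+ \cap \con(g\inv)$ and a ``remainder'' $r_n$ whose forward $\alpha$-orbit stays in a fixed compact subgroup, so that $\alpha^n(r_n x r_n^{-1})$ is asymptotically negligible whenever $x \in \con(g)$. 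Invoking the internal structure of $U_+$ from \cite{BaumgartnerWillis} -- essentially the decomposition of $U_+$ along its intersection with $\con(g\inv)$ modulo the $\alpha$-invariant compact core -- one verifies that $(s_n)$ is Cauchy in $U_+ \cap \con(g\inv)$, and its limit $t\inv$ furnishes the required element $t$.

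Finally, I would verify both inclusions of $\con(gu) = t\con(g)t\inv$ directly from $(gu)^n x (gu)^{-n} = \alpha^n(w_n x w_n^{-1})$, combining the convergence $s_n \to t\inv$ with the asymptotic negligibility of $r_n$. The main obstacle is Step 2: the raw sequence $w_n$ is not Cauchy in general, and the crux of the argument is to exploit \emph{both} halves of the tidy above decomposition $U = U_+U_-$, in combination with the Baumgartner--Willis description of $U_+$, to isolate a genuinely convergent factor in $U_+ \cap \con(g\inv)$ while showing that all other contributions act trivially on $\con(g)$ in the limit.
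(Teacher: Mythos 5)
This proposition is not proved in the paper at all: it is imported from \cite{CRW-TitsCore} (Lemma~4.1 and Corollary~4.2), so there is no internal proof to compare against. That said, your strategy --- expand $(gu)^n=g^nw_n$, peel a convergent $U_+$-factor off $w_n$ by iterated use of the tidy-above decomposition, and take its limit as the conjugator --- is essentially the strategy of the cited proof, and Step~1 is correct. The problems are in how you propose to execute Steps~2 and~3.

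Two of your specific claims fail, and the second is the crux of the lemma. First, $(s_n)$ is not Cauchy in general, and the factorisation is not obtained by ``commuting factors past each other''. Running the recursion correctly, one writes $w_n\inv=t_n\inv e_n$ with $t_n\in U_+$ and $e_n\in\alpha^{-(n-1)}(U_-)$, by re-decomposing $e_{n}\alpha^{-n}(u\inv)$ inside $\alpha^{-n}(U)=\alpha^{-n}(U_+)\alpha^{-n}(U_-)$ at each stage; the correction terms therefore live in $\alpha^{-n}(U_{\pm})$, not in $U_-$ and $U_0$. In particular the remainders $e_n$ do not stay in a fixed compact set (that is exactly what tidiness \emph{below} would control, and it is not assumed); all that is true, and all that is needed, is that $\alpha^n(e_n)\in\alpha(U_-)$ is uniformly bounded. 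The increments $t_{n+1}t_n\inv$ lie in $\alpha^{-n}(U_+)$, and these sets decrease to the $\alpha$-invariant core $U_0=\bigcap_{k\in\bZ}\alpha^k(U)$, \emph{not} to $\{1\}$, so $(t_n)$ converges only modulo $U_0$; one must extract a limit point by compactness of $U_+$, and then remove a $U_0$-discrepancy at the very end (using that $U_0$ normalizes $\con(g)$ and that $U_+\subseteq\con(g\inv)U_0$, which follows from Theorem~\ref{thm:bw:relative_contraction} applied to $G/U_0$) to land in $U_+\cap\con(g\inv)$. Second, and more seriously, ``convergence of $s_n$ plus negligibility of $r_n$'' cannot close Step~3: since $\alpha^n$ does not carry small sets to small sets, $s_n\to t$ gives no control over $\alpha^n(s_nt\inv)$, which is precisely the term that appears when you conjugate inside $\alpha^n(w_nxw_n\inv)$. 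What the recursion actually delivers, because the telescoped differences $\alpha^n(t_mt_n\inv)$ for $m\ge n$ are products of elements of $\alpha^{-j}(U_+)\subseteq U_+$ with $j\ge0$, is the uniform bound $\alpha^n(t_nt\inv)\in U_+$ for \emph{all} $n$; both inclusions of $\con(gu)=t\con(g)t\inv$ then follow by sandwiching $\alpha^n(t_nxt_n\inv)$ between conjugations by elements of the fixed compact sets $U_+$ and $\alpha(U_-)$. Without isolating that uniform bound --- which is a strictly stronger statement than convergence --- the proof does not go through.
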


\begin{prop}[\cite{CRW-TitsCore}, Proposition~5.1]\label{prop:ContractionGroup:NC}
Let $G$ be a \tdlc group and let $A$ be a (not necessarily closed) subgroup of $G$.  Given any $g \in A$, if $\overline{\con(g)}$ normalizes $A$, then $\overline{\con(g)} \le A$.  In particular, any normal subgroup of $G$ containing $g$ also contains $\overline{\con(g)}$.
\end{prop}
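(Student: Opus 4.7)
The plan is to use the commutator identity systematically. For every $y \in \overline{\con(g)}$, the element $[y,g] = ygy^{-1}g^{-1}$ lies in $A$ (since $y$ normalizes $A$ and $g \in A$, so $ygy^{-1} \in A$, whence $[y,g] \in A$) and also in $\overline{\con(g)}$ (which is $g$-invariant, so $gy^{-1}g^{-1} \in \overline{\con(g)}$). Thus the coboundary map $\beta : \overline{\con(g)} \to \overline{\con(g)}$ given by $\beta(y) = y\alpha(y)^{-1}$, where $\alpha$ denotes conjugation by $g$, lands in $A \cap \overline{\con(g)}$. The strategy is to show that $\beta$ is essentially surjective, so that every element of $\overline{\con(g)}$ is an actual commutator $[z,g]$ for some $z \in \overline{\con(g)}$; this immediately yields $\overline{\con(g)} \subseteq A$.

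The bulk of the work is to show $\con(g) \subseteq A$ by constructing $\beta$-preimages via an infinite product. Fix a compact open subgroup $U$ of $G$ tidy for $g$; recall from tidy theory that $\alpha(U_-) \le U_-$, and note that Lemma~\ref{tidy:orbit_dynamics} implies $\con(g) \cap U \subseteq U_-$ (since for $u \in \con(g) \cap U$ the set $\{n : \alpha^n(u) \in U\}$ is an interval in $\bZ$ containing $0$ and all sufficiently large $n$, hence contains $\bN$). For $x \in \con(g) \cap U_-$, the partial products $z_n = x\,\alpha(x)\,\alpha^2(x)\cdots\alpha^n(x)$ all lie in the compact subgroup $U_-$. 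Since $\alpha^k(x) \to 1$, for any compact open subgroup $V \le U_-$ we have $\alpha^k(x) \in V$ for large $k$, so $z_m^{-1}z_{m+k} = \alpha^{m+1}(x)\cdots\alpha^{m+k}(x) \in V$ for $m$ large and any $k \ge 0$. Thus $(z_n)$ is Cauchy in the compact group $U_-$ and converges to some $z \in U_- \cap \overline{\con(g)}$ satisfying $\alpha(z) = x^{-1}z$, whence $x = z\alpha(z)^{-1} = [z,g] \in A$. For a general $x \in \con(g)$, pick $N$ so that $\alpha^N(x) \in U_- \cap \con(g)$; then $\alpha^N(x) \in A$ by the above, and $x = g^{-N}\alpha^N(x)g^N \in A$ because $g \in A$.

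By Theorem~\ref{basic:closed_contraction}, $\overline{\con(g)} = \con(g)\,\nub(g)$, so the final step is to show $\nub(g) \subseteq A$. The commutator trick still places $[y,g] \in A \cap \nub(g)$ for every $y \in \nub(g)$, and by the ergodicity of $g$ on $\nub(g)$ (Theorem~\ref{basic:nub_characterizations}) combined with the profinite structure of $\nub(g)$, the closure in $\nub(g)$ of the subgroup generated by these commutators equals $\nub(g)$: on the resulting profinite quotient $g$ acts trivially, and a non-trivial profinite group cannot admit an ergodic automorphism that acts trivially. This makes $A \cap \nub(g)$ dense in $\nub(g)$. The main obstacle, and the technical heart of the proof, is to promote this density to exact equality $A \cap \nub(g) = \nub(g)$. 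This is accomplished by showing that $\beta$ is actually surjective on $\nub(g)$: when $\nub(g)$ is abelian, $\beta$ is a continuous homomorphism with closed image, and the ergodicity argument applied to $\nub(g)/\beta(\nub(g))$ gives surjectivity directly; for non-abelian $\nub(g)$, a finer analysis via finite $\alpha$-quotients and the fixed-point structure of $\alpha$ is required to realize every $y \in \nub(g)$ as an exact commutator $[z,g]$ with $z \in \nub(g) \subseteq \overline{\con(g)}$, completing the inclusion $\nub(g) \subseteq A$.
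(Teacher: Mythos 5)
Your argument through $\con(g)\le A$ is correct (the telescoping product inside $U_-$, the use of Lemma~\ref{tidy:orbit_dynamics} to see $\con(g)\cap U\subseteq U_-$, and the conjugation trick to handle general $x$ all work), and reducing the remainder to surjectivity of $\beta(y)=y\alpha(y)\inv$ on $\nub(g)$ is the right target. But there is a genuine gap exactly where you locate the ``technical heart'': for non-abelian $\nub(g)$ you supply no argument, and the route you gesture at (``finite $\alpha$-quotients'') cannot work as described, because by Theorem~\ref{basic:nub_characterizations} the nub has \emph{no} proper open $\alpha$-invariant subgroups, hence no non-trivial finite $\alpha$-equivariant quotients to induct over. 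The ergodicity argument you do give only shows that the closed subgroup generated by $\beta(\nub(g))$ is all of $\nub(g)$, i.e.\ that $A\cap\nub(g)$ is dense in $\nub(g)$; since $A$ is not assumed closed, density is not equality, and this is precisely the point requiring a further idea.

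The step can be closed by running your own infinite-product construction \emph{inside} the nub. Let $N=\nub(g)$. By Theorem~\ref{basic:nub_characterizations}, $\alpha$ acts ergodically on $N$; applying the same theorem to the compact group $N$ with the automorphism $\alpha|_N$ gives $N=\nub_N(\alpha)=\overline{\con_N(\alpha)}\cap\overline{\con_N(\alpha\inv)}$, so $\overline{\con_N(\alpha)}=N$, and $\con_N(\alpha)=\con(g)\cap N$ by Lemma~\ref{lem:samecontraction}; thus $\con(g)\cap N$ is dense in $N$. For $x\in\con(g)\cap N$ the partial products $x\,\alpha(x)\cdots\alpha^n(x)$ remain in the compact group $N$ and converge, exactly as in your first step, to some $z\in N$ with $x=z\alpha(z)\inv=\beta(z)$. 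Hence $\beta(N)$ is a compact, therefore closed, subset of $N$ containing the dense subset $\con(g)\cap N$, so $\beta(N)=N$ after all. Every $u\in N$ is then an honest commutator $[z,g]$ with $z\in N\le\overline{\con(g)}$, whence $u\in A$, and $\overline{\con(g)}=\con(g)\nub(g)\le A$ follows. Note this also shows your surjectivity claim is true in general, but for a reason (density of the contraction group in the nub plus compactness) quite different from the Lang-type analysis you propose.
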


We note the following variant of Proposition~\ref{contraction:conjugate} for convenience.

\begin{cor}\label{contraction:conjugate:bis}
Let $G$ be a \tdlc group.  Let $g \in G$ and let $U$ be a compact open subgroup of $G$ that is tidy above for $g$.  Then for every $u \in U$, there exists $t \in U_+ \cap \con(g\inv)$ such that
\[
\con(ug) = t\con(g)t\inv.
\]
\end{cor}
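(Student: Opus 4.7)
The plan is to deduce this corollary directly from Proposition~\ref{contraction:conjugate} via a simple conjugation trick. The key observation is that $ug$ and $gu$ are conjugate in $G$ via $g$ itself: explicitly, $g\inv(gu)g = ug$. Consequently $\con(ug) = g\inv\con(gu)g$, and the description of $\con(gu)$ provided by the proposition can be transported across this conjugation.

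First, I would apply Proposition~\ref{contraction:conjugate} to obtain $s \in U_+ \cap \con(g\inv)$ with $\con(gu) = s\con(g)s\inv$. Then, conjugating by $g\inv$ and using the standard fact that $g$ normalizes $\con(g)$ (so that $g\inv\con(g)g = \con(g)$, since $(g\inv y g)$ lies in $\con(g)$ whenever $y$ does, as the sequence $g^n(g\inv y g)g^{-n}$ is just a tail of $g^n y g^{-n}$), I would rewrite
\[
\con(ug) = g\inv s\con(g)s\inv g = (g\inv s g)\con(g)(g\inv s g)\inv,
\]
and set $t := g\inv s g$.

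The remaining task is to check that $t$ lies in $U_+ \cap \con(g\inv)$. The containment $t \in \con(g\inv)$ is immediate from the same kind of tail-of-sequence argument, since $\con(g\inv)$ is likewise invariant under conjugation by $g$. For the containment in $U_+$, I would use the defining formula $U_+ = \bigcap_{n \ge 0} g^n U g^{-n}$, from which a one-step index shift gives $g\inv U_+ g = \bigcap_{n \ge -1} g^n U g^{-n} \subseteq U_+$.

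There is essentially no obstacle beyond spotting the identity $ug = g\inv(gu)g$; once this observation is in place, the entire argument is a short bookkeeping exercise confirming that conjugation by $g\inv$ keeps $t$ inside both $U_+$ and $\con(g\inv)$, while the middle factor is preserved by $g$-conjugation.
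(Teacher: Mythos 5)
Your proof is correct and takes essentially the same route as the paper's: both reduce the statement to Proposition~\ref{contraction:conjugate} by a conjugation by $g$ and then conclude with the observation that $g\inv U_+ g \le U_+$. The only cosmetic difference is that the paper conjugates the input (replacing $U$ by $V = g\inv Ug$ and writing $ug = gv$ with $v = g\inv ug \in V$) while you conjugate the output equation; the element $t$ obtained is the same $g\inv$-conjugate of an element of $U_+ \cap \con(g\inv)$ either way.
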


\begin{proof}Let $V = g\inv U g$; note that $V$ is tidy above for $g$.  We have $ug = gv$ where $v = g\inv u g \in V$, so by Proposition~\ref{contraction:conjugate}, there exists $t \in V_+ \cap \con(g\inv)$ such that
\[
\con(ug) = \con(gv) = t\con(g)t\inv.
\]
Moreover, $V_+ = g\inv U_+g \le U_+$ by the definition of $U_+$, so $t \in U_+ \cap \con(g\inv)$.
\end{proof}

There is a straightforward condition for when the contraction group of an element is the same as its contraction group acting on a closed subgroup.

 \begin{lem}\label{lem:samecontraction}Let $G$ be a \tdlc group, let $g \in G$ and let $K$ be a closed $\langle g \rangle$-invariant subgroup of $G$.  Then $\con_K(g) = \con(g) \cap K$.  In particular, we have $K^\dagger = G^\dagger_K$ if and only if $G^\dagger_K \le K$.\end{lem}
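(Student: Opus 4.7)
The plan is to first establish the identity $\con_K(g) = \con(g) \cap K$ directly from the definitions, then use it to verify the ``in particular'' claim by chasing what the hypothesis $G^\dagger_K \le K$ forces about each generator of $G^\dagger_K$.

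For the main identity, the key observation is that since $K$ is closed in $G$, the subspace topology on $K$ coincides with any topology making $K$ a topological group, and in particular a net in $K$ converges to $1 \in K$ in $K$ if and only if it converges to $1$ in $G$. Because $K$ is $\langle g \rangle$-invariant, for any $x \in K$ the conjugates $g^n x g^{-n}$ remain in $K$. Therefore $g^n x g^{-n} \to 1$ in $K$ precisely when $g^n x g^{-n} \to 1$ in $G$, which gives $\con_K(g) = K \cap \con_G(g)$. This step is routine and contains no real difficulty.

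For the second assertion, recall that since $K$ is a subgroup we have
\[
G^\dagger_K = \langle \overline{\con_G(k)} \mid k \in K \rangle, \qquad K^\dagger = \langle \overline{\con_K(k)}^K \mid k \in K \rangle,
\]
where the superscript indicates the ambient space for the closure. One direction is trivial: if $K^\dagger = G^\dagger_K$, then since $K^\dagger$ is by construction a subgroup of $K$, we get $G^\dagger_K \le K$. For the converse, assume $G^\dagger_K \le K$. Then each generator $\overline{\con_G(k)}$ is contained in $K$, so in particular $\con_G(k) \subseteq K$, which by the first part of the lemma means $\con_G(k) = \con_G(k) \cap K = \con_K(k)$. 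Since $K$ is closed in $G$, the closure of $\con_G(k)$ in $G$ agrees with its closure in $K$, so $\overline{\con_G(k)}^G = \overline{\con_K(k)}^K$. The generating sets for $G^\dagger_K$ and $K^\dagger$ thus coincide term by term, giving $K^\dagger = G^\dagger_K$.

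No step here looks like a genuine obstacle; the whole lemma is a bookkeeping exercise once one notices that closedness of $K$ makes the ambient-space choice for closures immaterial, and that $\langle g \rangle$-invariance of $K$ is exactly what is needed to make $\con_K(g)$ well-defined and comparable to $\con_G(g)$.
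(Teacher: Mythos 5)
Your proof is correct and follows essentially the same route as the paper: the identity $\con_K(g)=\con(g)\cap K$ comes down to the fact that the subgroups $K\cap U$ form a base of identity neighbourhoods of $K$ while $\langle g\rangle$-invariance keeps the conjugates inside $K$, and the ``in particular'' clause is the term-by-term comparison of generators that the paper dismisses as clear. One small caution: your intermediate claim that ``the subspace topology on $K$ coincides with any topology making $K$ a topological group'' is false as a general statement; the fact you actually use --- that a net in $K$ converges to $1$ in the subspace topology if and only if it converges to $1$ in $G$ --- is immediate from the definition of the subspace topology and does not need closedness of $K$ (closedness is what makes $K$ a \tdlc group in its own right and makes closures taken in $K$ agree with closures taken in $G$, which you correctly invoke in the second half).
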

 
 \begin{proof}
 Let $g \in G$.  Given $u \in \con_K(g)$, then for all open subgroups $U$ of $G$, we have $g^nug^{-n} \in K \cap U \le U$ for $n$ sufficiently large, since $K \cap U$ is an open subgroup of $K$.  Thus $u \in \con(g) \cap K$.  Conversely, given $u \in \con(g) \cap K$, then $g^nug^{-n} \in K$ for all $n \ge 0$ by hypothesis, so given an open subgroup $U$ of $G$, we have $g^nug^{-n} \in K \cap U$ for $n$ sufficiently large.  Since the subgroups $K \cap U$ form a base of identity neighbourhoods in $K$ as $U$ ranges over the open subgroups of $G$, it follows that $u \in \con_K(g)$.  Thus $\con_K(g) = \con(g) \cap K$.  The last conclusion is clear.
\end{proof}

\subsection{Invariance of contraction groups and the relative Tits core}\label{sec:TitsCore}

Let us consider the implications of Proposition~\ref{contraction:conjugate} for conjugacy classes of contraction groups, and hence of relative Tits cores.

\begin{prop}\label{prop:twosided:stable}Let $G$ be a \tdlc group and let $g \in G$ and define $L_g = \langle \con(g),\con(g\inv) \rangle$.  Let $U$ be an open subgroup of $G$ that is tidy for $g$, let $u,v \in U$ and let $n > 0$.  
\begin{enumerate}[(i)]
\item There exists $t \in U \cap L_g$ such that
\[
\con(ug^nv) = t\con(g)t\inv.
\]
\item We have $L_{ug^nv} = L_g$ and $G^\dagger_{ug^nv} = G^\dagger_g$.  In particular, the normalizers of $L_g$ and $G^\dagger_g$ both contain $U$, and are thus open subgroups of $G$.
\end{enumerate}
\end{prop}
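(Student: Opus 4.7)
The plan is to reduce both parts to the one-sided stability results of Proposition~\ref{contraction:conjugate} and Corollary~\ref{contraction:conjugate:bis}, and then chain them while carefully tracking where the conjugating elements live. I first record the tidiness needed. Since $U$ is tidy for $g$, Lemma~\ref{basic:tidy:powers} gives tidiness for $g^n$ when $n>0$, and Theorem~\ref{tidy:stable}(i) says the set of elements for which $U$ is tidy is closed under left and right multiplication by $U$; hence $U$ is tidy (in particular tidy above) for each of $g^n$, $ug^n$, $ug^nv$, and their inverses.

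For part (i), I first apply Corollary~\ref{contraction:conjugate:bis} to $g^n$ with left factor $u\in U$, producing $t_1\in U\cap\con(g^{-n})$ with $\con(ug^n)=t_1\con(g^n)t_1\inv$; by Lemma~\ref{lem:contraction:powers} this is $t_1\con(g)t_1\inv$ and $t_1\in U\cap L_g$. Next I apply Proposition~\ref{contraction:conjugate} to $ug^n$ with right factor $v\in U$, producing $t_2\in U\cap\con((ug^n)\inv)$ with $\con(ug^nv)=t_2\con(ug^n)t_2\inv$. To see $t_2\in L_g$, I run the symmetric chain: applying Corollary~\ref{contraction:conjugate:bis} to $g^{-n}$ with left factor $u\inv\in U$ gives $\con(g^{-n}u\inv)=s\con(g\inv)s\inv$ for some $s\in U\cap\con(g)\le L_g$, so $\con((ug^n)\inv)\le L_g$ and hence $t_2\in U\cap L_g$. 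Setting $t:=t_2t_1\in U\cap L_g$ yields $\con(ug^nv)=t\con(g)t\inv$ as required.

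For part (ii), the inclusion $L_{ug^nv}\le L_g$ follows from (i) together with its analogue applied to $g\inv$ (using $L_{g\inv}=L_g$), which gives $\con((ug^nv)\inv)=\con(v\inv g^{-n}u\inv)\le L_g$. For the reverse inclusion $L_g\le L_{ug^nv}$, I turn the argument around: setting $h:=ug^nv$ and using that $U$ is tidy for $h$, I apply (i) with $h$ in the role of $g$, exponent~$1$, and factors $u\inv,v\inv\in U$. Since $u\inv hv\inv=g^n$, this gives $\con(g)=\con(g^n)=t'\con(h)t'\inv$ for some $t'\in U\cap L_h$, so $\con(g)\le L_h$, and symmetrically $\con(g\inv)\le L_h$, hence $L_g\le L_h$. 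The equality $G^\dagger_{ug^nv}=G^\dagger_g$ then follows because $t\in L_g\le G^\dagger_g$, so conjugation by $t$ preserves $G^\dagger_g$, giving $\overline{\con(ug^nv)}=t\overline{\con(g)}t\inv\le G^\dagger_g$ and similarly for the inverse, with the reverse inclusion obtained by the same turn-around trick.

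For the normalizer claim, I specialize (ii) to $n=1$ and $v=u\inv$ with arbitrary $u\in U$, giving $L_{ugu\inv}=L_g$. But conjugation by $u$ is an automorphism of $G$ sending $g$ to $ugu\inv$, so it sends $\con(g)$ to $\con(ugu\inv)$, whence $L_{ugu\inv}=uL_gu\inv$ and therefore $u\in\N_G(L_g)$. The identical argument applies to $G^\dagger_g$, and since $u\in U$ was arbitrary, both normalizers contain the open subgroup $U$ and are hence open. The main obstacle I foresee is the bookkeeping step showing $t_2\in L_g$: Proposition~\ref{contraction:conjugate} a priori only places $t_2$ in $\con((ug^n)\inv)$, and the symmetric application of Corollary~\ref{contraction:conjugate:bis} is what closes the chain back into $L_g$ and ultimately drives both the reverse inclusion in (ii) and the openness of the normalizer.
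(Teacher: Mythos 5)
Your proof is correct and follows essentially the same route as the paper: peel off the two factors one at a time via Proposition~\ref{contraction:conjugate} and Corollary~\ref{contraction:conjugate:bis}, check that each conjugator lands in $U\cap L_g$, and get the reverse inclusions in (ii) by running the same argument with $ug^nv$ in the role of $g$ (the paper merely handles $v$ before $u$ rather than the other way round, and reduces to $n=1$ at the outset). One small labelling slip: in your bookkeeping step for $t_2$, the element $u^{-1}$ sits as a \emph{right} factor of $g^{-n}$ in $g^{-n}u^{-1}$, so the correct reference is Proposition~\ref{contraction:conjugate} (which conjugates $\con(g^{-n}u^{-1})$ to $\con(g^{-1})$ by an element of $\con(g)\le L_g$), not Corollary~\ref{contraction:conjugate:bis}; the conclusion you draw is exactly right.
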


\begin{proof}
By Lemma~\ref{lem:contraction:powers}, we have $\con(g) = \con(g^n)$, and by Lemma~\ref{basic:tidy:powers}, $U$ is tidy for $g^n$; thus we may assume $n=1$.  By Proposition~\ref{contraction:conjugate}, there is $t_1 \in U \cap \con(g\inv)$ such that $\con(gv) = t_1\con(g)t\inv_1$.  In particular, $\con(gv) \le L_g$.  Similarly, $\con(v\inv g\inv)$ is conjugate to $\con(g\inv)$ under the action of $\con(g)$, so $\con(v\inv g\inv) \le L_g$.  Now $U$ is tidy for $gv$ by Theorem~\ref{tidy:stable}, so by Corollary~\ref{contraction:conjugate:bis}, there is $t_2 \in U \cap \con(v\inv g\inv)$ such that $\con(ugv) = t_2\con(gv)t\inv_2$.  Now set $t = t_2t_1$, and observe that $t \in U \cap L_g$ and that $\con(ugv) = t\con(g)t\inv$.

Since $\con(g) \le L_g$ and $\con(ugv)$ is $L_g$-conjugate to $\con(g)$, we have $\con(ugv) \le L_g$.  Likewise $\con(v\inv g\inv u\inv) \le L_g$, so $L_{ugv} \le L_g$.  By the same argument $L_g \le L_{ugv}$, since $U$ is tidy for $ugv$, so $L_{ugv} = L_g$.  The proof that $G^\dagger_{ug^nu'} = G^\dagger_g$ is similar.
\end{proof}

We are now able to give several invariance properties of relative Tits cores.

\begin{thm}\label{titscore_closure}Let $G$ be a \tdlc group and let $X$ be a subset of $G$.  Let $Y$ be the set
\[
Y = \{g \in G \mid \overline{\con(g)}, \overline{\con(g\inv)} \le G^\dagger_X\}.
\]
Then the following properties hold.
\begin{enumerate}[(i)]
\item $Y$ is a clopen subset of $G$ that contains all anisotropic elements of $G$.
\item Let $g \in G$ and let $n$ be a non-zero integer.  Then $g \in Y$ if and only if $g^n \in Y$.
\item The normalizer of $Y$ is closed in $G$ and is equal to the normalizer of $G^\dagger_X$ in $G$.
\item Suppose there exists a compact open subgroup $U$ of $G$ such that for all $g \in X$, there exists $V \ge U$ such that $V$ is tidy for $g$.  (For example, $X$ could be a union of finitely many flat subgroups of $G$.)  Then $U \le \N_G(G^\dagger_X)$; in particular, $\N_G(G^\dagger_X)$ is open in $G$.
\item Let $R = \overline{G^\dagger_X}$.  Then 
$$R^\dagger = G^\dagger_R \le G^\dagger_X,$$
so $R \subseteq Y$.  In particular, if $G^\dagger_X$ is dense in $G$, then $G^\dagger_X = G^\dagger$.
\end{enumerate}
\end{thm}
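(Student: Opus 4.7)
The plan is to tackle the five parts in order, leaning heavily on Proposition~\ref{prop:twosided:stable} (stability on tidy double cosets) for (i)--(iv), and on Proposition~\ref{prop:ContractionGroup:NC} plus Theorem~\ref{thmintro:titscore:containment} for (v).

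First I would rewrite $Y$ as $Y = \{g \in G \mid G^\dagger_g \le G^\dagger_X\}$, since $G^\dagger_g = \langle \overline{\con(g)}, \overline{\con(g\inv)}\rangle$. For part (i), Proposition~\ref{prop:twosided:stable}(ii) says that if $U$ is a tidy subgroup for $g$, then $G^\dagger_{h} = G^\dagger_g$ for every $h \in UgU$; hence both $Y$ and its complement are unions of tidy double cosets, so $Y$ is clopen. Anisotropic $g$ have $\overline{\con(g)} = \overline{\con(g\inv)} = \triv$ by Proposition~\ref{basic:anisotropic}(iii), so trivially $g \in Y$. For (ii), Lemma~\ref{lem:contraction:powers} gives $\con(g^n) = \con(g)$ for $n > 0$ (and similarly for $g\inv$), so $G^\dagger_{g^n} = G^\dagger_g$ and the condition defining $Y$ is invariant under $g \leftrightarrow g^n$.

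For (iii), note first that $X \subseteq Y$ (for $\alpha \in X$, $G^\dagger_\alpha \le G^\dagger_X$ trivially) and that for each $y \in Y$ we have $G^\dagger_y \le G^\dagger_X$, giving the dual description $G^\dagger_X = \langle G^\dagger_y \mid y \in Y \rangle$. Conjugation by any $g \in \N_G(Y)$ permutes $Y$, hence sends generators of $G^\dagger_X$ to generators, so $g \in \N_G(G^\dagger_X)$; conversely, $g \in \N_G(G^\dagger_X)$ gives $G^\dagger_{gyg\inv} = gG^\dagger_y g\inv \le G^\dagger_X$, so $gYg\inv \subseteq Y$ and thus $g \in \N_G(Y)$. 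Closedness of $\N_G(Y)$ follows because each map $g \mapsto gy_0 g\inv$ is continuous, so $\{g : gy_0g\inv \in Y\}$ is clopen for each $y_0 \in Y$, and $\N_G(Y)$ is an intersection of such clopen sets (for $y_0$ ranging over $Y$ and also $Y^{-1}$-style set from $g\inv Y g \subseteq Y$). For (iv), pick $V_g \ge U$ tidy for each $g \in X$; for $u \in U \le V_g$, both $u$ and $u\inv$ lie in $V_g$, so Proposition~\ref{prop:twosided:stable}(ii) gives $G^\dagger_{ugu\inv} = G^\dagger_g$, whence $u\overline{\con(g)}u\inv = \overline{\con(ugu\inv)} \le G^\dagger_g \le G^\dagger_X$ and similarly for $g\inv$. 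Since $G^\dagger_X$ is generated by such subgroups, $uG^\dagger_X u\inv \le G^\dagger_X$; replacing $u$ with $u\inv$ gives equality, so $U \le \N_G(G^\dagger_X)$.

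Part (v) is the subtle one. Because $Y$ is closed by (i) and $R = \overline{G^\dagger_X}$, it suffices to prove the \emph{inclusion} $G^\dagger_X \subseteq Y$; then $R \subseteq Y$, which means $\overline{\con(r)} \le G^\dagger_X$ for every $r \in R$ (and likewise for $r\inv$), giving $G^\dagger_R \le G^\dagger_X \le R$, and then Lemma~\ref{lem:samecontraction} upgrades this to $R^\dagger = G^\dagger_R$. To prove $G^\dagger_X \subseteq Y$, let $g \in G^\dagger_X$; by Proposition~\ref{prop:ContractionGroup:NC}, if $\overline{\con(g)}$ normalizes $G^\dagger_X$ then $\overline{\con(g)} \le G^\dagger_X$. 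The hard step is verifying this normalization. My plan is to apply Theorem~\ref{thmintro:titscore:containment} with $D = G^\dagger_X$ and the set there taken to be $R$: since $R = \overline{D}$ and $G^\dagger_X \le \N_G(D)$ trivially, the conclusion $G^\dagger_R \le D = G^\dagger_X$ will follow as soon as one produces an open $U$ with $U \cap G^\dagger_R \le \N_G(G^\dagger_X)$; the argument proceeds by exploiting that for $x \in \con(r)$ one has $r^n x r^{-n} \to 1$, so that for $r \in \N_G(G^\dagger_X)$ the element $x$ inherits the normalization modulo something converging into $\N_G(G^\dagger_X)$. The final ``in particular'' assertion is then immediate: if $G^\dagger_X$ is dense in $G$ then $R = G$, so $G^\dagger = G^\dagger_G = G^\dagger_R \le G^\dagger_X$, while $G^\dagger_X \le G^\dagger$ is automatic.

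The main obstacle is certainly the normalization step in (v): showing $\overline{\con(g)}$ normalizes $G^\dagger_X$ when $g \in G^\dagger_X$. It is conceivable that this requires a delicate bootstrap rather than a one-line reduction, bridging the gap between the single-element statement of part (iv) and the subset statement needed here.
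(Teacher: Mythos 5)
Parts (i)--(iv) of your proposal are correct and follow essentially the same route as the paper; your argument in (iii) that $\N_G(Y)$ is closed because it is an intersection of the clopen sets $\{g \mid gy_0g\inv \in Y\}$ (and their symmetric counterparts) is a legitimate and slightly more elementary alternative to the paper's argument, which instead approximates an element of $\overline{\N_G(Y)}$ by elements of $\N_G(Y)$ and applies Proposition~\ref{prop:twosided:stable} once more.

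The genuine gap is in (v), and you have located it but not closed it. You propose to obtain $\overline{\con(g)} \le G^\dagger_X$ for $g \in G^\dagger_X$ by feeding $D = G^\dagger_X$ into Theorem~\ref{thmintro:titscore:containment}, but the hypothesis of that theorem is exactly the existence of an open subgroup $U$ with $U \cap G^\dagger_R \le \N_G(G^\dagger_X)$ --- and producing such a $U$ is the whole difficulty, because for an arbitrary subset $X$ the normalizer $\N_G(G^\dagger_X)$ need not be open (part (iv) guarantees openness only under the extra uniform-tidiness hypothesis on $X$). Your sketch ``$r^nxr^{-n} \to 1$, so $x$ inherits the normalization'' has nothing to converge \emph{into} unless that normalizer is an identity neighbourhood. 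The missing idea is a reduction to \emph{finite} subsets of $X$: any $g \in G^\dagger_X$ is a finite product of elements of groups $\overline{\con(x_i^{\pm 1})}$, hence lies in $G^\dagger_Z$ for some finite $Z \subseteq X$, and for finite $Z$ the hypothesis of part (iv) is automatic (take $U$ to be the intersection of one tidy subgroup per element of $Z$), so $H = \N_G(G^\dagger_Z)$ is a clopen subgroup containing $g$. Then $\con(g) \le H$ because $H$ is a $g$-invariant identity neighbourhood, $\overline{\con(g)} \le H$ because $H$ is closed, and Proposition~\ref{prop:ContractionGroup:NC} applied to $A = G^\dagger_Z \ni g$ yields $\overline{\con(g)} \le G^\dagger_Z \le G^\dagger_X$; the same applies to $g\inv$, giving $G^\dagger_X \subseteq Y$, after which your deduction of the remaining claims in (v) (via closedness of $Y$ and Lemma~\ref{lem:samecontraction}) goes through. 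Note that the point is to normalize the smaller group $G^\dagger_Z$ rather than $G^\dagger_X$ itself, which is what bridges the gap you flagged between the single-element and subset statements.
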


\begin{proof}
Let us note first that $G^\dagger_X = G^\dagger_Y$, by the definition of $Y$.

(i)
We see from Proposition~\ref{prop:twosided:stable} that $Y$ is open.  Evidently $Y$ contains all anisotropic elements of $G$.

Let $k \in \overline{Y}$.  Then there is a compact open subgroup $U$ of $G$ that is tidy for $k$, and moreover we have $h \in kU$ for some $h \in Y$.  Hence $G^\dagger_k = G^\dagger_h$
by Proposition~\ref{prop:twosided:stable}, so $G^\dagger_k \le G^\dagger_Y = G^\dagger_X$, in other words $k \in Y$.  Hence $Y$ is closed.

(ii) follows immediately from Lemma~\ref{lem:contraction:powers}.

(iii)
We have $\N_G(Y) \le \N_G(G^\dagger_X)$, since $G^\dagger_X = G^\dagger_Y$ is determined by $G$ and $Y$.  Given $g \in \N_G(G^\dagger_X)$ and $y \in Y$, then 
$$G^\dagger_{gyg\inv} = gG^\dagger_y g\inv \le gG^\dagger_Yg\inv = G^\dagger_Y,$$
so $gxg\inv \in Y$.  Since $y \in Y$ was arbitrary we have $gYg\inv \subseteq Y$, and by symmetry in fact $gYg\inv = Y$.  So $g \in \N_G(Y)$ and hence $\N_G(Y) = \N_G(G^\dagger_X)$.

Let $N = \N_G(Y)$, let $r \in \overline{N}$ and let $y \in Y$.  Then $r$ can be approximated in $G$ by elements of $N$, so given a compact open subgroup $U$ of $G$ that is tidy for $ryr\inv$, there exists $s \in N$ such that $s \in Ur$.  By  Proposition~\ref{prop:twosided:stable} we have $G^\dagger_{ryr\inv} = G^\dagger_{sys\inv}$, and since $s \in \N_G(Y)$ we have $G^\dagger_{sys\inv} \le G^\dagger_Y$.  Since $y \in Y$ was arbitrary (in particular, independent of the choice of $r$), we have $rG^\dagger_Yr\inv \le G^\dagger_Y$, and by symmetry in fact $rG^\dagger_Yr\inv = G^\dagger_Y$, so $r \in N$.  Hence $N$ is closed.

(iv)
Let $U$ be as in the statement.  Then by Proposition~\ref{prop:twosided:stable}, we have $G^\dagger_{uxv} = G^\dagger_x$ for all $u,v \in U$ and $x \in X$.  Hence $G^\dagger_X = G^\dagger_{UXU}$, so $G^\dagger_X$ is normalized by $U$.

(v)
Let $g \in G^\dagger_X$.  Then $g = u_1u_2\dots u_n$, where $u_i \in G^\dagger_{x_i}$ for some $x_i \in X$.  Thus $g \in G^\dagger_Z$, where $Z$ is a finite subset of $X$.  By part (iv), $H = \N_G(G^\dagger_Z)$ is clopen.  We see that $\con(g) \le H$, since $H$ is a $g$-invariant neighbourhood of the identity, and hence $\overline{\con(g)} \le \N_G(G^\dagger_Z)$.  By Proposition~\ref{prop:ContractionGroup:NC}, it follows that $\overline{\con(g)} \le G^\dagger_Z$, and hence $\overline{\con(g)} \le G^\dagger_X$.  Since $g \in G^\dagger_X$ was arbitrary, we conclude that $G^\dagger_X \subseteq Y$, and since $Y$ is closed by part (i), we in fact have $R \subseteq Y$, that is, $G^\dagger_R \le G^\dagger_X$.

Given $r \in R$, we have seen that $\con_G(r) \le G^\dagger_X$, so $\con_G(r) \le R$.  It follows from Lemma~\ref{lem:samecontraction} that in fact $\con_G(r) = \con_R(r)$.  Hence $R^\dagger = R^\dagger_R = G^\dagger_R$.
\end{proof}

We now prove Theorem~\ref{thmintro:TitsCore:cocompact}, starting with a lemma.

\begin{lem}\label{lem:cocompact_power}
Let $G$ be a \tdlc group and let $G \ge H \ge K$ such that $H$ and $K$ are closed and $K$ is cocompact in $H$.  Let $h \in H$ and let $U$ be a compact open subgroup of $G$.  Then there exist $a,b \in \bZ$ and $v,w \in h^{-b}Uh^{b}$ such that $a > 0$ and $vh^aw \in K$.
\end{lem}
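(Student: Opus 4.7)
The plan is to combine the finiteness of the double coset space $V\backslash H/K$, where $V := U\cap H$, with a pigeonhole argument on the powers of $h$.

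First I would verify that $V$ is a compact open subgroup of $H$ and that the double coset space $V\backslash H/K$ is finite. The former is immediate: $V$ is closed in the compact subgroup $U$, hence compact, and $V = U\cap H$ is open in $H$ because $U$ is open in $G$. For the latter, since $V$ is open in $H$, each $V$-orbit on $H/K$ under left multiplication is an open subset of $H/K$ (it is the image of an open coset $Vx$ under the open map $H \to H/K$). These orbits partition $H/K$, which is compact because $K$ is cocompact in $H$, so only finitely many orbits occur, i.e., $V\backslash H/K$ is finite.

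Pigeonhole applied to the powers $h^0, h^1, h^2, \ldots$ in the finite set $V\backslash H/K$ then yields indices $n_1 < n_2$ such that $h^{n_1}$ and $h^{n_2}$ lie in the same $(V,K)$-double coset. Taking $h^{n_1}$ itself as the representative of this coset, I can write $h^{n_2} = v h^{n_1} k$ for some $v \in V$ and $k \in K$. Setting $a := n_2 - n_1 > 0$ and $b := n_1$, a short rearrangement starting from $h^a = h^{n_2}h^{-n_1} = v h^{n_1} k h^{-n_1}$ and using that $h^a$ commutes with $h^b$ gives
\[
(h^{-b}v^{-1}h^{b})\cdot h^{a} = k \in K.
\]
Defining $v' := h^{-b}v^{-1}h^{b} \in h^{-b}Vh^{b} \subseteq h^{-b}Uh^{b}$ and $w' := 1 \in h^{-b}Uh^{b}$ produces the required elements.

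The only step requiring more than routine computation is the finiteness of $V\backslash H/K$, which relies crucially on the cocompactness of $K$ in $H$. It is worth noting that the flexibility to choose $b \neq 0$ (namely $b = n_1$) is precisely what allows the final equation to land in $K$ itself, rather than in the possibly distinct conjugate $h^{n_1}Kh^{-n_1}$.
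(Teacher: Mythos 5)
Your proof is correct and follows essentially the same strategy as the paper's: use compactness of $H/K$ to pigeonhole two powers $h^{n_1},h^{n_2}$ into a common set of the form $Vh^{n_1}K$, then rearrange and conjugate by $h^{-n_1}$ to land in $K$. The paper extracts the two powers from an accumulation point of $(h^iK)_{i \in \bN}$ in the compact space $\overline{\langle h\rangle K}/K$ rather than from finiteness of the double coset space $V\backslash H/K$, but this is only a cosmetic difference, and your version (which moreover achieves $w=1$) is if anything slightly cleaner.
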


\begin{proof}
Let $R = \overline{\langle h \rangle K}$.  Then $R/K$ is a closed, hence compact, subspace of $H/K$.  The sequence $(h^iK)_{i \in \bN}$ thus has an accumulation point $rK$ say in the topology of $H/K$, where $r \in R$.

We see that there exist $i,j \in \bZ$ with $j > i$ such that $\{h^i,h^j\} \subseteq UrK$.  Moreover, since $r \in \overline{\langle h \rangle K}$, we can write $r$ as $r = uh^bk$ for some $u \in U$, $b \in \bZ$ and $k \in K$, so in fact $\{h^i,h^j\} \subseteq Uh^bK$, and hence $h^{a} \in Uh^bKh^{-b} U$, where $a = j-i > 0$.  After rearranging, we have $vh^{a}w \in K$ for $v,w \in h^{-b}Uh^{b}$.
\end{proof}

\begin{proof}[Proof of Theorem~\ref{thmintro:TitsCore:cocompact}]
Let us first consider the case when $K$ is dense in $H$.  Given $h \in H$ and a tidy subgroup $U$ for $h$, we have $h = ku$ for some $k \in K$.  Now $U$ is tidy for $k$ by Theorem~\ref{tidy:stable}, so by Proposition~\ref{contraction:conjugate}, there is $t \in \con(k\inv) \le G^\dagger_k$ such that $\con(h) = t\con(k)t\inv$, and we have $G^\dagger_h = G^\dagger_k$ by Proposition~\ref{prop:twosided:stable}.  So from now on, we may suppose that $K$ is closed in $H$.

Let $h \in H$ and let $U$ be a compact open subgroup of $G$ that is tidy for $h$.  We claim that there exist $a > 0$, $k \in K$ and $u$ and $v$ in a tidy subgroup for $h$ such that $k = uh^av$.

We first consider the case when $K$ has finite covolume in $H$ and let $\mu$ be an invariant probability measure on $H/K$.  Then $(H \cap U)K/K$ is a non-empty open subset of $H/K$, so $\mu((H \cap U)K/K) = \epsilon > 0$; by translation invariance, $\mu(h^n(H \cap U)K/K) = \epsilon$ for all integers $n$.  By finite additivity of the measure, there exist distinct integers $i < j$ such that
\[
h^i(H \cap U)K \cap h^j(H \cap U)K \neq \emptyset.
\]
In particular, $h^{a}(H \cap U)$ has non-empty intersection with $(H \cap U)K$ where $a = j-i > 0$, so there exists $u,v \in H \cap U$ and $k \in K$ such that $h^{a}v = u\inv k$.

Now suppose instead that $K$ is cocompact in $H$.  By Lemma~\ref{lem:cocompact_power} we obtain the equation $uh^{a}v = k$, where $a > 0$, $u,v \in h^{-b}Uh^{b}$ for some $b \in \bZ$ and $k \in K$.  Note that $h^{-b}Uh^{b}$ is tidy for $h$ by Theorem~\ref{tidy:stable}.

In either case, we see by Proposition~\ref{prop:twosided:stable} that $G^\dagger_k = G^\dagger_h$, and moreover there exists $t \in G^\dagger_h = G^\dagger_k$ such that $\con(h) = t\con(k)t\inv$.  The remaining assertions are now clear.
\end{proof}

The following corollary is now clear from Theorem~\ref{thmintro:TitsCore:cocompact} and Theorem~\ref{titscore_closure}(iv).

\begin{cor}\label{almostflat:titscore}Let $G$ be a \tdlc group and let $H$ be an almost flat subgroup of $G$.  Then $\N_G(G^\dagger_H)$ is open in $G$.\end{cor}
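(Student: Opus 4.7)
The plan is to deduce the corollary directly by combining Theorem~\ref{thmintro:TitsCore:cocompact} (invariance of the relative Tits core under passing to a cocompact subgroup) with the openness statement in Theorem~\ref{titscore_closure}(iv).

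More precisely, I would first use the hypothesis that $H$ is almost flat to find a closed cocompact subgroup $K$ of $\overline{H}$ which is flat on $G$. Two reductions of the set defining the Tits core are then available: from Theorem~\ref{thmintro:TitsCore:invariance}(ii) (equivalently, Theorem~\ref{titscore_closure}(i), which says $Y$ is closed and contains $X$, forcing $G^\dagger_X = G^\dagger_{\overline{X}}$) one obtains $G^\dagger_H = G^\dagger_{\overline{H}}$, and from Theorem~\ref{thmintro:TitsCore:cocompact} applied to the pair $K \le \overline{H}$ one obtains $G^\dagger_{\overline{H}} = G^\dagger_K$. Hence $G^\dagger_H = G^\dagger_K$, and it suffices to show that $\N_G(G^\dagger_K)$ is open.

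Since $K$ is flat on $G$, by definition there is a compact open subgroup $U$ of $G$ that is tidy for every $k \in K$. Thus $K$ satisfies the hypothesis of Theorem~\ref{titscore_closure}(iv) with the witness $V = U$ for each $k$, which yields $U \le \N_G(G^\dagger_K)$. Combined with the previous reduction, $U \le \N_G(G^\dagger_H)$, and since $U$ is open in $G$, the normalizer $\N_G(G^\dagger_H)$ is open, as desired.

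There is no serious obstacle here: the corollary is essentially a packaging of two invariance results already established in the section. The only point requiring any care is verifying that almost flatness allows us to replace $H$ with a flat subgroup without changing the Tits core, and this is exactly the content of Theorem~\ref{thmintro:TitsCore:cocompact} together with the closure-invariance of $G^\dagger_X$.
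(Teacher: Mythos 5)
Your argument is correct and is exactly the paper's proof: the corollary is derived from Theorem~\ref{thmintro:TitsCore:cocompact} together with Theorem~\ref{titscore_closure}(iv), and your intermediate step $G^\dagger_H = G^\dagger_{\overline{H}}$ (needed because the cocompactness theorem is stated for closed $H$) is the right way to fill in the one small gap the paper leaves implicit.
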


\subsection{Relative Tits cores and elementary groups}\label{sec:TitsCoreElementary}

We note some interesting features of the group $T = \overline{G^\dagger_H}$ in the case that $H$ is a compactly generated subgroup of $G$ (even just the case that $H$ is cyclic is interesting enough).

\begin{enumerate}[(1)]
\item It is clear that $H$ does not normalize any proper open subgroup of $T$.  From this, one can easily deduce that $L = \overline{TH}$ is compactly generated.  Indeed, $L = \langle H,U \rangle$ for any compact open subgroup $U$ of $L$.
\item In general, it is possible that $T$ contains $H$, intersects $H$ trivially, or some intermediate situation.  In general, there is little insight to be gained from the hypothesis that $T \cap H$ is trivial, since for instance this situation will occur whenever $G$ is a semidirect product $N \rtimes H$, where $H$ is any finitely generated group (equipped with the discrete topology) that acts by automorphisms on the \tdlc group $N$.  On the other hand, the existing literature suggests that an important special case is when $T$ is non-trivial and cocompact in $\overline{TH}$.  (Even when $H$ is cyclic, this is not the same thing as asking if $T \cap H$ has finite index in $H$; recall \S\ref{sec:Neretin}.)  It then follows from Theorem~\ref{titscore_closure} and Theorem~\ref{thmintro:TitsCore:cocompact} that
\[
T = \overline{G^\dagger_H} = \overline{G^\dagger_K} = \overline{T^\dagger}.
\]
Thus we have a compactly generated \tdlc group $T$ with dense Tits core; in particular, $T$ has no proper open normal subgroups.  One can then apply \cite[Proposition~5.4]{CM} to conclude that every proper closed normal subgroup of $T$ is contained in a maximal one, and that $T$ has $n$ topologically simple quotients for some positive integer $n$.  In particular, it follows that $T$ does not have any non-trivial elementary quotients in the sense of Wesolek (see \cite{Wesolek}), so that both $T$ and $G$ itself are non-elementary.
\end{enumerate}

If $G$ is an elementary \tdlc group and $g \in G \setminus \{1\}$, we conclude from the above observations that $g \not\in \overline{G^\dagger_g}$.  Conversely, as indicated by Question~\ref{que:TitsCoreElementary}, the author does not know of any counterexamples to the following statement:

$(*)$ Let $G$ be a non-elementary second-countable \tdlc group.  Then there is some non-trivial element $g$ such that $g \in G^\dagger_g$.

The statement $(*)$ is probably too ambitious and reflects a lack of knowledge of examples, but even weaker results of this kind could be highly significant for the general theory of \tdlc groups.  Proving $(*)$ to be true in general would prove all of the following statements:
\begin{enumerate}[(A)]
\item Given a compactly generated \tdlc group $G$, then $\Res(G)=G$ if and only if $G^\dagger$ is dense in $G$.
\item Given a compactly generated \tdlc group $G$ with no non-trivial discrete quotients, then $G^\dagger$ is the unique smallest dense subnormal subgroup of $G$.  If $G$ is topologically simple, then $G^\dagger$ is abstractly simple.
\item A second-countable \tdlc group $G$ is elementary if and only if there does not exist $K \unlhd H \le G$ such that $H/K$ is non-discrete, compactly generated and topologically simple.  (The `only if' follows from the fact that every closed subgroup of an elementary group is elementary; see \cite[Theorem~1.3]{Wesolek}.)
\item The set $E$ of closed subgroups of $\mathrm{Sym}(\bN)$ that are elementary second-countable \tdlc groups belongs to the Effros-Borel $\sigma$-algebra of $\mathrm{Sym}(\bN)$.
\item Letting $\mathscr{E}$ be the class of elementary second-countable \tdlc groups and writing $\xi(G)$ for the decomposition rank of $G$ (see \cite[\S4.3]{Wesolek}), then the supremum of $\{\xi(G) \mid G \in \mathscr{E}\}$ is a countable ordinal, which is achieved by $\xi(G)$ for some $G \in \mathscr{E}$.
\end{enumerate}

The derivation of (B) from (A) is given in \cite{CRW-TitsCore}.  The last two statements require some further explanation.

Let $X$ be a Polish space and let $\mathcal{F}$ be the set of all non-empty closed subsets of $X$. The \defbold{Effros-Borel $\sigma$-algebra} of $X$ is the smallest $\sigma$-algebra $\mathcal{E}(X)$ on $X$ containing the sets
\[
\{ F \in \mathcal{F}(X) \mid F \cap U \neq \emptyset\}, \; U \subseteq X \text{ open}.
\]
If $X$ is a locally compact space, $\mathcal{E}(X)$ coincides with the Borel $\sigma$-algebra of the Vietoris space.  In general, there is no standard topology on $\mathcal{F}(X)$, but nevertheless $\mathcal{E}(X)$ is isomorphic to a standard Borel $\sigma$-algebra; for instance, a suitable isomorphism is induced by identifying $X$ with a $G_{\delta}$-subset of the Hilbert cube.

The supremum $\alpha$ of $\{\xi(G) \mid G \in \mathscr{E}\}$ is achieved by $G \in \mathscr{E}$ if and only if $\alpha < \omega_1$: if the supremum is achieved, then $\alpha$ must be countable by the definition of the decomposition rank, and conversely if $\alpha$ is countable, then it is the supremum of $\{\xi(G_i) \mid i < \omega\}$ for a countable sequence $(G_i)_{i < \omega}$ of elementary groups; one can then construct a local direct product $G$ of the groups $(G_i)_{i < \omega}$ (see \cite{Wesolek}) so that $\xi(G) = \alpha$.

One can show (analogous to the situation with elementary amenable discrete groups; see \cite[\S6.4]{WesolekWilliams}) that $E$ is a ${\bf \Pi}^1_1$-set and the decomposition rank function $\xi$ is a ${\bf \Pi}^1_1$-rank on $E$.  In particular, we have $E \in \mathcal{E}(\mathrm{Sym}(\bN))$ if and only if the image of $E$ under $\xi$ is bounded below $\omega_1$.  Since every second-countable \tdlc group is isomorphic to a closed subgroup of $\mathrm{Sym}(\bN)$, we conclude that the statements (D) and (E) are equivalent.

It remains to deduce (D) from $(*)$; here we use the invariance properties of the relative Tits core to prove an unconditional result about the class of \tdlcsc groups that satisfy $(*)$.

\begin{lem}\label{lem:inner_type}
Let $G$ be a \tdlc group and let $P(G)$ be the set of elements $g$ of $G$ such that $\langle g \rangle$ is an infinite discrete group and every open subgroup normalized by $g$ contains a non-zero power of $g$.
\begin{enumerate}[(i)]
\item Every element $g \in P(G)$ is isotropic.
\item Given $g \in G$ isotropic, then $g \in P(G)$ if and only if $\overline{G^\dagger_g}$ is cocompact in $\overline{\langle G^\dagger_g, g\rangle}$.
\item $P(G)$ is open in $G$.
\item $P(G)$ is non-empty if and only if there exists $g \in G \setminus \{1\}$ such that $g \in G^\dagger_g$.
\end{enumerate}
\end{lem}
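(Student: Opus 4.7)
The plan is to work through parts (i)--(iv) in order, reducing each to the stability properties of contraction groups established earlier. For an element $x \in G$, write $T_x = \overline{G^\dagger_x}$ and $S_x = \overline{\langle G^\dagger_x, x\rangle}$; by Corollary~\ref{almostflat:titscore}, $O_x := \N_G(T_x)$ is open in $G$, so the continuous surjection $\pi_x \colon O_x \to O_x/T_x$ is available. The key observation used in (ii), (iii) and (iv) is that $\con_G(x) \le G^\dagger_x \le T_x$, so Theorem~\ref{thm:bw:relative_contraction} gives $\con_{O_x/T_x}(\bar x) = \{1\}$, and by Proposition~\ref{basic:anisotropic}(iii) the image $\bar x$ is anisotropic in $O_x/T_x$.

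For (i), I argue the contrapositive: an anisotropic $g$ with $\langle g\rangle$ discrete infinite normalizes arbitrarily small compact open subgroups, and I take one small enough to avoid the closed discrete set $\{g^n : n \ne 0\}$, violating the defining condition of $P(G)$. For (ii), the $(\Leftarrow)$ direction uses that any open $g$-invariant subgroup $W_0$ of $G$ contains $\con(g)$ and $\con(g^{-1})$, hence $T_g$; then $(W_0 \cap S_g)/T_g$ is open in the compact group $S_g/T_g$ and has some finite index $n$, forcing $g^n \in W_0$. For $(\Rightarrow)$, assume $S_g/T_g$ is non-compact; then $\bar g$ has infinite order in $O_g/T_g$, and any sufficiently small $\bar g$-invariant compact open subgroup $\bar V \le O_g/T_g$ (provided by anisotropy) avoids every non-zero power of $\bar g$, because $\bar V \cap \overline{\langle\bar g\rangle}$ is a compact open subgroup of a non-compact monothetic group and thus lies in the kernel of a surjection to $\bZ$. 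Then $\pi_g^{-1}(\bar V)$ is an open $g$-invariant subgroup of $G$ missing $\{g^n : n \ne 0\}$, a contradiction.

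For (iii), fix $h \in P(G)$ and a tidy subgroup $V$ for $h$. As above, $\bar h$ is anisotropic in $O_h/T_h$, and since $h \in P(G)$, (ii) yields that $\overline{\langle\bar h\rangle} = S_h/T_h$ is compact. Hence there is a compact open $\bar h$-invariant subgroup $\bar W \le \pi_h(V)$ of $O_h/T_h$, making $\bar W \cdot \overline{\langle\bar h\rangle}$ a compact open subgroup. For any $g = hv$ with $v \in V \cap \pi_h^{-1}(\bar W)$, Proposition~\ref{prop:twosided:stable} gives $G^\dagger_g = G^\dagger_h$ and $T_g = T_h$, while $\bar g = \bar h\bar v \in \bar W \cdot \overline{\langle\bar h\rangle}$ forces $\overline{\langle\bar g\rangle}$ to be compact, so (ii) then yields $g \in P(G)$. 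Thus $h\cdot(V \cap \pi_h^{-1}(\bar W))$ is an open neighborhood of $h$ contained in $P(G)$.

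For (iv), the $(\Leftarrow)$ direction is immediate from (ii): $g \in G^\dagger_g \setminus \{1\}$ is isotropic, lies in $T_g$, and so $S_g = T_g$ is trivially cocompact, giving $g \in P(G)$. For $(\Rightarrow)$, given $h \in P(G)$ and $V$ tidy for $h$, the compact set $\overline{\langle\bar h\rangle}$ meets only finitely many left $\pi_h(V)$-cosets, so a pigeonhole argument produces $n \ne 0$ with $\bar h^n \in \pi_h(V)$, i.e.\ $h^n \in T_h V$. Writing $h^n = t v_0$ with $t \in T_h$ and $v_0 \in V$ gives $h^nV \cap T_h = t(V \cap T_h)$, a coset of the open subgroup $V \cap T_h$ of $T_h$; since every non-trivial closed contraction group accumulates at the identity, $T_h$ is non-discrete, $V \cap T_h$ is non-trivial, and density of $G^\dagger_h$ in $T_h$ forces $G^\dagger_h \cap h^n V$ to contain a non-trivial element $g = h^n v$ with $v \in V$. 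Proposition~\ref{prop:twosided:stable} then gives $G^\dagger_g = G^\dagger_{h^n} = G^\dagger_h$, so $g \in G^\dagger_h = G^\dagger_g$ as required. The main technical hurdle is this last step: combining the pigeonhole (from cocompactness of $S_h/T_h$) with the density of $G^\dagger_h$ in $T_h$ to extract a non-trivial witness from the single coset $h^nV$, which requires non-discreteness of $T_h$ as an additional input.
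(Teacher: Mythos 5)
Your proof is correct and follows essentially the same route as the paper's: reduce to the quotient $\N_G(\overline{G^\dagger_g})/\overline{G^\dagger_g}$, on which $g$ acts anisotropically, apply the compact-or-discrete dichotomy for monothetic locally compact groups, and use Proposition~\ref{prop:twosided:stable} for the stability and openness statements. The only notable difference is that in part (iv) you spell out a step the paper compresses --- using non-discreteness of $\overline{G^\dagger_h}$ together with density of $G^\dagger_h$ to guarantee that the witness $h^n v$ can be chosen non-trivial --- which is a worthwhile clarification rather than a deviation.
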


\begin{proof}
Let $g \in P(G)$.  If $g$ is anisotropic, then $g$ normalizes arbitrarily small compact open subgroups of $G$.  In particular, $g$ normalizes some compact open subgroup $U$ of $G$.  By the definition of $P(G)$, we also have $g^n \in U$ for some non-zero integer $n$.  Thus $\langle g \rangle U$ consists of only finitely many cosets of $U$, so it is compact.  This contradicts the condition that $\langle g \rangle$ should be an infinite discrete group.  Thus $g$ is isotropic, proving (i).

Now let $g \in G$ be isotropic and let $T = \overline{G^\dagger_g}$.

If $T$ is cocompact in $\overline{\langle T, g \rangle}$, then every open subgroup that is normalized by $g$ contains $T$, and every open subgroup containing $T$ contains a finite index subgroup of $\overline{\langle T, g\rangle}$, hence also contains a non-zero power of $g$.  So $g \in P(G)$.  Conversely, if $T$ is not cocompact in $\overline{\langle T, g\rangle}$, then $\langle gT \rangle$ is a non-compact subgroup of $\N_G(T)/T$, so $\langle gT \rangle$ is an infinite cyclic group that has trivial intersection with every compact open subgroup of $\N_G(T)/T$.  At the same time, $gT$ has anisotropic action by conjugation on $\N_G(T)/T$ by Corollary~\ref{cor:anisotropic_quotient}, so $gT$ normalizes a compact open subgroup $V/T$ of $\N_G(T)/T$.  Moreover $\N_G(T)$ is open in $G$ by Proposition~\ref{prop:twosided:stable}, so $V$ is open in $G$.  Thus $V$ is an open subgroup normalized by $g$ that does not contain any non-zero power of $g$, proving (ii). 

Now suppose $g \in P(G)$.  Then $g$ is contained in a compact open subgroup $V/T$ of $\N_G(T)/T$ by the characterization of $P(G)$ given in part (ii).  Since $\N_G(T)$ is open in $T$, in fact $V$ is open in $G$.  Given a compact open subgroup $U$ of $G$ that is tidy for $g$ and $u \in U$, then $u$ normalizes $G^\dagger_g$ and $G^\dagger_g = G^\dagger_{gu}$.  We now observe that for all $h \in gU \cap V$, then $h$ is isotropic on $G$ (so in particular, $\langle h \rangle$ is an infinite discrete group) and the group $\overline{G^\dagger_h} = \overline{G^\dagger_g}$ is cocompact in $\overline{\langle G^\dagger_h, h \rangle}$, so that $h \in P(G)$ by part (ii).  Hence $P(G)$ is open in $G$, proving (iii).

Given $g \in P(G)$ and $U$ tidy for $g$, we see by part (ii) and Proposition~\ref{prop:twosided:stable} that $g^nu \in G^\dagger_g = G^\dagger_{g^nu}$ for some $u \in U$ and non-zero integer $n$.  Conversely, if $g \in G \setminus \{1\}$ is such that $g \in G^\dagger_g$, then clearly $g$ is isotropic, and hence $g \in P(G)$ by part (ii).  Thus $P(G)$ is non-empty if and only if $g \in G^\dagger_g$ for some non-trivial $g \in G$, proving (iv).
\end{proof}

\begin{thm}\label{thm:inner_type:Borel}
Let $S = \Sym(\bN_{\ge 0})$ and let $E^*$ be the set of closed locally compact subgroups $G$ of $S$ such that for all $g \in G \setminus \{1\}$ we have $g \not\in G^\dagger_g$.

Then $E^* \in \mathcal{E}(S)$.
\end{thm}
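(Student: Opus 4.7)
The plan is to invoke Lemma~\ref{lem:inner_type}(iv) and reduce to showing that $\{G \in \mathcal{F}(S) : P(G) \ne \emptyset\}$ is Borel. The set $\mathcal{L}(S) \subseteq \mathcal{F}(S)$ of closed locally compact subgroups of $S$ is itself Borel by standard arguments (local compactness amounts to the existence of a basic open set $B_n$ such that $B_n \cap G$ is a compact open subgroup, which is a countable disjunction of Borel conditions). By the Kuratowski--Ryll-Nardzewski selection theorem, fix Borel maps $d_n \colon \mathcal{F}(S) \to S$ such that $\{d_n(G)\}_{n \in \bN}$ is dense in every non-empty $G$. Since Lemma~\ref{lem:inner_type}(iii) tells us that $P(G)$ is open in $G$, we have $P(G) \ne \emptyset$ iff $d_n(G) \in P(G)$ for some $n$; hence it suffices to show that the relation
\[
R := \{(G, g) \in \mathcal{L}(S) \times S : g \in P(G)\}
\]
is Borel in the product Borel structure.

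Unpacking the definition of $P(G)$, $(G, g) \in R$ iff: (a) $g \in G$; (b) $\langle g \rangle$ is infinite and discrete in $G$; and (c) every open subgroup of $G$ normalized by $g$ contains a non-zero power of $g$. Conditions (a) and (b) translate to Borel ones in a routine way using a countable basis $\{B_n\}$ for $S$ at $1$. The main obstacle is (c), because of its universal quantifier over open subgroups. To eliminate it I would apply Lemma~\ref{lem:inner_type}(ii) together with Corollary~\ref{cor:anisotropic_quotient}: setting $T := \overline{G^\dagger_g}$, the normalizer $\N_G(T)$ is open in $G$ by Theorem~\ref{titscore_closure}(iv) (taking $U$ tidy for $g$ gives $U \le \N_G(G^\dagger_g) \le \N_G(T)$), and $g$ acts anisotropically on $\N_G(T)/T$ by Corollary~\ref{cor:anisotropic_quotient}. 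Under the isotropy of $g$ that follows from (b) combined with (c) via Lemma~\ref{lem:inner_type}(i), Lemma~\ref{lem:inner_type}(ii) then gives that (c) is equivalent to $\overline{\langle gT \rangle}$ being compact in $\N_G(T)/T$; combined with the anisotropy of $gT$ (which ensures $gT$ normalizes arbitrarily small compact open subgroups of $\N_G(T)/T$), this is in turn equivalent to the \emph{existence} of an open subgroup $V \le G$ with $T \le V \le \N_G(T)$, $V/T$ compact, and $g \in V$. The universal quantifier has thus been traded for an existential one.

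The remaining step is to verify that the ingredients admit Borel parameterizations in $(G, g)$ within the Effros--Borel structure. The contraction group is given by the manifestly Borel formula
\[
\con_G(g) = \bigcap_n \bigcup_N \bigcap_{m \ge N} \{x \in G : g^m x g^{-m} \in B_n\},
\]
so it, together with $\overline{\con_G(g^{-1})}$, the subgroup $G^\dagger_g$ they generate, and its closure $T$, can all be described by Borel formulae using standard operations on Borel-indexed families of closed sets in a Polish space. Candidate subgroups $V$ can likewise be enumerated from countably many parameters, e.g.\ in the form $V = \langle T, d_{n_1}(G), \ldots, d_{n_k}(G), B_m \cap G \rangle$ as $(n_1, \ldots, n_k, m)$ range over finite tuples of indices (noting that any $V$ with $V/T$ compact is a finite union of cosets of $(B_m \cap G) T$ for suitable $m$, and hence of this form); the conditions $T \le V \le \N_G(T)$, $V/T$ compact, and $g \in V$ each translate into Borel conditions on $(G, g)$ and the parameters. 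Piecing everything together shows that $R$ is Borel, and hence $E^* \in \mathcal{E}(S)$. The main technical obstacle at this stage is the careful handling of closures and subgroup generation applied to Borel-parameterized families---a routine but delicate bookkeeping exercise in descriptive set theory, rather than a conceptual difficulty.
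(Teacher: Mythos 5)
Your overall strategy agrees with the paper's up to the point where it matters: both arguments use the Kuratowski--Ryll-Nardzewski selectors together with Lemma~\ref{lem:inner_type} to reduce the problem to expressing ``$d_n(G) \in P(G)$'' by a measurable formula, and your idea of using Lemma~\ref{lem:inner_type}(ii) to trade the universal quantifier over open subgroups for an existential one is sound as far as the group theory goes. The gap is in the final paragraph, which you dismiss as bookkeeping but which is in fact the crux. Your route requires the map $(G,g) \mapsto T = \overline{G^\dagger_g}$ (and hence $\N_G(T)$, and the relations $T \le V \le \N_G(T)$, ``$V/T$ compact'') to be Borel into the Effros--Borel space. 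But the set $\con_G(g)$ defined by your displayed formula is not closed, so it is not an element of $\mathcal{F}(S)$ and the selector/``meets $U$'' machinery does not apply to it. To decide whether $\overline{\con_G(g)}$ meets a basic open set $U$ you must decide whether $\con_G(g) \cap U \neq \emptyset$, i.e.\ whether there exists $x \in G \cap U$ with $g^m x g^{-m} \to 1$; this is an existential quantifier over the uncountable set $G \cap U$ applied to a condition that is neither open nor closed in $x$, and cannot be replaced by a quantifier over the dense sequence $d_k(G)$ (those points need not lie anywhere near $\con_G(g)$). A priori this projection yields only an analytic set, and no argument is given that it is Borel. The same problem then infects $G^\dagger_g$, its closure $T$, and every condition stated in terms of $T$.

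The paper's proof is engineered precisely to sidestep this: it never computes $\con(g)$ or $G^\dagger_g$. Instead it observes that condition (c) in the definition of $P(G)$ need only be tested on the open subgroups generated by the $\langle g\rangle$-conjugates of the basic compact open subgroups $U_c \cap G$, and each such subgroup is the closure of the set of finite words in the \emph{closed} (indeed compact) sets $V_{c,n,l} = g^{l}(U_c \cap G)g^{-l} \cup g^{-l}(U_c \cap G)g^{l}$. Membership of a power of $g$ in that closure is then expressible by the formula (\ref{eq:inner_type:Borel2}), with all quantifiers ranging over $\bN$ and all set data accessed through selectors applied to genuinely closed sets. If you want to rescue your version, you would need either to prove that $(G,g) \mapsto \overline{G^\dagger_g}$ is Borel (which I do not believe follows by ``standard operations''), or to re-express the cocompactness condition of Lemma~\ref{lem:inner_type}(ii) in terms of closed sets built from the $U_c \cap G$ --- at which point you have essentially reconstructed the paper's formula.
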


\begin{proof}
By the Kuratowski--Ryll-Nardzewiski selector theorem, there is a sequence $(d_n)_{n \in \omega}$ of measurable functions $d_n: \mathcal{F}(S) \rightarrow S$ such that for each $F \in \mathcal{F}(S)$, the set $\{d_n(F)\}_{n \in \omega}$ is a dense subset of $F$.  Let us fix such a sequence $d_n$.  Fix also a countable descending chain $(U_n)_{n \ge 0}$ of clopen symmetric neighbourhoods of the identity in $S$, forming a base of identity neighbourhoods in $S$.

Let $F^*$ be the set of closed locally compact groups $G$ of $S$ such that there exists $g \in G \setminus \{1\}$ for which $g \in G^\dagger_g$.  Let $L_i$ be the set of subgroups $G$ of $S$ such that $G \cap U_i$ is compact.  The conditions of being a subgroup and having compact intersection with an open set are measurable conditions on non-empty closed subsets of $S$.  Thus $L_i \in \mathcal{E}(S)$.

Now let $G \in L_0$.  Then $G$ is locally compact, and by Lemma~\ref{lem:inner_type}, we have $G \in F^*$ if and only if the open set $P(G)$ is non-empty.  In turn, $P(G)$ is non-empty if and only if $d_n(G) \in P(G)$ for some $n$.  Write $g_n := d_n(G)$.  Then $g_n \in P(G)$ if and only if the following formulae (quantified over $\bN_{\ge 0}$) are both satisfied:

\begin{equation}\label{eq:inner_type:Borel1}
\forall a \forall b: g_n^{b+1} \not\in U_a;
\end{equation}
\begin{equation}\label{eq:inner_type:Borel2}
\forall c \exists d \forall e \exists f, k_1,\dots,k_f,l_1,\dots,l_f: d_{k_1}(V_{c,n,l_1}) \dots d_{k_f}(V_{c,n,l_f}) \in U_eg_n^{d+1};
\end{equation}
where
\[
V_{c,n,l} := g_n^{l}(U_c \cap G)g_n^{-l} \cup g_n^{-l}(U_c \cap G)g_n^{l}.
\]
Specifically, (\ref{eq:inner_type:Borel1}) is equivalent to stating that $\langle g_n \rangle$ is infinite and discrete, and (\ref{eq:inner_type:Borel2}) is equivalent to stating that for every identity neighbourhood $U$ in $G$, then there is a positive power $g_n^{d+1}$ of $g_n$ that can be approximated by words in the $\langle g_n\rangle$-conjugates of $U$, so that $g_n^{d+1}$ is in the (necessarily closed) group generated by the $\langle g_n\rangle$-conjugates of $U$.  These formulae impose a measurable condition on $g_n$, and so `there exists $n$ such that $g_n$ satisfies (\ref{eq:inner_type:Borel1}) and (\ref{eq:inner_type:Borel2})' is a measurable condition on $G$.  We therefore conclude that $F^* \cap L_0 \in \mathcal{E}(S)$, so $E^* \cap L_0 \in \mathcal{E}(S)$.

The same argument shows that the sets $E^* \cap L_i$ are in $\mathcal{E}(S)$.  Note that a subgroup $G$ of $S$ is closed and locally compact if and only if $G \cap U_i$ is compact for some $i$.  Thus $E^* = \bigcup_{i \ge 0}(E^* \cap L_i)$, so $E^* \in \mathcal{E}(S)$ as required. 
\end{proof}

The statement $(*)$ is then equivalent to asserting that $E^* = E$.  In particular, we see that $(*)$ implies (D) as claimed.

\subsection{Subgroups containing relative Tits cores}\label{sec:TitsCore:subgroups}

There is no reason for an arbitrary subgroup $D$ of $G$ to contain $G^\dagger_D$.  For example, if $G$ is the automorphism group of a locally finite tree, then $G^\dagger_g$ is open in $G$ for every hyperbolic element $g \in G$ (see Example~\ref{ex:tree}), so certainly $G^\dagger_g \not\le \langle g \rangle$.  However, we can ensure $G^\dagger_D \le D$ under certain circumstances, as stated in Theorem~\ref{thmintro:titscore:containment}.

\begin{proof}[Proof of Theorem~\ref{thmintro:titscore:containment}]We may assume that $X = X\inv$.  Let $U$ be an open subgroup of $G$ such that $U \cap G^\dagger_X \le \N_G(D)$.

Let $x \in X$.  By Proposition~\ref{prop:twosided:stable} we have $G^\dagger_x = G^\dagger_d$ for all $d \in VxV$, where $V$ is a compact open subgroup of $G$ that is tidy for $x$.  Since $X \subseteq \overline{D}$, there exists $d \in VxV \cap D$: for this $d$, we see that $U \cap G^\dagger_d = U \cap G^\dagger_x \le \N_G(D)$.

Let $u \in \con(d)$.  Then for $n \ge 0$ sufficiently large, we have $d^nud^{-n} \in U$, and thus $d^nud^{-n} \in \N_G(D)$.  But $\N_G(D)$ is $D$-invariant, so $u \in \N_G(D)$, and hence $\con(d) \le \N_G(D)$.  In addition, $\N_G(D)$ contains the open subgroup $U \cap \nub(d)$ of $\nub(d)$.  By Theorem~\ref{basic:nub_characterizations} there are no proper $d$-invariant open subgroups of $\nub(d)$ , so $\nub(d) \le \N_G(D)$.  Thus $\overline{\con(d)} \le \N_G(D)$ by Theorem~\ref{basic:closed_contraction}.  The same argument shows that $\overline{\con(d\inv)} \le \N_G(D)$, so in fact $G^\dagger_d \le \N_G(D)$.  Hence by Proposition~\ref{prop:ContractionGroup:NC}, we have $G^\dagger_d \le D$, so $G^\dagger_x \le D$.

As $x \in X$ was arbitrary, we conclude that $G^\dagger_X \le D$.\end{proof}

\begin{cor}\label{cor:subnormal:containment}Let $G$ be a \tdlc group, let $A$ be a subgroup of $G$, and let $B \subseteq A$.  Then the following are equivalent:
\begin{enumerate}[(i)]
\item $G^\dagger_{\overline{B}} \le A$;
\item There exists a subgroup $H$ of $G$ such that $A$ is subnormal in $H$ and $G^\dagger_B \le H$.
\end{enumerate}
\end{cor}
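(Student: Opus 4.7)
The plan is to exploit the closure-invariance from Theorem~\ref{thmintro:TitsCore:invariance}(ii), which gives $G^\dagger_B = G^\dagger_{\overline{B}}$, combined with iterated application of Theorem~\ref{thmintro:titscore:containment} along the subnormal chain joining $A$ to $H$. The implication (i) $\Rightarrow$ (ii) is immediate: one takes $H := A$, which is subnormal in itself via the trivial chain of length zero, and $G^\dagger_B = G^\dagger_{\overline{B}} \le A$ by hypothesis.

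For (ii) $\Rightarrow$ (i), since $G^\dagger_{\overline{B}} = G^\dagger_B$ it suffices to prove $G^\dagger_B \le A$. I would fix a subnormal chain
\[
A = A_0 \lhd A_1 \lhd \cdots \lhd A_n = H
\]
and induct on $n$; the base case $n = 0$ is trivial. For the inductive step, the essential claim is that $G^\dagger_B \le A_{n-1}$, after which the inductive hypothesis applied to the shorter chain $A_0 \lhd \cdots \lhd A_{n-1}$ (with $A_{n-1}$ in place of $H$) immediately yields $G^\dagger_B \le A$. To prove the claim, I would invoke Theorem~\ref{thmintro:titscore:containment} with $X := B$, $D := A_{n-1}$, and $U := G$ (which is an open subgroup of itself): the containment $B \subseteq \overline{A_{n-1}}$ holds because $B \subseteq A \subseteq A_{n-1}$, and the normalizer hypothesis $U \cap G^\dagger_B = G^\dagger_B \le \N_G(A_{n-1})$ follows from $G^\dagger_B \le H \le \N_G(A_{n-1})$, using that $A_{n-1} \lhd H$.

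There is no substantial obstacle here; the real work is done by Theorem~\ref{thmintro:titscore:containment}. The key observation is merely that the open subgroup $U$ appearing there may be chosen to be $G$ itself, so the normalizer hypothesis collapses to a trivial consequence of the normality at each step of the subnormal chain, and the induction clears the remaining levels.
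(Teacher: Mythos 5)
Your argument is correct and is essentially the paper's own proof: both directions proceed by taking $H=A$ for (i) $\Rightarrow$ (ii), and for (ii) $\Rightarrow$ (i) by inducting along the subnormal chain and applying Theorem~\ref{thmintro:titscore:containment} at each step with $D=A_{n-1}$, the normalizer hypothesis coming from $G^\dagger_B \le H \le \N_G(A_{n-1})$. Your explicit use of $G^\dagger_B = G^\dagger_{\overline{B}}$ and of $U=G$ only makes precise what the paper leaves implicit.
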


\begin{proof}Clearly (i) implies (ii), as we can take $A = H$ in this case.  Suppose (ii) holds, and let 
\[
A = A_0 \lhd A_1 \lhd \dots \lhd A_n = H
\]
be a subnormal series from $A$ to $H$.  Suppose $n > 0$.  Then by Theorem~\ref{thmintro:titscore:containment}, we have $G^\dagger_{\overline{B}} \le A_{n-1}$, since $G^\dagger_B$ normalizes $A_{n-1}$ and $B \subseteq A_{n-1}$.  The conclusion now follows by induction on the subnormal degree of $B$ in $H$.
\end{proof}

The following is now clear from Corollary~\ref{cor:subnormal:containment} and Theorem~\ref{thmintro:TitsCore:cocompact}.  

\begin{cor}\label{cor:subnormal:sametitscore}Let $G$ be a \tdlc group and let $H$ be a subnormal subgroup of $G$.  Then $G^\dagger_H = (\overline{H})^\dagger$.  If in addition $H$ is closed in $G$ and either cocompact or of finite covolume in $G$, then $G^\dagger = H^\dagger$.\end{cor}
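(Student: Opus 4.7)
The plan is to deduce the corollary in two moves: first get $G^\dagger_H = (\overline{H})^\dagger$ using the subnormality hypothesis, then pull in the cocompact/finite covolume hypothesis via Theorem~\ref{thmintro:TitsCore:cocompact}.

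First I would observe that $\overline{H}$ is itself subnormal in $G$: a subnormal series $H = H_0 \lhd H_1 \lhd \dots \lhd H_n = G$ yields, on taking closures term by term, a series $\overline{H} = \overline{H_0} \lhd \overline{H_1} \lhd \dots \lhd \overline{H_n} = G$ in which each inclusion remains normal (continuity of conjugation). Now apply Corollary~\ref{cor:subnormal:containment} with $A = \overline{H}$ and $B = \overline{H}$: the subgroup $A$ is subnormal in $G$ and trivially $G^\dagger_B \le G$, so condition (ii) is satisfied, giving
\[
G^\dagger_{\overline{H}} \le \overline{H}.
\]
By Theorem~\ref{thmintro:TitsCore:invariance}(ii) we also have $G^\dagger_H = G^\dagger_{\overline{H}}$, so $G^\dagger_H \le \overline{H}$.

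Next I would identify $G^\dagger_{\overline{H}}$ with the intrinsic Tits core of $\overline{H}$. For every $h \in \overline{H}$, the inclusion $\overline{\con_G(h)} \le \overline{H}$ (a consequence of $G^\dagger_{\overline{H}} \le \overline{H}$) together with Lemma~\ref{lem:samecontraction} forces $\con_G(h) = \con_{\overline{H}}(h)$, so the closures agree as well. Taking the group generated over $h \in \overline{H}$ gives
\[
G^\dagger_H \;=\; G^\dagger_{\overline{H}} \;=\; (\overline{H})^\dagger,
\]
which is the first claim.

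For the second claim, assume in addition that $H$ is closed and is either cocompact or of finite covolume in $G$. Applying Theorem~\ref{thmintro:TitsCore:cocompact} with the ambient group $G$ playing the role of $H$ there and with $H$ playing the role of $K$ yields $G^\dagger = G^\dagger_G = G^\dagger_H$. Combined with the first claim (where now $\overline{H} = H$), this gives $G^\dagger = H^\dagger$. None of the steps presents a genuine obstacle; the only mild subtlety is checking that subnormality is preserved under closure so that Corollary~\ref{cor:subnormal:containment} applies to $\overline{H}$, and that the relative contraction groups coincide with the intrinsic ones inside $\overline{H}$ — both points handled by the invocations of Theorem~\ref{thmintro:TitsCore:invariance}(ii) and Lemma~\ref{lem:samecontraction} above.
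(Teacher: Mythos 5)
Your argument is correct and is exactly the route the paper intends: the paper derives this corollary directly from Corollary~\ref{cor:subnormal:containment} (applied with the ambient group in the role of the enveloping subgroup) together with Theorem~\ref{thmintro:TitsCore:cocompact}, with Theorem~\ref{thmintro:TitsCore:invariance}(ii) and Lemma~\ref{lem:samecontraction} supplying the identifications $G^\dagger_H = G^\dagger_{\overline{H}} = (\overline{H})^\dagger$ just as you do. The only (harmless) extra work in your version is verifying that subnormality passes to closures; this can be skipped by taking $A = H$ and $B = H$ in Corollary~\ref{cor:subnormal:containment}, which already yields $G^\dagger_{\overline{H}} \le H \le \overline{H}$.
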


In particular, we have the following strengthening of \cite[Corollary~1.2]{CRW-TitsCore}.

\begin{cor}Let $G$ be a \tdlc group and let $\mc{S}$ be the set of subnormal subgroups $S$ of $G$ such that $\overline{S}$ is cocompact or of finite covolume in $G$.  Then
\[
G^\dagger \le \bigcap_{S \in \mc{S}}S.
\]
\end{cor}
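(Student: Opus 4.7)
The plan is to deduce the corollary by combining the two previous corollaries, fixing an arbitrary $S \in \mathcal{S}$ and showing $G^\dagger \le S$.

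First I would observe that the closure of a subnormal subgroup is subnormal: given a subnormal series $S = S_0 \lhd S_1 \lhd \dots \lhd S_n = G$, then since each $S_{i+1}$ normalizes $S_i$ and normalizers of subgroups are closed (as automorphic images of closed subsets), $S_{i+1}$ normalizes $\overline{S_i}$, hence so does $\overline{S_{i+1}}$. Thus $\overline{S} = \overline{S_0} \lhd \overline{S_1} \lhd \dots \lhd \overline{S_n} = G$, so $\overline{S}$ is a closed subnormal subgroup of $G$. By hypothesis $\overline{S}$ is cocompact or of finite covolume in $G$, so Corollary~\ref{cor:subnormal:sametitscore} applies to $\overline{S}$ and gives
\[
G^\dagger = (\overline{S})^\dagger = G^\dagger_{\overline{S}}.
\]

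Next I would invoke Corollary~\ref{cor:subnormal:containment} with $A = B = S$ and $H = G$: the subgroup $S$ is subnormal in $G$ and we have the trivial containment $G^\dagger_S \le G$, so condition (ii) of that corollary is met. Therefore condition (i) holds, namely $G^\dagger_{\overline{S}} \le S$. Combining this with the equality from the previous paragraph gives $G^\dagger \le S$. Since $S \in \mathcal{S}$ was arbitrary, we obtain $G^\dagger \le \bigcap_{S \in \mathcal{S}} S$, as required.

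There is no real obstacle here; the only point requiring a brief verification is that closures of subnormal subgroups are subnormal, which is needed to legitimately apply Corollary~\ref{cor:subnormal:sametitscore} (whose hypothesis demands a closed subnormal subgroup). Once that observation is made, the argument is a straightforward two-line application of the previously established machinery.
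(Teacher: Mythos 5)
Your argument is correct and matches the paper's intended derivation: the paper presents this corollary as an immediate consequence of Corollary~\ref{cor:subnormal:containment} together with Corollary~\ref{cor:subnormal:sametitscore} (itself a consequence of Theorem~\ref{thmintro:TitsCore:cocompact}), which is exactly the combination you use. Your auxiliary observation that the closure of a subnormal subgroup is subnormal is a correct and standard way to justify applying Corollary~\ref{cor:subnormal:sametitscore} to $\overline{S}$; alternatively one can bypass it by applying Theorem~\ref{thmintro:TitsCore:cocompact} directly with $H = G$ and $K = \overline{S}$ to get $G^\dagger = G^\dagger_{\overline{S}}$.
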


\subsection{Examples}\label{sec:TitsCore_examples}

We give two basic examples that illustrate how the relative Tits core $G^\dagger_g$ can depend very little on the choice of element $g$, even though the groups $\con(g)$ and $\con(g\inv)$ are sensitive to the choice of $g$.

\begin{ex}\label{ex:tree}
Let $T$ be a locally finite regular tree of degree at least $3$, let $G = \Aut(T)$, and let $g \in G$.  If $g$ is elliptic, that is, $g$ fixes a vertex or inverts an edge, then $\con(g) = \con(g\inv) = \triv$.  Otherwise $g$ is hyperbolic, and the set of vertices $v$ such that $d(v,gv)$ is minimised forms a bi-infinite path $L$ in $T$, the axis of $g$.  Identify $L$ with $\bZ$, so that $g(0) > 0$ and $d_T(i,j) = |j-i|$, and let $\pi$ be the nearest point projection from $T$ to $L = \bZ$.  Let $K^-_n$ be the fixator of the set $\pi\inv((-\infty,n))$ and let $K^+_n$ be the fixator of the set $\pi\inv((n,+\infty))$.  Then we see that
\[
\con(g) \ge \bigcup_{n \in \bZ}K^-_n \text{ and } \con(g\inv) \ge \bigcup_{n \in \bZ}K^+_n.
\]
In fact, $\con(g)$ is the set of all elements $h \in G$ such that there exists $k \in L$ and a function $f_h: (-\infty,k] \rightarrow \bN$, with $f_h(n) \rightarrow +\infty$ as $n \rightarrow -\infty$, such that $h$ fixes pointwise the ball of radius $f_h(n)$ about the vertex $n$.  It is easily seen that $\con(g)$ is not closed in $G$: in fact, the closure $\overline{\con(g)}$ consists of all elliptic elements fixing the end $-\infty$ of the axis of $g$.  The normalizer of $\con(g)$ (and also of $\overline{\con(g)}$) is the stabilizer of $-\infty$, so $\N_G(\con(g))$ is a closed but not open subgroup of $G$.  Similarly, $\nub(g)$ is the pointwise stabilizer of the axis of $g$, so $\N_G(\nub(g))$ is not open.  However the subgroups $K^-_n$ and $K^+_n$ are each compact, and the product $K^+_nK^-_{n+1}$ is a compact open subgroup of $G$, being the stabilizer in $G$ of the directed edge $(n,n+1)$.  In turn, it is easily seen that the group generated by the stabilizers of the directed edges $(n,n+1)$ as $n$ ranges over $L$ is in fact the subgroup $G^+$ of $G$ generated by all directed edge stabilizers, which is a simple open subgroup of $G$ of index $2$.  So in this case, every element $g \in G$ satisfies either $G^\dagger_g = G^+ = G^\dagger$ (if $g$ is hyperbolic) or $G^\dagger_g = \triv$ (if $g$ is not hyperbolic).

We will return to a more general class of examples including this one at the end of the paper (see \S\ref{sec:weakdecomp}).
\end{ex}

\begin{ex}\label{ex:linear}(See \cite{Prasad} for a more detailed treatment of a class of examples including this one.)

Let $G= \mathrm{SL}_n(\bQ_p)$ and let $g$ be the diagonal matrix $\mathrm{diag}(\lambda_1,\lambda_2,\dots,\lambda_n)$.  Suppose that
\[
|\lambda_1|_p \ge |\lambda_2|_p \ge \dots \ge |\lambda_n|_p.
\]
Then $\con(g)$ is closed in this case: it is the group of matrices of the form $1+u$, where $u_{ij} = 0$ whenever $|\lambda_i|_p \le |\lambda_j|_p$.  In other words, $\con(g)$ is a group of block upper unitriangular matrices, with the blocks corresponding to intervals of $(\lambda_1,\dots,\lambda_n)$ on which $|\lambda_i|_p$ is constant.  Thus $\con(g)$ is the unipotent radical of a parabolic subgroup $P$, where $P$ consists of all elements $a$ of $G$ such that $a_{ij} = 0$ whenever $|\lambda_i|_p > |\lambda_j|_p$.  In fact $P$ itself can be characterized directly in terms of the dynamics of $g$: it consists of those elements $a \in G$ such that $\{g^nag^{-n} \mid n \ge 0\}$ is relatively compact.  We see also that $\con(g\inv)$ is simply the image of $\con(g)$ under matrix transposition.

A similar description of contraction groups can be given for all elements of $G$ that have non-trivial contraction group.  So a typical non-trivial relative Tits core $G^\dagger_g$ in $G$ is of the form $\langle U, U'\rangle$, where $U$ is the unipotent radical of a parabolic subgroup $P$ and $U'$ is the unipotent radical of a parabolic subgroup opposite to $P$.  By \cite[Proposition 6.2(v)]{BorelTits}, the group $\langle U, U'\rangle$ does not depend on which parabolic $P$ we have, as long as it is proper (in other words, as long as $U$ is non-trivial).  So in fact $G^\dagger_g = G^\dagger$ whenever $\con(g) \not=\triv$; in the present example, $G^\dagger = \mathrm{SL}_n(\bQ_p) = G$.
\end{ex}

\section{The nub of a flat group}

\subsection{Introduction}

Let $G$ be a \tdlc group and let $H$ be a group of automorphisms of $G$.  If $H$ is flat, the \defbold{nub}\index{nub!of a subgroup} $\nub(H)$ is the intersection of all compact open subgroups of $U$ that are tidy for $H$.  More generally we define the \defbold{lower nub}\index{nub!lower nub}\index{lnub@$\nubl_G(H)$} $\nubl(H)$ to be the closure of the group generated by the nubs of the cyclic subgroups of $H$.   Recall that the action of $H$ is said to be smooth if the tidy subgroups for the action form a base of neighbourhoods of the identity; in other words, a flat group $H$ is smooth if and only if $\nub(H)$ is trivial.

It is clear from Proposition~\ref{basic:tidybelow} that $\nubl(H) \le \nub(H)$ whenever $H$ is flat.  The following example illustrates that $\nubl(H)$ need not be the same as $\nub(H)$, even for uniscalar flat groups, and that the action of $H$ on $\nub(H)$ does not in general have the dynamical properties observed in \cite{WillisNub} in the cyclic case.

\begin{ex}\label{ex:badnub}
Let $V = \bF_p[[t]]$, regarded as a profinite vector space over $\bF_p$, and let $W$ be a non-trivial (possibly finite) closed subspace of $V$.  Let $H$ be the group of continuous $\bF_p$-linear maps from $V$ to $W$ under pointwise addition, and define an action $\rho$ of $H$ on $G = V \oplus W$ by setting $\rho(h)(v,w) = (v,w+h(v))$.  Then $\rho(H)$ is a subgroup of $\Aut(G)$ (necessarily flat, since $G$ is compact), and $\nub_G(\rho(H)) = W$, since for every subspace $V'$ of $V$ of finite codimension, there exists $h \in H$ such that $h(V') = W$.  In particular, the action of $H$ on $G$ does not have SIN.  However, $\rho(H)$ acts trivially on $W$ and the group $G \rtimes_{\rho} H$ is nilpotent, so there is no non-trivial subgroup $K$ of $G$ such that $\N_{\rho(H)}(K)$ acts ergodically on $K$, and in particular $\nub(\rho(h))$ is trivial for every $h \in H$.
\end{ex}

Both the nub and the lower nub are well-behaved under closures.

\begin{lem}\label{lem:nub:closure}
Let $G$ be a \tdlc group and let $H$ be a subgroup of $G$.  Suppose $\nubl(H)$ is compact.  Then $\nubl(H) = \nubl(\overline{H})$.  If $H$ is flat, then $\overline{H}$ is flat and $\nub(H) = \nub(\overline{H})$.
\end{lem}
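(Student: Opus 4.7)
The lemma contains two claims which I would treat separately. The flat claim is almost immediate from Lemma~\ref{flat:closure}, which already provides flatness of $\overline{H}$ and the fact that any compact open subgroup tidy for $H$ is tidy for $\overline{H}$; the converse inclusion is trivial, so the two families of tidy subgroups coincide and intersecting them yields $\nub(H) = \nub(\overline{H})$.

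For the lower nub claim, $\nubl(H) \le \nubl(\overline{H})$ is immediate. Set $K := \nubl(H)$. Conjugation by $h \in H$ sends $\nub(g)$ (for $g \in H$) to $\nub(hgh^{-1})$, so $H$ normalizes $K$; since $K$ is closed, $\N_G(K)$ is closed and hence $\overline{H}$ normalizes $K$ as well. It therefore suffices to show $\nub(h) \le K$ for every $h \in \overline{H}$. For this I would apply Theorem~\ref{basic:nub_characterizations}: $\nub(h)$ has no proper open $h$-invariant subgroup. Because both $K$ and $\nub(h)$ are $h$-invariant, so is the intersection $K \cap \nub(h)$; it therefore suffices to show this intersection is open in $\nub(h)$, since then the characterization forces equality and $\nub(h) \le K$.

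To produce openness of $K \cap \nub(h)$, choose a compact open subgroup $U$ tidy for $h$; then $\nub(h) \le U$ and, by the clopenness of tidy sets (Theorem~\ref{tidy:stable}), there are $h' \in H$ arbitrarily close to $h$ with $U$ still tidy for $h'$, giving $\nub(h') \le U \cap K$. The natural way to transfer this into the required identity neighbourhood of $\nub(h)$ inside $K$ is through Proposition~\ref{contraction:conjugate} and Corollary~\ref{contraction:conjugate:bis}, which supply conjugators $t \in U_+ \cap \con(h^{-1})$ and $s \in U_- \cap \con(h)$ with $\overline{\con(h')} = t\overline{\con(h)}t^{-1}$ and $\overline{\con(h'^{-1})} = s\overline{\con(h^{-1})}s^{-1}$. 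Combined with $\nub(\cdot) = \overline{\con(\cdot)} \cap \overline{\con((\cdot)^{-1})}$ and the $\overline{H}$-invariance of $K$ to absorb the conjugations, this comparison should produce the required open neighbourhood inside $K \cap \nub(h)$. The main obstacle is that $t$ and $s$ are generally distinct elements, so the two conjugations do not combine into a single conjugation of $\nub(h)$, making the transfer delicate; a careful analysis of the construction of $t$ and $s$ (they are limits inside $U_{\pm}$) and their behaviour as $h' \to h$ will be required to complete the argument.
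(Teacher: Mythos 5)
Your treatment of the flat statement is correct and is essentially the paper's: by Theorem~\ref{tidy:stable} (as used in Lemma~\ref{flat:closure}) every tidy subgroup for $H$ is tidy for $\overline{H}$, the converse being trivial, so the two families of tidy subgroups coincide and the nubs agree.

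The lower-nub argument has a genuine gap, and you have correctly located it yourself: the ``transfer'' step cannot be completed as proposed. Two things go wrong. First, Proposition~\ref{contraction:conjugate} and Corollary~\ref{contraction:conjugate:bis} conjugate $\con(h)$ and $\con(h\inv)$ by \emph{different} elements $t,s$, so the identity $\nub(h')=t\overline{\con(h)}t\inv\cap s\overline{\con(h\inv)}s\inv$ yields no usable relation between $\nub(h')$ and $\nub(h)=\overline{\con(h)}\cap\overline{\con(h\inv)}$; the input actually needed is the single-conjugator statement for nubs themselves, \cite[Theorem~1.5]{CRW-TitsCore}, which gives $\nub(h')=v\,\nub(h)\,v\inv$ for a single $v$ in a compact open subgroup $U$ once $h'\in hV$ with $V\le U$ small. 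Second, even granting that, your reduction to ``$K\cap\nub(h)$ is open in $\nub(h)$'' (with $K=\nubl(H)$) still fails: the conjugator $v$ lies in $U$, not in $\N_G(K)$, so from $\nub(h')\le K$ you only obtain $\nub(h)\le v\inv K v$, and $K\cap v\inv Kv$ is merely a compact subgroup with no reason to be open in $K$ or in $\nub(h)$. The paper escapes this by aiming at a different target: for any compact open subgroup $U$ with $\nubl(H)\le U$ and any $a\in\overline{H}$, one gets $\nub(a)=v\,\nub(h)\,v\inv\le vUv\inv=U$ for suitable $h\in H$ and $v\in U$ --- conjugation by $v$ preserves $U$ even though it does not preserve $K$ --- hence $\nubl(\overline{H})\le U$; since $\nubl(H)$ is compact it equals the intersection of the compact open subgroups containing it, and equality follows. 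Note that this is also where the compactness hypothesis on $\nubl(H)$ enters, a hypothesis your argument never uses.
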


\begin{proof}Suppose $\nubl(H)$ is compact and let $U$ be a compact open subgroup such that $\nubl(H) \le U$, so that $\nub(h) \le U$ for all $h \in H$.  Let $a \in \overline{H}$.  Then by  \cite[Theorem~1.5]{CRW-TitsCore}, for some open subgroup $V$ of $U$, then $\nub(a)$ is $V$-conjugate to $\nub(h)$ for all $h \in aV$.  Since $aV \cap H$ is non-empty, we conclude that $\nub(a) \le U$.  So in fact 
\[\nubl(H) \le U \Rightarrow \nubl(\overline{H}) \le U\]
for all compact open subgroups $U$, and the converse implication also clearly holds.  Hence $\nubl(H) = \nubl(\overline{H})$.

Now suppose $H$ is flat on $G$.  Then any tidy subgroup $U$ for $H$ is also tidy for $\overline{H}$ by Theorem~\ref{tidy:stable}, and conversely.  Hence $\nub(H) = \nub(\overline{H})$.
\end{proof}

The nub provides a simple criterion for when a flat group of automorphisms of $G$ has flat action on an open subgroup of $G$. 

\begin{lem}\label{openinvariant:flat}
Let $G$ be a \tdlc group, let $H$ be a flat group of automorphisms of $G$, and let $K$ be an open $H$-invariant subgroup of $G$.  Then $H$ is flat on $K$ if and only if $\nub_G(H) \le K$.  If $\nub_G(H) \le K$, then $\nub_G(H) = \nub_K(H)$.
\end{lem}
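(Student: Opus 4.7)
The plan is to establish both the equivalence and the nub identity using a compactness argument on the filter of tidy subgroups for $H$ on $G$, combined with the scale equality $s_G(\alpha) = s_K(\alpha)$ from Corollary~\ref{openinvariant:tidy}.

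The forward implication is routine: if $H$ is flat on $K$, any compact open $U \le K$ that is tidy for $H$ on $K$ is, by Corollary~\ref{openinvariant:tidy} applied elementwise, also tidy for $H$ on $G$, whence $\nub_G(H) \le U \le K$.

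For the reverse implication, suppose $\nub_G(H) \le K$. The goal is to produce a tidy subgroup for $H$ on $G$ lying inside $K$. Let $\mathcal{T}$ denote the collection of tidy subgroups for $H$ on $G$. Since tidy subgroups for a single automorphism are closed under finite intersection (Theorem~\ref{tidy:stable}(iii)), and being tidy for $H$ means being tidy for each $\alpha \in H$, the family $\mathcal{T}$ is a filter base of compact sets with $\bigcap \mathcal{T} = \nub_G(H)$. Fix any $U_0 \in \mathcal{T}$. Then the family $\{(U \cap U_0) \setminus K : U \in \mathcal{T}\}$ consists of closed subsets of the compact space $U_0$, is closed under finite intersection (since $\mathcal{T}$ is), and has total intersection $\nub_G(H) \setminus K = \emptyset$. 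Compactness therefore forces some $U \in \mathcal{T}$ with $U \cap U_0 \subseteq K$, and $U \cap U_0 \in \mathcal{T}$. To conclude that $H$ is flat on $K$, I would invoke the other direction of Corollary~\ref{openinvariant:tidy}: since $s_G(\alpha) = s_K(\alpha)$, Theorem~\ref{basic:tidy} shows that a compact open subgroup of $K$ is tidy for $\alpha$ on $K$ precisely when it is tidy for $\alpha$ on $G$, both conditions amounting to $|\alpha(V):\alpha(V)\cap V| = s_G(\alpha)$. Hence $U \cap U_0$ is tidy for $H$ on $K$.

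For the final assertion, the same intersection trick shows that for every $V \in \mathcal{T}$ there exists $W \in \mathcal{T}$ with $W \le V \cap K$, so the subfamily $\mathcal{T}_K = \{U \in \mathcal{T} : U \le K\}$ is cofinal in $\mathcal{T}$ under reverse inclusion. By the equivalence noted above, $\mathcal{T}_K$ coincides with the family of tidy subgroups for $H$ on $K$, so intersecting gives $\nub_G(H) = \bigcap \mathcal{T}_K = \nub_K(H)$. The main obstacle is the compactness step in the reverse direction — extracting a single tidy subgroup inside $K$ from the knowledge that only the total intersection lies in $K$; once this is in hand, the remaining content is a direct application of the scale and tidiness characterizations already established in the excerpt.
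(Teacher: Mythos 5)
Your proposal is correct and follows essentially the same route as the paper: the forward direction and the identification of tidy subgroups for $H$ on $K$ with those tidy subgroups for $H$ on $G$ lying inside $K$ both come from Corollary~\ref{openinvariant:tidy} together with Theorem~\ref{basic:tidy}, and the reverse direction is exactly the paper's compactness argument using closure of the tidy family under finite intersections. The cofinality observation you use for $\nub_G(H)=\nub_K(H)$ is a mild repackaging of the paper's remark that $A\cap B$ is tidy on both $G$ and $K$, so nothing is missing.
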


\begin{proof}
By Corollary~\ref{openinvariant:tidy}, we have $s_G(\alpha) = s_K(\alpha)$ for all $\alpha \in H$, so the tidy subgroups for $H$ on $K$ are precisely the tidy subgroups $U$ for $H$ on $G$ such that $U \le K$.  If $\nub_G(H) \not\le K$, then no such tidy subgroup can exist, so $H$ is not flat on $K$.  So suppose that $\nub_G(H) \le K$.  Let $V$ be a compact open subgroup of $K$ such that $\nub_G(H) \le V$.  Let $U$ be a tidy subgroup for $H$ on $G$.  Then $\nub_G(H) \le U \cap V$, and the collection of tidy subgroups for $H$ on $G$ is closed under finite intersections, so by a compactness argument on the open subgroups of $U$, there exists $W \le U \cap V$ such that $W$ is tidy for $H$ on $G$, and hence also for $H$ on $K$.  Thus $H$ is flat on $K$.  We see that given tidy subgroups $A$ and $B$ for $H$ on $G$ and on $K$ respectively, then $A \cap B$ is tidy for $H$ both on $G$ and on $K$, so $\nub_G(H) = \nub_K(H)$.
\end{proof}

The following is an immediate consequence of Corollary~\ref{cor:finiterank:subgroup} and Lemma~\ref{openinvariant:flat}.

\begin{cor}\label{cor:finiterank:residual}Let $G$ be a \tdlc group and let $H$ be a flat group of automorphisms of $G$ of finite rank.  Let $K$ be an open $H$-invariant subgroup of $G$.  Then $\nub_G(H) = \nub_K(H)$.\end{cor}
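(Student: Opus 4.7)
The plan is to derive the corollary directly by combining the two results cited in its statement, namely Corollary~\ref{cor:finiterank:subgroup} (finite rank flatness passes to closed invariant subgroups) and Lemma~\ref{openinvariant:flat} (flatness on an open invariant subgroup is equivalent to containment of the nub, and in that case the nubs agree).

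First I would observe that $K$, being an open subgroup of the topological group $G$, is automatically closed in $G$ (its complement is a union of non-trivial cosets of $K$, hence open). Thus $K$ is a closed $H$-invariant subgroup of $G$, and so Corollary~\ref{cor:finiterank:subgroup} applies to give that $H$ is flat of finite rank on $K$. In particular, $H$ is flat on $K$.

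Now I invoke Lemma~\ref{openinvariant:flat}: since $K$ is an open $H$-invariant subgroup of $G$ and $H$ is flat on both $G$ and $K$, the equivalence provided by that lemma forces $\nub_G(H) \le K$, and its final assertion then yields $\nub_G(H) = \nub_K(H)$. Since both implications needed here are already packaged in the cited results, there is no real obstacle; the corollary is essentially a one-line deduction, and the only thing to check carefully is that "open $H$-invariant" in the corollary indeed supplies the "closed $H$-invariant" hypothesis required by Corollary~\ref{cor:finiterank:subgroup}.
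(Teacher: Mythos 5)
Your proposal is correct and is precisely the argument the paper intends: the corollary is stated there as an immediate consequence of Corollary~\ref{cor:finiterank:subgroup} and Lemma~\ref{openinvariant:flat}, and your deduction (open implies closed, hence $H$ is flat on $K$, hence $\nub_G(H)\le K$ by the equivalence in the lemma, hence the nubs coincide by its final assertion) is exactly how those two results combine. Nothing is missing.
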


\subsection{Invariant uniscalar subgroups}

We now prove a result on the effect of $H$-invariant subgroups on tidy subgroups for $H$, which will allow us to establish some properties of $\N_G(\nub(H))$.  This result is a variation on \cite[Theorem~3.3]{WillisFlat}.

\begin{thm}\label{thm:compact_invariant:tidy}Let $G$ be a \tdlc group and let $H$ be a flat group of automorphisms of $G$.  Let $K$ be an $H$-invariant subgroup of $G$ such that $|K:\N_K(U)|$ is finite, and let $V = \bigcap_{k \in K}kUk\inv$.
\begin{enumerate}[(i)]
\item If $U$ is tidy below for $H$, then $V$ is tidy below for $H$.
\item If $U$ is tidy for $H$, then $V$ is tidy for $H$.  If in addition $K$ is compact, then $VK$ is also tidy for $H$.
\end{enumerate}
\end{thm}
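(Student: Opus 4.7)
The plan is to first note that, since $|K:\N_K(U)|$ is finite, the intersection $V = \bigcap_{k \in K}kUk\inv$ equals the finite intersection $\bigcap_{i=1}^n k_i U k_i\inv$ over any coset representatives $k_1,\dots,k_n$ of $\N_K(U)$ in $K$, so $V$ is a compact open subgroup of $G$, and by construction $V$ is normalized by $K$.

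For part (i), by Proposition~\ref{basic:tidybelow} it suffices to show that $\nub(\alpha) \le V$ for every $\alpha \in H$, equivalently $k\inv \nub(\alpha) k \le U$ for every $k \in K$.  The key manoeuvre is to realize $k\inv \nub(\alpha) k$ as the nub of the element $k\inv \alpha k$ of $G \rtimes H$, which equals $(k\inv\alpha(k))\cdot\alpha$ (using that $\alpha(k) \in K$ because $K$ is $H$-invariant).  Theorem~\ref{tidy:stable}(i), applied inside $G \rtimes H$, says that $U$ is tidy for every element of the double coset $U\alpha U$, so whenever $k\inv\alpha(k) \in U$ we have $k\inv\alpha k \in U\alpha U$ and hence $k\inv \nub(\alpha) k = \nub(k\inv\alpha k) \le U$.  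The uniscalar case is completely clean: if $\alpha(U) = U$ then $\alpha(kUk\inv) = \alpha(k) U \alpha(k)\inv$ is again a $K$-conjugate of $U$, so $\alpha$ permutes the finite set of $K$-conjugates of $U$, and $V$ itself is $\alpha$-invariant (hence tidy for $\alpha$).

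For part (ii), the tidy-below property is given by (i), so only the tidy-above decomposition $V = V_+V_-$ remains to be established for each $\alpha \in H$.  I would set $V_\pm := \bigcap_{k \in K} k U_\pm k\inv$ and observe that, using $\alpha(K) = K$ and the definitions $U_+ = \bigcap_{n \ge 0}\alpha^n(U)$, $U_- = \bigcap_{n \le 0}\alpha^n(U)$, one has $V_\pm = \bigcap_{n}\alpha^{\pm n}(V)$, yielding automatically $\alpha(V_+) \ge V_+$ and $\alpha(V_-) \le V_-$.  The factorisation $V = V_+V_-$ would then be verified by combining the tidy-above decomposition of each $kUk\inv$ with the $K$-invariance of $V$.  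For the $VK$ claim with $K$ compact, $VK$ is a compact open subgroup (since $K$ normalizes $V$) which is $K$-invariant, and the same argument carries over verbatim with $VK$ in place of $V$.

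The principal technical obstacle is the non-uniscalar case of (i) when $k\inv\alpha(k) \notin U$, since the double-coset invariance from Theorem~\ref{tidy:stable}(i) then no longer applies directly.  To bridge this gap, one would enlarge $U$ along the flat structure of $H$ so as to absorb all the displacement terms $k\inv\alpha(k)$ for $k$ in a set of coset representatives, apply the double-coset argument there, and then descend back to $V$ via the monotonicity properties of nubs; executing this descent without losing the containment $\nub(\alpha) \le V$ is the crux of the proof.
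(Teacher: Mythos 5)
There are two genuine gaps, and they sit exactly at the two places you yourself flag as ``the crux'' without resolving them. In part (i), your double-coset argument only yields $k\inv\nub(\alpha)k = \nub(k\inv\alpha k)\le U$ when the displacement $k\inv\alpha(k)$ happens to lie in $U$, and the proposed repair --- enlarge $U$ to absorb the displacements, then descend --- is not an argument: if $W\ge U$ is the enlarged subgroup, double-coset invariance gives at best $\nub(k\inv\alpha k)\le W$, and nothing forces this nub back down into $U$ (moreover enlarging destroys tidiness above, so $W$ need not be tidy and Theorem~\ref{tidy:stable}(i) need not apply to it). The paper proceeds in the opposite order: it first treats the case where $U$ is genuinely tidy, and obtains tidiness below of $V$ not from a conjugation formula for nubs but from the orbit-dynamics Lemma~\ref{tidy:orbit_dynamics} --- for $v\in V\cap U_+$ the backward $\alpha$-orbit of $kvk\inv$ is trapped in the compact set $\bigcup_{k\in K}kU_+k\inv$, so $V\cap U_+$ is normalized by $K$ and $V\cap V_{++}\le V\cap U\cap U_{++}=V\cap U_+=V_+$. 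The general case of (i) is then deduced by shrinking $U$ to $U'=\bigcap^n_{i=0}\alpha^i(U)$, which is tidy, applying part (ii) there, and using $V\ge V'$ with Proposition~\ref{basic:tidybelow}.

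The second, more serious, gap is in part (ii): the factorisation $V=V_+V_-$ is the entire content of the theorem and does not follow by ``combining the tidy-above decomposition of each $kUk\inv$ with the $K$-invariance of $V$''. An intersection of tidy-above subgroups is not tidy above in general --- this is precisely why Proposition~\ref{basic:tidyabove} intersects along an $\alpha$-orbit rather than over arbitrary conjugates --- and writing $v=u_+u_-$ with $u_\pm\in U_\pm$ gives no control on whether $u_\pm$ lie in $V$. The paper's proof spends most of its length here: it extracts an accumulation point $x$ of $(\alpha^{-n}(u_+))_{n\ge0}$ lying in $U_+\cap U_-$, uses it to replace $u_+$ by $\alpha^{p}(v')$ with $v'\in V_+$, and then combines the $K$-normalization of $V\cap U_+$ with Lemma~\ref{tidy:orbit_dynamics} again to force $u_+\in V_+$ and hence $u_-\in V_-$. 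Finally, the claim that the $VK$ assertion ``carries over verbatim'' is also not right ($VK$ is not an intersection of conjugates of $U$); the paper instead observes $|\alpha(VK):\alpha(VK)\cap VK|\le|\alpha(V):\alpha(V)\cap V|=s(\alpha)$ and invokes the minimality characterization of tidiness from Theorem~\ref{basic:tidy}.
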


\begin{proof}It suffices to consider elements $\alpha \in H$ individually, so fix $\alpha \in H$.  Suppose for the time being that $U$ is tidy for $H$.

Since $|K:\N_K(U)|$ is finite, $V$ is the intersection of finitely many conjugates of $U$, and hence $V$ is a compact open subgroup of $G$.  Note also that the set $\bigcup_{k \in K}kUk\inv$ is compact.  Since $K$ is $\alpha$-invariant, for all $n \in \bZ$ we have
\[
\alpha^n(V) = \bigcap_{k \in K}\alpha^n(kUk\inv) =  \bigcap_{k \in K}k\alpha^n(U)k\inv.
\]

Define
\[
U_+ := \bigcap_{n \ge 0}\alpha^n(U); \; U_- := \bigcap_{n \le 0}\alpha^n(U); \; U_{++} = \{g \in G \mid \forall n \gg 0 : \; \alpha^{-n}(g) \in U\}.
\]
The subgroups $V_+$, $V_-$ and $V_{++}$ are defined similarly, with $V$ in place of $U$.  We see that $V_+ = \bigcap_{k \in K}kU_+k\inv$ and $V_- = \bigcap_{k \in K}kU_-k\inv$.  Let $v \in V \cap U_{+}$ and let $k \in K$.  Then for all $n \le 0$, we have
\[
\alpha^n(kvk\inv) = \alpha^n(k)\alpha^n(v)\alpha^n(k\inv) \in \bigcup_{k \in K}kU_+k\inv.
\]
So the backward $\alpha$-orbit $\{\alpha^n(kvk\inv) \mid n \le 0\}$ is confined to the relatively compact set $\bigcup_{k \in K}kU_+k\inv$.  Moreover, we have $kvk\inv \in U$ since $v \in V$.  Thus by Lemma~\ref{tidy:orbit_dynamics}, $kvk\inv \in U_+$, so in fact $kvk\inv \in V \cap U_+$.  Since $k \in K$ was arbitrary, we conclude that $V \cap U_+$ is normalized by $K$.  In particular, it follows that 
\[
V \cap U_+ = V \cap \bigcap_{k \in K}kU_+k\inv = V_+.
\]
Similarly, $V \cap U_- = V_-$.

Note that $V_{++} \le U_{++}$, so
\[
V \cap V_{++} \le V \cap U_{++} \le V \cap U \cap U_{++} = V \cap U_+ = V_+,
\]
and hence $V$ is tidy below for $\alpha$ by Lemma~\ref{tidybelow:asymptotic}.

Our next aim is to show that $V = V_+V_-$.  Given $v \in V$, then \emph{a fortiori} $v \in U$, so there exist $u_+ \in U_+$ and $u_- \in U_-$ such that $v = u_+u_-$.  Then the sequence $(\alpha^{-n}(u_+))_{n \ge 0}$ is confined to the compact set $U_+$, so has an accumulation point $x$ say.  For all $r \in \bZ$, we see that $\alpha^{-r}(x)$ is an accumulation point of the sequence $(\alpha^{-n}(u_+))_{n \ge r}$, and hence also of $(\alpha^{-n}(u_+))_{n \ge 0}$ (which accounts for all but finitely many of the terms of $(\alpha^{-n}(u_+))_{n \ge r}$).  Thus $\alpha^r(x) \in U$ for all $r \in \bZ$ and in fact $\alpha^r(x) \in U_+ \cap U_-$ for all $r \in \bZ$.  Let $p \ge 0$ be such that $\alpha^{-p}(u_+) \in Vx$, that is, $\alpha^{-p}(u_+) = v'x$ for some $v' \in V$.  Then $v'$ is also an element of $U_+$, so in fact $v' \in V \cap U_+ = V_+$.  Now $v = \alpha^{p}(v')\alpha^{p}(x)u_-$; we see that $v \in \alpha^{p}(v')U_-$ (since $\alpha^{p}(x) \in U_-$) and also that $\alpha^{p}(v') = u_+(\alpha^{p}(x))\inv \in U_+$ (since $\alpha^{p}(x) \in U_+$), and hence we could have chosen $u_+$ and $u_-$ so that $u_+ = \alpha^{p}(v')$ for some $p \ge 0$ and $v' \in V_+$.  Let us assume that we have done so, and let $k \in K$.  Then
\[
\alpha^{-p}(ku_+k\inv) = \alpha^{-p}(k)v'\alpha^{-p}(k\inv);
\]
since $\alpha^{-p}(k) \in K$ and $V_+ = V \cap U_+$ is normalized by $K$, we see that $\alpha^{-p}(ku_+k\inv) \in V_+$, in other words $ku_+k\inv = u'$ for some $u' \in \alpha^p(V_+)$, so $kvk\inv = u'ku_-k\inv$.  At the same time, $kvk\inv \in U$, so $kvk\inv = w_+w_-$ for $w_+ \in U_+$ and $w_- \in U_-$.  Consider the element $w\inv_+ u'$.  Since $u'ku_-k\inv = w_+w_-$, we have
\[
w\inv_+ u' = w_-(ku_-k\inv)\inv \in \bigcup_{k \in K}(U_{-}kU_-k\inv);
\]
since $K$ is $\alpha$-invariant and $\alpha(U_-) \le U_-$, it follows that $(\alpha^n(w\inv_+ u'))_{n \ge 0}$ is confined to a compact set.  In addition
\[
\alpha^{-p}(w\inv_+ u') \in \alpha^{-p}(U_+)V_+ \subseteq U_+,
\]
so $\alpha^{-p}(w\inv_+ u')$ is an element of $U_+$ whose forward $\alpha$-orbit is bounded; it follows from Lemma~\ref{tidy:orbit_dynamics} that $\alpha^{-p}(w\inv_+ u') \in U_+ \cap U_-$, so $w\inv_+ u' \in U_+ \cap U_-$.  In particular, $u' = w_+(w\inv_+ u') \in U_+$.  Since $u' = ku_+k\inv$ and the choice of $k \in K$ was arbitrary, we conclude that $u_+ \in V \cap U_+$, so $u_+ \in V_+$.  Hence $u_- \in V$ as well (since $u_- = u\inv_+ v$), so $u_- \in V \cap U_-$, and hence $u_- \in V_-$.  Thus we have expressed an arbitrary $v \in V$ as a product of an element of $V_+$ and an element of $V_-$, so $V$ is tidy above for $\alpha$, completing the proof that $V$ is tidy for $\alpha$.
  
Now suppose $K$ is compact.  Since $K$ is $\alpha$-invariant, we see that
$$|\alpha(VK): \alpha(VK) \cap VK| \le |\alpha(V):\alpha(V) \cap V|.$$
Since $V$ is tidy for $H$, the minimum value for $|\alpha(W):\alpha(W) \cap W|$ (for $W$ a compact open subgroup of $G$) is already attained by $V$, so $VK$ is tidy for $H$ by Theorem~\ref{basic:tidy}.  This completes the proof of (ii).

Finally, let us relax the assumption that $U$ is tidy, and instead assume that $U$ is tidy below for $\alpha$.  Then there exists $U' = \bigcap^n_{i=0}\alpha^i(U)$ that is tidy above for $\alpha$, by Proposition~\ref{basic:tidyabove}; in fact $U'$ is also tidy below for $\alpha$ by Proposition~\ref{basic:tidybelow}.  Each of the groups $\alpha^i(U)$ has only finitely many $K$-conjugates, because $K$ is $\alpha$-invariant, so $|K:\N_K(U')|$ is finite.  We now apply part (ii) to conclude that $V' = \bigcap_{k \in K}kU'k\inv$ is tidy for $\alpha$.  Now $V \ge V'$ since $U \ge U'$, so $V$ is tidy below for $\alpha$ by Proposition~\ref{basic:tidybelow}, proving (i).
\end{proof}

\begin{cor}\label{cor:nub:normalizer}Let $G$ be a \tdlc group and let $H$ be a flat group of automorphisms of $G$.
\begin{enumerate}[(i)]
\item Let $L$ be the closure of the group generated by all $H$-invariant compact subgroups of $G$.  Then $\nub(H)$ is a normal subgroup of $L$.
\item Let $H'$ be a subgroup of $H$.  Then $\nub(H')$ is a normal subgroup of $\nub(H)$.
\item Suppose $H$ is uniscalar.  Then $\nub(H)$ is normalized by every compact open subgroup of $G$ that is tidy for $H$.  In particular, $\N_G(\nub(H))$ is open in $G$.
\end{enumerate}
\end{cor}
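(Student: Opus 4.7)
My plan is to derive all three parts from Theorem~\ref{thm:compact_invariant:tidy} (specifically part (ii)), using the fact that any compact subgroup of a \tdlc group $G$ has finite index in its intersection with any open subgroup of $G$.

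For part (i), I would first fix a tidy subgroup $U$ for $H$ and an $H$-invariant compact subgroup $K$. Since $U$ is open and $K$ is compact, $K \cap U$ is open in $K$ and hence of finite index; thus $\N_K(U) \supseteq K \cap U$ also has finite index in $K$, so Theorem~\ref{thm:compact_invariant:tidy}(ii) applies and $V_U := \bigcap_{k \in K}kUk\inv$ is tidy for $H$. Now observe
\[
\bigcap_{U \text{ tidy}} V_U \;=\; \bigcap_{k \in K}k\Bigl(\bigcap_{U \text{ tidy}} U\Bigr) k\inv \;=\; \bigcap_{k \in K} k\nub(H) k\inv,
\]
while on the other hand $\nub(H) \le V_U \le U$ for each $U$, forcing $\bigcap_U V_U = \nub(H)$. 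So $\nub(H) = \bigcap_{k \in K}k\nub(H)k\inv$, and a symmetry argument (applying this to $k$ and $k\inv$) yields $k\nub(H)k\inv = \nub(H)$ for all $k \in K$. Hence $K \le \N_G(\nub(H))$. Since $\nub(H)$ is compact, $\N_G(\nub(H))$ is closed, so it also contains the closure $L$ of the group generated by all such $K$.

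For part (ii), note that since $H$ is flat on $G$ with a tidy subgroup $U$, the same $U$ is tidy for $H'$, so $H'$ is also flat on $G$ and $\nub(H') \le \nub(H)$. By Theorem~\ref{tidy:stable}(iii), the collection of tidy subgroups for $H$ is $H$-invariant, so $\nub(H)$ is an $H$-invariant compact subgroup of $G$, hence \emph{a fortiori} $H'$-invariant. Applying part (i) to $H'$ in place of $H$, with $\nub(H)$ playing the role of the $H'$-invariant compact subgroup, yields $\nub(H) \le \N_G(\nub(H'))$, i.e., $\nub(H')$ is normal in $\nub(H)$.

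For part (iii), the uniscalar hypothesis means that every tidy subgroup $U$ for $H$ is $H$-invariant (as $H = H_{\us}$ normalizes any tidy subgroup). Such a $U$ is thus an $H$-invariant \emph{compact} subgroup of $G$, so part (i) gives $U \le \N_G(\nub(H))$. Since $U$ is open, $\N_G(\nub(H))$ is open in $G$. No step here looks like a serious obstacle; the only mild subtlety is verifying the interchange of intersections in part (i) and the $H$-invariance of $\nub(H)$ in part (ii), both of which follow directly from definitions and Theorem~\ref{tidy:stable}.
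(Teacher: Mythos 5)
Your argument is correct and follows essentially the same route as the paper: both reduce everything to Theorem~\ref{thm:compact_invariant:tidy}, expressing $\nub(H)$ as an intersection of $K$-invariant tidy subgroups to conclude $K \le \N_G(\nub(H))$ for every $H$-invariant compact $K$, and then derive (ii) and (iii) as formal consequences of (i). The only point to add in (i) is that $\nub(H)$ is itself a compact $H$-invariant subgroup (as you observe when proving (ii)), hence $\nub(H) \le L$, which is needed to say $\nub(H)$ is a normal subgroup \emph{of} $L$ rather than merely normalized by $L$.
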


\begin{proof}Theorem~\ref{thm:compact_invariant:tidy} implies that whenever $K$ is an $H$-invariant compact subgroup of $G$ and $U$ is a tidy subgroup for $H$, then $U$ contains a $K$-invariant tidy subgroup for $H$.  Thus for any $H$-invariant compact subgroup $K$ of $G$, then $\nub(H)$ can be expressed as an intersection of $K$-invariant compact open subgroups, so $K$ normalizes $\nub(H)$.  Hence the normalizer of $\nub(H)$ contains a dense subgroup of $L$.  Since $\nub(H)$ is compact and $H$-invariant, in fact $\nub(H) \le L$ and $\N_G(\nub(H))$ is closed, so $\nub(H)$ is a normal subgroup of $L$, proving (i).

Given a subgroup $H'$ of $H$, then every compact open subgroup of $G$ that is tidy for $H$ is also tidy for $H'$; hence $\nub(H') \le \nub(H)$.  Since $\nub(H)$ is an $H'$-invariant compact subgroup of $G$, it follows from part (i) that $\nub(H')$ is normalized by $\nub(H)$, proving (ii).

Suppose $H$ is uniscalar.  Then a compact open subgroup $V$ of $G$ is tidy for $H$ if and only if $V$ is $H$-invariant; in this case we have $V \le \N_G(\nub(H))$ by part (i), proving (iii).\end{proof}

The following lemma and corollary can be used to enlarge the uniscalar part of a flat subgroup.

\begin{lem}\label{tidy:uniscalar_extension}Let $G$ be a \tdlc group and let $H$ and $K$ be flat subgroups of $G$ such that $H \le \N_G(K)$ and $K$ is uniscalar.  Let $U$ be a compact open subgroup of $G$.  Then $U$ is tidy for $HK$ if and only if $U$ is tidy for both $H$ and $K$.  Moreover, if $HK$ is flat, then $s_G(hk) = s_G(h)$ for all $h \in H$ and $k \in K$.\end{lem}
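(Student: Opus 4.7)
The plan is to reduce the entire argument to an equality of $\langle hk \rangle$-orbits of $U$ with $\langle h \rangle$-orbits of $U$. The forward direction of the equivalence is immediate, since $H \cup K \subseteq HK$. For the converse, the key observation is that because $K$ is uniscalar and $U$ is tidy for $K$, tidiness forces $k(U) = U$ for every $k \in K$; thus $K \le \N_G(U)$. This is the only place the uniscalar hypothesis on $K$ is used.

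Next, using $H \le \N_G(K)$, a short induction shows that for every $h \in H$, $k \in K$ and $n \in \bZ$, one can write $(hk)^n = h^n k_n$ for some $k_n \in K$. Combined with $k_n \in \N_G(U)$, this yields the crucial orbit identity
\[
(hk)^n(U) = h^n(U) \quad \text{for all } n \in \bZ.
\]
From this I would read off $U_+(hk) = U_+(h)$ and $U_-(hk) = U_-(h)$ directly, and via the equivalence $(hk)^n(g) \in U \iff g \in (hk)^{-n}(U) = h^{-n}(U) \iff h^n(g) \in U$, also $U_{++}(hk) = U_{++}(h)$. Tidiness of $U$ for $hk$ then follows: the factorization $U = U_+(h)U_-(h)$ coming from tidiness above for $h$ gives tidy above for $hk$, while closedness of $U_{++}(h)$ gives tidy below via Lemma~\ref{tidybelow:asymptotic}. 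Since $h$ and $k$ were arbitrary, $U$ is tidy for $HK$.

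For the scale equality, assume $HK$ is flat and pick $U$ tidy for $HK$. By the first part $U$ is tidy for both $h$ and $hk$, and it is normalized by $k$, so $hk(U) = h(U)$. Theorem~\ref{basic:tidy} then computes
\[
s_G(hk) = |hk(U):hk(U) \cap U| = |h(U):h(U) \cap U| = s_G(h).
\]
The only step with any real content is the factorization $(hk)^n = h^n k_n$, and even that is routine given $H \le \N_G(K)$; I do not anticipate any serious obstacle.
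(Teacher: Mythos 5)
Your proof is correct and rests on the same key identity as the paper's, namely $(hk)^nU(hk)^{-n} = h^nUh^{-n}$ for all $n \in \bZ$, derived from $K$-invariance of $U$ (forced by uniscalarity plus tidiness) together with $H \le \N_G(K)$. The only difference is in how you finish: the paper deduces $s_G(hk)=s_G(h)$ from Theorem~\ref{scale:asymptotic} and then invokes Theorem~\ref{basic:tidy} to get tidiness, whereas you verify tidiness above and below directly from the equalities $U_+(hk)=U_+(h)$, $U_-(hk)=U_-(h)$ and $U_{++}(hk)=U_{++}(h)$ and handle the scale equality separately; both finishes are sound.
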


\begin{proof}If $U$ is tidy for $HK$, then clearly it is tidy for both $H$ and $K$.  Conversely, suppose that $U$ is tidy for $H$ and for $K$, and let $h \in H$ and $k \in K$.  Then $U$ is normalized by $K$ since $K$ is uniscalar, and for all $n \in \bZ$ we have $(hk)^n \in h^nK = Kh^n$, since $K$ is normalized by $H$.  Thus $h^nUh^{-n} = (hk)^nU(hk)^{-n}$ for all $n \in \bZ$.  By Theorem~\ref{scale:asymptotic}, it follows that $s_G(h) = s_G(hk)$, and hence by Theorem~\ref{basic:tidy}, $U$ is tidy for $hk$.\end{proof}

\begin{cor}Let $G$ be a \tdlc group and let $H$ be a flat subgroup of $G$.  Let $K$ be a compact $H$-invariant subgroup of $G$.  Then $HK$ is flat, and for all $h \in H$ and $k \in K$ we have $s_G(hk) = s_G(h)$.\end{cor}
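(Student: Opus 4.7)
The plan is to deduce this corollary by combining Theorem~\ref{thm:compact_invariant:tidy} (to produce a tidy subgroup for $H$ that is simultaneously normalized by $K$) with Lemma~\ref{tidy:uniscalar_extension} (to upgrade this to a tidy subgroup for $HK$ and to extract the scale identity). The only real preparatory work is checking that $K$ itself qualifies as a uniscalar flat subgroup in the sense required by Lemma~\ref{tidy:uniscalar_extension}; the rest is a direct assembly.

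First I would verify that $K$, acting by conjugation on $G$, is uniscalar and flat. Given any compact open subgroup $U$ of $G$, the stabilizer in $K$ of $U$ under the conjugation action is open (because conjugation by elements of a small identity neighbourhood fixes $U$ setwise), hence of finite index in $K$ by compactness. Thus $U$ has only finitely many $K$-conjugates, so $V_0 := \bigcap_{k \in K} kUk\inv$ is a compact open subgroup of $G$ normalized by $K$. This shows every $k \in K$ leaves $V_0$ invariant, so $K$ is uniscalar; and $V_0$ is then automatically tidy for every element of $K$, so $K$ is flat on $G$.

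Next, starting from any compact open subgroup $U_0$ of $G$ that is tidy for $H$ (which exists as $H$ is flat), I would apply Theorem~\ref{thm:compact_invariant:tidy}(ii) with this $U_0$ and the compact $H$-invariant group $K$: the hypothesis $|K:\N_K(U_0)|<\infty$ is satisfied by the same compactness argument above, so $V := \bigcap_{k \in K}kU_0k\inv$ is tidy for $H$. By construction $V$ is normalized by $K$, and since $K$ is uniscalar, $V$ is therefore tidy for $K$ as well. Note also that $H$ normalizes $K$ (by $H$-invariance of $K$), so $HK$ is a subgroup of $\Aut(G)$ (or of $G$, depending on the setting).

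With $H$, $K$ flat, $K$ uniscalar, $H \le \N_G(K)$, and $V$ tidy for both $H$ and $K$, Lemma~\ref{tidy:uniscalar_extension} immediately yields that $V$ is tidy for $HK$, so $HK$ is flat. The concluding assertion of the same lemma then gives $s_G(hk) = s_G(h)$ for all $h \in H$ and $k \in K$. There is no single "hard step" here; the content of the corollary is really absorbed into the earlier results, and the only thing to be careful about is the order in which the hypotheses of Lemma~\ref{tidy:uniscalar_extension} are established, in particular that $K$ is flat and uniscalar on $G$ before invoking it.
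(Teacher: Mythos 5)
Your proof is correct and follows essentially the same route as the paper: the paper's own proof simply invokes Theorem~\ref{thm:compact_invariant:tidy} to obtain a $K$-normalized tidy subgroup for $H$ and then concludes via Lemma~\ref{tidy:uniscalar_extension}. The extra details you supply (that $\N_K(U)$ has finite index so $K$ is uniscalar and flat, and that the $K$-normalized tidy subgroup for $H$ is automatically tidy for $K$) are exactly the verifications the paper leaves implicit.
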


\begin{proof}By Theorem~\ref{thm:compact_invariant:tidy}, there is a compact open subgroup $V$ of $G$ that is normalized by $K$ and is tidy for $H$.  The conclusion now follows from Lemma~\ref{tidy:uniscalar_extension}.\end{proof}

\subsection{Tidy subgroups in quotients}

The (lower) nub is not in general preserved under passing to quotients, because a compact open subgroup that is tidy (below) does not necessarily remain tidy below on passing to a quotient.  Indeed, \cite[Example~6.5]{WillisFurther} gives an example of the following situation: there is a \tdlc group $G$, an automorphism $\alpha$ and a closed $\alpha$-invariant subgroup $K$ of $G$, such that $\alpha$ has arbitrarily small tidy subgroups (so $\nub_G(\alpha)$ is trivial), and yet for every tidy subgroup $U$ for $\alpha$ on $G$, the group $UK/K$ is not tidy for $\alpha$ on $G/K$, because $UK/K$ fails to be tidy below (in other words, $UK/K$ does not contain $\nub_{G/K}(\alpha)$).

However, under certain conditions there is good control over the tidy subgroups, and hence the nub, when passing to a quotient.  In particular, it suffices for the scale to be preserved, as the following lemma shows.

\begin{lem}\label{lem:tidy:quotients}Let $G$ be a \tdlc group, let $\alpha$ be an automorphism of $G$ and let $U$ be a compact open subgroup of $G$.  Let $K$ be a closed subgroup of $G$, such that $U \le \N_G(K)$ and $\alpha(K)=K$, and write $N := \N_G(K)$.
\begin{enumerate}[(i)]
\item We have $s_{N/K}(\alpha) \le s_{N}(\alpha)$.  Indeed, $s_{N/K}(\alpha)$ divides $s_{N}(\alpha)$.
\item If $U$ is tidy above for $\alpha$, then $UK/K$ is tidy above for the action of $\alpha$ on $N/K$.
\item Suppose $s_{N/K}(\alpha) = s_{N}(\alpha)$ and that $U$ is tidy for $\alpha$ on $N$.  Then $UK/K$ is tidy for $\alpha$ on $N/K$.
\item Suppose $K$ is compact.  Then
\[
s_{G}(\alpha) = s_{N}(\alpha) = s_{N/K}(\alpha);
\]
moreover, given any $V/K \in N/K$ such that $V/K$ is tidy for $\alpha$ on $N/K$, then $V$ is tidy for the action of $\alpha$ on $G$.
\end{enumerate}
\end{lem}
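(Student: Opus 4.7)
The overall plan is to prove the four parts in the order (ii), (i), (iii), (iv). The engine is the following basic identity: if $V$ is a subgroup of $N$ with $K \le V$, then also $K \le \alpha(V)$ (since $\alpha(K)=K$), so $K$ lies in both $V$ and $\alpha(V)$; the standard identity $(\alpha(V)/K) \cap (V/K) = (\alpha(V) \cap V)/K$ then yields
\[
|\alpha(V/K):\alpha(V/K) \cap V/K| \;=\; |\alpha(V):\alpha(V) \cap V|.
\]

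Part (ii) is dispatched first and directly: writing $U = V' W'$ with $\alpha(V') \ge V'$ and $\alpha(W') \le W'$, one has $UK/K = (V'K/K)(W'K/K)$ with $\alpha(V'K/K) = \alpha(V')K/K \ge V'K/K$ and analogously $\alpha(W'K/K) \le W'K/K$, so $UK/K$ is tidy above on $N/K$. For (i), I would pick any $U_0$ tidy for $\alpha$ on $N$ and observe that $V := U_0 K$ is itself tidy for $\alpha$ on $N$: an argument analogous to (ii) (but applied upstairs in $N$) shows $V$ is tidy above, and $V$ is tidy below because $\nub_N(\alpha) \le U_0 \le V$, so Proposition~\ref{basic:tidybelow} applies. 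Hence $|\alpha(V):\alpha(V) \cap V| = s_N(\alpha)$ by Theorem~\ref{basic:tidy}, and the basic identity transfers this to $|\alpha(V/K):\alpha(V/K) \cap V/K| = s_N(\alpha)$ in $N/K$. This gives $s_{N/K}(\alpha) \le s_N(\alpha)$. To upgrade inequality to divisibility, I would invoke the sharper form of the scale characterization (standard in tidy theory) that $s(\alpha)$ divides $|\alpha(W):\alpha(W) \cap W|$ for \emph{every} compact open subgroup $W$; applying this in $N/K$ to $W = V/K$ yields $s_{N/K}(\alpha) \mid s_N(\alpha)$.

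Part (iii) then follows at once: under the hypothesis $s_{N/K}(\alpha) = s_N(\alpha)$, the chain $s_{N/K}(\alpha) \le |\alpha(UK/K):\alpha(UK/K) \cap UK/K| = s_N(\alpha)$ (the right-hand equality coming from the construction in (i) applied to the given $U$) collapses, so $UK/K$ attains the scale on $N/K$ and is tidy by Theorem~\ref{basic:tidy}. Part (iv) relies on $K$ being compact, making $\pi: N \to N/K$ a proper map. First, $N$ is open in $G$ since $U \le N$ is open, so by Corollary~\ref{openinvariant:tidy} we have $s_G(\alpha) = s_N(\alpha)$ and tidiness on $N$ implies tidiness on $G$. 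Second, given $V/K$ tidy for $\alpha$ on $N/K$, the preimage $V$ is compact open in $N$ containing $K$, and the basic identity gives $|\alpha(V):\alpha(V) \cap V| = s_{N/K}(\alpha)$; combined with (i) this forces $s_N(\alpha) = s_{N/K}(\alpha)$ and shows that $V$ attains the scale on $N$, so $V$ is tidy on $N$ (hence on $G$) by Theorem~\ref{basic:tidy}. The main obstacle will be the divisibility step in (i): the elementary index comparison via $V = U_0K$ only yields the inequality $s_{N/K}(\alpha) \le s_N(\alpha)$, and sharpening to divisibility relies on the structural refinement of tidy theory (not made explicit in the preceding material) that the scale divides the displacement index of every compact open subgroup.
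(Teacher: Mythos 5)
Your parts (ii), (iii) and (iv) are sound and essentially follow the paper's route (for (iv) the paper applies Theorem~\ref{scale:asymptotic} to the identity $|\alpha^n(V/K):\alpha^n(V/K)\cap V/K|=|\alpha^n(V):\alpha^n(V)\cap V|$, whereas you combine the $n=1$ identity with part (i); both work). The genuine gap is in part (i), specifically the construction $V:=U_0K$. In parts (i)--(iii) the subgroup $K$ is only assumed \emph{closed}, not compact, so $U_0K$ is an open but in general non-compact subgroup of $N$; the notions of tidiness above and below, Proposition~\ref{basic:tidybelow} and Theorem~\ref{basic:tidy} all apply only to compact open subgroups, so the step ``$V$ is tidy, hence $|\alpha(V):\alpha(V)\cap V|=s_N(\alpha)$'' is not available. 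Moreover that equality genuinely fails: take $K=N=G$ with $s_G(\alpha)>1$, so that $U_0K=G$ has displacement $1$. What survives is only the inequality
\[
|\alpha(U_0K/K):\alpha(U_0K/K)\cap U_0K/K| \;=\; |\alpha(U_0K):\alpha(U_0K)\cap U_0K| \;\le\; |\alpha(U_0):\alpha(U_0)\cap U_0| \;=\; s_N(\alpha),
\]
which does yield $s_{N/K}(\alpha)\le s_N(\alpha)$; and this weaker statement is all that your arguments for (iii) and (iv) actually use, so those parts are repaired simply by replacing your ``$=$'' with ``$\le$''.

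The divisibility assertion of (i), however, does not follow from your argument even if one grants the auxiliary fact you invoke (that the scale divides the displacement index of every compact open subgroup): applying that fact in $N/K$ to $W=U_0K/K$ gives $s_{N/K}(\alpha)\mid|\alpha(W):\alpha(W)\cap W|$, but since the displacement of $W$ is only bounded above by $s_N(\alpha)$ and need not equal it, no divisibility relation between $s_{N/K}(\alpha)$ and $s_N(\alpha)$ results. The paper does not prove (i) either: it cites Willis, \emph{Further properties of the scale function}, Proposition~4.7, where the divisibility of scales under quotient maps is established by a more careful analysis of tidy subgroups. So your instinct that this is the hard step is correct, but the obstruction is structural rather than a matter of quoting one additional standard lemma.
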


\begin{proof}
(i)
See \cite[Proposition~4.7]{WillisFurther}.
 
(ii) Let $U_+ = \bigcap_{n \ge 0}\alpha^n(U)$ and $U_- = \bigcap_{n \le 0}\alpha^n(U)$.  Suppose $U$ is tidy above for $\alpha$.  Then $UK/K = (U_+K/K)(U_-K/K)$, and we have $\alpha(U_+K/K) \ge U_+K/K$ and $\alpha(U_-K/K) \le U_-K/K$.  So $UK/K$ is tidy above for $\alpha$.

(iii) Let $k = |\alpha(UK/K): \alpha(UK/K) \cap UK/K|$.  It is clear that
$$k \le |\alpha(U):\alpha(U) \cap U| = s_{N}(\alpha) = s_{N/K}(\alpha).$$
Since $s_{N/K}(\alpha)$ is already the minimum possible value for $k$, in fact $s_{N/K}(\alpha) = k$, and it follows that $UK/K$ is tidy for $\alpha$ by Theorem~\ref{basic:tidy}.

(iv) Let $K \le V \le N$ such that $V/K$ is tidy for $\alpha$ on $N/K$.  Then $V$ is a compact open subgroup of $G$.  Since $K$ is $\alpha$-invariant, we have
$$|\alpha^n(V/K): \alpha^n(V/K) \cap V/K| = |\alpha^n(V):\alpha^n(V) \cap V|,$$
for all $n \in \bZ$.  Hence $s_G(\alpha) = s_{N}(\alpha) = s_{N/K}(\alpha)$, by Theorem~\ref{scale:asymptotic}, so $V$ is tidy for $\alpha$ on $G$.
\end{proof}

We also note as a general point that intersections of compact subgroups of \tdlc groups are well-behaved under homomorphisms.

\begin{lem}\label{lem:intersection:quotient}Let $G$ be a \tdlc group, let $\mc{C}$ be a collection of compact subgroups of $G$ that is closed under finite intersections and let $\phi: G \rightarrow H$ be a continuous homomorphism to some \tdlc group $H$.  Then
\[
\bigcap_{C \in \mc{C}}(\phi(C)) = \phi(\bigcap_{C \in \mc{C}}C).
\]
\end{lem}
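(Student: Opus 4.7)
The inclusion $\phi(\bigcap_{C \in \mc{C}} C) \subseteq \bigcap_{C \in \mc{C}} \phi(C)$ is immediate, so the real content is the reverse inclusion. My plan is to fix $y \in \bigcap_{C \in \mc{C}} \phi(C)$ and use a standard compactness / finite intersection argument to show that $y$ is hit by a single point of $\bigcap_{C \in \mc{C}} C$.

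More precisely, for each $C \in \mc{C}$ I would set
\[
K_C := C \cap \phi^{-1}(y).
\]
Since $H$ is Hausdorff (being a \tdlc group), the singleton $\{y\}$ is closed, so $\phi^{-1}(y)$ is closed in $G$; hence $K_C$ is a closed subset of the compact set $C$, and is therefore compact. By the hypothesis $y \in \phi(C)$, each $K_C$ is non-empty.

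The key observation is that the family $\{K_C\}_{C \in \mc{C}}$ has the finite intersection property: given $C_1,\dots,C_n \in \mc{C}$, the hypothesis that $\mc{C}$ is closed under finite intersections gives $C' := C_1 \cap \dots \cap C_n \in \mc{C}$, so $K_{C'}$ is non-empty, and $K_{C'} \subseteq K_{C_i}$ for each $i$, hence $\emptyset \neq K_{C'} \subseteq \bigcap_{i=1}^n K_{C_i}$. Fixing any $C_0 \in \mc{C}$, all of the $K_C$ are closed subsets of the compact space $C_0$ (since $K_C \cap K_{C_0} = K_{C \cap C_0}$ is non-empty forces $K_C$ to meet $C_0$; more cleanly, one replaces $K_C$ by $K_C \cap K_{C_0} = K_{C \cap C_0}$), so by compactness $\bigcap_{C \in \mc{C}} K_C$ is non-empty. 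Any $x$ in this intersection lies in $\bigcap_{C \in \mc{C}} C$ and satisfies $\phi(x) = y$, completing the proof.

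I do not anticipate any serious obstacle: the statement is essentially the fact that a projective limit of non-empty compact Hausdorff spaces is non-empty, packaged as a statement about homomorphisms of \tdlc groups. The only minor point of care is ensuring that the $K_C$ really do form a filtered family of compacta, which is precisely what the assumption that $\mc{C}$ is closed under finite intersections provides.
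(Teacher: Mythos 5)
Your proof is correct, but it takes a genuinely different route from the paper's. You argue pointwise on fibers: for $y$ in the left-hand side you form the non-empty compact sets $K_C = C \cap \phi^{-1}(y)$, observe that closure of $\mc{C}$ under finite intersections gives the finite intersection property (via $K_{C_1 \cap \dots \cap C_n} \subseteq \bigcap_i K_{C_i}$), and conclude by compactness inside a fixed $K_{C_0}$ that the fibers have a common point. This is the classical ``inverse limit of non-empty compacta is non-empty'' argument; it uses no group structure at all, only that $H$ is Hausdorff and the $C$ are compact, so it actually proves a more general topological statement. The paper instead stays group-theoretic: it notes that each $\phi(C)$ is profinite, so $\bigcap_C \phi(C)$ is an intersection of compact open subgroups of $H$, reduces the problem to showing that any compact open $U \le H$ containing $\phi(\bigcap_C C)$ contains some single $\phi(C)$, and gets that by applying compactness to the family $\{C \setminus \phi^{-1}(U)\}$, which has empty total intersection and hence contains the empty set. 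The paper's version buys the slightly more structured intermediate fact that containment can be tested against compact open subgroups, which matches how the lemma is used later (intersections of tidy subgroups); your version is more elementary and self-contained. One cosmetic point: your parenthetical justification that each $K_C$ lies in the compact space $C_0$ is garbled (meeting $C_0$ is not being contained in it), but your own ``more cleanly'' fix --- replacing $K_C$ by $K_C \cap K_{C_0} = K_{C \cap C_0}$ --- is exactly right, so nothing is missing.
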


\begin{proof}For each $C \in \mc{C}$, we see that $\phi(C)$ is profinite, so $\bigcap_{C \in \mc{C}}(\phi(C))$ is profinite.  Thus $\bigcap_{C \in \mc{C}}(\phi(C))$ is expressible as an intersection of compact open subgroups of $H$.  Writing $D = \bigcap_{C \in \mc{C}}C$, it is clear that $\phi(D)$ is compact, hence closed, and that
\[
\bigcap_{C \in \mc{C}}(\phi(C)) \ge \phi(D);
\]
to show the reverse inequality, it suffices to show that for every compact open subgroup $U$ of $H$, if $\phi(D) \le U$, then $\bigcap_{C \in \mc{C}}(\phi(C)) \le U$.  So suppose $U$ is a compact open subgroup of $H$ such that $\phi(D) \le U$.  Consider the set $\mc{C}' = \{C \setminus \phi\inv(U) \mid C \in \mc{C}\}$.  Then $\mc{C}'$ consists of compact sets; it is closed under finite intersections; and the intersection $\bigcap_{E \in \mc{C}'}E$ is empty, since $D \le \phi\inv(U)$.  Thus $\emptyset \in \mc{C}'$, in other words, $C \le \phi\inv(U)$ for some $C \in \mc{C}$.  But then $\phi(C) \le U$, and hence $\bigcap_{C \in \mc{C}}(\phi(C)) \le U$ as desired.\end{proof}

Combining the previous two lemmas, we obtain the following stability properties of tidy subgroups and the nub under passing to a quotient.

\begin{prop}\label{prop:tidy:quotients}Let $G$ be a \tdlc group, let $H$ be a flat group of automorphisms of $G$ and let $K$ be a closed normal $H$-invariant subgroup of $G$.  Let $R = G/K$.

If $K$ is compact, then we have $s_{R}(\alpha) = s_{G}(\alpha)$ for all $\alpha \in H$, and also $\nub_{R}(H) = \nub_G(H)K/K$.

If $s_{R}(\alpha) = s_{G}(\alpha)$ for all $\alpha \in H$, then the following holds:
\begin{enumerate}[(i)]
\item The action of $H$ on $R$ is flat.  Indeed, whenever $U$ is tidy for $H$ on $G$, then $UK/K$ is tidy for $H$ on $R$.
\item We have
$$ \nub_{R}(H) \le \nub_G(H)K/K.$$
In particular, if $\nub_G(H) \le K$, then the action of $H$ on $R$ is smooth.
\end{enumerate}
\end{prop}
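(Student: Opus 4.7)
The plan is to first obtain (i) and (ii) directly from Lemma~\ref{lem:tidy:quotients}(iii) and Lemma~\ref{lem:intersection:quotient} under the scale-preservation hypothesis, then deduce the compact-$K$ case by invoking Lemma~\ref{lem:tidy:quotients}(iv) to verify that hypothesis and to lift tidy subgroups from $R$ back to $G$. So assume first that $s_R(\alpha) = s_G(\alpha)$ for every $\alpha \in H$, and let $U$ be tidy for $H$ on $G$. Since $K$ is normal in $G$ we have $U \le \N_G(K) = G$, so Lemma~\ref{lem:tidy:quotients}(iii) applied to each $\alpha \in H$ shows that $UK/K$ is tidy for $\alpha$ on $R$. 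Hence the single compact open subgroup $UK/K$ is tidy for every element of $H$, witnessing (i).

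For (ii), part (i) yields $\nub_R(H) \le UK/K$ for every $U$ tidy for $H$ on $G$. The collection $\mc{C}$ of tidy subgroups for $H$ on $G$ is closed under finite intersections, since by Theorem~\ref{tidy:stable}(iii) the analogous closure holds for each individual $\alpha \in H$. Applying Lemma~\ref{lem:intersection:quotient} to the quotient map $\phi : G \to R$ and the family $\mc{C}$ gives
\[
\bigcap_{U \in \mc{C}} UK/K \;=\; \phi\Bigl(\bigcap_{U \in \mc{C}} U\Bigr) \;=\; \nub_G(H)K/K,
\]
whence $\nub_R(H) \le \nub_G(H)K/K$. If in addition $\nub_G(H) \le K$, this forces $\nub_R(H) = \triv$; combined with the flatness from (i), a routine compactness argument exploiting closure of the tidy subgroups on $R$ under finite intersections shows that they form a base of identity neighbourhoods in $R$, so $H$ acts smoothly on $R$.

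Finally, for the compact-$K$ case, normality again gives $\N_G(K) = G$, and Lemma~\ref{lem:tidy:quotients}(iv) immediately yields $s_R(\alpha) = s_G(\alpha)$ for every $\alpha \in H$. Applying what was just proved, $H$ is flat on $R$ and $\nub_R(H) \le \nub_G(H)K/K$. For the reverse inclusion, each tidy subgroup $V/K$ of $R$ for $H$ lifts via Lemma~\ref{lem:tidy:quotients}(iv) to a tidy subgroup $V$ of $G$ for $H$, so $V \ge \nub_G(H)$ and hence $V/K \ge \nub_G(H)K/K$; intersecting over all such $V/K$ gives $\nub_R(H) \ge \nub_G(H)K/K$. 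The principal technical point is precisely this nub equality: both inclusions rest on transporting the nub across the quotient map, one via Lemma~\ref{lem:intersection:quotient} (which needs closure of $\mc{C}$ under finite intersections) and the other via the lifting statement in Lemma~\ref{lem:tidy:quotients}(iv), which is only available when $K$ is compact.
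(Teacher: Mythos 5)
Your proof is correct and follows essentially the same route as the paper: Lemma~\ref{lem:tidy:quotients}(iii) for flatness and the forward nub inclusion via Lemma~\ref{lem:intersection:quotient}, and Lemma~\ref{lem:tidy:quotients}(iv) both to verify scale preservation when $K$ is compact and to lift tidy subgroups back to $G$ for the reverse inclusion. Your explicit remark that the family of tidy subgroups for $H$ is closed under finite intersections (needed to invoke Lemma~\ref{lem:intersection:quotient}) is a detail the paper leaves implicit, and is a welcome addition.
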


\begin{proof}
If $K$ is compact, then $s_{R}(\alpha) = s_{G}(\alpha)$ for all $\alpha \in H$ by Lemma~\ref{lem:tidy:quotients}(iv).

Now assume that $s_{R}(\alpha) = s_{G}(\alpha)$ for all $\alpha \in H$.  Then by Lemma~\ref{lem:tidy:quotients}(iii), if $U$ is a compact open subgroup of $G$ that is tidy for every $\alpha \in H$, then $UK/K$ is also tidy for every $\alpha \in H$, proving (i).  Thus $\nub_R(H) \le UK/K$ for every tidy subgroup $U$ for $H$ on $G$, so by Lemma~\ref{lem:intersection:quotient}, we conclude that $\nub_R(H) \le \nub_G(H)K/K$, proving (ii).  If $K$ is compact, then by Lemma~\ref{lem:tidy:quotients}(iv), every tidy subgroup $V/K$ of $H$ on $G/K$ is the image of a tidy subgroup $V$ of $H$ on $G$, so $\nub_G(H) \le V$ for all such $V$, and hence $\nub_G(H)K/K = \nub_R(H)$.
\end{proof}

The following special case will be used later.

\begin{cor}\label{cor:tidy:quotients:nub}
Let $G$ be a \tdlc group, let $H$ be a flat group of automorphisms of $G$, let $L$ be a uniscalar normal subgroup of $H$ and let $K = \nub_G(L)$.  Let $R = \N_G(K)/K$.  Then the following holds:
\begin{enumerate}[(i)]
\item We have $s_{R}(\alpha) = s_{G}(\alpha)$ for all $\alpha \in H$.
\item The action of $H$ on $R$ is flat, and moreover $\nub_{R}(H) = \nub_G(H)K/K$.
\end{enumerate}
\end{cor}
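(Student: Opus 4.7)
The plan is to reduce the statement to Proposition~\ref{prop:tidy:quotients}, applied inside the open $H$-invariant subgroup $M := \N_G(K)$ of $G$, rather than inside $G$ itself. Since $L \le H$ and $H$ is flat, $L$ is flat on $G$, and the uniscalar assumption implies that the tidy subgroups for $L$ on $G$ are exactly the $L$-invariant compact open subgroups of $G$. Hence $K = \nub_G(L)$ is compact. Normality $L \unlhd H$ ensures that $H$ permutes these $L$-invariant subgroups among themselves (a conjugate $h(V)$ of an $L$-invariant $V$ is invariant under $hLh^{-1} = L$), so their intersection $K$ is $H$-invariant, and consequently so is $M$.

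The key step, and the only one not immediate from the quoted results, is to verify that every tidy subgroup $U$ for $H$ on $G$ satisfies $K \le U \le M$. The containment $K \le U$ is Proposition~\ref{basic:tidybelow} applied to $L$, since $U$ is tidy for every element of $H$, hence for every element of $L$. The containment $U \le M$ is where the uniscalar hypothesis on $L$ really enters: Corollary~\ref{cor:nub:normalizer}(iii) states that for the uniscalar flat group $L$, every tidy subgroup for $L$ normalizes $\nub_G(L) = K$, so in particular $U \le \N_G(K) = M$. In particular $M$ is open in $G$, so Corollary~\ref{openinvariant:tidy} gives $s_M(\alpha) = s_G(\alpha)$ for all $\alpha \in H$, and every tidy subgroup for $H$ on $G$ is tidy for $H$ on $M$; then Lemma~\ref{openinvariant:flat} gives that $H$ is flat on $M$ with $\nub_M(H) = \nub_G(H)$.

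With these reductions, $K$ is a compact $H$-invariant normal subgroup of the \tdlc group $M$, and Proposition~\ref{prop:tidy:quotients} (applied with $M$ in the role of $G$) yields $s_R(\alpha) = s_{M/K}(\alpha) = s_M(\alpha) = s_G(\alpha)$ for all $\alpha \in H$, which is~(i), together with the flatness of $H$ on $R = M/K$ and
\[
\nub_R(H) = \nub_M(H) K/K = \nub_G(H) K/K,
\]
which is~(ii). The main conceptual obstacle is recognising that the right ambient group for applying Proposition~\ref{prop:tidy:quotients} is $M$, not $G$, and the uniscalar hypothesis on $L$ is precisely what, via Corollary~\ref{cor:nub:normalizer}(iii), makes this reduction work; everything else is a direct citation of results established earlier in the paper.
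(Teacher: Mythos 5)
Your proposal is correct and follows essentially the same route as the paper: reduce to the open $H$-invariant subgroup $\N_G(K)$ using Corollary~\ref{cor:nub:normalizer} (tidy subgroups for $H$ are tidy for the uniscalar $L$ and hence normalize $K$), transfer scale, flatness and nub via Corollary~\ref{openinvariant:tidy} and Lemma~\ref{openinvariant:flat}, and then apply Proposition~\ref{prop:tidy:quotients} with the compact normal subgroup $K$. No gaps.
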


\begin{proof}
Let $U$ be a a compact open subgroup of $G$ that is tidy for $H$.  The normalizer of $K$ contains $U$ by Corollary~\ref{cor:nub:normalizer}; in particular, $\N_G(K)$ is open and $\N_G(K) \ge \nub_G(H)$.  It now follows by Lemma~\ref{openinvariant:flat} that $H$ is flat on $\N_G(K)$ and $\nub_G(H) = \nub_{\N_G(K)}(H)$.  We also have $s_{G}(\alpha) = s_{\N_G(K)}(\alpha)$ for all $\alpha \in H$ by Corollary~\ref{openinvariant:tidy}.  Thus we may assume $G = \N_G(K)$.  Note also that $K$ is compact.  All the conclusions now follow from Proposition~\ref{prop:tidy:quotients}.
\end{proof}

So we obtain an action of $H$ on a quotient $R = \N_G(K)/K$ of an open subgroup of $G$, such that the action of $H$ on $R$ retains the important properties of the action of $H$ on $G$, such as flatness and the scale function, but now the uniscalar part of $H$ (which is the same subgroup, whether we define it with respect to $G$ or with respect to $R$) acts smoothly.

\subsection{Flatness below}\label{sec:flatbelow:uniscalar}

Say $H$ is \defbold{flat below}\index{flat!flat below} if there exists a compact open subgroup $U$ of $G$ such that $U$ is tidy below for all $\alpha \in H$; equivalently (given Propositions \ref{basic:tidyabove} and \ref{basic:tidybelow}), for all $\alpha \in H$ there exists $V \le U$ depending on $\alpha$ such that $V$ is tidy for $\alpha$.  By Proposition~\ref{basic:tidybelow}, $H$ is flat below if and only if $\nubl(H)$ is compact, and in this case $\nubl(H)$ is the intersection of all compact open subgroups of $G$ that are tidy below for $H$.

We note that unlike the flat property, flatness below is inherited from cocompact normal subgroups, so in particular any virtually flat below group is flat below.  Flatness below is also stable on restricting the action to a closed subgroup.

\begin{lem}\label{flatbelow:cocompact}Let $G$ be a \tdlc group, let $H$ be a closed subgroup of $G$ and let $K$ be a closed cocompact normal subgroup of $H$.  Then $H$ is flat below if and only if $K$ is flat below.  Indeed, if $K$ is flat below then $\nubl(H) = \nubl(K)$.\end{lem}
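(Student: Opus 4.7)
The plan is to dispose of the forward direction routinely and then establish $\nubl(H) = \nubl(K)$ under the assumption that $K$ is flat below. For the forward direction, if $V$ is a compact open subgroup tidy below for every element of $H$, then $V$ is tidy below for every element of $K \le H$, so $K$ is flat below; moreover, $\nubl(K) \le \nubl(H)$ is clear from the definition, since $K \le H$ means $\{\nub(k) : k \in K\} \subseteq \{\nub(h) : h \in H\}$.

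For the reverse, assume $K$ is flat below, so $\nubl(K)$ is compact by Proposition~\ref{basic:tidybelow}. Since $\nubl(K) \le \nubl(H)$ always holds, it remains to prove $\nubl(H) \le \nubl(K)$; equivalently, by Proposition~\ref{basic:tidybelow} and the fact that $\nubl(K)$ is the intersection of the compact open subgroups containing it, I must show that $\nub(h) \le U$ for every $h \in H$ and every compact open subgroup $U$ of $G$ containing $\nubl(K)$. Fix such $h$ and $U$. Since $K$ is normal in $H$ and $U \cap H$ is open in $H$, the set $(U \cap H)K$ is an open subgroup of $H$, and $(U \cap H)K/K$ is therefore an open subgroup of the compact group $H/K$, hence of finite index $n$. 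Thus $h^n \in (U \cap H)K$, giving $h^n = wk$ with $w \in U \cap H$ and $k \in K$. By Lemma~\ref{lem:contraction:powers} we have $\nub(h) = \nub(h^n)$, so it suffices to control $\nub(h^n)$.

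The final step applies the local stability of the nub, namely \cite[Theorem~1.5]{CRW-TitsCore}, exactly as invoked in the proof of Lemma~\ref{lem:nub:closure}: given a compact open neighbourhood of the identity, one can find an open subgroup $V$ of it such that for all $y$ in a suitable translate of $V$, $\nub(y)$ is $V$-conjugate to the nub of a chosen base element. Applying this with base element $h^n$, and choosing $V$ small enough to be contained in $U$ (using a tidy subgroup for $h$, which is automatically tidy for every non-zero power of $h$ by Lemma~\ref{basic:tidy:powers}), one then reapplies the compactness-of-$H/K$ argument with $V$ in place of $U$ to obtain a decomposition $h^{n'} = w'k'$ with $k' \in h^{n'}V \cap K$. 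The stability statement then yields $\nub(k') = v\nub(h)v^{-1}$ for some $v \in V \le U$. Since $\nub(k') \le \nubl(K) \le U$ and $U$ is a subgroup containing $v$, conjugation-invariance of $U$ under its own elements gives $\nub(h) = v^{-1}\nub(k')v \le U$, as required. The main obstacle I anticipate is the interdependence between the choice of the stability neighbourhood $V$ and the power $n'$ produced by compactness: the value of $n'$ depends on $V$, while the relevant application of Theorem~1.5 is about the element $h^{n'}$ itself. Resolving this requires care in the order of choices, first fixing a tidy subgroup for $h$ (and hence for all its powers), then intersecting with $U$ to produce a candidate $V$, and only then invoking the compactness argument so that $k'$ falls into the stability translate $h^{n'}V$.
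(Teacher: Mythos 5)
Your proposal follows essentially the same route as the paper: reduce to showing $\nub(h) \le U$ for an arbitrary compact open $U$ containing $\nubl(K)$, use cocompactness of $K$ in $H$ to write a positive power of $h$ as a product of an element of a small subgroup $V$ and an element of $K$, and then transfer the nub across that decomposition via the conjugacy results of \cite{CRW-TitsCore}. One step needs repair: you propose to take $V \le U$ to be a \emph{tidy} subgroup for $h$, but by Proposition~\ref{basic:tidybelow} a tidy subgroup for $h$ contained in $U$ exists only when $\nub(h) \le U$ --- which is precisely the conclusion you are trying to establish, so taken literally this choice is circular. The stability results you invoke only require $V$ to be tidy \emph{above} for $h$ (hence tidy above for every power $h^{n'}$ by Lemma~\ref{basic:tidy:powers}(i)), and tidy above subgroups can be taken arbitrarily small by Proposition~\ref{basic:tidyabove}; this is exactly the choice the paper makes, and with that one-word substitution your argument goes through. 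Two further trivialities: the exponent you get from finiteness of $|H:(V\cap H)K|$ is the order of the $\langle h\rangle$-orbit on the coset space rather than the index itself (some positive power lands in $(V\cap H)K$, which is all you need), and writing $h^{n'}=w'k'$ with $w'\in V$ places $k'$ in $Vh^{n'}$ rather than $h^{n'}V$, so you should either work with $K(V\cap H)$ or use the left-perturbation form of the conjugacy statement (as in Corollary~\ref{contraction:conjugate:bis} for contraction groups).
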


\begin{proof}
If $H$ is flat below, then clearly $K \le H$ is as well, so we may assume $K$ is flat below.  Let $U$ be a compact open subgroup of $G$ such that $\nubl(K) \le U$, and let $h \in H$.  Then the sequence $(h^nK)_{n \ge 0}$ accumulates at the identity in the compact group $H/K$.  Using Proposition~\ref{basic:tidyabove}, let $V$ be a compact open subgroup of $G$ that is tidy above for $h$ and let $W$ be a compact open subgroup of $G$ that is tidy above for $k$, such that $W \le V \le U$.  Then there exist distinct integers $i,j \in \bZ$ such that $\{h^i,h^j\} \subseteq WK$, so $h^{i-j} = rks$ for some $r,s \in W$ and $k \in K$.  By \cite[Lemma~4.3 and Corollary~4.4]{CRW-TitsCore}, there exists $v \in V$ such that $\nub(r\inv h^{i-j}) = v\nub(h^{i-j})v\inv$, and there exists $w \in W$ such that $\nub(ks) = w\nub(k)w\inv$.  Thus $\nub(h^{i-j})$ is $V$-conjugate to $\nub(k) \le U$, so $\nub(h^{i-j}) \le U$.  Moreover $\nub(h^{i-j}) = \nub(h)$ by Lemma~\ref{basic:tidy:powers}, so $\nub(h) \le U$.  Since $h \in H$ was arbitrary, we conclude that $\nubl(H) \le U$, so $H$ is flat below.  Since $U$ could be any compact open subgroup of $G$ such that $\nubl(K) \le U$, we have shown $\nubl(H) \le \nubl(K)$; from the definition, it is clear that $\nubl(H) \ge \nubl(K)$, so in fact $\nubl(H) = \nubl(K)$.
\end{proof}

\begin{lem}\label{flatbelow:subgroup}Let $G$ be a \tdlc group, let $H$ be a subgroup of $G$ and let $K$ be a closed $H$-invariant subgroup of $G$.  Then $\nubl_K(H) \le \nubl_G(H)$, and if $K$ is open then $\nubl_K(H) = \nubl_G(H)$.  In particular, if $H$ is flat below on $G$, then it is flat below on $K$.\end{lem}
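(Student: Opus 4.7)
The plan is to compare $\nub_K(h)$ with $\nub_G(h)$ for each $h\in H$ using the contraction-group characterization of the nub, and then take closed subgroups generated.

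First I would observe that since $K$ is a closed, hence \tdlc, subgroup of $G$, the formula $\nub(\alpha)=\overline{\con(\alpha)}\cap\overline{\con(\alpha\inv)}$ from Theorem~\ref{basic:nub_characterizations} is available both in $G$ and in $K$. Lemma~\ref{lem:samecontraction} gives $\con_K(h)=\con_G(h)\cap K$, so $\con_K(h)\le\con_G(h)$; taking closures in $G$ (the closure of $\con_K(h)$ in $K$ equals its closure in $G$, since $K$ is closed in $G$) yields $\overline{\con_K(h)}\le\overline{\con_G(h)}$, and similarly for $h\inv$. Hence
\[
\nub_K(h)\;=\;\overline{\con_K(h)}\cap\overline{\con_K(h\inv)}\;\le\;\overline{\con_G(h)}\cap\overline{\con_G(h\inv)}\;=\;\nub_G(h).
\]
Taking the closed subgroup generated over $h\in H$ gives $\nubl_K(H)\le\nubl_G(H)$, which is the first assertion.

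Next, suppose $K$ is open in $G$. Then by Corollary~\ref{openinvariant:tidy}, $s_K(h)=s_G(h)$ and every compact open subgroup $V\le K$ that is tidy for $h$ on $K$ is tidy for $h$ on $G$. Consequently $\nub_G(h)\le V$ for every such $V$; intersecting over all tidy subgroups $V$ of $K$ yields $\nub_G(h)\le\nub_K(h)$. Combined with the previous paragraph, $\nub_K(h)=\nub_G(h)$ for each $h\in H$, so $\nubl_K(H)=\nubl_G(H)$.

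Finally, if $H$ is flat below on $G$, then $\nubl_G(H)$ is compact by the characterization via Proposition~\ref{basic:tidybelow}; since $\nubl_K(H)$ is closed in $G$ and contained in $\nubl_G(H)$, it is itself compact, so $H$ is flat below on $K$. The argument is essentially bookkeeping; the only step that requires care is verifying that closures taken inside $K$ agree with closures in $G$, which is immediate from $K$ being closed in $G$, so I do not anticipate any serious obstacle.
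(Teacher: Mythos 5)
Your proof is correct, but it routes the key elementwise comparison $\nub_K(h)\le\nub_G(h)$ differently from the paper. The paper's proof gets this containment by citing a result of Willis (\cite[Lemma~4.1]{WillisFurther}) that every tidy subgroup for $h$ on $G$ contains a tidy subgroup for $h$ on $K$, and then handles the open case by noting that $\nub_G(h)\cap K$ is an open $h$-invariant subgroup of $\nub_G(h)$, so $\nub_G(h)\le K$ by Theorem~\ref{basic:nub_characterizations}, whence $\nub_G(h)\le\nub_K(h)$ because $\nub_K(h)$ is the largest closed subgroup of $K$ on which $h$ acts ergodically. You instead derive the containment from the identity $\nub(\alpha)=\overline{\con(\alpha)}\cap\overline{\con(\alpha\inv)}$ together with Lemma~\ref{lem:samecontraction}, and you handle the open case via Corollary~\ref{openinvariant:tidy} (tidy on $K$ implies tidy on $G$, so $\nub_G(h)$ lies in every tidy subgroup for $h$ on $K$). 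Both arguments are sound; yours has the mild advantage of using only results already stated in the paper rather than an external lemma, while the paper's version makes the intermediate fact $\nub_G(h)\le K$ explicit, which you only obtain implicitly. The assembly into the lower nub and the deduction of flatness below via compactness of $\nubl_K(H)$ as a closed subgroup of the compact $\nubl_G(H)$ match the intended (and, in the paper, unspoken) "remaining conclusions are clear" step.
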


\begin{proof}Let $h \in H$.  By \cite[Lemma~4.1]{WillisFurther}, any tidy subgroup for $h$ on $G$ contains a tidy subgroup for $h$ on $K$.  Thus $\nub_K(h) \le \nub_G(h)$.  If $K$ is open, then $\nub_G(h) \cap K$ is an open $h$-invariant subgroup of $\nub_G(h)$, so by Theorem~\ref{basic:nub_characterizations} we have $\nub_G(h) \le K$.  Since $\nub_K(h)$ is the largest $h$-invariant subgroup of $K$ on which $h$ acts ergodically, we must have $\nub_K(h) = \nub_G(h)$.  The remaining conclusions are clear.\end{proof}

We saw in Example~\ref{ex:virtually_flat} that a finitely generated non-flat group $H$ can potentially be virtually flat, and hence also flat below.

Here are some basic observations about the role of $[H,H]$-invariant compact open subgroups in the tidy theory of $H$.

\begin{lem}\label{lem:invariablytidy}Let $G$ be a \tdlc group, let $H$ be a group of automorphisms of $G$ and let $L$ be a subgroup of $H$ such that $[H,H] \le L$.  Let $U$ be a compact open subgroup of $G$, and suppose that $U$ is $L$-invariant.
\begin{enumerate}[(i)]
\item Let $X$ be a subset of $H$.  Then $\bigcap_{\alpha \in X}\alpha(U)$ is $L$-invariant.
\item Let $\alpha \in H$ and let $\beta \in L\alpha$.  Then $U$ is tidy above for $\beta$ if and only if it is tidy above for $\alpha$, and $U$ is tidy below for $\beta$ if and only if it is tidy below for $\alpha$.
\item Let $\alpha, \beta \in H$ and suppose $U$ is tidy (above, below) for $\alpha$.  Then $\beta(U)$ is tidy (above, below) for $\alpha$.
\end{enumerate}
\end{lem}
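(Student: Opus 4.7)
The approach rests on the observation that $[H,H] \le L$ forces $L$ to be normal in $H$. In particular, for every $\alpha \in H$ and $\lambda \in L$ there exists $\mu \in L$ with $\lambda\alpha = \alpha\mu$, and hence $\lambda\alpha(U) = \alpha\mu(U) = \alpha(U)$ by $L$-invariance of $U$. Part (i) is immediate from this: each $\alpha(U)$ is $L$-invariant, and intersections of $L$-invariant sets are $L$-invariant.

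For part (ii), write $\beta = \lambda\alpha$ with $\lambda \in L$. An easy induction using normality of $L$ produces elements $\mu_n \in L$ with $\beta^n = \mu_n\alpha^n$ for every $n \in \bZ$. Since $\alpha^n(U)$ is $L$-invariant by (i), we obtain $\beta^n(U) = \alpha^n(U)$, so the subgroups $U_+$ and $U_-$ coincide whether computed for $\alpha$ or for $\beta$. Similarly, for any $g \in G$, the identity $\beta^n(g) = \mu_n\alpha^n(g)$ together with $L$-invariance of $U$ yields $\beta^n(g) \in U \Leftrightarrow \alpha^n(g) \in U$, so the subgroups $U_{++}$ and $U_{--}$ of Lemma~\ref{tidybelow:asymptotic} also coincide. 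Both equivalences in (ii) then follow from the definition of tidy above and from Lemma~\ref{tidybelow:asymptotic}.

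For part (iii), the key identity is $\alpha^n\beta(U) = \beta\alpha^n(U)$ for every $n \in \bZ$. The case $n=1$ follows by noting that $\mu := \alpha^{-1}\beta^{-1}\alpha\beta \in [H,H] \le L$, whence $\beta^{-1}\alpha\beta = \alpha\mu$ and therefore $\alpha\beta(U) = \beta(\beta^{-1}\alpha\beta)(U) = \beta\alpha\mu(U) = \beta\alpha(U)$ by $L$-invariance of $U$. Iterating, with $\alpha^k(U)$ in place of $U$ at each step (valid by (i)), gives the identity for general $n$. Taking intersections yields $\beta(U)_+(\alpha) = \beta(U_+(\alpha))$ and $\beta(U)_-(\alpha) = \beta(U_-(\alpha))$, so tidiness above transfers from $U$ to $\beta(U)$. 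For tidiness below, the same commutator trick gives $\beta^{-1}\alpha^n\beta = \kappa_n\alpha^n$ with $\kappa_n \in L$; combined with $L$-invariance of $U$, this shows $\alpha^n(g) \in \beta(U) \Leftrightarrow \alpha^n\beta^{-1}(g) \in U$, so $\beta(U)_{++}(\alpha) = \beta(U_{++}(\alpha))$, which is closed if and only if $U_{++}(\alpha)$ is, by Lemma~\ref{tidybelow:asymptotic}.

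The entire argument is essentially bookkeeping in the commutation relations modulo $L$; no step presents a substantial obstacle beyond careful tracking of which elements remain in $L$ after conjugation by powers of $\alpha$ and $\beta$.
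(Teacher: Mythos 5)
Your proof is correct and follows essentially the same route as the paper: everything reduces to the normality of $L$ in $H$ and the fact that $L$-invariance of $U$ (and of $\alpha^n(U)$) absorbs the elements of $L$ arising from commutators, so that $\beta^n(U)=\alpha^n(U)$ for $\beta\in L\alpha$. The only cosmetic difference is in (iii), where the paper observes that $\beta(U)$ is tidy for $\beta\alpha\beta\inv\in L\alpha$ and then invokes part (ii), while you carry out the equivalent computation $\alpha^n\beta(U)=\beta\alpha^n(U)$ directly.
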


\begin{proof}(i) We see that $L$ is normal in $H$, since $[H,H] \le L$.  Hence the set of $L$-invariant compact open subgroups of $G$ is preserved by $H$.  Consequently, if $U$ is $L$-invariant, then so is $\bigcap_{\alpha \in X}\alpha(U)$.

(ii) Observe that by part (i), the sets $U_+ := \bigcap_{n \ge 0}\beta^n(U)$ and $U_- := \bigcap_{n \le 0}\beta^n(U)$ do not depend on the choice of $\beta$ inside $L\alpha$.  Thus the validity of the equation $U = U_+U_-$ does not depend on the choice of $\beta$ inside $L\alpha$.  Similarly, the set 
\[
U_{++} := \bigcup_{i \ge 0}\bigcap_{n \ge i}\beta^n(U),
\]
does not depend on the choice of $\beta$ inside $L \alpha$.  Consequently, $U$ is tidy (above, below) for $\beta$ if and only if it is tidy (above, below) for $\alpha$.  

(iii) We see that $\beta(U)$ is tidy (above, below) for $\beta\alpha\beta\inv \in L\alpha$.  Hence $\beta(U)$ is tidy (above, below) for $\alpha$ by part (ii).
\end{proof}

\begin{defn}Let $G$ be a \tdlc group and let $H$ be a group of automorphisms of $G$.  A \defbold{tidying set}\index{tidy!tidying set} for $H$ is a subset $X$ of $H$ with the following property:

$(*)$ Let $U$ be a compact open subgroup of $G$, and suppose $\alpha(U)$ is tidy for $\beta$, for all $\alpha \in H$ and $\beta \in X$.  Then $U$ is tidy for $H$.\end{defn}

Given a finitely generated flat group $H$ of automorphisms, not all finite generating sets for $H$ are tidying sets (see \cite[Example~3.5]{WillisFlat}).  However, the following is effectively established in the proof of \cite[Theorem~5.5]{WillisFlat}.

\begin{thm}[See \cite{WillisFlat}, \S 5]\label{flat:invariablytidy}Let $G$ be a \tdlc group and let $H$ be a finitely generated group of automorphisms of $G$.  Then there exists a finite subset $X$ of $H$ that is a tidying set for $H$ on $G$.\end{thm}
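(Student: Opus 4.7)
The strategy is to adapt the iterated tidying procedure developed in the proof of \cite{WillisFlat}, \S 5. Fix a finite generating set $Y$ for $H$. I will build $X$ to encode the finite combinatorial data needed both to achieve tidy-above via intersections of forward orbits (as in Proposition~\ref{basic:tidyabove}) and to control tidy-below via the nub (as in Proposition~\ref{basic:tidybelow}).

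First, starting from an auxiliary compact open subgroup $V_0$, I would apply Proposition~\ref{basic:tidyabove} to each generator $y \in Y$ to obtain an integer $N_y$ such that $\bigcap_{i=0}^{N_y} y^i(V_0)$ is tidy above for $y$. I would then iterate this construction cyclically over the generators. The key termination fact is that the nonnegative integer $\sum_{y \in Y}\bigl(\log_2 |y(U):y(U) \cap U| - \log_2 s(y)\bigr)$ strictly decreases whenever a genuinely non-trivial tidy-above step is taken, bounding the total number of iterations by the initial value of this quantity. The entire procedure uses only subgroups of the form $w(V_0)$ for words $w$ in $Y$ of bounded length, so I would take $X$ to consist of those finitely many words, together with $Y$, $Y^{-1}$, and a finite set of products sufficient to ensure tidy-below via Proposition~\ref{basic:tidybelow}.

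To verify the tidying property, suppose $U$ is a compact open subgroup such that $\alpha(U)$ is tidy for every $\beta \in X$ and every $\alpha \in H$. Applying the iterated tidying procedure to $U$ in place of $V_0$ has no effect at any stage, because each refinement step uses only $w(U)$ for $w$ of bounded length, and by hypothesis every such $w(U)$ is already tidy for the relevant generator; in particular, the termination measure above is already zero. Hence $U$ is tidy for every generator of $H$. To extend this to an arbitrary $\gamma \in H$, I would write $\gamma$ as a word in $Y$ and run the tidying procedure for $\gamma$ on $U$: the procedure only intersects finitely many $H$-translates of $U$, which by Theorem~\ref{tidy:stable}(iii) and the hypothesis are already tidy, so $U$ itself is tidy for $\gamma$.

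The main obstacle will be the propagation step, namely making the bounded tidying data encoded in $X$ suffice uniformly for every $\gamma \in H$, not only those $\gamma$ of short word length in $Y$. The resolution, following Willis, is that the tidy property depends only on finite combinatorial data in the $H$-orbit of $U$, and the tidying procedure for any single element is controlled by integer-valued scale data that drops strictly at each non-trivial step. It is precisely at this juncture that the hypothesis on every $H$-translate of $U$ (rather than $U$ alone) becomes essential.
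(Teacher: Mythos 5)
The paper does not actually prove this statement: it is imported wholesale from the proof of \cite[Theorem~5.5]{WillisFlat}, so the only comparison available is between your sketch and Willis's argument, whose general shape (an iterated tidying procedure with a terminating integer invariant) you have correctly guessed. On its own terms, however, your proposal has a fatal gap at exactly the point you flag as ``the main obstacle''. The hypothesis of the tidying-set property gives you that every $H$-translate of $U$ is tidy for each element of the \emph{finite} set $X$; from this you correctly conclude that $U$ is tidy for each generator, but the extension to an arbitrary $\gamma \in H$ does not follow from anything you have written. Theorem~\ref{tidy:stable}(iii) only says that the tidy subgroups for a \emph{fixed} automorphism are closed under finite intersections; it gives no information about tidiness for $\gamma$ when all you know is tidiness for elements of $X$. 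Tidiness is emphatically not preserved under products --- the paper itself recalls \cite[Example~3.5]{WillisFlat} precisely to make this point --- and the entire content of the theorem is the mechanism by which finitely many extra elements of $X$ force tidiness of $U$ for arbitrarily long words in the generators. Your sketch never identifies that mechanism (in Willis it rests on an induction on word length with a careful analysis of the subgroups $U_+$ and $U_-$ for products and the behaviour of the scale under composition), so the conclusion is asserted rather than proved.

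Two further steps are also unsupported. First, the termination measure: you claim that $\sum_{y\in Y}\bigl(\log_2|y(U):y(U)\cap U| - \log_2 s(y)\bigr)$ strictly decreases under a non-trivial tidy-above step, but replacing $U$ by $\bigcap_{i=0}^{N}y^i(U)$ can \emph{increase} the displacement $|z(V):z(V)\cap V|$ for another generator $z$; controlling this interference between generators is the technical heart of Willis's \S 5 and cannot be taken for granted. (The quantity is also not an integer in general, since neither the indices nor the scales need be powers of $2$, although integrality of the indices themselves would rescue termination if the monotonicity held.) Second, tidiness below is dispatched in a single clause: guaranteeing $\nub(\gamma)\le U$ for every $\gamma\in H$ from finitely many conditions is essentially the content of Theorem~\ref{thm:flat:nubdecomp}, which in this paper is \emph{deduced from} Theorem~\ref{flat:invariablytidy} via Lemma~\ref{flat:invariablytidy:tidying}, so it cannot be invoked here without circularity. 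Either reproduce Willis's induction in full or simply cite it, as the paper does.
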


We thus obtain a `tidying above procedure' for actions of finitely generated groups under certain circumstances.

\begin{lem}\label{flat:invariablytidy:tidying}Let $G$ be a \tdlc group, let $H$ be a finitely generated group of automorphisms of $G$, and let $X$ be a finite tidying set for $H$ on $G$.  Let $U$ be a compact open subgroup of $G$ such that $U$ is tidy below for $\alpha$, for all $\alpha \in X$, and such that $U$ has only finitely many conjugates under the action of $[H,H]$.  Then there is a finite intersection of $H$-conjugates of $U$ that is tidy for $H$ on $G$.
\end{lem}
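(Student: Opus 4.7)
The plan is to first establish the result in the special case where $U$ is itself $[H,H]$-invariant, via a direct iterative tidying-above construction, and then reduce the general case to this one using the finite $[H,H]$-orbit hypothesis.

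In the $[H,H]$-invariant case, I would enumerate $X = \{\alpha_1, \ldots, \alpha_n\}$ and inductively build a descending sequence $U = U_0 \ge U_1 \ge \cdots \ge U_n$ of compact open subgroups, each $[H,H]$-invariant, each tidy below for every element of $X$, and with $U_i$ tidy for $\alpha_1, \ldots, \alpha_i$. To pass from $U_{i-1}$ to $U_i$, I would use Proposition~\ref{basic:tidyabove} to choose $m$ so that $U_i := \bigcap_{j=0}^m \alpha_i^j(U_{i-1})$ is tidy above for $\alpha_i$. Its $[H,H]$-invariance then follows from Lemma~\ref{lem:invariablytidy}(i) together with the normality of $[H,H]$ in $H$. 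The tidy-below property for $\alpha \in X$ follows because the $[H,H]$-invariance of $U_{i-1}$ combined with Lemma~\ref{lem:invariablytidy}(ii) (taking $L = [H,H]$) transports tidiness below from $\alpha$ to every element of $[H,H]\alpha$, in particular to each $H$-conjugate $\alpha_i^{-j}\alpha\alpha_i^j$, whence $\nub(\alpha) \le \alpha_i^j(U_{i-1})$ for every $j$ and thus $\nub(\alpha) \le U_i$. Preservation of tidiness for $\alpha_r$ with $r < i$ is handled by Lemma~\ref{lem:invariablytidy}(iii), which shows that each $\alpha_i^j(U_{i-1})$ is tidy for $\alpha_r$, combined with Theorem~\ref{tidy:stable}(iii) to close up under finite intersections. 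After $n$ steps, $U_n$ is $[H,H]$-invariant and tidy for $X$; by a further application of Lemma~\ref{lem:invariablytidy}(ii), $U_n$ is tidy for every element of $\bigcup_{\alpha \in X}[H,H]\alpha$, a set that contains every $H$-conjugate of every element of $X$. The tidying-set property of $X$ then forces $U_n$ to be tidy for $H$, and by construction $U_n$ is a finite intersection of $H$-conjugates of $U$.

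For the reduction, I would set $V := \bigcap_{\beta \in [H,H]}\beta(U)$, which by the finite-orbit hypothesis is a finite intersection of $H$-conjugates of $U$, hence a compact open $[H,H]$-invariant subgroup of $G$. Granted that $V$ is tidy below for each $\alpha \in X$, applying the $[H,H]$-invariant case to $V$ produces a finite intersection of $H$-conjugates of $V$, and thus of $U$, that is tidy for $H$.

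The step I expect to be the main obstacle is precisely this last tidy-below verification: translating the bound $\nub(\alpha) \le U$ and the finiteness of the $[H,H]$-orbit of $U$ into $\nub(\alpha) \le \beta(U)$ for every representative $\beta$ of $[H,H]/\mathrm{Stab}_{[H,H]}(U)$. A plausible route is to argue that the compact $\alpha$-ergodic subgroup $\nub(\alpha)$ must already be stabilised by a sufficiently large subgroup of $[H,H]$ (forced by the finiteness of the orbit of $U$ together with the characterisation of $\nub(\alpha)$ as the maximal such subgroup inside $U$), or alternatively to adjust the starting subgroup by intersecting with a further finite set of $H$-translates before taking the $[H,H]$-core. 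Either way, the delicate point is reconciling the purely pointwise tidy-below hypothesis on $X$ with the simultaneous $[H,H]$-invariance required to feed into the Case~1 iteration.
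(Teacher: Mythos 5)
Your treatment of the $[H,H]$-invariant case is essentially the paper's own argument: the same iterated tidying-above construction $U_i = \bigcap_j \alpha_i^j(U_{i-1})$, with Lemma~\ref{lem:invariablytidy} and Proposition~\ref{basic:tidybelow} propagating tidiness below and tidiness for the earlier $\alpha_r$ through the finite intersections, and the tidying-set property invoked at the end in an equivalent form (you check that $U_n$ is tidy for every element of $\bigcup_{\alpha\in X}[H,H]\alpha$, which is the same as checking that every $H$-conjugate of $U_n$ is tidy for every element of $X$). That part is correct.

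The gap is exactly where you flag it: showing that $V=\bigcap_{\beta\in[H,H]}\beta(U)$ is still tidy below for each $\alpha\in X$. This does not follow formally from $\nub(\alpha)\le U$: what is needed is $\nub(\beta^{-1}\alpha\beta)\le U$ for every $\beta\in[H,H]$, and at this stage $U$ is not yet $[H,H]$-invariant, so Lemma~\ref{lem:invariablytidy}(ii) is unavailable. Your first suggested route does not work as stated — the finiteness of the $[H,H]$-orbit of $U$ gives a finite-index stabilizer of $U$ in $[H,H]$, but nothing forces that stabilizer to normalize $\nub(\alpha)$, and the ergodic characterization of the nub only yields invariance under automorphisms commuting with $\alpha$, which elements of $[H,H]$ need not do. The paper closes this step with Theorem~\ref{thm:compact_invariant:tidy}(i): taking $K=[H,H]$ (viewed as a subgroup of $G\rtimes[H,H]$ if one insists on inner automorphisms), which is $\alpha$-invariant by normality of $[H,H]$ in $H$ and satisfies $|K:\N_K(U)|<\infty$ by your finite-orbit hypothesis, that theorem asserts directly that the core $V$ is tidy below for $\alpha$. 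This is a genuinely nontrivial dynamical statement — its proof is the long orbit-confinement computation in Theorem~\ref{thm:compact_invariant:tidy} — and neither of your two proposed workarounds substitutes for it. With that citation inserted at the reduction step, your argument is complete and coincides with the paper's.
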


\begin{proof}By Theorem~\ref{thm:compact_invariant:tidy}, the intersection of all $[H,H]$-conjugates of $U$ is tidy below for all $\alpha \in X$ (since $[H,H]$ is invariant under the action of each $\alpha \in X$).  So we may assume that $U$ is $[H,H]$-invariant.  By Lemma~\ref{lem:invariablytidy}, if $U$ is $[H,H]$-invariant and tidy below for some $\alpha \in H$, then $U$ is also tidy below for every $H$-conjugate of $\alpha$, and so any $H$-conjugate of $U$ is tidy below for $\alpha$.  It is clear from Proposition~\ref{basic:tidybelow} that the property of being tidy below for $\alpha$ is closed under finite intersections; hence any finite intersection of $H$-conjugates of $U$ is tidy below for $\alpha$.

Fix a compact open subgroup $U$ of $G$ such that $U$ is tidy below for $\alpha$, for all $\alpha \in X$, and such that $U$ is $[H,H]$-invariant.  Let $X = \{\alpha_1,\alpha_2,\dots,\alpha_m\}$.  We define a sequence of subgroups $U_{(i)}$ as follows: $U_{(0)}= U$, and thereafter $U_{(i)} = \bigcap_{|n| \le k_i}\alpha^n_i(U_{(i-1)})$, where $k_i$ is large enough so that $U_{(i)}$ is tidy above for $\alpha_i$ (such a $k_i$ exists by Lemma~\ref{basic:tidyabove}).  Then $U_{(i)}$ is tidy below for $\alpha_i$ by Lemma~\ref{lem:invariablytidy}(iii), since it is a finite intersection of $H$-conjugates of $U$, so in fact $U_{(i)}$ is tidy for $\alpha_i$.  Given $j > i$, then $U_{(j)}$ is a finite intersection of $H$-conjugates of $U_{(i)}$; each $H$-conjugate of $U_{(i)}$ is tidy for $\alpha_i$ by Lemma~\ref{lem:invariablytidy}(iii).  Hence $U_{(j)}$ is tidy for $\alpha_i$.  In particular, $U_{(m)}$ is tidy for every element of $X$.  By Lemma~\ref{lem:invariablytidy}(iii), for all $\gamma \in M$, the conjugate $\gamma(U_{(m)})$ is tidy for every element of $X$.  Hence $V = U_{(m)}$ is tidy for $H$.
\end{proof}

\subsection{A decomposition theorem for the nub}\label{sec:nubdecomp}

We can now state and prove a more precise version of Theorem~\ref{thmintro:flat:nubdecomp}.

\begin{thm}\label{thm:flat:nubdecomp}
Let $G$ be a \tdlc group and let $H$ be a flat group of automorphisms of $G$.  Suppose that $L$ is a uniscalar normal subgroup of $H$ such that $H/L$ is polycyclic.  Then there is a finite subset $X$ of $H$ such that the following holds:

Let $U$ be a compact open subgroup of $G$ such that $\nub(L) \le U$ and $\nub(\alpha) \le U$ for all $\alpha \in X$.  Then there is a finite subset $Y$ of $H$ such that $V = \bigcap_{\alpha \in Y}\alpha(U)$ is tidy for $H$.

In particular, writing $X = \{\alpha_1,\alpha_2,\dots,\alpha_n\}$, we have
$$ \nub(H) = \nub(L) \nub(\alpha_1) \nub(\alpha_2) \dots \nub(\alpha_n).$$
\end{thm}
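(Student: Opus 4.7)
The plan is to reduce to the quotient in which $L$ acts smoothly, and then invoke the tidying-set machinery for finitely generated automorphism groups. Setting $K := \nub(L)$, by Corollary~\ref{cor:nub:normalizer}(iii) the normalizer $\N_G(K)$ is open in $G$; passing to $R := \N_G(K)/K$, Corollary~\ref{cor:tidy:quotients:nub} gives that $H$ acts flatly on $R$ with $s_R(\alpha) = s_G(\alpha)$ for every $\alpha \in H$ and $\nub_R(H) = \nub_G(H)K/K$. Crucially, on $R$ the uniscalar normal subgroup $L$ has trivial nub, hence acts smoothly there, so that $L$-invariant compact open subgroups of $R$ form a base of identity neighbourhoods.

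Since $H/L$ is polycyclic, hence finitely generated, choose $\alpha_1, \ldots, \alpha_n \in H$ whose cosets generate $H/L$, and set $H_0 := \langle \alpha_1, \ldots, \alpha_n \rangle$, so that $H = H_0 L$ with $H_0$ finitely generated. Apply Theorem~\ref{flat:invariablytidy} to $H_0$ acting on $R$ to obtain a finite tidying set $X_0 \subseteq H_0$ for $H_0$ on $R$; after enlarging, we may assume $\{\alpha_1, \ldots, \alpha_n\} \subseteq X_0$. Declare $X := X_0$ to be the finite subset promised by the theorem.

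Now suppose $U$ is a compact open subgroup of $G$ with $\nub(L) \le U$ and $\nub(\alpha) \le U$ for every $\alpha \in X_0$. After intersecting $U$ with finitely many $H$-conjugates so that the result lies in the open subgroup $\N_G(K)$, we may pass to $\bar U := U/K$ in $R$. By Proposition~\ref{prop:tidy:quotients}(ii) we have $\nub_R(\alpha) \le \nub_G(\alpha)K/K \le \bar U$ for each $\alpha \in X_0$, so $\bar U$ is tidy below for each $\alpha \in X_0$ on $R$ by Proposition~\ref{basic:tidybelow}. Smoothness of $L$ on $R$ together with Theorem~\ref{thm:compact_invariant:tidy}(i) allows $\bar U$ to be replaced by a finite intersection of its $L$-conjugates that is $L$-invariant and still tidy below for each $\alpha \in X_0$; this replacement corresponds to intersecting $U$ with finitely many conjugates by elements of $L \le H$. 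Lemma~\ref{flat:invariablytidy:tidying} applied to $H_0$ on $R$ then produces a further finite intersection of $H_0$-conjugates that is tidy for $H_0$ on $R$, and this intersection remains $L$-invariant because normality of $L$ in $H$ implies that $H_0$-conjugation preserves $L$-invariance. Lemma~\ref{tidy:uniscalar_extension} upgrades ``tidy for $H_0$ and $L$-invariant'' to ``tidy for $H = H_0 L$'' on $R$, and Lemma~\ref{lem:tidy:quotients}(iv) lifts this back to a compact open subgroup $V$ of $G$ tidy for $H$; by construction $V$ is a finite intersection of $H$-conjugates of the original $U$.

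The displayed product formula for $\nub(H)$ then follows: the containment $\nub(L)\nub(\alpha_1)\cdots\nub(\alpha_n) \le \nub(H)$ is immediate from Corollary~\ref{cor:nub:normalizer}(ii), which makes each factor normal in $\nub(H)$ and so the product a compact subgroup of $\nub(H)$. For the reverse, the construction applied to compact open subgroups $U$ shrinking down to this product yields tidy $V \le U$ for $H$, and $\nub(H) \le V$ for every such $V$ forces $\nub(H) \le \nub(L)\nub(\alpha_1)\cdots\nub(\alpha_n)$. The main technical obstacle will be ensuring that the $L$-invariant replacement of $\bar U$ is genuinely a finite intersection of $L$-conjugates, rather than requiring the full (possibly infinite) $L$-orbit; smoothness of $L$ on $R$ supplies an $L$-invariant compact open subgroup inside $\bar U$, and a compactness argument on the quotient should yield the required finiteness, but this is the most delicate step of the argument.
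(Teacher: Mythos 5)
Your overall strategy (pass to $R=\N_G(\nub(L))/\nub(L)$ so that $L$ becomes smooth, run the tidying machinery of Theorem~\ref{flat:invariablytidy} and Lemma~\ref{flat:invariablytidy:tidying} on a finitely generated $H_0$ with $H=H_0L$, upgrade via Lemma~\ref{tidy:uniscalar_extension}, and deduce the product formula from normality of the factors in $\nub(H)$) is the same as the paper's, but there are two genuine gaps. The first is the step you yourself flag as delicate, and your proposed fix does not close it. To invoke Theorem~\ref{thm:compact_invariant:tidy}(i) with $K=L$ you must know that $\bar U$ has only \emph{finitely many} $L$-conjugates; an $L$-invariant compact open $W_0\le\bar U$ only shows that $\bigcap_{\ell\in L}\ell(\bar U)$ is open and equal to a finite sub-intersection, not that the family $\{\ell(\bar U)\}_{\ell\in L}$ is finite. (Take $R=\bigoplus_{\bZ}\bF_2$ discrete with $L$ the shift, which is smooth and uniscalar, and $\bar U$ a single coordinate factor: there are infinitely many conjugates and $\bigcup_{\ell}\ell(\bar U)$ is not relatively compact, which is precisely what the proof of Theorem~\ref{thm:compact_invariant:tidy} uses.) Without that theorem you have no reason why the intersection still contains $\nub_R(\alpha)$ for $\alpha\in X_0$: the conjugates $\ell(\bar U)$ only contain the $L$-conjugates of these nubs. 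The paper's resolution is to first replace $U$ by $W=U\cap U''$ with $U''=U'\nubl(H)$, where $U'$ is an $L$-invariant open intersection of conjugates; the sandwich $U'\le W\le U''$ between $L$-invariant compact groups of finite index in one another forces finiteness of the $L$-orbit of $W$, while multiplying by $\nubl(H)$ keeps all the relevant nubs inside. This ingredient is absent from your argument.

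The second gap is structural and explains why the paper does not treat the polycyclic case in one pass. Lemma~\ref{flat:invariablytidy:tidying} requires the input subgroup to have only finitely many conjugates under $[H_0,H_0]$. Your $L$-invariant replacement of $\bar U$ satisfies this only if $[H_0,H_0]\le L$, i.e.\ only when $H/L$ is abelian; for general polycyclic $H/L$ the hypothesis is simply not available, and the claim that ``$H_0$-conjugation preserves $L$-invariance'' (true, since $L\lhd H$) does not substitute for it. This is exactly why the paper proves the abelian case first and then inducts on the derived length of $H/L$ via $H^*$, the last non-trivial term of the derived series over $L$; your proposal omits this induction entirely, so as written it only proves the theorem when $H/L$ is abelian.
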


\begin{proof}
Let us suppose for the moment that $H/L$ is abelian.  In light of Lemma~\ref{lem:tidy:quotients} and Corollary~\ref{cor:tidy:quotients:nub}, we may assume that $\nub(L)$ is trivial, in other words, $L$ is smooth.

Let $M$ be a finitely generated subgroup of $H$ such that $H = LM$ and let $X = \{\alpha_1,\dots,\alpha_n\}$ be a finite tidying set for $M$, as given by Theorem~\ref{flat:invariablytidy}.  Let $U$ be a compact open subgroup of $G$ such that $\nub(\alpha) \le U$ for all $\alpha \in X$.  We wish to show that there is a finite intersection $V$ of $H$-conjugates $U$ such that $V$ is tidy for $H$.

Let $R$ be the group of automorphisms of $G$ generated by $L$ together with conjugation by elements of $\nubl(H)$.  Let $U'' = U'\nubl(H)$, where $U'$ is the intersection of all $R$-conjugates of $U$.  Since $L$ is smooth and uniscalar and $\nubl(H)$ is compact and $L$-invariant, we see that $U'$ is an open subgroup of $U$.  Thus $U''$ is a compact open $L$-invariant subgroup of $G$ and we have $\nub(\alpha) \le U \cap U''$ for all $\alpha \in X$.  Now let $W = U \cap U''$.  We see that $W$ has only finitely many $L$-conjugates (since there are only finitely many subgroups between $U'$ and $U''$).  Given Theorem~\ref{thm:compact_invariant:tidy}, the intersection $W'$ of all $L$-conjugates of $W$ still contains $\nub(\alpha)$ for all $\alpha \in X$.  (The topological structure of $L$ is irrelevant to this application of Theorem~\ref{thm:compact_invariant:tidy}, so there is no harm in treating $L$ as a subgroup of $G$ by replacing $G$ with $G \rtimes L$.)  Given Lemma~\ref{flat:invariablytidy:tidying}, there is a finite intersection $V$ of $M$-conjugates of $W'$ that is tidy for $M$ on $G$.  Notice that $V$ inherits $L$-invariance from $W'$, since $M$ normalizes $L$.  We conclude by Lemma~\ref{tidy:uniscalar_extension} that $V$ is tidy for $H$.

In particular, $\nubl(H) \le V$, so $\nubl(H) \le U$, from which we see that $U'' = U'$, so $U''$ is actually a finite intersection of $L$-conjugates of $U$.  Following through the construction of $V$, we see that $V$ is itself a finite intersection of $H$-conjugates of $U$, as desired.

By Corollary~\ref{cor:nub:normalizer}, the groups $\nub(L)$ and $\nub(\alpha)$ for $\alpha \in H$ are normal subgroups of $\nub(H)$.  Thus the product
$$ K = \nub(L)\nub(\alpha_1)\nub(\alpha_2) \dots \nub(\alpha_n)$$
is compact subgroup of $G$ that does not depend on the ordering of the factors.  We have seen that every compact open subgroup of $G$ that contains $K$ also contains $\nub(H)$; this in fact ensures $\nub(H) \le K$, since in a \tdlc group, every compact subgroup is expressible as the intersection of the compact open subgroups that contain it.  Clearly also
\[
K \subseteq \nub(L)\nubl(H) \subseteq \nub(H),
\]
so in fact
\[
K = \nub(L)\nubl(H) = \nub(H).
\]

The proof of the theorem is now complete in the case that $H/L$ is abelian.

\

Now suppose $H/L$ is polycyclic, but not abelian.  We will prove the theorem by induction on the derived length of $H/L$.  Let $H^*/L$ be the last non-trivial term of the derived series for $H$.  From the abelian case, we see that there is a finite subset $X_1$ of $H^*$ such that the following holds:

Given a compact open subgroup $U$ of $G$ such that $\nub(L) \le U$ and $\nub(\alpha) \le U$ for all $\alpha \in X_1$, then there is a finite subset $Y_1$ of $H^*$ such that $V = \bigcap_{\alpha \in Y_1}\alpha(U)$ is tidy for $H^*$.  In particular, $\nub(H^*) \le U$.

By the inductive hypothesis, there is a finite subset $X_2$ of $H$ such that the following holds:

Given a compact open subgroup $U$ of $G$ such that $\nub(H^*) \le U$ and $\nub(\alpha) \le U$ for all $\alpha \in X_2$, then there is a finite subset $Y_2$ of $H$ such that $V = \bigcap_{\alpha \in Y_2}\alpha(U)$ is tidy for $H$.

Combining the two statements, we obtain the following:

Given a compact open subgroup $U$ of $G$ such that $\nub(L) \le U$ and $\nub(\alpha) \le U$ for all $\alpha \in X_1 \cup X_2$, then $\nub(H^*) \le U$, and moreover there is a finite subset $Y_2$ of $H$ such that $V = \bigcap_{\alpha \in Y_2}\alpha(U)$ is tidy for $H$.  In particular, $\nub(H) \le U$.

The desired decomposition of the nub now follows in the same manner as in the abelian case.
\end{proof}

We highlight the following special cases of Theorem~\ref{thm:flat:nubdecomp}.  (Here the product should be understood as a permutable product of subsets of $G$, which is not necessarily a direct product.)

\begin{cor}\label{flat:nubdecomp:special}Let $G$ be a \tdlc group and let $H$ be a flat group of automorphisms of $G$.
\begin{enumerate}[(i)]
\item Suppose that $H/H_{\us}$ is finitely generated.  Then
$$ \nub(H) = \nub(H_{\us})\prod_{\alpha \in X}\nub(\alpha)$$
for some finite subset $X$ of $H$.
\item Suppose that $H/[H,H]$ is finitely generated.  Then
$$ \nub(H) = \nub([H,H])\prod_{\alpha \in X}\nub(\alpha)$$
for some finite subset $X$ of $H$.
\item Suppose that $H$ has a smooth uniscalar normal subgroup $K$, such that $H/K$ is polycyclic.  Then
$$ \nub(H) = \prod_{\alpha \in X}\nub(\alpha)$$
for some finite subset $X$ of $H$.
\end{enumerate}
\end{cor}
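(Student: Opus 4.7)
The plan is to derive all three parts directly from Theorem~\ref{thm:flat:nubdecomp}, with each case amounting to choosing a suitable uniscalar normal subgroup $L$ of $H$ and verifying that $H/L$ is polycyclic.

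For part (i), I would take $L = H_{\us}$. This is a uniscalar normal subgroup by definition of the uniscalar part of a flat group. By Theorem~\ref{flat:uniscalar_derived}, $H/H_{\us}$ is torsion-free abelian; combined with the finite-generation hypothesis, this makes $H/H_{\us}$ free abelian of finite rank, hence in particular polycyclic. The conclusion then follows directly from Theorem~\ref{thm:flat:nubdecomp} applied with this choice of $L$.

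For part (ii), I would take $L = [H,H]$. This is normal by construction, and I need to check that it is uniscalar. Since $H/H_{\us}$ is abelian by Theorem~\ref{flat:uniscalar_derived}, we have $[H,H] \le H_{\us}$, so $[H,H]$ leaves invariant any compact open subgroup $U$ that witnesses the uniscalarity of $H_{\us}$; hence $[H,H]$ is uniscalar. Since $H/[H,H]$ is abelian and finitely generated by hypothesis, it is polycyclic, and Theorem~\ref{thm:flat:nubdecomp} applies. (This is the only step in the corollary where there is something subtle to check, but Theorem~\ref{flat:uniscalar_derived} makes it routine.)

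For part (iii), I would take $L = K$ directly. The hypotheses match those of Theorem~\ref{thm:flat:nubdecomp}, and smoothness of $K$ means that the tidy subgroups for $K$ form a base of identity neighbourhoods, so $\nub(K) = \{1\}$. Substituting this trivial factor into the decomposition supplied by Theorem~\ref{thm:flat:nubdecomp} gives $\nub(H) = \prod_{\alpha \in X}\nub(\alpha)$, as required. No real obstacle arises: in each part the work is entirely in locating the correct $L$ and checking the two hypotheses (uniscalar and polycyclic quotient), both of which are immediate consequences of previously established facts.
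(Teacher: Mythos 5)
Your proposal is correct and matches the paper's intent exactly: the paper presents this corollary as an immediate specialization of Theorem~\ref{thm:flat:nubdecomp}, obtained by taking $L = H_{\us}$, $L = [H,H]$ (which lies in $H_{\us}$ by Theorem~\ref{flat:uniscalar_derived} and is therefore uniscalar), and $L = K$ with $\nub(K) = \triv$ by smoothness, respectively. All three verifications you carry out are the right ones and there is no gap.
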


\section{Residuals}

\subsection{Preliminaries}

\begin{defn}Let $H$ be a group acting by homeomorphisms on a Hausdorff topological space $X$.  A pair $(x,y) \in X \times X$ is \defbold{proximal}\index{proximal} for the action if there is a diagonal point $(z,z) \in X \times X$ that is in the closure of the $H$-orbit $\{(\alpha(x),\alpha(y)) \mid \alpha \in H\}$; the action is \defbold{distal}\index{distal} if there does not exist a proximal pair of distinct points.
\end{defn}

In this article we will be interested in the case when $X = G/K$, where $G$ is a \tdlc group on which $H$ acts by automorphisms and $K$ is a closed $H$-invariant subgroup of $G$.  In this situation there are two natural notions of distality: we say the action is \defbold{distal} if it is distal in the usual sense, and \defbold{distal at $1$}\index{distal!distal at $1$} if no $H$-orbit accumulates at the trivial coset, in other words, there is no non-trivial proximal pair of the form $(xK,K)$.  \emph{A priori}, distality at $1$ is a weaker notion than distality; the two notions coincide in the special case that $K$ is a normal subgroup of $G$, so that $X$ is a group, since in this case if $(xK,yK)$ is a proximal pair, then so is $(y\inv xK,K)$.

Observe that distal action (at $1$) is a residual property.

\begin{lem}\label{lem:distal_residual}Let $G$ be a Hausdorff topological group and let $H$ be a group of automorphisms of $G$.  Let $\mc{K}$ be a collection of closed $H$-invariant subgroups of $G$ such that $H$ is distal (at $1$) on $G/K$ for all $K \in \mc{K}$.  Then $H$ is distal (at $1$) on $G/L$, where $L = \bigcap_{K \in \mc{K}}K$.
\end{lem}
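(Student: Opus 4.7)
The plan is to exploit the natural continuous $H$-equivariant quotient maps $\pi_K \colon G/L \to G/K$ available for each $K \in \mc{K}$, which exist because $L \le K$ for every $K \in \mc{K}$ (by the definition of $L$ as the intersection). Since $\pi_K$ is continuous, the image of any accumulation point of an $H$-orbit in $G/L$ is an accumulation point of the corresponding orbit in $G/K$; equivalently, a proximal pair in $G/L$ maps to a proximal pair in $G/K$.

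For distality at $1$: I would argue by contrapositive. Suppose some $H$-orbit in $G/L$ accumulates at the trivial coset from a point $xL \ne L$. Then $x \notin L$, so by definition of $L$ there exists $K \in \mc{K}$ with $x \notin K$, i.e.\ $xK \ne K$ in $G/K$. Applying $\pi_K$ to a net $\alpha_i(x)L \to L$ gives $\alpha_i(x)K \to K$ in $G/K$, producing an $H$-orbit in $G/K$ that accumulates at the trivial coset from a non-trivial point, contradicting distality at $1$ on $G/K$.

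For the full distal property: I would run the same argument on pairs. Suppose $(xL,yL)$ is a proximal pair in $G/L$ with $xL \ne yL$. Then $x\inv y \notin L$, so there exists $K \in \mc{K}$ with $x\inv y \notin K$, giving $xK \ne yK$. A net $\alpha_i$ witnessing proximality, with $\alpha_i(x)L$ and $\alpha_i(y)L$ both converging to some $zL$, maps under $\pi_K$ to a net witnessing that $(xK,yK)$ is proximal in $G/K$, contradicting distality on $G/K$.

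I do not anticipate a real obstacle: the only points to verify are that $\pi_K$ is continuous and $H$-equivariant (immediate from $L$ being an $H$-invariant closed subgroup of $K$, itself $H$-invariant) and that $L = \bigcap \mc{K}$ is closed and $H$-invariant (immediate from being an intersection of closed $H$-invariant subgroups). The slight subtlety worth flagging in the write-up is the case distinction between the two meanings of distal, and the observation that the argument works uniformly because a single well-chosen $K \in \mc{K}$ separates the relevant pair.
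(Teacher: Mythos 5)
Your proposal is correct and follows essentially the same route as the paper: both arguments push a net witnessing proximality in $G/L$ through the natural continuous quotient map $G/L \to G/K$ to obtain a proximal pair in $G/K$, and then use $L = \bigcap_{K \in \mc{K}} K$ to conclude. The only cosmetic difference is that you phrase it as a contrapositive (choosing one $K$ that separates the pair) while the paper argues directly that $y\inv x \in K$ for every $K$, hence $y\inv x \in L$; these are logically identical.
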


\begin{proof}
Suppose that $H$ is distal on $G/K$ for all $K \in \mc{K}$.  Let $(x,y) \in G \times G$ and suppose $(xL,yL)$ is a proximal pair for the action of $H$ on $G/L$.  Then there is a convergent net $(\alpha_i(x)L,\alpha_i(y)L)_{i \in I}$ in $G/L \times G/L$ with limit $zL$ say.  Given $K \in \mc{K}$, then $(\alpha_i(x)K,\alpha_i(y)K)_{i \in I}$ converges to $zK$, owing to the natural continuous quotient map $G/L \times G/L \rightarrow G/K \times G/K$.  Since the action of $H$ on $G/K$ is distal, we must have $xK = yK$, in other words $y\inv x \in K$.  Since $K \in \mc{K}$ was arbitrary, we in fact have $y\inv x \in L$.  Thus the action of $H$ on $G/L$ is distal.

The proof that distality at $1$ is a residual property is similar.
\end{proof}

Distality at $1$ is closed under extensions.  Distality is closed under extensions under certain circumstances.

\begin{lem}\label{lem:distal_extension}
Let $G$ be a Hausdorff topological group and let $H$ be a group of automorphisms of $G$.  Let $(K_\alpha)_{\alpha < \lambda}$ be a well-ordered descending chain of closed $H$-invariant subgroups of $G$, such that $K_\alpha = \bigcap_{\beta < \alpha}K_\beta$ whenever $\alpha$ is a non-zero limit ordinal.
\begin{enumerate}[(i)]
\item Suppose that $H$ is distal on $K_0/K_1$, and that for all $\alpha$ such that $1 < \alpha+ 1 < \lambda$, we have $K_{\alpha} \le \N_G(K_{\alpha+1})$ and the action of $H$ on $K_{\alpha}/K_{\alpha+1}$ is distal at $1$.  Then $H$ is distal on $K_0/K_\alpha$ for all $\alpha < \lambda$.
\item Suppose that $H$ is distal at $1$ on the coset space $K_{\alpha}/K_{\alpha+1}$ for all $\alpha$ such that $\alpha+1 < \lambda$.  Then $H$ is distal at $1$ on $K_0/K_\alpha$ for all $\alpha < \lambda$.
\end{enumerate}
\end{lem}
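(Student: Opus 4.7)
The plan is a transfinite induction on $\alpha < \lambda$, applied in parallel to both parts. The base case $\alpha = 0$ is vacuous, and the limit case is immediate from Lemma~\ref{lem:distal_residual}: apply it inside the Hausdorff topological group $K_0$ to the collection $\{K_\beta \mid \beta < \alpha\}$ of closed $H$-invariant subgroups, whose intersection is $K_\alpha$ by the chain condition. The bulk of the work is the successor step $\alpha \to \alpha+1$.

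For part (ii), given $x \in K_0$ and a net $(\phi_i) \subseteq H$ with $\phi_i(x)K_{\alpha+1} \to K_{\alpha+1}$ in $K_0/K_{\alpha+1}$, I first project to $K_0/K_\alpha$ and invoke the inductive hypothesis to conclude $x \in K_\alpha$; the whole net then lies in $K_\alpha$ by $H$-invariance. Since $G \to G/K_{\alpha+1}$ is open and $K_\alpha$ is closed in $G$, the natural map $K_\alpha/K_{\alpha+1} \to G/K_{\alpha+1}$ is a topological embedding, so the convergence persists in $K_\alpha/K_{\alpha+1}$; the distal-at-$1$ hypothesis there then forces $x \in K_{\alpha+1}$. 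For part (i), the case $\alpha = 1$ is handed to us by the first hypothesis, and for $\alpha \ge 1$ I take a proximal pair $(xK_{\alpha+1}, yK_{\alpha+1})$ with witness net $\phi_i$ and common limit $zK_{\alpha+1}$, project to $K_0/K_\alpha$, and use distality there (inductive hypothesis) to obtain $k := y^{-1}x \in K_\alpha$. Since $K_\alpha \le \N_G(K_{\alpha+1})$, the quotient $K_\alpha/K_{\alpha+1}$ is a topological group, on which distality at $1$ is equivalent to distality (because $\phi_i(y)^{-1}\phi_i(x) \to 1$ in a topological group whenever $(\phi_i(x), \phi_i(y)) \to (z,z)$). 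Hence it suffices to show $\phi_i(k)K_{\alpha+1} \to K_{\alpha+1}$ in $K_\alpha/K_{\alpha+1}$, which will force $k \in K_{\alpha+1}$ and complete the step.

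The hard part will be establishing this last convergence, since $G/K_{\alpha+1}$ is merely a coset space when $K_{\alpha+1}$ is not normal in $G$ and one cannot freely divide convergent nets in it. To handle it, I fix a basic identity neighborhood $VK_{\alpha+1}/K_{\alpha+1}$ in $K_\alpha/K_{\alpha+1}$ with $V \subseteq K_\alpha$ open, extend $V$ to an open $V' \subseteq G$ with $V' \cap K_\alpha = V$, and pick a symmetric open $W' \ni 1$ in $G$ with $(W')^2 \subseteq V'$. For $i$ sufficiently large one has $\phi_i(x), \phi_i(y) \in zW'K_{\alpha+1}$, so I write $\phi_i(x) = zw_in_i$ and $\phi_i(y) = zw'_in'_i$ with $w_i, w'_i \in W'$ and $n_i, n'_i \in K_{\alpha+1}$. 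Multiplying out gives $\phi_i(k) = (n'_i)^{-1} ((w'_i)^{-1}w_i) n_i$, where the middle factor $(w'_i)^{-1}w_i$ lies in $(W')^2 \subseteq V'$ and is forced into $K_\alpha$ (hence into $V$) because $\phi_i(k)$ and the $n_i, n'_i$ all lie in $K_\alpha$. The normalization hypothesis $K_\alpha \le \N_G(K_{\alpha+1})$ finally lets me conjugate $(n'_i)^{-1}$ across this middle factor to absorb it into $K_{\alpha+1}$, placing $\phi_i(k)K_{\alpha+1}$ inside $VK_{\alpha+1}/K_{\alpha+1}$ as required; this is the only step of the proof where the normalization hypothesis is genuinely used.
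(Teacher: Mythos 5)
Your proof is correct and follows essentially the same route as the paper's: transfinite induction (the paper phrases it as a least counterexample), the limit case via Lemma~\ref{lem:distal_residual}, and a successor step that reduces to distality at $1$ on $K_\alpha/K_{\alpha+1}$ by showing $\phi_i(k)K_{\alpha+1}$ converges to the trivial coset, with the normalization hypothesis used at exactly the same point. The only difference is cosmetic: the paper locates $h_i(k^{-1})$ in $O^{-1}OK_{\alpha+1}$ by intersecting the two cosets $zOK_{\alpha+1}$ and $zOh_i(k^{-1})K_{\alpha+1}$, whereas you write out the factorization $\phi_i(k)=(n'_i)^{-1}\bigl((w'_i)^{-1}w_i\bigr)n_i$ explicitly — the same computation in different packaging.
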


\begin{proof}
(i)
Suppose there is some $\alpha < \lambda$ such that $H$ is not distal on $K_0/K_\alpha$; let $\alpha$ be the least ordinal for which this occurs.  Clearly $\alpha > 1$.

If $\alpha$ is a limit ordinal, then $H$ is distal on $K_0/K_\beta$ for all $\beta < \alpha$, so $H$ is distal on $K_0/K_\alpha$ by Lemma~\ref{lem:distal_residual}.

Suppose $\alpha = \beta+1$ for some ordinal $\beta$ and let $(xK_\alpha,yK_\alpha)$ be a proximal pair for the action of $H$ on $K_0/K_\alpha$.  Then $(xK_\beta,yK_\beta)$ is a proximal pair, so $xK_{\beta} = yK_{\beta}$; say $y = xk$ for some $k \in K_{\beta}$.  Let $(h_i)_{i \in I}$ be a net in $H$ such that $((h_i(x)K_\alpha,h_i(y)K_\alpha))_{i \in I}$ converges to $(zK_{\alpha},zK_{\alpha})$ for some $z \in K_0$.  Given an open neighbourhood $O$ of $1$ in $G$, we have
\[
h_i(x), h_i(y) \in zOK_{\alpha}
\]
for all $i$ sufficiently large.  Now $h_i(y) = h_i(x)h_i(k)$, so for $i$ sufficiently large we have
\[
h_i(x) \in zOK_{\alpha} \cap zOK_{\alpha}h_i(k\inv) = z(OK_{\alpha} \cap Oh_i(k\inv)K_{\alpha}),
\]
using the fact that $h_i(k\inv) \in K_{\beta}$, so $h_i(k\inv)$ normalizes $K_{\alpha}$ for all $i$.  In particular, for all $i$ sufficiently large the set $OK_{\alpha} \cap Oh_i(k\inv)K_{\alpha}$ is non-empty, so we have $h_i(k\inv) \in O\inv OK_{\alpha}$.  By the continuity of the group operations in $G$, the sets $O\inv O$ form a base of identity neighbourhoods as $O$ ranges over the identity neighbourhoods of $G$.  Thus $h_i(k\inv)K_{\alpha}$ converges to $K_{\alpha}$ in $K_{\beta}/K_{\alpha}$.  Since $H$ is distal at $1$ on $K_{\beta}/K_{\alpha}$ by hypothesis, it follows that $k\inv \in K_{\alpha}$, and consequently $xK_\alpha = yK_{\alpha}$.

In either case, we obtain a contradiction to the assumption that $H$ was not distal on $K_0/K_\alpha$ for some $\alpha < \lambda$.

(ii)
Again we suppose that $\alpha$ is minimal such that $K_0/K_{\alpha}$ is a counterexample.  As before, it is clear that $\alpha = \beta+1$ for some ordinal $\beta > 0$.

Let $(xK_\alpha,K_\alpha)$ be a proximal pair for the action of $H$ on $K_0/K_{\alpha}$.  Then $(xK_\beta,K_\beta)$ is a proximal pair, so $x \in K_{\beta}$.  But then $(xK_\alpha,K_\alpha)$ is a proximal pair for the action of $H$ on $K_{\beta}/K_{\alpha}$, so $x \in K_\alpha$, so in fact $H$ is distal at $1$ on $K_0/K_{\alpha}$, a contradiction.
\end{proof}

\begin{defn}
Let $G$ be a Hausdorff topological group and let $H$ be a group of automorphisms of $G$.

The \defbold{distal residual}\index{distal!distal residual} $\Dist_G(H)$ is the intersection of all $H$-invariant closed subgroups $K$ of $G$ such that $H$ acts distally on $G/K$, and $\Dist(G) := \Dist_G(\Inn(G))$.  Equivalently in light of Lemma~\ref{lem:distal_residual}, the subgroup $D = \Dist_G(H)$ is the smallest $H$-invariant closed subgroup of $G$ such that the conjugation action of $H$ on the coset space $G/D$ is distal.  We can also define the distal residual of a coset space: given an $H$-invariant subgroup $K$ of $G$, define $\Dist_{G/K}(H)$\index{Dist@$\Dist_{G/K}(H)$} to be the smallest closed $H$-invariant subgroup $D$ of $G$ such that $K \le D$ and the conjugation action of $H$ on $G/D$ is distal.  Similarly, we define the $1$-distal residual $\Dist^*_{G/K}(H)$\index{Dist@$\Dist^*_{G/K}(H)$} to be the smallest closed $H$-invariant subgroup $E$ of $G$ such that $K \le E$ and $H$ acts distally at $1$ on $G/E$.

The \defbold{discrete residual}\index{discrete residual} $\Res(G)$ of $G$ is the intersection of all open normal subgroups of $G$, and $G$ is \defbold{residually discrete}\index{residually discrete} if $\Res(G)=\triv$.  More generally, given an $H$-invariant subgroup $K \le G$, we define $\Res_{G/K}(H)$, the \defbold{discrete residual of $H$ on $G/K$}\index{Res@$\Res_{G/K}(H)$}, to be the intersection of all open $H$-invariant subgroups of $G$ that contain $K$, and the action of $H$ on $G/K$ is \defbold{residually discrete} if $\Res_{G/K}(H)=K$.

Define $\Res^{\alpha}_{G/K}(H)$ as $\alpha$ ranges over the ordinals as follows: $\Res^0_G(H) := G$; $\Res^{\alpha+1}_{G/K}(H) := \Res_{\Res^\alpha_{G/K}(H)/K}(H)$; and if $\alpha$ is a non-zero limit ordinal, $\Res^{\alpha}_{G/K}(H) := \bigcap_{\beta < \alpha}\Res^{\beta}_{G/K}(H)$.  Then the groups $\Res^{\alpha}_{G/K}(H)$ form a descending chain of closed $H$-invariant subgroups of $G$, eventually terminating at some group $\Res^\infty_{G/K}(H)$\index{Res@$\Res^\infty_{G/K}(H)$} that has no proper $H$-invariant open subgroups.\end{defn}

Indeed, $\Res^\infty_G(H)$ can be characterized as the unique largest closed $H$-invariant subgroup $K$ of $G$ such that $K$ has no proper open $H$-invariant subgroups.

\begin{lem}\label{lem:resinfty_properopen}Let $G$ be a Hausdorff topological group and let $H$ be a group of automorphisms of $G$.  Let $K$ be a closed $H$-invariant subgroup of $G$, and suppose that $K$ has no proper open $H$-invariant subgroups.  Then $K \le \Res^\infty_G(H)$.\end{lem}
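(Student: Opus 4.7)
The plan is to prove $K \leq \Res^\alpha_G(H)$ for all ordinals $\alpha$ by transfinite induction, from which $K \leq \Res^\infty_G(H)$ follows. The base case $\alpha = 0$ is immediate since $\Res^0_G(H) = G$, and the limit case is handled directly by the definition $\Res^\alpha_G(H) = \bigcap_{\beta < \alpha} \Res^\beta_G(H)$ together with the inductive hypothesis.

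The only substantive step is the successor step. Assuming $K \leq M := \Res^\alpha_G(H)$, I want to show $K \leq \Res^{\alpha+1}_G(H) = \Res_M(H)$. So let $U$ be an arbitrary open $H$-invariant subgroup of $M$. Since $M$ is a closed subgroup of $G$ equipped with the subspace topology and $K \leq M$, the intersection $U \cap K$ is an open subgroup of $K$ (in its subspace topology from $G$). Moreover $U \cap K$ is $H$-invariant, since both $U$ and $K$ are. The hypothesis that $K$ admits no proper open $H$-invariant subgroup then forces $U \cap K = K$, i.e.\ $K \leq U$. Since $U$ was arbitrary, $K$ lies in the intersection defining $\Res_M(H)$, completing the successor step.

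I do not anticipate any real obstacle here: the argument is essentially a one-line verification per step, and no tidy theory or structural results are needed. The only mild subtlety is to be careful with the topology on $\Res^\alpha_G(H)$ when restricting open subgroups to $K$; this is routine since closed subgroups carry the subspace topology and $K \leq \Res^\alpha_G(H)$ by induction.
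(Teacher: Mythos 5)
Your proof is correct and is essentially identical to the paper's: the same transfinite induction, with the successor step handled by observing that $U \cap K$ is an open $H$-invariant subgroup of $K$ and hence equals $K$. No issues.
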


\begin{proof}
It is enough to show that $K \le \Res^\alpha_G(H)$ for every ordinal $\alpha$.  We proceed by induction on $\alpha$.

The case $\alpha = 0$ is immediate.

If $\alpha$ is a non-zero limit ordinal, then $K \le \Res^\beta_G(H)$ for all $\beta < \alpha$ by the inductive hypothesis, so $K \le \Res^\alpha_G(H)$.

If $\alpha = \beta+1$ for some ordinal $\beta$, then $K \le \Res^\beta_G(H)$ by the inductive hypothesis.  Given an open $H$-invariant subgroup $U$ of $L:= \Res^\beta_G(H)$, then $K \cap U$ is an open $H$-invariant subgroup of $K$, so $K \cap U = K$, that is, $K \le U$.  Thus $K \le \Res_L(H) = \Res^\alpha_G(H)$.
\end{proof}

Example~\ref{ex:badnub} shows that $\Res^\infty_G(H)$ can be a proper subgroup of $\Res_G(H)$.

We also see that a residually discrete action is distal (and hence distal at $1$).  In particular, it follows that $\Dist_G(H) \le \Res_G(H)$ and $\Dist^*_G(H) \le \Res^{\infty}_G(H)$.

\begin{lem}\label{lem:discres_distal}
Let $G$ be a Hausdorff topological group, let $H$ be a group of automorphisms of $G$ and let $K$ be a closed $H$-invariant subgroup of $G$.  Suppose that $K$ is an intersection of open $H$-invariant subgroups of $G$.  Then $H$ acts distally on $G/K$.
\end{lem}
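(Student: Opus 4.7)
The plan is to reduce the problem to the case where the quotient is discrete, and then observe that any Hausdorff action on a discrete space is automatically distal, before invoking the residual property of distality established in Lemma~\ref{lem:distal_residual}.

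First, I would write $K = \bigcap_{i \in I} U_i$, where each $U_i$ is an open $H$-invariant subgroup of $G$. For each $i$, since $U_i$ is $H$-invariant, the action of $H$ on $G$ descends to an action on $G/U_i$ by homeomorphisms; and since $U_i$ is open, the coset space $G/U_i$ carries the discrete topology.

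Next, I would verify directly that $H$ acts distally on each $G/U_i$. Suppose $(xU_i, yU_i)$ is a proximal pair, so there is a net $(h_\lambda)$ in $H$ for which $(h_\lambda(x)U_i, h_\lambda(y)U_i)$ converges to some diagonal point $(zU_i, zU_i)$ in $G/U_i \times G/U_i$. Since this product space is discrete, the net is eventually equal to $(zU_i, zU_i)$; in particular, for some $\lambda$ we have $h_\lambda(x)U_i = h_\lambda(y)U_i$, which rearranges to $h_\lambda(x^{-1}y) \in U_i$. The $H$-invariance of $U_i$ then yields $x^{-1}y \in U_i$, i.e. $xU_i = yU_i$, so the action is distal on $G/U_i$.

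Finally, applying Lemma~\ref{lem:distal_residual} to the collection $\{U_i\}_{i \in I}$ of closed $H$-invariant subgroups whose intersection is $K$ gives that $H$ acts distally on $G/K$. The only step requiring any thought is the distality of the action on a discrete quotient, and even this is immediate once one notes that convergence in a discrete space means eventual equality; the rest is essentially a direct appeal to Lemma~\ref{lem:distal_residual}, so I do not anticipate any real obstacle.
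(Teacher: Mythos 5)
Your proof is correct and follows the same route as the paper: express $K$ as an intersection of open (hence closed) $H$-invariant subgroups, observe that the action on each discrete quotient $G/U_i$ is distal, and conclude by Lemma~\ref{lem:distal_residual}. The only difference is that you spell out the discrete-quotient distality check explicitly, which the paper treats as immediate.
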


\begin{proof}
Let $\mathcal{K}$ be the set of all open $H$-invariant subgroups of $G$ that contain $K$ and let $O \in \mathcal{K}$.  Then clearly the action of $H$ on $G/O$ is distal, since $G/O$ is a discrete space on which $H$ acts by permutations.  The conclusion then follows by Lemma~\ref{lem:distal_residual}.
\end{proof}

Let us now assume that $G$ is a \tdlc group.  Since $\Dist^*_G(H)$ is a closed subgroup and contraction groups are well-behaved with respect to coset spaces by Theorem~\ref{thm:bw:relative_contraction}, we have $\overline{G^\dagger_H} \le \Dist^*_G(H)$.  So we have inclusions
\[
\overline{G^\dagger_H} \subseteq \Dist^*_G(H) \subseteq A_G(H) \subseteq \Res_G(H)
\]

of closed $H$-invariant subgroups of $G$, as stated in the introduction, where $A_G(H)$ is either $\Dist_G(H)$ or $\Res^\infty_G(H)$.

In the case of a group of automorphisms of a \emph{profinite} group, the existence or non-existence of proper invariant open subgroups is closely related to whether or not the action is ergodic.  The next series of results are based on results by Jaworski (\cite{JawDistal}) for actions on compact groups in general.

\begin{prop}[See \cite{JawDistal} Proposition 2.1]\label{ergodic_res}Let $G$ be a profinite group and let $H$ be a group of automorphisms of $G$.  Then the following are equivalent:
\begin{enumerate}[(i)]
\item $H$ does not act ergodically.
\item There exists a proper open normal $H$-invariant subgroup $N$ of $G$.
\item There exists a compact $H$-invariant identity neighbourhood $U$ in $G$ such that $U^2 \not= G$.
\end{enumerate}
\end{prop}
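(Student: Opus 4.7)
The plan is to argue via the cycle (ii) $\Rightarrow$ (iii) $\Rightarrow$ (i) $\Rightarrow$ (ii). The first two implications are essentially formal, and the substance lies in (i) $\Rightarrow$ (ii).

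For (ii) $\Rightarrow$ (iii), I would take $U := N$: this is a compact $H$-invariant identity neighbourhood of $G$ with $U^2 = N \neq G$.

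For (iii) $\Rightarrow$ (i), let $\mu$ denote the normalized Haar measure on $G$. Since $U$ contains an open neighbourhood of the identity we have $\mu(U) > 0$. Moreover $U^2$ is the continuous image of the compact set $U \times U$ under multiplication, hence compact and therefore closed; thus $U^2 \neq G$ forces $\mu(U^2) < 1$, and since $1 \in U \subseteq U^2$ this yields $\mu(U) < 1$. Therefore $U$ is an $H$-invariant Borel subset of $G$ with $0 < \mu(U) < 1$, witnessing failure of ergodicity.

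The heart of the argument is (i) $\Rightarrow$ (ii), and my plan is to use the Peter--Weyl decomposition $L^2(G) = \bigoplus_{\pi \in \hat{G}} V_\pi$. Since $G$ is profinite, every irreducible unitary representation $\pi \in \hat{G}$ is finite-dimensional and factors through a finite quotient $G/\ker(\pi)$ with $\ker(\pi)$ open and normal in $G$. The action of $H$ by automorphisms of $G$ permutes the Peter--Weyl summands via $\alpha \cdot V_\pi = V_{\pi \circ \alpha^{-1}}$, and on $\hat{G}$ itself satisfies $\ker(\pi \circ \alpha^{-1}) = \alpha(\ker\pi)$. Non-ergodicity produces a non-zero $H$-invariant vector $f$ in the orthogonal complement of the constants; writing $f = \sum_\pi f_\pi$, the condition $\alpha \cdot f = f$ forces $\alpha \cdot f_\pi = f_{\pi \circ \alpha^{-1}}$, and isometry of $\alpha$ gives $\|f_{\pi \circ \alpha^{-1}}\| = \|f_\pi\|$. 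Square-summability of $\sum_\pi \|f_\pi\|^2$ then forces the $H$-orbit of $[\pi]$ in $\hat{G}$ to be finite whenever $f_\pi \neq 0$. Picking any non-trivial $\pi$ with $f_\pi \neq 0$, the subgroup
\[
N := \bigcap_{\alpha \in H} \alpha(\ker\pi)
\]
is by the finite-orbit conclusion a finite intersection of open normal subgroups of $G$; it is manifestly $H$-invariant and properly contained in $\ker(\pi)$, yielding the desired subgroup.

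The main obstacle is this last step: non-ergodicity provides only a measurable invariant of the $H$-action, and the crucial leverage comes from combining square-summability in Peter--Weyl (which forces a relevant representation to have finite $H$-orbit) with the fact that every irreducible representation of a profinite group cuts out an open subgroup.
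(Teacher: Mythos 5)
The paper does not prove this proposition at all: it is imported verbatim from Jaworski (\cite{JawDistal}, Proposition~2.1, stated there for general compact groups with ``$G/N$ a Lie group'' in place of ``$N$ open''), so there is no in-paper argument to compare against. Your proof is correct and self-contained, and in the profinite case it is essentially the standard Peter--Weyl argument underlying Jaworski's result: the cycle (ii)~$\Rightarrow$~(iii)~$\Rightarrow$~(i) is routine as you say (for (iii)~$\Rightarrow$~(i) the key point, which you state, is that $U^2$ is closed, so $U^2\neq G$ forces $\mu(U^2)<1$ and hence $0<\mu(U)\le\mu(U^2)<1$), and the substance of (i)~$\Rightarrow$~(ii) is exactly the square-summability trick: a nonzero $H$-fixed vector orthogonal to the constants must have a nonzero component in some non-trivial isotypic summand $V_\pi$, the $H$-orbit of $[\pi]$ in $\hat G$ is then finite, and $N=\bigcap_{\alpha\in H}\alpha(\ker\pi)$ is a finite intersection of open normal subgroups. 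One wording slip at the end: $N$ need not be \emph{properly} contained in $\ker\pi$ (if the orbit of $[\pi]$ is a single point then $N=\ker\pi$); what you actually need, and what your argument delivers, is that $N$ is contained in $\ker\pi$, which is a \emph{proper} subgroup of $G$ because $\pi$ is non-trivial. With that phrase corrected the proof is complete.
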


\begin{cor}\label{cor:distal_compact_extension}
Let $G$ be a \tdlc group, let $H$ be a group of automorphisms of $G$ and let $R$ be a compact $H$-invariant subgroup of $G$.  Suppose that $H$ acts distally on $G/R$.  Then $H$ acts distally on $G/\Res^\infty_R(H)$.
\end{cor}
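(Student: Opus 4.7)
The plan is to apply Lemma~\ref{lem:distal_extension}(i) to a descending chain interpolating from $G$ down to $\Res^\infty_R(H)$ through the iterated discrete residuals of the compact group $R$. Let $\mu$ be the least ordinal with $\Res^\mu_R(H) = \Res^\infty_R(H)$, set $K_0 := G$ and $K_{1+\beta} := \Res^\beta_R(H)$ for $0 \le \beta \le \mu$, so that $K_1 = R$ and $K_{1+\mu} = \Res^\infty_R(H)$. The limit-ordinal intersection condition required by Lemma~\ref{lem:distal_extension} at positions $1+\beta$ (for $\beta$ a non-zero limit ordinal) is inherited from the defining recursion $\Res^\beta_R(H) = \bigcap_{\gamma<\beta}\Res^\gamma_R(H)$, noting that $K_0 = G$ contains every subsequent $K_\alpha$ and so may be harmlessly included in any such intersection. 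The distality of $H$ on $K_0/K_1 = G/R$ is the hypothesis of the corollary.

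For $1 \le \alpha \le \mu$ the two remaining hypotheses of Lemma~\ref{lem:distal_extension}(i) must be verified, namely that $K_\alpha \le \N_G(K_{\alpha+1})$ and that $H$ acts distally at $1$ on $K_\alpha/K_{\alpha+1}$. The key point is the compactness of $R$: each $K_\alpha$ is closed in $R$, hence profinite, so every open $H$-invariant subgroup $V$ of $K_\alpha$ has finite index in $K_\alpha$, and therefore its $K_\alpha$-core $V^{*} := \bigcap_{x \in K_\alpha} xVx^{-1}$ is again open. The $H$-invariance of $K_\alpha$ ensures that $H$ permutes the conjugates $\{xVx^{-1} : x \in K_\alpha\}$ among themselves (since for $h \in H$ and $x \in K_\alpha$ we have $h(xVx^{-1}) = h(x)Vh(x)^{-1}$ with $h(x) \in K_\alpha$), so $V^{*}$ is an open normal $H$-invariant subgroup of $K_\alpha$ contained in $V$. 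Hence the open normal $H$-invariant subgroups of $K_\alpha$ are cofinal among the open $H$-invariant ones, so $K_{\alpha+1} = \Res_{K_\alpha}(H)$ coincides with their intersection and is therefore normal in $K_\alpha$. Expressing $K_{\alpha+1}$ as this intersection and applying Lemma~\ref{lem:discres_distal} to the action of $H$ on $K_\alpha$ yields distality of $H$ on $K_\alpha/K_{\alpha+1}$, which in view of normality is equivalent to distality at $1$.

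With these hypotheses in place, Lemma~\ref{lem:distal_extension}(i) delivers distality of $H$ on $K_0/K_{1+\mu} = G/\Res^\infty_R(H)$, which is the desired conclusion. The main obstacle is the normality assertion $K_{\alpha+1} \lhd K_\alpha$ required to invoke the extension lemma; this is precisely the step where compactness of $R$ is essential, since without finiteness of the index $|K_\alpha : V|$ one could not refine an arbitrary open $H$-invariant subgroup to an open normal $H$-invariant one.
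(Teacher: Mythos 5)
Your argument is correct, but it is organised differently from the paper's. The paper does not use the canonical chain of iterated residuals $\Res^{\beta}_R(H)$ at all: it builds an ad hoc transfinite descending chain inside $R$ by choosing, at each successor step, a single proper open \emph{normal} $H$-invariant subgroup of the previous term (terminating when none exists), applies Lemma~\ref{lem:distal_extension} to get distality of $H$ on $G/T$ for the terminal group $T$, and only then identifies $T$ with $\Res^{\infty}_R(H)$ by invoking the ergodicity characterization of Proposition~\ref{ergodic_res} (no proper open normal $H$-invariant subgroup forces ergodic action, hence no proper open $H$-invariant subgroup) together with Lemma~\ref{lem:resinfty_properopen}. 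You instead work directly with the residual chain and supply the normality hypothesis $K_\alpha \le \N_G(K_{\alpha+1})$ by the elementary core argument: in a compact group every open $H$-invariant subgroup has finite index, so its $K_\alpha$-core is again open, normal and $H$-invariant, whence $\Res_{K_\alpha}(H)$ is an intersection of open normal $H$-invariant subgroups and Lemma~\ref{lem:discres_distal} applies to each factor. Both proofs ultimately rest on the same fact --- that in a compact group the open normal $H$-invariant subgroups are cofinal among the open $H$-invariant ones --- but you prove it directly at each step, making the argument more self-contained and avoiding the measure-theoretic input of Proposition~\ref{ergodic_res}, while the paper defers the identification of the terminal subgroup to the end and leans on a proposition it needs elsewhere anyway. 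The only caveats are bookkeeping ones (the ordinal reindexing $K_{1+\beta}$ and the harmless inclusion of $K_0=G$ in limit intersections), which you handle correctly.
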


\begin{proof}
Construct a descending sequence $(R_\alpha)_{\alpha < \lambda}$ of closed $H$-invariant subgroups of $R$, as far as it is possible to do so, in the following manner:

Set $R_0 = R$.

If $R_{\alpha}$ has been defined, let $R_{\alpha+1}$ be a proper open normal $H$-invariant subgroup of $R_{\alpha}$ if one exists; otherwise terminate.

If $\alpha$ is a limit ordinal and $R_{\beta}$ has been defined for all $\beta < \alpha$, set $R_{\alpha} = \bigcap_{\beta < \alpha}R_{\beta}$.

This sequence eventually terminates at some subgroup $T = R_{\alpha}$.  Given the construction of $T$ and Lemma~\ref{lem:distal_extension}, we see that $H$ acts distally on $G/T$.  It is also clear from the construction that $T \ge \Res^\infty_R(H)$.  The rule for terminating the series ensures that $T$ has no proper open normal $H$-invariant subgroups.  But then by Proposition~\ref{ergodic_res}, $H$ acts ergodically on $T$, so $T$ has no proper open $H$-invariant subgroups.  Hence $T = \Res^\infty_R(H)$ by Lemma~\ref{lem:resinfty_properopen}.
\end{proof}

\begin{prop}\label{prop:resinfty_ergodic}Let $G$ be a \tdlc group and let $H$ be a group of automorphisms of $G$.  Suppose that $\Res^\infty_G(H)$ is compact.  Then the following holds.
\begin{enumerate}[(i)]
\item $\Res^\infty_G(H)$ is the largest closed subgroup of $G$ on which $H$ acts ergodically.
\item $\Res^\infty_G(H)$ is normalized by every compact $H$-invariant subgroup of $G$.
\item If $\Res^\infty_G(H)$ is metrizable, then $\Res^\infty_G(H) = \Dist^*_G(H)$.
\end{enumerate}
\end{prop}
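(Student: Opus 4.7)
The plan is to establish the three parts in order, with each building on the tools already developed in the preceding subsections. Write $R := \Res^\infty_G(H)$ throughout.

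For (i), I would argue that this is essentially a translation of the cited tools. The group $R$ is $H$-invariant, closed, and compact by hypothesis, and by the terminal condition in the transfinite construction (combined with Lemma~\ref{lem:resinfty_properopen}) it admits no proper open $H$-invariant subgroup. Proposition~\ref{ergodic_res} then yields that $H$ acts ergodically on $R$. Conversely, any closed subgroup $K$ of $G$ on which $H$ acts ergodically must be $H$-invariant and compact (for the invariant probability measure), and Proposition~\ref{ergodic_res} ensures $K$ has no proper open $H$-invariant subgroup; Lemma~\ref{lem:resinfty_properopen} then places $K$ inside $R$.

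For (ii), let $L$ be a compact $H$-invariant subgroup. I would proceed by transfinite induction to show that $L$ normalizes $G_\alpha := \Res^\alpha_G(H)$ for every ordinal $\alpha$, with the limit and base cases immediate. In the successor step, suppose $L$ normalizes $G_\alpha$; then $L$ acts on $G_\alpha$ by automorphisms via conjugation. Given an open $H$-invariant subgroup $U$ of $G_\alpha$, form $U_L := \bigcap_{l \in L} lUl^{-1}$. The family $\{lUl^{-1}\}_{l \in L}$ is permuted setwise by $H$—since $\beta(lUl^{-1}) = \beta(l)U\beta(l)^{-1}$ and $\beta(l) \in L$—so $U_L$ is both $H$-invariant and $L$-invariant. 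I would use continuity of the conjugation action of the compact group $L$ on $G_\alpha$, together with the inclusion $L \cap U \le N_L(U)$, to argue that $N_L(U)$ is open in $L$, whence the $L$-orbit of $U$ is finite and $U_L$ is open in $G_\alpha$. Consequently $G_{\alpha+1} \le U_L$ for every such $U$, so $G_{\alpha+1} \le \bigcap_l lG_{\alpha+1}l^{-1}$, yielding $L$-invariance of $G_{\alpha+1}$ and completing the induction.

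For (iii), the inclusion $\Dist^*_G(H) \le R$ comes for free from the fact that $H$ acts distally at $1$ on $G/R$: the chain $(G_\alpha)_\alpha$ has residually discrete successive quotients by construction, so Lemma~\ref{lem:discres_distal} combined with Lemma~\ref{lem:distal_extension}(ii) provides the required distality at~$1$. For the reverse inclusion, under metrizability I would invoke a theorem of Jaworski (see \cite{JawDistal}) asserting that for an ergodic action of a group by automorphisms on a compact metric group, every non-identity element admits an orbit accumulating at the identity. Combined with (i), this gives: for any closed $H$-invariant $D$ with $D \not\ge R$ and any $r \in R \setminus D$, some sequence $\alpha_n \in H$ satisfies $\alpha_n(r) \to 1$ in $R$, hence in $G$, so $\alpha_n(r)D \to D$ in $G/D$, contradicting distality at $1$ on $G/D$. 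Hence $\Dist^*_G(H) \ge R$ and equality holds.

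The main obstacle will be verifying the openness of $N_L(U)$ in $L$ at the successor step in (ii); this is automatic at the base case (where $G_\alpha = G$ is open) but requires care for later $\alpha$, since $L \cap G_\alpha$ need not be open in $L$ in general. I anticipate that this can be handled by first refining $U$ to a compactly generated open subgroup within $G_\alpha$, or by reformulating the induction to track an enlarged acting group $H \cdot \Inn_L$ from the outset.
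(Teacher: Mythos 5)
There is a genuine gap in your part (iii). The statement you attribute to Jaworski --- that for an ergodic action by automorphisms on a compact metric group \emph{every} non-identity element has an orbit accumulating at the identity --- is false. Take a non-trivial finite group $F$, let $R = \prod_{\bZ}F$ and let $H$ be generated by the shift: the action is ergodic and $R$ is compact and metrizable, yet every constant sequence is a fixed point, so a non-trivial constant sequence has orbit equal to itself, which does not accumulate at $1$. What is true, and what the paper actually proves, is that the set $T$ of elements of $R$ proximal to $1$ is \emph{dense}: writing $T = \bigcap_n \bigcup_{h \in H}h\inv(V_n)$ for a countable base $(V_n)$ of identity neighbourhoods in $R$ (this is where metrizability enters), each $\bigcup_{h}h\inv(V_n)$ is a non-empty open $H$-invariant subset of $R$, hence dense because $H$ acts ergodically on $R$ by part (i), and the Baire category theorem applies. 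Density suffices for your intended contradiction: if $D$ is closed and $H$-invariant, $H$ is distal at $1$ on $G/D$ and $D \not\ge R$, then $R \setminus D$ is a non-empty open subset of $R$ and therefore meets $T$; any $r \in T \cap (R \setminus D)$ gives $\alpha_n(r)D \rightarrow D$ with $rD \neq D$. So the architecture of your step is right, but the input must be weakened from ``every element'' to ``a dense set of elements'', produced by the Baire argument rather than by the false pointwise claim.

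Parts (i) and (ii) are essentially sound. Part (i) matches the paper's argument (modulo the small point that compactness of $K$ should be deduced from $K \le \Res^\infty_G(H)$ rather than assumed for the sake of a probability measure). Part (ii) is a genuinely different route: the paper simply cites \cite[Theorem~2.6]{JawDistal}, whereas you run a transfinite induction along the chain $\Res^\alpha_G(H)$. The obstacle you flag --- openness of $\bigcap_{l \in L}lUl\inv$ in $G_\alpha$ when $L \cap G_\alpha$ is not open in $L$ --- is already resolved by Lemma~\ref{lem:compact_anisotropic}(i): the conjugation action of the compact group $L$ on the closed $L$-invariant subgroup $G_\alpha$ has SIN, so the given $H$-invariant open subgroup $U$ of $G_\alpha$ contains an $L$-invariant open subgroup of $G_\alpha$, which makes $\bigcap_{l \in L}lUl\inv$ open without any need for $\N_L(U)$ to be open or for the orbit of $U$ to be finite. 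This yields a self-contained proof of (ii) that avoids the external citation and, as a bonus, does not use the compactness hypothesis on $\Res^\infty_G(H)$.
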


\begin{proof}
Suppose that $K$ is a closed $H$-invariant subgroup on which $H$ acts ergodically.  Then $K$ cannot have a proper open $H$-invariant subgroup, so $\Res_K(H) = K$, and hence $K \le \Res^\infty_G(H)$; in particular, $K$ is compact.  Observe that if $L$ is an $H$-invariant subgroup of $G$ and $\Res^\infty_G(H) \le L$, then $\Res^\infty_G(H) = \Res^\infty_L(H)$.  Part (i) now follows from Proposition~\ref{ergodic_res} and part (ii) follows from \cite[Theorem~2.6]{JawDistal}.

By Lemma~\ref{lem:discres_distal} and Lemma~\ref{lem:distal_extension}, we have $\Dist^*_G(H) \le \Res^\infty_G(H)$.  Suppose $R = \Res^\infty_G(H)$ is metrizable, in other words, $R$ has only countably many open subgroups, and let $T$ be the set of all $t \in R$ such that $(t,1)$ is a proximal pair for the action of $H$ on $R$.  Then $T$ can be expressed as the intersection of countably many open $H$-invariant subsets of $R$.  Since $H$ acts ergodically on $R$, every open $H$-invariant subset of $R$ is dense, so by the Baire Category Theorem, $T$ is dense in $R$.  Since the action of $H$ on $\Res^\infty_G(H)/\Dist^*_G(H)$ is distal at $1$, we see that $T \subseteq \Dist^*_G(H)$, and hence $\Dist^*_G(H) = \Res^\infty_G(H)$.
\end{proof}

\subsection{A sufficient condition for a non-distal action}\label{sec:nondistal}

In this subsection we shall obtain a sufficient condition for a compactly generated subgroup (not necessarily flat) to act non-distally.  The argument is to a large extent a combination of those used in the proofs of \cite[Corollary~4.1]{CM} and \cite[Theorem~3.1]{RajaShah}.

First of all, we prove a version of \cite[Theorem~3.1]{RajaShah}, using a similar argument.

\begin{prop}\label{prop:distal:cocompact}
Let $G$ be a \tdlc group and let $\Gamma$ be a group of automorphisms of $G$.  Let $L \le K$ be $\Gamma$-invariant closed subgroups of $G$ such that $\bigcap_{k \in K}kLk\inv$ is cocompact in $K$.  If $\Gamma$ acts distally on $G/L$, then $\Gamma$ acts distally on both $K/L$ and $G/K$.
\end{prop}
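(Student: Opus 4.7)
My plan for Proposition~\ref{prop:distal:cocompact} is to treat the two conclusions in turn, with the $K/L$ part being immediate and the $G/K$ part requiring a fibrewise argument.

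The conclusion for $K/L$ is immediate: $K$ and $L$ are both $\Gamma$-invariant with $L \le K$, so $K/L$ embeds as a closed $\Gamma$-invariant subspace of $G/L$, and the distality hypothesis on $G/L$ restricts to distality on $K/L$. Note further that, setting $M := \bigcap_{k \in K} kLk\inv$, we have $M \le L$ and $K/L$ is a continuous image of the compact space $K/M$, hence $K/L$ is itself compact. The group $M$ is also a closed, $\Gamma$-invariant normal subgroup of $K$, contained in $L$, and cocompact in $K$ by hypothesis; these properties will be used in the second part.

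For the conclusion on $G/K$, suppose $(xK, yK)$ is a proximal pair witnessed by a net $\gamma_i$ with limit $zK$. The projection $\pi \colon G/L \to G/K$ has fibres homeomorphic to the compact space $K/L$, hence is proper, so after passage to a subnet I obtain $\gamma_i(x)L \to z_xL$ and $\gamma_i(y)L \to z_yL$ in $G/L$ with $z_xK = z_yK = zK$. If $z_xL = z_yL$, then $(xL, yL)$ is a proximal pair in $G/L$, and the distality hypothesis forces $xL = yL$, hence $xK = yK$.

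The remaining and crucial case is $z_xL \ne z_yL$, where both limits lie in the same fibre $zK/L$ but at different points. My strategy here is to use cocompactness of $M$ in $K$ together with the already-established distality on the compact space $K/L$: choosing a compact set $C \subseteq K$ with $K = CM$, decomposing the $K$-parts of $\gamma_i(x)$ and $\gamma_i(y)$ along $C$ and $M$, and exploiting the normality of $M$ in $K$ to shuffle $M$-cosets past elements of $K$, the accumulation set of $(\gamma_i(x)L, \gamma_i(y)L)$ in the compact region $(zK/L)^2 \subseteq (G/L)^2$ is controlled by the $\Gamma$-orbit closures in the compact distal flow $K/L$. Since distal flows on compact Hausdorff spaces have minimal orbit closures, and $L$ is a $\Gamma$-fixed point of $K/L$, a careful selection of subnets produces an accumulation point on the diagonal of $(G/L)^2$, reducing to the previous case.

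The main obstacle is precisely this final alignment step: forcing a diagonal accumulation point in $(G/L)^2$ from the given non-diagonal limits in the compact fibre, using only the distality of $\Gamma$ on $G/L$ and the cocompactness of $M$ in $K$. This is essentially the transfer-of-distality argument underlying the cited Raja--Shah theorem, and combining it with the inheritance arguments above completes the proof.
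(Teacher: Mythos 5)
Your reduction is sound up to the point you yourself flag as ``the main obstacle,'' and that is exactly where the proof still has to happen. The treatment of $K/L$ (a closed $\Gamma$-invariant subspace of $G/L$, compact because $M = \bigcap_{k \in K}kLk\inv$ is cocompact in $K$ and contained in $L$), the properness of $G/L \to G/K$, and the case $z_xL = z_yL$ are all fine. But the plan for the case $z_xL \neq z_yL$ cannot work as stated: the net $(\gamma_i(x)L,\gamma_i(y)L)$ already \emph{converges} to the off-diagonal point $(z_xL,z_yL)$, so no passage to subnets will produce a diagonal accumulation point for that net; and ``reducing to the previous case'' would yield $xL = yL$, which is strictly stronger than the desired $xK = yK$ and is false in general (take $x$ and $y$ in the same $K$-coset but different $L$-cosets). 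The diagonal accumulation point must be manufactured for a \emph{modified} pair, not for $(xL,yL)$ itself, and your sketch never says how the modification is chosen.

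The missing idea, which is how the paper (following \cite[Theorem~3.1]{RajaShah}) proceeds, is to correct $x$ and $y$ on the right by elements of $K$ selected using the group structure of the enveloping semigroup of the compact distal flow $K/L$. Writing $\gamma_i(x) = zu_ik_i$ and $\gamma_i(y) = zu_i'k_i'$ with $u_i,u_i' \to 1$ and $k_i,k_i' \in K$, compactness of $K/M$ lets one assume $k_iM \to kM$ and $k_i'M \to k'M$. By Ellis's theorem \cite{Ellis}, the closure $E$ of $\Gamma$ in $(K/L)^{K/L}$ is a compact \emph{group} of homeomorphisms, so a limit $\gamma \in E$ of $(\gamma_i)$ is invertible; setting $k_1L = \gamma\inv(k\inv L)$ and $k_1'L = \gamma\inv((k')\inv L)$, one checks (using that $M$ is normal in $K$ and $M \le L$) that both $\gamma_i(xk_1)L$ and $\gamma_i(yk_1')L$ converge to $zL$. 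Distality of $\Gamma$ on $G/L$ then forces $xk_1L = yk_1'L$, and since $k_1,k_1' \in K$ this gives $xK = yK$. Without this inversion step, or some equivalent device for producing the correcting elements $k_1,k_1'$, the argument is not complete.
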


\begin{proof}
Suppose $\Gamma$ acts distally on $G/L$.  Then it is clear that $\Gamma$ acts distally on $X:= K/L$.  Let $E$ be the closure of $\Gamma$ in $X^X$, the set of all functions from $X$ to itself.  Since $\Gamma$ acts distally on the compact space $X$, $E$ is a compact group of homeomorphisms of $X$ (see \cite{Ellis} Theorem 1).

Let $(U_i)_{i \in I}$ be a descending net of open subsets of $G$ forming a base of neighbourhoods of identity in $G$.  Suppose $(aK,aK)$ is in the closure of $\{(\gamma(x)K,\gamma(y)K) \mid \gamma \in \Gamma\}$ (as a subset of $G/K \times G/K$) for some $a,x,y \in G$.  Then for each $U_i$, there exists $\gamma_i \in \Gamma$ such that
\[
\gamma_i(x)K,\gamma_i(y)K \in aU_iK.
\]
In other words, $\gamma_i(x) = au_ik_i$ and $\gamma_i(y) = au'_ik'_i$ for some $u_i,u'_i \in U_i$ and $k_i,k'_i \in K$.  The choice of the net $(U_i)_{i \in I}$ ensures that $u_i \rightarrow 1$ and $u'_i \rightarrow 1$.  Let $M = \bigcap_{k \in K}kLk\inv$.  Since $K/M$ is compact, by passing to a subnet we may assume there are $k,k' \in K$ such that $k_iM \rightarrow kM$ and also $k'_iM \rightarrow k'M$.  Let $\gamma \in E$ be a limit point of $(\gamma_i)_{i \in I}$ in $X^X$; by passing to a subnet we may assume $\gamma_i \rightarrow \gamma$.  Let $k_1L = \gamma\inv(k\inv L)$ and $k'_1L = \gamma\inv((k')\inv L)$.  Then $\gamma_i(k_1L)$ converges to $\gamma(k_1L) = k\inv L$ in $K/L$.  Thus we see that given any open neighbourhoods $O_G$ and $O_K$ of the identity in $G$ and $K$ respectively, then for $i$ sufficiently large we have
\[
\gamma_i(xk_1L) = au_ik_i\gamma_i(k_1L) \in aO_G(O_KkM)(k\inv O_K L)/L = aO_GO^2_K L/L,
\]
so $\gamma_i(xk_1L)$ converges to $aL$.  Similarly, $\gamma_i(yk'_1L)$ converges to $aL$.  Since $\Gamma$ is distal on $G/L$, it follows that $xk_1L = yk'_1L$, so $xK = yK$.  Thus $\Gamma$ is distal on $G/K$, completing the proof.
\end{proof}

We also note that actions of compact subgroups always have SIN.

\begin{lem}\label{lem:compact_anisotropic}
Let $G$ be a \tdlc group, let $H$ be a closed subgroup of $G$, let $K$ be an $H$-invariant closed subgroup of $G$ and let $M$ be a compact open subgroup of $H$.
\begin{enumerate}[(i)]
\item The action of $M$ on $K$ has SIN, in other words the open subgroups of $K$ normalized by $M$ form a base of identity neighbourhoods in $K$.
\item If $K$ is compact, then every open subgroup of $K$ has open normalizer in $H$.
\end{enumerate}
\end{lem}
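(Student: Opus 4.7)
The plan is to prove both parts by a standard joint-continuity argument applied to the conjugation action, together with a compactness-based "averaging" step.

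For (i), I would first invoke Van Dantzig's theorem on $K$ (itself \tdlc as a closed subgroup of $G$) to reduce to showing that every compact open subgroup $V$ of $K$ contains an $M$-invariant compact open subgroup. The conjugation map $M \times K \to K$, $(m,k) \mapsto mkm^{-1}$, is jointly continuous, and for each $v \in V$ sends $(1,v)$ into the open set $V$. By continuity there exist open neighbourhoods $M_v \subseteq M$ of $1$ and $V_v \subseteq V$ of $v$ with $M_v V_v M_v^{-1} \subseteq V$. Covering the compact set $V$ by finitely many $V_{v_1},\dots,V_{v_n}$ and setting $M_0 := \bigcap_{i=1}^n M_{v_i}$ produces an open neighbourhood of $1$ in $M$ with $M_0 V M_0^{-1} \subseteq V$. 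Since $M$ is compact, $M_0$ has finite index in $M$, so
\[
V' := \bigcap_{m \in M} m V m^{-1}
\]
reduces to a finite intersection of compact open subgroups of $K$ (one per coset of $M_0$ in $M$); hence $V'$ is itself a compact open subgroup of $K$, is $M$-invariant by construction, and is contained in $V$.

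For (ii), the argument is formally the same, applied to the continuous conjugation map $H \times K \to K$ and the compact open subgroup $U$ of the compact group $K$. Joint continuity together with the compactness of $U$ yields, as above, an open neighbourhood $H_0$ of $1$ in $H$ such that $H_0 U H_0^{-1} \subseteq U$. Since $U$ is open in the compact group $K$, it has finite index in $K$; consequently $hUh^{-1}$ has the same finite index for every $h \in H$, so the inclusion $H_0 U H_0^{-1} \subseteq U$ is in fact an equality. This gives $H_0 \subseteq \N_H(U)$, so $\N_H(U)$ contains an identity neighbourhood, and being a subgroup is therefore open in $H$.

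There is no substantial obstacle. The two points to be careful about are (a) extracting the uniform neighbourhood $M_0$ (respectively $H_0$) from the pointwise data via compactness of $V$ (respectively $U$), which is the standard "tube lemma"-type step; and (b) in (ii), the passage from the inclusion $H_0 U H_0^{-1} \subseteq U$ to normalization, which uses that $U$ has finite index in the compact group $K$ and is the reason the compactness hypothesis on $K$ is needed.
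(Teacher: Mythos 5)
Your proof is correct, and it takes a genuinely different route from the paper's. For (i), the paper sidesteps the tube-lemma computation entirely: it places the compact group $M$ inside a compact open subgroup $U$ of $G$ (so that $M$ normalizes $U$ for free), reduces to the case $K \le U$ compact, and then intersects $K$ with a base of open \emph{normal} subgroups of the profinite group $U$, which are automatically $M$-invariant. Your argument instead extracts the invariance from joint continuity of conjugation together with compactness of $V$; it is more self-contained, at the cost of the covering argument. For (ii), the paper deduces the statement from (i): an $M$-invariant open subgroup $B \le A$ has finite index in the compact group $K$, so $M$ permutes the finitely many subgroups between $B$ and $K$ and $\N_M(A)$ is open of finite index, whereas your direct tube-lemma argument on $H \times K$ makes (ii) independent of (i) and is arguably cleaner. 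Two small points to tidy: $M_0 = \bigcap_i M_{v_i}$ is only a neighbourhood of the identity, so before speaking of its index you should first shrink each $M_{v_i}$ to an open subgroup of the profinite group $M$; and the claim that $mVm^{-1}$ depends only on the coset $mM_0$ requires observing that the inclusion $m_0 V m_0^{-1} \subseteq V$ for all $m_0$ in the subgroup $M_0$ forces $M_0 \le \N_M(V)$ (apply the inclusion to both $m_0$ and $m_0^{-1}$).
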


\begin{proof}
(i)
Since $M$ is compact, there exists a compact open subgroup $U$ of $G$ such that $M \le U$.  In particular, $U \cap K$ is a compact open subgroup of $K$ that is normalized by $M$.  To show that the action of $M$ on $K$ has SIN, it suffices to show that the action of $M$ on $U \cap K$ has SIN.  So we may assume that $K \le U$, so $K$ is compact.

Let $\mc{U}$ be a base of identity neighbourhoods in $U$; since $U$ is profinite, we can choose $\mc{U}$ to consist of open normal subgroups of $U$, which are then in particular $M$-invariant.  Then the set $\mc{U}_K:= \{V \cap K \mid V \in \mc{U}\}$ is a collection of $M$-invariant open normal subgroups of $\mc{U}_K$ with trivial intersection.  A standard compactness argument now shows that $\mc{U}_K$ is in fact a base of identity neighbourhoods in $K$.  Hence the action of $M$ on $K$ has SIN.

(ii)
Let $A$ be an open subgroup of $K$.  Since the action of $M$ on $K$ has SIN, there is a compact open $M$-invariant subgroup $B$ of $K$ such that $B \le A$.  Since $K$ is compact, $B$ has finite index in $K$, so there are only finitely many subgroups between $B$ and $K$, and hence $|M:\N_M(A)| < \infty$.  It follows that $A$ has open normalizer in $M$, and hence in $H$.
\end{proof}

We now obtain a non-distal action under certain circumstances.

\begin{thm}\label{thm:trivial_core:nondistal}
Let $G$ be a \tdlc group and let $H$ be a closed subgroup of $G$.  Suppose that there is a compact subgroup $K$ of $G$ such that the following conditions hold:
\begin{enumerate}[(i)]
\item $K$ is commensurated by $H$.
\item $\N_H(K)$ is open in $H$.
\item Letting $L = \bigcap_{h \in H}hKh\inv$, then $L$ is not open in $K$;
\item There is a closed normal subgroup $N$ of $H$ such that $N \le \N_H(K)$ and $H/N$ is compactly generated.
\end{enumerate}
Then there is a non-trivial coset $kL$ of $L$ such that the $H$-orbit of $kL$ accumulates at the trivial coset in $G/L$.  In particular, $H$ does not act distally on $G/L$, and hence $H$ does not act distally on $G$.
\end{thm}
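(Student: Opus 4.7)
The plan is to construct an explicit proximal pair $(aL, L)$ in $G/L$ for the conjugation action of $H$, with $a \in K \setminus L$, which will witness non-distality. First I set up the necessary structure: by (ii), $\N_H(K)$ is open in the locally compact group $H$, so I fix a compact open subgroup $M \le \N_H(K)$ of $H$. Then $M$ normalizes $K$, and by Lemma~\ref{lem:compact_anisotropic}(i), the $M$-invariant open subgroups of $K$ form a base of identity neighbourhoods in $K$. By (iv) together with the openness of $\N_H(K)$ (which implies that $\N_H(K)/N$ is open in $H/N$), the discrete coset space $H/\N_H(K)$ is finitely generated in the sense that there is a finite symmetric subset $S \subseteq H$ for which $H = \langle S \cup \N_H(K)\rangle$ as a topological group.

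Next I would describe $L$ as a filtered limit of finite-index open subgroups of $K$. For each word $w$ in the alphabet $S$, the conjugate $K_w := wKw^{-1}$ is commensurable with $K$ by (i), so $K \cap K_w$ is a finite-index open subgroup of $K$. Setting $V_n := K \cap \bigcap_{|w| \le n}K_w$, one has $V_n \supseteq V_{n+1}$, and moreover $\bigcap_n V_n = L$, because every $H$-conjugate of $K$ equals $K_w$ for some word $w$ in $S$ (conjugation by elements of $\N_H(K)$ fixing $K$). By hypothesis (iii), $L$ is not open in $K$, so $[K : V_n] \to \infty$.

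The main obstacle is the third step: actually extracting the proximal pair. The strategy is to exploit the unbounded growth of $[K : V_n]$ together with the SIN action of $M$ on $K$ and the commensurability from (i) to arrange, for each sufficiently small compact open neighbourhood $U$ of the identity in $G$, an element $a_U \in K \setminus L$ and an element $h_U \in H$ (realized as a word in $S$) such that $h_U a_U h_U^{-1} \in UL$; one then passes to a countable cofinal family of such $U$'s (using $\sigma$-compactness of $H$, which follows from the compact generation of $H/N$, to reduce to a metrizable open subgroup of $G$) and applies a diagonal/compactness argument in the compact group $K$ to produce a single $a \in K \setminus L$ along with a sequence $(h_n) \subseteq H$ for which $h_n a h_n^{-1}L \to L$ in $G/L$. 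This is where all four hypotheses become indispensable, and the structure of the argument follows the pattern of \cite[Corollary~4.1]{CM} and \cite[Theorem~3.1]{RajaShah}; making this extraction rigorous, especially pinning down the precise way the commensurability is used to bring conjugates into small neighbourhoods of $L$ via words of bounded length, is the technical heart of the proof.

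Finally, the pair $(aL, L)$ with $aL \neq L$ and its $H$-orbit accumulating at $L$ witnesses non-distality on $G/L$. Since $L$ is a closed $H$-invariant subgroup of $G$ and the quotient map $G \to G/L$ is $H$-equivariant, non-distality on $G/L$ immediately implies non-distality of the action on $G$, completing the argument.
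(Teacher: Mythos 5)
Your setup is sound and matches the paper's: the descending chain $V_n$ of finite-index open subgroups of $K$ with $\bigcap_n V_n = L$, obtained from words of bounded length in a generating set taken modulo $\N_H(K)$ (or rather modulo $N$), is exactly the paper's $K(n) = \bigcap_{y \in X^n} y\inv K y$. But the step you defer as ``the technical heart'' is precisely the content of the theorem, and your sketch of it does not contain the idea that makes it work. The paper's argument introduces the sets $P(n) = \bigcup_{y \in X^{n}} yK(n)y\inv$ (the elements of $K$ conjugated into $K(n)$ by some word of length at most $n$), observes that these form a \emph{descending chain of compact subsets} of $K$, and then shows that $P(m) \subseteq K(1)$ for some $m$ would force $K(m) \subseteq \bigcap_{y \in X^{m}} y\inv K(1) y = K(m+1)$, hence $K(m) = L$, contradicting hypothesis (iii). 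Therefore every $P(n)$ meets the \emph{compact} set $K \setminus K(1)$, and the finite intersection property yields a single $x \in \bigcap_{n}\bigl(P(n) \cap (K \setminus K(1))\bigr)$, so $xL \neq L$ while $x$ is $H$-conjugate into every $K(n)$. Your proposed diagonal argument over a cofinal family of neighbourhoods $U$ has no mechanism to prevent the limit point $a$ from landing in $L$: a net $a_U \in K \setminus L$ can perfectly well converge into $L$, since $K \setminus L$ is open in $K$ rather than compact. Pinning the whole construction inside the fixed compact set $K \setminus K(1)$, which is disjoint from $L$, is the missing idea. (The metrizability reduction you invoke is also unnecessary: the subgroups $K(n)/L$ already form a countable base of neighbourhoods of the trivial coset in $K/L$, with no hypothesis on $G$.)

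A second, smaller gap: the closing claim that non-distality on $G/L$ ``immediately implies'' non-distality on $G$ because the quotient map is $H$-equivariant is not correct as stated. Equivariance only shows that proximal pairs in $G$ push forward to proximal pairs in $G/L$; what you need is the converse direction, namely that distality of $H$ on $G$ forces distality on $G/L$, and this genuinely uses the compactness of $L$ (it can fail for general closed $H$-invariant subgroups). The paper invokes Proposition~\ref{prop:distal:cocompact}, a version of the Raja--Shah theorem proved via the Ellis semigroup, for exactly this step.
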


\begin{proof}
Let $X$ be a generating set for $H$ such that $N \subseteq X$, $X$ is a union of left cosets of $N$, $x \in X \Leftrightarrow x\inv \in X$ and $X/N$ is compact.  Write $X^n$ for the set of elements of $H$ of the form $x_1x_2 \dots x_n$ for $x_i \in X$; note that $X^n/N$ is compact for all natural numbers $n$ and that $X^n \subseteq X^{n'}$ whenever $n' \ge n$.  Set
\[
K(n) = \bigcap_{y \in X^n}y\inv Ky.
\]
Since $N \le \N_H(K)$ and $X^n/N$ is compact, we see that $X^n$ is contained in the union of finitely many (right) cosets of the open subgroup $\N_H(K)$ of $H$, and hence $K(n)$ is the intersection of finitely many conjugates of $K$.  Since $K$ is commensurated by $H$, in fact $K(n)$ is an open subgroup of $K$ for all $n$.  By Lemma~\ref{lem:compact_anisotropic}, the normalizer of $K(n)$ in $H$ is also open; by construction $K(n)$ is also normalized by $N$.  Thus the same argument as for $K$ itself shows that given $m,n \in \bN$, there are only finitely many distinct subgroups of the form $y\inv K(n) y$ for $y \in X^m$.

We now have a descending chain $K = K(0) \ge K(1) \ge \dots$ of open subgroups of $K$ with intersection $L$.  A standard compactness argument then shows that the set $\{K(n)/L \mid n \ge 0\}$  is a base of neighbourhoods of the trivial coset in the coset space $K/L$.

For $n \ge 0$, define
\[
P(n) := \bigcup_{y \in X^{n}}yK(n)y\inv.
\]
We observe that $P(n)$ is a subset of $K$, for all $n$; moreover, $P(n)$ is compact, since it is a union of finitely many closed subsets of $K(n)$.  Given $m \le n$ and $g \in P(n)$, there exists $y \in X^n$ such that $y\inv g y \in K(n) = \bigcap_{z \in X^{n-m}}z\inv K(m) z$, so in particular, there exists $w = yz\inv \in X^m$, with $z \in X^{n-m}$, such that $w\inv g w \in K(m)$.  Thus $g \in P(m)$, showing that $P(m) \supseteq P(n)$.  So we have a descending chain
\[
P(0) \supseteq P(1) \supseteq \dots
\]
of compact subsets of $K$.

Suppose that $P(m) \subseteq K(1)$ for some $m$, in other words, for all $y \in X^{m}$ we have $yK(m)y\inv \subseteq K(1)$.  Then $K(m) \subseteq y\inv K(1) y$ for all $y \in X^m$, so 
\[
K(m) \subseteq \bigcap_{y \in X^{m}}y\inv K(1) y =  K(m+1).
\]
By the same argument, $K(n) = K(n+1)$ for all $n \ge m$, so in fact $K(m) = \bigcap_{n \ge m}K(n) = L$.  This is absurd as $K(m)$ is an open subgroup $K$, whereas $L$ is not open in $K$.  Hence for all $n$, $P(n) \cap (K \setminus K(1))$ is non-empty.  Now $K \setminus K(1)$ is compact, since $K$ is compact and $K(1)$ is an open subgroup of $K$.  So by compactness, there exists
\[
x \in \bigcap_{n \ge 0}P(n) \cap (K \setminus K(1)).
\]
Since $x \not\in K(1)$, we see that $xL$ is a non-trivial element of $K/L$.  On the other hand, since $x \in P(n)$ for all $n \ge 0$, we see that $x$ is $H$-conjugate to an element of $K(n)$ for all $m \ge 0$.  Since $\{K(n)/L \mid n \ge 0\}$ is a base of neighbourhoods of the trivial coset in $K/L$, it follows that the $H$-orbit of $xL$ accumulates at the trivial coset $L$, so $(xL,L)$ is a non-trivial proximal pair for the action of $H$ on $G/L$.  In particular, the action of $H$ on $G/L$ is not distal.  It follows from Proposition~\ref{prop:distal:cocompact} that the action of $H$ on $G$ is not distal.
\end{proof}

The hypotheses of Theorem~\ref{thm:trivial_core:nondistal} are general enough that the (relative) Tits core $G^\dagger_H$ can be trivial, even if $H = G$ and $L = \triv$: see Example~\ref{ex:nondistal} below.  On the other hand, the sufficient condition for non-distal action does provide several equivalent characterizations of when the action of $H$ is flat and uniscalar.  We can now state and prove a more general version of Theorem~\ref{thmintro:distal}.

\begin{thm}\label{thm:distal}
Let $G$ be a \tdlc group, let $H$ be a closed subgroup of $G$, acting by conjugation, and let $K$ be a closed $H$-invariant subgroup of $G$.  Suppose that there exists a closed normal subgroup $N$ of $H$ (possibly trivial) such that $N$ has SIN action on $K$ and $H/N$ is compactly generated.

Then the following are equivalent:
\begin{enumerate}[(i)]
\item $\Dist_K(H)$ is compact;
\item $\Res_K(H)$ is compact;
\item $H$ normalizes a compact open subgroup of $K$.
\end{enumerate}
Moreover, if any of the above conditions is satisfied, then
\[
\nub_K(H) = \Res_K(H) =  \Res^\infty_K(H) = \Dist_K(H)
\]
and $H$ acts ergodically on $\nub_K(H)$, with $\nub_K(H) = \Dist^*_K(H)$ in the case that $\nub_K(H)$ is metrizable.
\end{thm}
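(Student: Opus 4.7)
The plan is to establish the three equivalences in a cycle, with the delicate direction being (i) $\Rightarrow$ (iii), and then to derive the ``moreover'' statement from (iii). First I would handle the easy implications: if (iii) holds with $U_0$ a compact open subgroup of $K$ normalized by $H$, then $\Res_K(H) \le U_0$ is compact (giving (ii)) and by Lemma~\ref{lem:discres_distal} residual discreteness implies distality, so $\Dist_K(H) \le \Res_K(H)$ is compact as well (giving (i)). For (ii) $\Rightarrow$ (iii), assume $R = \Res_K(H)$ is compact, pick any compact open subgroup $U$ of $K$ containing $R$, and use that the family of open $H$-invariant subgroups of $K$ is a filter base with intersection $R \subseteq U$: by compactness of $R$ some finite intersection already lies inside $U$, producing an $H$-invariant compact open subgroup of $K$.

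For (i) $\Rightarrow$ (iii), set $D = \Dist_K(H)$, which is compact by assumption. The strategy is to build a compact open $M \le K$ with $D \le M$ whose normalizer in $H$ is open, and then either (a) its core $L := \bigcap_{h\in H} hMh^{-1}$ is open in $M$, giving an $H$-invariant compact open subgroup of $K$ containing $D$, or (b) $L$ is not open, in which case Theorem~\ref{thm:trivial_core:nondistal} produces a proximal pair in $K/L$, contradicting (i). To produce $M$, pick any compact open $V \le G$, so that $V \cap H$ is compact open in $H$ and acts on $K$ with SIN by Lemma~\ref{lem:compact_anisotropic}; combine this with the SIN action of $N$ (taking first a $(V \cap H)$-invariant compact open subgroup of $K$, shrinking to be $N$-invariant, then finitely intersecting $(V \cap H)$-conjugates using that $N \lhd H$) to obtain a compact open $W \le K$ normalized by $N(V \cap H)$. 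Setting $M := WD$ then gives a compact open subgroup of $K$ containing $D$ and still normalized by the open subgroup $N(V \cap H)$ of $H$. Since any compact open subgroup of $K$ is automatically commensurated by $H$, Theorem~\ref{thm:trivial_core:nondistal} applies to $M$, with $L$ compact and containing $D$. If $L$ is not open in $M$, then the $H$-orbit of some non-trivial $xL \in M/L$ accumulates at the trivial coset in $G/L$, and since $x$ and its $H$-conjugates all lie in $K$, also in $K/L$; in particular $H$ is not distal on $K/L$. On the other hand, $D \le L$ with $L$ compact implies $\bigcap_{l \in L} lDl\inv \le D$ is cocompact in $L$, so Proposition~\ref{prop:distal:cocompact} applied with ambient group $K$, subgroup $L$, and further subgroup $D$, shows that distal action on $K/D$ forces distal action on $K/L$, the desired contradiction.

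For the ``moreover'' part, assume (iii) with $H$ normalizing compact open $U_0 \le K$. Then every element of $H$ is uniscalar on $K$ with tidy subgroup $U_0$, so $H$ is flat with $\nub_K(H)$ equal to the intersection of all $H$-invariant compact open subgroups of $K$. Since every open $H$-invariant subgroup $V$ of $K$ contains the $H$-invariant compact open $V \cap U_0$, we get $\Res_K(H) = \nub_K(H)$. To see that $H$ acts ergodically on $\nub_K(H)$, assume for contradiction a proper open $H$-invariant subgroup $W$ of $\nub_K(H)$ exists: then applying the construction of the (i) $\Rightarrow$ (iii) step inside $\nub_K(H)$ (taking a suitable compact open $M' \le \nub_K(H)$ with $W = \nub_K(H) \cap M'$ as candidate) produces, via Theorem~\ref{thm:trivial_core:nondistal}, a non-distal action of $H$ on a quotient contained in $\nub_K(H)$, which violates the compact invariance established by (iii). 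Ergodicity then forces $\Res^{\alpha+1}_K(H) = \Res^\alpha_K(H) = \nub_K(H)$ for $\alpha \ge 1$, so $\Res^\infty_K(H) = \nub_K(H)$. The equality $\Dist_K(H) = \nub_K(H)$ follows by combining $\Dist_K(H) \le \Res_K(H) = \nub_K(H)$ with the fact that, for any closed $H$-invariant $D \lneq \nub_K(H)$, the argument of (i) $\Rightarrow$ (iii) applied with this $D$ would otherwise produce an $H$-invariant compact open between $D$ and a strictly smaller neighbourhood than $\nub_K(H)$, contradicting the definition of the nub. Finally, the equality $\Dist^*_K(H) = \nub_K(H)$ under metrizability follows directly from Proposition~\ref{prop:resinfty_ergodic}(iii) applied to the now-compact $\Res^\infty_K(H) = \nub_K(H)$.

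The main obstacle is the interplay between the distal residual $\Dist_K(H)$ of the $H$-action on $K$ and the core $L = \bigcap_h hMh\inv$ produced by the construction: a priori the non-distality conclusion of Theorem~\ref{thm:trivial_core:nondistal} lives on the coset space $G/L$ (or at best $K/L$), while the hypothesis (i) only provides distality on $K/D$ for a possibly smaller subgroup $D$. The careful bookkeeping to arrange $D \le M$ (so that $D \le L$), combined with a correct invocation of Proposition~\ref{prop:distal:cocompact} to pass distality from $K/D$ up to $K/L$, is the heart of the argument; similarly, ruling out proper open $H$-invariant subgroups of $\nub_K(H)$ in the moreover-part requires the same machinery applied one level deeper, which is where the compactly generated hypothesis on $H/N$ is indispensable.
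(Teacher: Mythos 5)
Your treatment of the hard implication (i) $\Rightarrow$ (iii) is essentially the paper's own argument: construct a compact open subgroup of $K$ containing $D=\Dist_K(H)$ that is normalized by the open subgroup $N(V\cap H)$ of $H$ by exploiting the two SIN actions, then play Theorem~\ref{thm:trivial_core:nondistal} against Proposition~\ref{prop:distal:cocompact} when the $H$-core fails to be open. Two points there need repair but are minor. First, $M:=WD$ need not be a subgroup, since $W$ has no reason to normalize $D$; instead start from a compact open subgroup of $K$ containing $D$ and note that each shrinking operation preserves containment of $D$ because $D$ is $H$-invariant. Second, your direct proof of (ii) $\Rightarrow$ (iii) is invalid: the open $H$-invariant subgroups of $K$ are closed but generally non-compact, so compactness of their intersection $R$ does not force a finite subintersection into $U$ (in the restricted product $\bigoplus_{i\in\bN}(\bQ_p,\bZ_p)$, the open subgroups $O_n=\{x: x_i\in\bZ_p \text{ for } i\le n\}$ form a filter base of non-compact open subgroups with compact intersection $\prod_i\bZ_p$, and no $O_n$ is contained in $\prod_i\bZ_p$). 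The cycle is saved only because $\Dist_K(H)$ is a closed subgroup of $\Res_K(H)$, so (ii) $\Rightarrow$ (i) is immediate.

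The genuine gap is the ergodicity claim in the ``moreover'' part. A proper open $H$-invariant subgroup $W$ of $\nub_K(H)$ is already $H$-invariant, so its $H$-core is $W$ itself and is open in $W$: hypothesis (iii) of Theorem~\ref{thm:trivial_core:nondistal} fails and the non-distality machinery cannot be run against $W$. Replacing $W$ by a compact open $M'\le K$ with $M'\cap\nub_K(H)=W$ does not help either: if the core $L'$ of $M'$ fails to be open, the machinery yields non-distality of $H$ on $K/D'$ only for closed $H$-invariant $D'\le L'$, and every such $D'$ lies properly below $\Dist_K(H)=\nub_K(H)$ because $\nub_K(H)\not\le M'$; this is perfectly consistent with (i) and produces no contradiction. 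The correct route avoids Theorem~\ref{thm:trivial_core:nondistal} here entirely: if $W$ is a proper open normal $H$-invariant subgroup of $P=\Dist_K(H)$, then $P/W$ is finite and discrete, so $H$ is distal at $1$ on $P/W$, and Lemma~\ref{lem:distal_extension} (packaged as Corollary~\ref{cor:distal_compact_extension}) upgrades distality of $H$ on $K/P$ to distality on $K/W$, contradicting minimality of the distal residual; Proposition~\ref{ergodic_res} then converts the absence of proper open normal $H$-invariant subgroups into ergodicity of $H$ on $P$. With that in hand, your identification $\Res^\infty_K(H)=\nub_K(H)$ and the appeal to Proposition~\ref{prop:resinfty_ergodic}(iii) in the metrizable case go through as stated.
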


\begin{proof}
Recall that $\Res_K(H) \ge \Dist_K(H) \ge \Dist^*_K(H)$.  Fix a compact $H$-invariant subgroup $R$ of $K$.  Let us consider whether or not the following statement is true:

$(**)$ For every compact open subgroup $U$ of $K$ such that $U \ge R$, then $V = \bigcap_{h \in H}hUh\inv$ is $H$-invariant and open in $K$.

If $(**)$ is true, then $H$ is uniscalar and $V$ is tidy for $H$, so $H$ is flat and $V \ge \nub_K(H)$.  In particular, since $U/R$ can be made arbitrarily small, we see that $R \ge \nub_K(H)$ and also that $R \ge \Res_K(H)$.

Suppose instead that $(**)$ is false, with the open subgroup $U$ of $K$ as a counterexample.  Let $U'$ be the intersection of all $N$-conjugates of $U$; since $N$ has SIN action, $U'$ is open in $U$.  Fix a compact open subgroup $M$ of $H$.  Then the action of $M$ on $K$ has SIN by Lemma~\ref{lem:compact_anisotropic}, so the intersection $W$ of all $M$-conjugates of $U'$ is open in $K$; moreover, $W$ is $N$-invariant, using the fact that $M$ normalizes $N$.  Let $L$ be the intersection of all $H$-conjugates of $W$, and note that $L \ge R$.  Then the following are all easily verified:

$W$ is commensurated by $\Aut(K)$, so in particular by $H$; $\N_H(V)$ contains $MN$ and so is open in $H$; $L$ is a closed but not open subgroup of $W$; and $H/N$ is compactly generated.

It now follows by Theorem~\ref{thm:trivial_core:nondistal} that there is a non-trivial $H$-orbit on $K/L$ that accumulates at the trivial coset.  Since $L$ is compact, it follows by Proposition~\ref{prop:distal:cocompact} that $H$ does not act distally on $K/D$, for any closed $H$-invariant subgroup $D$ of $L$.  In particular, $L \not\ge \Dist_K(H)$, so $R \not\ge \Dist_K(H)$.

If $H$ is uniscalar and flat on $K$, then $\nub_K(H)$ is the intersection of all compact open $H$-invariant subgroups of $K$, in other words $\nub_K(H)= \Res_K(H)$; since $\Res_K(H) \ge \Dist_K(H)$, it follows that $R \not\ge \nub_K(H)$ in this case.

In particular, we see from the arguments above that if $R = \Dist_K(H)$ is compact, then there exist arbitrarily small open neighbourhoods $V/R$ of the trivial coset in $G/R$ such that $V$ is a compact open subgroup of $G$ that is normalized by $H$.  In particular, in this case $H$ is uniscalar and flat on $K$ and
$$\nub_K(H) = \Dist_K(H) = \Res_K(H).$$
Moreover, we see by Corollary~\ref{cor:distal_compact_extension} that $\Dist_K(H)$ has no proper open $H$-invariant subgroups, so that $\Dist_K(H) = \Res^\infty_K(H)$.

So (i) $\Rightarrow$ (iii).  The implications (iii) $\Rightarrow$ (ii) and (ii) $\Rightarrow$ (i) are immediate, so (i)--(iii) are equivalent.  The fact that $H$ acts ergodically on $\Dist_K(H)$ now follows from Proposition~\ref{prop:resinfty_ergodic}(i), and if $\Res^\infty_K(H)$ is metrizable then $\Res^\infty_K(H) = \Dist^*_K(H)$ by Proposition~\ref{prop:resinfty_ergodic}(iii), finishing the proof.
\end{proof}

\begin{proof}[Proof of Corollary~\ref{corintro:ergodic_nub}]
By replacing $G$ with $G \rtimes H$ (where $G$ is embedded in the semidirect product as an open subgroup), we may assume $H$ is a subgroup of $G$.

The group $\nub_G(H)$ is compact, so the hypotheses of Theorem~\ref{thm:distal} apply with $K = \nub_G(H)$.  In particular, $\nub^2_G(H) = \Res_{\nub_G(H)}(H)$ and $H$ acts ergodically on $\nub^2_G(H)$.

Now suppose that $H_{\us}$ is finitely generated.  Applying Theorem~\ref{thm:distal} again, this time with $K = G$, we see that $H_{\us}$ normalizes a compact open subgroup, and hence $H_{\us}$ acts ergodically on $\nub_G(H_{\us})$.

Since $H$ is finitely generated, the quotient $H/H_{\us}$ is finitely generated, so $H$ is flat of finite rank.  By Theorem~\ref{thm:flat:nubdecomp}, we have
\[
\nub_G(H) = \nub_G(H_{\us})\prod^n_{i=1}\nub_G(\alpha_i),
\]
where $X = \{\alpha_1,\dots,\alpha_n\}$ is a finite subset of $X$.  By Theorem~\ref{basic:nub_characterizations}, $\nub_G(\alpha_i)$ has no proper open $\alpha$-invariant subgroups; similarly $\nub_G(H_{\us})$ has no proper open $H_{\us}$-invariant subgroups.

Now let $U$ be an $H$-invariant subgroup of $\nub_G(H)$.  Then $U \cap \nub_G(H_{\us})$ is an open $H_{\us}$-invariant subgroup of $\nub_G(H_{\us})$, so $U \ge \nub_G(H_{\us})$, and similarly $U \ge \nub_G(\alpha_i)$ for all $i$.  Hence $U \ge \nub_G(H)$.  Since $\nub_G(H)$ is compact, it follows that $H$ has no proper tidy subgroups for its action on $\nub_G(H)$, so $\nub^2_G(H) = \nub(H)$ as required. 
\end{proof}

\begin{ex}\label{ex:nondistal}This example is due to Kepert--Willis and Bhattacharjee--MacPherson (\cite{KepertWillis}, \cite{BhattacharjeeMacPherson}).

Let $F$ be a non-abelian finite simple group and let $R = \prod_{\bZ}F$.  Then we can form a semidirect product $R \rtimes \Sym(\bZ)$, where $\Sym(\bZ)$ has the discrete topology and acts by permuting the copies of $F$.  Let $A$ be a subgroup of $\Sym(\bZ)$ with the following properties:
\begin{enumerate}[(a)]
\item $A$ is transitive on $\bZ$;
\item $A$ is finitely generated as an abstract group;
\item For all $a \in A$, every orbit of $\langle a \rangle$ on $\bZ$ is finite;
\item For all $a \in A$, the symmetric difference of $\bN$ and $a\bN$ is finite.
\end{enumerate}
Such a permutation group was obtained by Bhattacharjee and MacPherson: they show (\cite[Theorem~1.2]{BhattacharjeeMacPherson}) that the free group on $2$ generators has a faithful transitive action on $\bZ$ with the required properties.

For each $i \in \bZ$ let $S_i$ be the subgroup $\prod_{j \ge i}F$ of $R$; and let $S$ be the ascending union $\bigcup_{i \le 0}S_i$, equipped with the topology extending the natural topology of $S_0$ (so $S_0$ is embedded in $S$ as a compact open subgroup).  Condition (d) ensures that $A$ normalizes $S$, preserving the topology of $S$, so that there is a subgroup $G = S \rtimes A$ of $R \rtimes \Sym(\bZ)$, and moreover the subgroups $S_i$ generate a group topology on $G$, under which $G$ is a \tdlc group.  Indeed, given conditions (c) and (d), we see that each element $a \in A$ preserves intervals $[j,+\infty)$ in $\bZ$ where $j$ can be made arbitrarily large, and consequently $a$ normalizes subgroups $S_{j}$ such that $j \rightarrow +\infty$.  Such a collection of subgroups forms a base of neighbourhoods of the identity in $G$, so each $a \in A$ is anisotropic, and indeed $G$ as a whole is anisotropic, that is, $G^\dagger = \triv$.  Conditions (a) and (b) ensure that $G$ is compactly generated (it is generated by $S_0$ and $A$) and also that $G$ does not have any non-trivial compact normal subgroups: indeed, using the transitivity of $A$, it can be seen that every non-trivial normal subgroup of $G$ contains $T = \bigoplus_{\bZ}[F,F] = \bigoplus_{\bZ}F$, which already fails to be relatively compact in $G$.

Although $G^\dagger$ is trivial, we can easily see that this example does not contradict Theorem~\ref{thm:trivial_core:nondistal}: $A$ does not act distally on $S$, because given any element $t \in S$ such that $t(i) = 1$ for all $i \neq 0$ and $t(0) \in F \setminus \{1\}$, then the pair $(t,1)$ is proximal for the action of $A$ on $S$.
\end{ex}

\subsection{Eigenfactors}

We recall some of the theory of eigenfactors as set out in \cite{WillisFlat}.

Let $H$ be a flat subgroup of the \tdlc group $G$, and let $U$ be a compact open subgroup of $G$ that is tidy for $U$.  A \defbold{$U$-eigenfactor}\index{eigenfactor} for $H$ is a closed subgroup $K$ of $U$ with the following properties:

\begin{enumerate}[(a)]
\item $K$ is commensurated by $H$.
\item The set $\{hKh\inv \mid h \in H\}$ is totally ordered by inclusion.
\item $K$ is the intersection of the set $\{hUh\inv \mid h \in H, hKh\inv \ge K\}$.
\end{enumerate}

\begin{thm}[\cite{WillisFlat} Lemma~6.2 and Theorem~6.8]\label{willis:eigenfactor_decomposition}Let $G$ be a \tdlc group, let $H$ be a flat subgroup of $G$ such that $H/H_{\us}$ is finitely generated and let $U$ be a compact open subgroup of $G$ that is tidy for $H$.  Then there are only finitely many $U$-eigenfactors for $H$, and $U$ can be expressed as a product of the distinct $U$-eigenfactors (in some order).\end{thm}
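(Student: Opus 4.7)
The plan is to reduce to the case where the acting group is free abelian of finite rank and then construct the eigenfactors as the constant values of the map $h \mapsto U_h^+$ on chambers of a canonical hyperplane arrangement in $V := (H/H_{\us}) \otimes_{\bZ} \bR$.

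First, since $U$ is tidy for every element of $H_{\us}$ and $H_{\us}$ is uniscalar, $U$ is normalized by $H_{\us}$, so the conjugation action of $H$ on the set of subgroups of $U$ factors through $H/H_{\us}$. By Theorem~\ref{flat:uniscalar_derived} together with the finite generation hypothesis, $H/H_{\us}$ is isomorphic to $\bZ^n$; passing to this quotient, I may assume $H \cong \bZ^n$ acts on $G$ with $U$ simultaneously tidy for every element.

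Next, for each $h \in H$ set $U_h^+ := \bigcap_{m \ge 0} h^m U h^{-m}$ and $U_h^- := \bigcap_{m \le 0} h^m U h^{-m}$; since $U$ is tidy above for $h$, we have $U = U_h^+ U_h^-$, and both $U_h^\pm$ are commensurated by $H$ because $H$ is abelian and each of its elements is tidy on $U$. The key step is to exhibit a finite collection of hyperplanes through the origin in $V$ (arising from the characters $\chi\colon H \to \bZ$ that record the behaviour of the scale function on the finitely many distinct eigen-directions of the action on $U$), whose complement is a finite disjoint union of open polyhedral chambers on which $h \mapsto U_h^+$ is constant and whose common value defines a $U$-eigenfactor. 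This immediately yields finiteness of the set of eigenfactors, and the three conditions in the definition of an eigenfactor then follow by inspection: (a) is the commensuration already noted; (b) holds because for $g\in H$, conjugation by $g$ translates the defining intersection along a single ray, producing a totally ordered family $\{g^{k}Kg^{-k}\}$; and (c) is minimality of the defining intersection.

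For the factorization $U = K_1 K_2 \cdots K_m$, I would induct on the rank $n$. The base case $n=1$ is the standard decomposition $U = U_+ U_-$ into tidy-above factors. For the inductive step, fix $h \in H$ lying in the interior of a chamber, write $U = U_h^+ U_h^-$, observe that each factor is tidy for the induced action of $H/\langle h\rangle$ on it, and apply the inductive hypothesis to each factor. The main obstacle is to verify that the eigenfactors produced for the smaller action on $U_h^\pm$ coincide with $U$-eigenfactors for the full action of $H$, and to choose an order of the product that is compatible with the decomposition $U = U_h^+ U_h^-$; both of these amount to showing that the chambers of the full arrangement on $V$ restrict compatibly to chambers of the quotient arrangement on $V/\bR h$, which in turn rests on piecewise-linearity of the logarithmic scale function with respect to the chosen coordinates on $V$.
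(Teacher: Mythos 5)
First, a point of reference: the paper does not prove this statement at all — it is quoted from \cite{WillisFlat} (Lemma~6.2 and Theorem~6.8) — so your proposal can only be judged against the definition of a $U$-eigenfactor given just above the theorem and against Willis's original argument.

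Your geometric picture (finitely many walls in $(H/H_{\us})\otimes_{\bZ}\bR$ cut out by characters, with $h\mapsto U_h^+$ constant on chambers) is essentially the right one, and the reduction to $H/H_{\us}\cong\bZ^n$ is fine. The genuine gap is the sentence identifying the eigenfactors with the chamber-constant values of $h\mapsto U_h^+$. Those values are \emph{products} of eigenfactors (all eigenfactors not expanded by $h$), and they generally violate condition (b) of the definition: if $K_1,K_2$ are eigenfactors whose associated characters are linearly independent, the family $\{g(K_1K_2)g\inv \mid g\in H\}$ is not totally ordered by inclusion. Worse, some eigenfactors are never attained as a value of $h\mapsto U_h^+$ at all. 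Take $G=\bQ_p^3\rtimes\bZ^2$ with $(a,b)$ acting by $(x,y,z)\mapsto(p^ax,p^by,p^{a+b}z)$ and $U=\bZ_p^3$: the three coordinate subgroups $K_1,K_2,K_3$ are $U$-eigenfactors with $U=K_1K_2K_3$, yet $U_h^+=\prod_{\chi_i(h)\le 0}K_i$ for $\chi_1=a$, $\chi_2=b$, $\chi_3=a+b$, and since $\chi_3\le 0$ forces $\chi_1\le 0$ or $\chi_2\le 0$, the value $K_3$ never occurs, so your construction both produces non-eigenfactors and misses eigenfactors. The eigenfactors must instead be extracted as intersections $\bigcap\{hUh\inv\mid h\in\Omega\}$ over the half-space semigroups $\Omega=\{h\mid \chi_i(h)\ge 0\}$ attached to a \emph{single} wall, which is exactly what condition (c) is designed to encode and what Willis does. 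Two secondary problems: in your inductive step, $U_h^{\pm}$ is neither open in $G$ nor normalized by $H$ (only commensurated), so ``the induced action of $H/\langle h\rangle$ on it'' is not defined and the inductive hypothesis cannot be applied as stated; and the piecewise-linearity of the logarithmic scale that you invoke at the end is, in Willis's development, a \emph{consequence} of the eigenfactor decomposition, so using it as an input risks circularity unless you prove it independently first.
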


Theorem~\ref{willis:eigenfactor_decomposition} gives some insight into the structure of the group
\[
\langle hUh\inv \mid h \in H \rangle.
\]

\begin{cor}\label{cor:titscore:uzero}Let $G$ be a \tdlc group, let $H$ be a flat subgroup of $G$ such that $H/H_{\us}$ is finitely generated and let $U$ be a compact open subgroup of $G$ that is tidy for $H$.  Let $U_0 = \bigcap_{h \in H}hUh\inv$.
\begin{enumerate}[(i)]
\item Let $K$ be a $U$-eigenfactor of $H$.  Then there exists $h \in H$ such that $K = (\con(h) \cap K)U_0$.
\item $G^\dagger_HU_0$ is the group generated by all $H$-conjugates of $U$.  In particular, $G^\dagger_HU_0$ is an open subgroup of $G$, and $UG^\dagger_H/\overline{G^\dagger_H}$ is normalized by $H$.
\end{enumerate}
\end{cor}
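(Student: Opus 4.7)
The plan is to prove (i) first and then derive (ii) from it. For (i), I note that property (c) of eigenfactors immediately yields $U_0 \subseteq K$, since $U_0 = \bigcap_{h \in H} hUh\inv$ is an intersection over a superset of the index set in condition (c). So the identity $K = (\con(h) \cap K)U_0$ is a meaningful equation between subgroups of $K$. The approach splits into two cases. If $K$ is $H$-invariant, then $H_K = H$ and property (c) forces $K = U_0$, making the identity trivial for any $h$. Otherwise, using the total ordering of $\{hKh\inv : h \in H\}$ together with the fact that $H/H_{\us}$ is finitely generated abelian (Theorem~\ref{flat:uniscalar_derived}), I would choose $h \in H$ with $hKh\inv \supsetneq K$; iterating gives $K \subseteq U_+$ for $\alpha = \mathrm{ad}(h)$. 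The target decomposition then follows from Willis's dynamical analysis of tidy subgroups: each $k \in K$ factors through its forward $\alpha$-orbit into a contracting part in $\con(h)$ and an invariant residue, which the eigenfactor structure forces into $U_0$.

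For (ii) I would prove the two inclusions. The containment $G^\dagger_H U_0 \subseteq \langle hUh\inv : h \in H \rangle$ is direct: $U_0 \le U$, and for each $h \in H$ and $u \in \con(h)$, the convergence $h^n u h^{-n} \to 1$ means $h^n u h^{-n} \in U$ for all sufficiently large $n$, hence $u \in h^{-n} U h^n$, so $\con(h) \subseteq \bigcup_n h^{-n} U h^n$. Since $\langle hUh\inv : h \in H \rangle$ contains $U$ it is open, hence closed, so $\overline{\con(h)} \le \langle hUh\inv : h \in H \rangle$ for every $h \in H$, giving $G^\dagger_H \le \langle hUh\inv : h \in H \rangle$. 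For the reverse inclusion I would apply Theorem~\ref{willis:eigenfactor_decomposition} to write $U = K_1 K_2 \cdots K_m$ as a product of its eigenfactors, then use (i) to place each $K_i$ inside $G^\dagger_H U_0$. Since $U \le \N_G(G^\dagger_H)$ by Theorem~\ref{titscore_closure}(iv), and since both $G^\dagger_H$ and $U_0$ are $H$-invariant, the product $G^\dagger_H U_0 = U_0 G^\dagger_H$ is an $H$-invariant subgroup; any $H$-conjugate of $U$ is then a product of $H$-conjugates of eigenfactors, each sitting inside $G^\dagger_H U_0$. Openness is immediate from $U \le G^\dagger_H U_0$.

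The final assertion then drops out: combining the previous paragraph with $G^\dagger_H U_0 \le UG^\dagger_H \le \langle hUh\inv : h \in H \rangle = G^\dagger_H U_0$ yields $UG^\dagger_H = G^\dagger_H U_0$. Therefore $UG^\dagger_H/\overline{G^\dagger_H}$ is the image in $G/\overline{G^\dagger_H}$ of the subgroup $U_0 \overline{G^\dagger_H}$, which is $H$-invariant, so it is normalized by $H$ (which acts on $G/\overline{G^\dagger_H}$ since it normalizes $\overline{G^\dagger_H}$).

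The main obstacle will be the decomposition step in (i). Showing that $K = (\con(h) \cap K)U_0$ requires more than just placing $K$ inside $U_+$: one must argue that the non-contracting residue of an element $k \in K$ under iterated conjugation by $h$ actually lands in the smaller group $U_0$, rather than merely in the $\alpha$-invariant part $\bigcap_{n \in \bZ} h^n U h^{-n}$ of $U$. This uses both the flatness of $U$ for the full group $H$ (so that the other generators of $H/H_{\us}$ further cut the $h$-invariant part down to $U_0$) and the minimality built into the eigenfactor condition (c). Everything else in the argument is a routine consequence of the invariance results for contraction groups established earlier in the paper.
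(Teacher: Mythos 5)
Your treatment of part (ii) is essentially the paper's own argument and is correct: the inclusion $G^\dagger_H \le \langle hUh\inv \mid h \in H\rangle$ via openness (hence closedness) of that group, the reverse inclusion via the eigenfactor decomposition of Theorem~\ref{willis:eigenfactor_decomposition} together with part (i), and the normalization of $G^\dagger_H$ by $U$ to make $G^\dagger_HU_0$ a group, all match. Likewise your preliminary observations for (i) — that $U_0 \le K$ by property (c), and that an $H$-normalized eigenfactor must equal $U_0$ — are correct.

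However, part (i) has a genuine gap, and it is exactly the step you flag as ``the main obstacle'': you never actually prove $K = (\con(h)\cap K)U_0$. Your proposed mechanism (splitting each $k \in K$ into a contracting part and an ``invariant residue'' which the other generators of $H/H_{\us}$ then cut down to $U_0$) is not an argument; there is no reason an individual element of $K$ admits such a factorization, and the role you assign to the other generators does not enter the correct proof at all. The missing ingredient is Theorem~\ref{thm:bw:relative_contraction}. The paper first observes that the totally ordered set $\{hKh\inv \mid h \in H\}$ is discrete with no extremal elements, hence order-isomorphic to $\bZ$; consequently, for $h$ with $hKh\inv < K$, the one-sided intersection $\bigcap_{n\ge 0}h^nKh^{-n}$ already equals the intersection of \emph{all} $H$-conjugates of $K$, which by property (c) is $U_0$. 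Thus conjugation by $h$ contracts $K/U_0$ to the trivial coset, and since $U_0$ is closed and $h$-invariant, Theorem~\ref{thm:bw:relative_contraction} gives $K \subseteq \con_{G/U_0}(h) = \con(h)U_0$; Dedekind's modular law (using $U_0 \le K$) then yields $K = (\con(h)\cap K)U_0$. Without invoking the relative contraction theorem — or some equivalent — your sketch of (i) does not close, and since (ii) depends on (i), the whole corollary remains unproven as written.
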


\begin{proof}
(i)
It is clear that $U_0$ is a $U$-eigenfactor of $H$; moreover it is the only $U$-eigenfactor that is normalized by $H$.  For any other $U$-eigenfactor $K$, we see that the total order on $\{hKh\inv \mid h \in H\}$ under inclusion is discrete and has no minimal or maximal elements, so it is order-isomorphic to $\bZ$.  Thus given $h \in H$ such that $hKh\inv < K$, then $\bigcap_{n \ge 0}h^nKh^{-n}$ is the intersection of all $H$-conjugates of $K$, so that $\bigcap_{n \ge 0}h^nKh^{-n} = U_0$.

In other words, $h$ induces a contracting self-map on the coset space $K/U_0$.  By Theorem~\ref{thm:bw:relative_contraction}, it follows that $K \subseteq \con(h)U_0$, so $K = (\con(h) \cap K)U_0$.

(ii)
By Proposition~\ref{prop:twosided:stable}, $G^\dagger_H$ is normalized by $U$, so $G^\dagger_HU_0$ is a group.  We see from part (i) that $G^\dagger_HU_0$ contains every $U$-eigenfactor, so by Theorem~\ref{willis:eigenfactor_decomposition}, $U \le G^\dagger_HU_0$.  Thus $G^\dagger_HU_0 = G^\dagger_HU$.  Clearly, $G^\dagger_HU_0$ is $H$-invariant, so the quotient $UG^\dagger_H/\overline{G^\dagger_H}$ is normalized by $H$.

Let $R$ be the group generated by all $H$-conjugates of $U$.  Then $R \le G^\dagger_HU$ since $G^\dagger_HU$ is $H$-invariant.  On the other hand $G^\dagger_H$ is a subgroup of $R$, since $R$ is open and $H$-invariant, and also $U \le R$, so in fact we must have $R = G^\dagger_HU$.
\end{proof}

\subsection{Almost flat actions}\label{sec:relative_discrete}

Let $G$ be a \tdlc group and let $H$ be an almost finite-rank flat subgroup of $G$.  Then $\Res_G(H)$ is expressible in terms of nubs and contraction groups, as stated in Theorem~\ref{thmintro:relative_discrete}.  In fact we will prove a result with slightly more general hypotheses.

\begin{thm}\label{thm:relative_discrete:general}Let $G$ be a \tdlc group, let $H$ be a subgroup of $G$, and suppose there is a cocompact closed subgroup $K$ of $\overline{H}$ such that $K$ is flat on $G$ and such that $K/K_{\us}$ is finitely generated.
\begin{enumerate}[(i)]
\item The following subgroups of $G$ are all equal to $\Res_G(H)$:
\[
\Res_G(K), \; \overline{G^\dagger_H}\nub_G(K), \; \overline{G^\dagger_H}\nub_G(K_{\us}).
\]
\item The normalizers of $\Res_G(H)$ and $\Res^\infty_G(H)$ in $G$ are open.  Indeed, $\Res_G(H)$ is normalized by every tidy subgroup for the action of $K$ on $G$.
\item $H$ is anisotropic and flat on $\N_G(G^\dagger_H)/\overline{G^\dagger_H}$.
\item $\overline{G^\dagger_H}$ is a cocompact normal subgroup of $\Res_G(H)$.  Indeed, $\Res_G(H)/\overline{G^\dagger_H}$ is the nub of the action of $H$ on $\N_G(G^\dagger_H)/\overline{G^\dagger_H}$.
\item The action of $H$ on $\N_G(\Res_G(H))/\Res_G(H)$ has SIN.
\item Suppose that $G$ is metrizable.  Then
\[
\Dist^*_G(H) = \Dist_G(H) = \Res^\infty_G(H).
\]
\item Suppose that $\overline{H}$ is compactly generated.  Then
\[
\Dist_G(H) = \Res^\infty_G(H) =\Res_G(H).
\]
\end{enumerate}
\end{thm}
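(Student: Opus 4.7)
The plan is to work in the quotient $\N_G(T)/T$, where $T = \overline{G^\dagger_H} = \overline{G^\dagger_K}$ (the identification uses Theorem~\ref{thmintro:TitsCore:cocompact}, since $K$ is cocompact in $\overline{H}$); by Corollary~\ref{almostflat:titscore}, $\N_G(T)$ is open in $G$ and contains $\overline{H}$. The payoff is that each element of $\overline{H}$ acts anisotropically on $\N_G(T)/T$ by Corollary~\ref{cor:anisotropic_quotient}, transforming the intricate action of $K$ on $G$ into a uniscalar one in the quotient.

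For part (i), I would first reduce $\Res_G(H) = \Res_G(K)$ by cocompactness: any open $K$-invariant subgroup has only finitely many $\overline{H}$-conjugates (since $\overline{H}/K$ is compact), whose intersection is $\overline{H}$-invariant. To identify $\Res_G(K)$ with both $T\nub_G(K)$ and $T\nub_G(K_{\us})$, fix a compact open $U$ tidy for $K$ and use Corollary~\ref{cor:titscore:uzero}(ii), which gives that $TU = G^\dagger_K U_0$ is open and $K$-invariant; letting $U$ shrink through tidy subgroups and using a compactness argument on the compact nested non-empty family $\{xT \cap U\}$ yields $\Res_G(K) \le T\nub_G(K)$. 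For the reverse inclusion, any open $K$-invariant $V$ is closed and contains every $\overline{\con(\alpha)}$, hence $T$; moreover $V \cap U$ is a $K_{\us}$-invariant compact open subgroup, so $\nub_G(K_{\us}) \le V$. The main obstacle is handling the full $\nub_G(K)$ without assuming $K$ acts ergodically on it (which is Question~\ref{que:ergodic_nub}); I would circumvent this by invoking Theorem~\ref{thm:flat:nubdecomp} with $L = K_{\us}$, applicable because $K/K_{\us} \cong \bZ^n$ by Theorem~\ref{flat:uniscalar_derived}, to decompose $\nub_G(K) = \nub_G(K_{\us})\prod_i \nub_G(\alpha_i)$ with each $\nub_G(\alpha_i) \le T$, giving $T\nub_G(K_{\us}) = T\nub_G(K) = \Res_G(K)$.

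Parts (ii)--(v) follow by descending to $\N_G(T)/T$. Corollary~\ref{cor:titscore:uzero}(ii) also shows every $K$-conjugate of $U$ sits in $TU$, so $K$ normalizes $UT/T$; combined with per-element anisotropy this makes $K$ uniscalar and flat on $\N_G(T)/T$, and Lemma~\ref{cocompact:uniscalar_flat} extends this to $\overline{H}$ and $H$, proving (iii). The nub of this uniscalar flat action coincides with $\Res_{\N_G(T)/T}(H) = \Res_G(H)/T$ (using that $\Res_G(H) \le \N_G(T)$) and is compact, yielding (iv). For (ii), Corollary~\ref{cor:nub:normalizer}(iii) ensures that tidy subgroups of an uniscalar flat action normalize its nub, so $UT/T$ normalizes $\Res_G(H)/T$ in the quotient; lifting gives $U \le \N_G(\Res_G(H))$, and transfinite iteration along the $\Res$-chain handles $\Res^\infty_G(H)$. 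For (v), further quotienting by $\Res_G(H)/T$ kills the nub, yielding a smooth (hence SIN) $K$-action on $\N_G(\Res_G(H))/\Res_G(H)$, and Lemma~\ref{cocompact:uniscalar_flat} extends SIN to $H$.

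Parts (vi) and (vii) both exploit compactness of the discrete residual of $H$ on $\N_G(T)/T$. If $G$ is metrizable then so is $\N_G(T)/T$, and Proposition~\ref{prop:resinfty_ergodic}(iii) gives $\Dist^*_{\N_G(T)/T}(H) = \Res^\infty_{\N_G(T)/T}(H)$; since $T \le \Dist^*_G(H)$ and residuals transfer faithfully through the open subgroup $\N_G(T)$, this yields (vi). If $\overline{H}$ is compactly generated, Theorem~\ref{thm:distal} applies to the action of $\overline{H}$ on $\N_G(T)/T$ (whose discrete residual is compact), forcing $\Dist_{\N_G(T)/T}(H) = \Res^\infty_{\N_G(T)/T}(H) = \Res_{\N_G(T)/T}(H)$, which lifts to the claimed equalities in $G$. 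The main bookkeeping throughout is to verify that residuals computed in $G$ agree with those computed in $\N_G(T)$ and transfer correctly to the quotient by $T$.
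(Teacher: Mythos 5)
Your overall route is the paper's own: pass to $O/T$ with $T=\overline{G^\dagger_K}$ and $O=\N_G(G^\dagger_K)$, use Corollary~\ref{cor:titscore:uzero} together with Lemma~\ref{lem:intersection:quotient} to get $\Res_G(K)=T\nub_G(K)$, invoke Theorem~\ref{thm:flat:nubdecomp} to replace $\nub_G(K)$ by $\nub_G(K_{\us})$, deduce (ii)--(v) from uniscalar flatness of $H$ on $O/T$, and use Proposition~\ref{prop:resinfty_ergodic} and Theorem~\ref{thm:distal} for (vi)--(vii). (Your direct finite-intersection argument for $\Res_G(H)=\Res_G(K)$ is a harmless variant of the paper's detour through $O/T$.) Two steps, however, would not go through as you describe them. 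In (ii) you propose to handle the openness of $\N_G(\Res^\infty_G(H))$ by ``transfinite iteration along the $\Res$-chain''. This does not iterate: knowing that a tidy subgroup $U$ normalizes $R_1=\Res_G(H)$ does not imply that $U$ normalizes $R_2=\Res_{R_1}(H)$, since conjugation by $u\in U$ need not permute the open $H$-invariant subgroups of $R_1$ (one gets $(huh\inv)V(huh\inv)\inv$ rather than $uVu\inv$). The paper instead observes that $\Res^\infty_{O/T}(H)$ sits inside the compact group $\Res_G(H)/T$ and applies Proposition~\ref{prop:resinfty_ergodic}(ii), by which a compact $\Res^\infty$ is normalized by every compact $H$-invariant subgroup; since $H$ is flat and uniscalar on $O/T$, there are compact open such subgroups, whence the normalizer is open.

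The second gap is in (vi): Proposition~\ref{prop:resinfty_ergodic}(iii) gives you only $\Dist^*_G(H)=\Res^\infty_G(H)$. The three-fold equality also requires $\Dist_G(H)\le\Res^\infty_G(H)$, and this is not formal --- a priori the action of $H$ on $G/\Res^\infty_G(H)$ is only distal at $1$ (Lemma~\ref{lem:distal_extension}(ii)), not distal, so $\Dist$ could in principle exceed $\Res^\infty$. The missing ingredient is Corollary~\ref{cor:distal_compact_extension} applied with $R=\Res_G(H)/T$ inside $O/T$: the action on $(O/T)/R$ is residually discrete, hence distal, and $R$ is compact by part (iv), so $H$ acts distally on $(O/T)/\Res^\infty_{O/T}(H)$. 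This gives $\Dist_{O/T}(H)\le\Res^\infty_{O/T}(H)$ and closes the sandwich $\Dist^*\le\Dist\le\Res^\infty=\Dist^*$. With these two repairs the argument matches the paper's proof.
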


We begin the proof with the case where $H$ is flat and uniscalar, a situation which has several equivalent characterizations.

\begin{lem}\label{discrete_residual:uniscalar}Let $G$ be a \tdlc group and let $H$ be a flat subgroup of $G$.  Then the following are equivalent:
\begin{enumerate}[(i)]
\item $H$ is uniscalar;
\item $\Res_G(H)$ is compact;
\item $\Res_G(H) = \nub_G(H)$;
\item $\overline{G^\dagger_H} = \nubl_G(H)$.
\end{enumerate}
\end{lem}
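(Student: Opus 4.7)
The plan is to establish the cycle (i) $\Rightarrow$ (iii) $\Rightarrow$ (ii) $\Rightarrow$ (i) together with (i) $\Leftrightarrow$ (iv). The implication (iii) $\Rightarrow$ (ii) is free, since $\nub_G(H)$ is always compact.

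For (i) $\Rightarrow$ (iii), I would exploit the fact that when $H$ is uniscalar flat, a compact open subgroup is tidy for $H$ if and only if it is $H$-invariant. This gives $\Res_G(H) \le \nub_G(H)$ because tidy subgroups are among the open $H$-invariant subgroups. For the reverse inclusion, given any tidy subgroup $U$ and any open $H$-invariant subgroup $W$, the intersection $U \cap W$ is again compact open and $H$-invariant (using that open subgroups of \tdlc groups are closed), hence tidy, so $\nub_G(H) \le U \cap W \le W$; intersecting over all $W$ yields $\nub_G(H) \le \Res_G(H)$.

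For (ii) $\Rightarrow$ (i), the key observation is that $\con_G(\alpha) \le \Res_G(H)$ for every $\alpha \in H$: if $x \in \con_G(\alpha)$ and $W$ is any open $H$-invariant (in particular $\alpha$-invariant) subgroup, then $\alpha^n(x) \in W$ for $n$ large, whence $x = \alpha^{-n}(\alpha^n(x)) \in W$. Compactness of $\Res_G(H)$ then forces $\con_G(\alpha)$ and $\con_G(\alpha\inv)$ to be relatively compact, so $s(\alpha) = s(\alpha\inv) = 1$ by Proposition~\ref{basic:anisotropic}(i).

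The equivalence (i) $\Leftrightarrow$ (iv) rests on the identity $\overline{\con(\alpha)} = \nub(\alpha)$ for uniscalar $\alpha$, which is the main technical point. To prove it, I would observe that if $U$ is any compact open $\alpha$-invariant subgroup and $x \in \con(\alpha)$, then $\alpha^n(x) \in U$ for large $n$, so $x = \alpha^{-n}(\alpha^n(x)) \in U$; thus $\con(\alpha)$ is contained in every tidy subgroup for $\alpha$, giving $\overline{\con(\alpha)} \le \nub(\alpha)$, while $\nub(\alpha) \le \overline{\con(\alpha)}$ follows from Theorem~\ref{basic:nub_characterizations}. Since $\nub(\alpha) = \nub(\alpha\inv)$, uniscalarity of $H$ then yields $\overline{G^\dagger_H} = \nubl_G(H)$ directly from the definitions. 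For the converse, $H$ being flat gives $\nub_G(H)$ compact, and $\nub(\alpha) \le \nub_G(H)$ for each $\alpha \in H$ by Corollary~\ref{cor:nub:normalizer}(ii), so $\nubl_G(H)$ is compact. Assuming (iv), this forces each $\overline{\con(\alpha)}$ to be compact, hence $s(\alpha) = s(\alpha\inv) = 1$ by Proposition~\ref{basic:anisotropic}(i). The main obstacle is assembling the identity $\overline{\con(\alpha)} = \nub(\alpha)$ for uniscalar $\alpha$; this is short but it is the conceptual hinge for the whole equivalence.
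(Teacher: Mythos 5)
Your proof is correct and follows essentially the same route as the paper: uniscalarity makes tidy subgroups coincide with $H$-invariant compact open subgroups (giving $\Res_G(H)=\nub_G(H)$), and each of the reverse implications comes down to $\con(\alpha)$ being relatively compact together with Proposition~\ref{basic:anisotropic}(i). The only difference is that you derive the identity $\overline{\con(\alpha)}=\nub(\alpha)$ for uniscalar $\alpha$ directly (correctly), where the paper simply cites \cite[Proposition~5.4]{WillisNub}.
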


\begin{proof}
Suppose that $H$ is uniscalar.  Then there exists an $H$-invariant compact open subgroup $U$.  Moreover, every $H$-invariant open subgroup $O$ contains a compact open $H$-invariant subgroup $O \cap U$, and since $H$ is uniscalar, the nub of $H$ is precisely the intersection of all $H$-invariant compact open subgroups.  Thus (i) implies (ii) and (iii).  For each $h \in H$, we have $\nub(h) = \overline{\con(h)}$ by \cite[Proposition~5.4]{WillisNub}, so (i) implies (iv).

Conversely, suppose that at least one of (ii), (iii) and (iv) holds.  Then $\con(\alpha)$ is relatively compact for all $\alpha \in H$, since we have $\con(\alpha) \le \overline{G^\dagger_H}$ and $\con(\alpha) \le \Res_G(H)$, and both $\nubl_G(H)$ and $\nub_G(H)$ are compact.  By Proposition~\ref{basic:anisotropic}, it follows that $H$ is uniscalar, so each of (ii), (iii) and (iv) implies (i).  Hence (i), (ii), (iii) and (iv) are all equivalent as required.
\end{proof}

\begin{proof}[Proof of Theorem~\ref{thm:relative_discrete:general}]
Since the relative Tits cores, discrete residual, ($1$-)distal residual and nub defined with respect to the action of a subgroup $H$ on a \tdlc group $G$ are all unaffected by replacing $H$ with $\overline{H}$, we may assume that $H$ is closed.

Let $K$ be a cocompact subgroup of $H$ such that $K$ is flat on $G$ and $K/K_{\us}$ is finitely generated, and recall that $G^\dagger_H = G^\dagger_K$ by Theorem~\ref{thmintro:TitsCore:cocompact}.  Consider the set
\[
\mc{N} = \{ U \le G \mid U \text{ is tidy for $K$ on } G\}.
\]
Let $T = \overline{G^\dagger_K}$ and let $O = \N_G(G^\dagger_K)$; note that $O$ is $H$-invariant.  By Proposition~\ref{prop:twosided:stable}, we have $O \ge \langle \mc{N} \rangle$.  In particular, $O$ is open in $G$, so $\Res_G(H) \le O$ and $\Res_G(K) \le O$.

By Corollary~\ref{cor:titscore:uzero}, the group $TU$ is a $K$-invariant open subgroup of $G$, for all $U \in \mc{N}$.  By definition, $\nub_G(K) =\bigcap_{U \in \mc{N}}U$, so by applying Lemma~\ref{lem:intersection:quotient} to the quotient map $O \rightarrow O/T$, we have
\[
\bigcap_{U \in \mc{N}}(TU) = T\nub_G(K).
\]
In particular, we see that
\[
\Res_G(K) \le T\nub_G(K).
\]
By Theorem~\ref{thm:flat:nubdecomp}, we have $\nub_G(K) = \nubl_G(K)\nub_G(K_{\us})$, and since $\nub(k) \le \overline{\con(k)}$ for each $k \in K$, we see that $\nubl_G(K) \le T$.  Thus
\[
T\nub_G(K) = T\nub_G(K_{\us}).
\]
Let us now fix some $U \in \mc{N}$.  Let $Y$ be a $K$-invariant open subgroup of $G$.  Then $U$ is $K_{\us}$-invariant, so $Y \cap U$ is also $K_{\us}$-invariant, and hence $Y \cap U$ is tidy for $K_{\us}$.  Thus $\nub_G(K_{\us}) \le Y$.  In addition, $\con(k) \le Y$ for all $k \in K$, since $Y$ is an open $K$-invariant identity neighbourhood.  Hence $Y \ge T\nub_G(K_{\us})$.  We conclude that
\[
\Res_G(K) = T\nub_G(K) = T\nub_G(K_{\us}).
\]

The image $UT/T$ is normalized by $K$ by Corollary~\ref{cor:titscore:uzero}, so $K$ is uniscalar and flat on $O/T$.  By Lemma~\ref{cocompact:uniscalar_flat}, $H$ is also flat on $O/T$, and $H$ is anisotropic on $O/T$ by Corollary~\ref{cor:anisotropic_quotient}, proving (iii).

By Lemma~\ref{cocompact:uniscalar_flat}, we see that $\Res_{O/T}(H)/T = \Res_{O/T}(K)/T$.  Moreover, every $K$-invariant open subgroup of $G$ contains $T$, so in fact
\[
\Res_{O/T}(K) = \Res_G(K) \text{ and } \Res_{O/T}(H) = \Res_G(H).
\]
Thus $\Res_G(H) = \Res_G(K)$, completing the proof of (i).

We have seen that $\Res_G(H) = T\nub_G(K_{\us})$.  Given $U \in \mc{N}$, then $U$ normalizes $T$ by Proposition~\ref{prop:twosided:stable} and $U$ normalizes $\nub_G(K_{\us})$ by Corollary~\ref{cor:nub:normalizer}.  Thus $U$ normalizes $\Res_G(H)$; in particular, $\N_G(\Res_G(H))$ is open.  We see that $O \ge \Res^\infty_G(H) \ge T$, so in fact $\Res^\infty_G(H) = \Res^\infty_{O/T}(H)$.  By Proposition~\ref{prop:resinfty_ergodic}(ii), the group $\Res^\infty_{O/T}(H)/T$ is normalized by every compact $H$-invariant subgroup of $O/T$; since $H$ is flat and uniscalar on $O/T$, it follows that $\N_G(\Res^\infty_{G}(H))$ is open, completing the proof of (ii).

Since $H$ is flat and uniscalar on $O/T$, by Lemma~\ref{discrete_residual:uniscalar} it follows that $\Res_G(H)/T = \nub_{O/T}(H)$, proving (iv).  A compactness argument then shows that the $H$-invariant compact open subgroups of $\N_O(\Res_G(H))/\Res_G(H)$ form a base of identity neighbourhoods, from which (v) follows.

Now consider the action of $H$ on quotients of $O/T$.  Certainly the action of $H$ on $O/\Res_G(H)$ is distal and $\Res_G(H)/T$ is compact.  By applying Corollary~\ref{cor:distal_compact_extension}, we see that $\Dist_{O/T}(H)/T \le \Res^\infty_{O/T}(H)/T$, so we have the inequalities
\[
\Res^\infty_{O/T}(H)/T \ge \Dist_{O/T}(H)/T \ge \Dist^*_{O/T}(H)/T.
\]
Moreover $\Dist_G(H) \ge T$, so in fact $\Dist_G(H) = \Dist_{O/T}(H)$, and similarly $\Dist^*_G(H) = \Dist^*_{O/T}(H)$.

If $\Res^\infty_{O/T}(H)/T$ is metrizable then in fact
\[
\Res^\infty_{O/T}(H)/T = \Dist_{O/T}(H)/T = \Dist^*_{O/T}(H)/T
\]
by Proposition~\ref{prop:resinfty_ergodic}(iii), implying that $\Res^\infty_{G}(H) = \Dist_{G}(H) = \Dist^*_{G}(H)$, which proves (vi).

Let us now suppose that $H$ is compactly generated.  By (iv) we have $\Res_{O/T}(H)/T = \nub_{O/T}(H)$, so in particular $\Res_{O/T}(H)/T$ is compact.  Hence by Theorem~\ref{thm:distal}, we have 
\[
\Dist_{O/T}(H) =  \Res^\infty_{O/T}(H) = \Res_{O/T}(H).
\]
Let $A$ represent $\Dist$ or $\Res^\infty$.  In each case it is clear that $A_O(H)$ contains $T$, so that $A_{O/T}(H) = A_O(H)$, and hence $A_O(H) = \Res_O(H) = \Res_G(H)$.  Furthermore, we have $A_O(H) \le A_G(H)$, since if the action of $H$ on $G/A_G(H)$ is distal or admits a descending series of residually discrete sections, then the same is true of $OA_G(H)/A_G(H)$, and hence of $O/(A_G(H) \cap O)$ (here we use the continuity of the natural map from $O/(A_G(H) \cap O)$ to $OA_G(H)/A_G(H)$).  Since in each case we also have $A_G(H) \le \Res_G(H)$, we complete a cycle of inequalities and conclude that $A_O(H) = A_G(H) = \Res_G(H)$, so in particular
\[
\Dist_{G}(H) = \Res^\infty_{G}(H) = \Res_{G}(H),
\]
proving (vii).
\end{proof}

\begin{proof}[Proof of Corollary~\ref{corintro:relative_discrete:polycyclic}]By Theorem~\ref{thm:nilpotent:flat}, there is a polycyclic subgroup $L$ of $K$ such that $L$ is flat on $G$ and $\overline{L}$ has finite index in $K$.  We therefore have $\Res_G(H) = \overline{G^\dagger_L}\nub_G(L)$ by Theorem~\ref{thm:relative_discrete:general}(i).  We have $\overline{G^\dagger_L} = \overline{G^\dagger_H}$ by Theorem~\ref{thmintro:TitsCore:cocompact}.  Since $L$ is polycyclic, Theorem~\ref{thm:flat:nubdecomp} ensures that $\overline{G^\dagger_L} \ge \nub_G(L)$.  Hence $\Res_G(H) = \overline{G^\dagger_H}$.  Parts (iii) and (iv) of Theorem~\ref{thm:relative_discrete:general} then ensure that the action of $H$ on $\N_G(G^\dagger_H)/\overline{G^\dagger_H}$ is uniscalar with trivial nub, so this action is uniscalar and smooth, in other words, the open subgroups of $\N_G(G^\dagger_H)/\overline{G^\dagger_H}$ normalized by $H$ are a base of identity neighbourhoods.  The desired conclusion for the action on $G/\overline{G^\dagger_H}$ follows from the fact that $\N_G(G^\dagger_H)$ is open in $G$.
\end{proof}

\begin{proof}[Proof of Corollary~\ref{corintro:relative_discrete:hji}]
Suppose that $H$ normalizes a compact open subgroup $U$ of $G$.  It is a general fact that a just infinite profinite group only has finitely many subgroups of any given index; see for instance \cite[Corollary~2.5]{ReidJI}.  In particular, for each natural number $n$, the intersection $U_n$ of all open subgroups of $U$ of index at most $n$ is a characteristic open subgroup of $U$, and the set $\{U_n \mid n \in \bN\}$ is a base of identity neighbourhoods in $U$, and hence also in $G$.  We see that $U_n$ is normalized by $H$ for each $n \in \bN$, so case (a) of the dichotomy is satisfied.

Suppose that $H$ does not normalize any compact open subgroup of $G$.  It then follows from Lemma~\ref{cocompact:uniscalar_flat} that the cocompact subgroup $K$ of $H$ also does not normalize any compact open subgroup of $G$.  Since $K$ is flat, we conclude that $K$ is not uniscalar and hence $\con(k)$ is non-trivial for some $k \in K$.  Consequently $\overline{G^\dagger_H}$ is non-discrete, so $\Res_G(H)$ is non-discrete.  Let $V$ be a compact open subgroup of $G$ that is tidy for $K$.  By Theorem~\ref{thm:relative_discrete:general}(ii), $V$ normalizes $\Res_G(H)$, so the intersection $N = V \cap \Res_G(H)$ is a non-discrete, in particular non-trivial, closed normal subgroup of $V$.  By hypothesis, $V$ is just infinite, so $N$ is open in $V$ and hence also in $G$.  Thus $\Res_G(H)$ is an open subgroup of $G$ normalized by $H$; by definition, $\Res_G(H)$ is contained in any other open subgroup of $G$ normalized by $H$.  Thus case (b) of the dichotomy is satisfied.
\end{proof}

Using Theorem~\ref{thm:relative_discrete:general}, we obtain another characterization of the discrete residual of an almost finite-rank flat subgroup.

\begin{cor}\label{discrete_smooth}Let $G$ be a \tdlc group and let $H$ be an almost finite-rank flat subgroup of $G$.  Then $\Res_G(H) = R$ is the smallest closed subgroup of $G$ with both of the following properties:
\begin{enumerate}[(a)]
\item $\N_G(R)$ is open in $G$ and contains $H$;
\item The action of $H$ on $\N_G(R)/R$ has SIN.
\end{enumerate}
\end{cor}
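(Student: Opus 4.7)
The plan is to prove the corollary in two stages: first verify that $\Res_G(H)$ itself has properties (a) and (b), and then show that any closed subgroup $R'$ with both properties must contain $\Res_G(H)$. Both stages are essentially direct applications of material already established in Theorem~\ref{thm:relative_discrete:general}, so there is no serious obstacle; the argument is a clean distillation rather than new work.

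For the first stage, let $R = \Res_G(H)$. Clearly $H$ normalizes $R$, since $R$ is defined as an intersection of $H$-invariant subgroups of $G$. That $\N_G(R)$ is open in $G$ is exactly the content of Theorem~\ref{thm:relative_discrete:general}(ii), establishing property (a). Property (b) is exactly Theorem~\ref{thm:relative_discrete:general}(v). So $R$ does satisfy both conditions.

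For the second stage, let $R'$ be any closed subgroup of $G$ satisfying (a) and (b), and set $N := \N_G(R')$. By (a), $N$ is open in $G$ and contains $H$, so $N$ is $H$-invariant. By (b), the conjugation action of $H$ on the \tdlc group $N/R'$ has SIN, which in the \tdlc setting is equivalent (using Van Dantzig's theorem applied to $N/R'$) to the existence of arbitrarily small compact open $H$-invariant subgroups of $N/R'$. Equivalently, writing $\mc{V}$ for the set of $H$-invariant open subgroups $V$ of $N$ that contain $R'$, the quotients $\{V/R' \mid V \in \mc{V}\}$ form a base of neighbourhoods of the trivial coset in $N/R'$, so $\bigcap_{V \in \mc{V}} V = R'$. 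Since $N$ is open in $G$, each such $V$ is an open $H$-invariant subgroup of $G$, and hence $V \geq \Res_G(H)$. Taking the intersection yields $R' \geq \Res_G(H) = R$, proving that $R$ is minimal among closed subgroups satisfying (a) and (b).
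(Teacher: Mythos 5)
Your proof is correct and follows essentially the same route as the paper: the paper likewise cites Theorem~\ref{thm:relative_discrete:general}(ii) and (v) for properties (a) and (b) of $\Res_G(H)$, and for minimality observes that $H$ normalizes arbitrarily small compact open subgroups $U/R$ of $\N_G(R)/R$, each such $U$ being an open $H$-invariant subgroup of $G$ and hence containing $\Res_G(H)$. Your version merely makes explicit the intermediate step that these subgroups intersect in $R'$, which the paper leaves implicit.
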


\begin{proof}Let $R$ be a closed subgroup of $G$ satisfying (a) and (b).  Then $H$ normalizes arbitrarily small (compact) open subgroups $U/R$ of $\N_G(R)/R$, and if $U/R$ is such a subgroup, then $U$ is $H$-invariant and open in $G$, so $\Res_G(H) \le U$.  Hence $\Res_G(H) \le R$.

Now consider $\Res_G(H) = R$ itself.  Then (a) is satisfied by Theorem~\ref{thm:relative_discrete:general}(ii) and (b) is satisfied by Theorem~\ref{thm:relative_discrete:general}(v).
\end{proof}

The fact that $\overline{G^\dagger_H}$ is cocompact in $\Res_G(H)$ allows us to prove a stability result for discrete residuals on quotients.

\begin{prop}Let $G$ be a \tdlc group, let $H$ be an almost finite-rank flat subgroup of $G$ and let $N$ be a closed normal $H$-invariant subgroup of $G$.  Then
\[
\Res_{G/N}(H)/N = \overline{\Res_G(H)N}/N.
\]
\end{prop}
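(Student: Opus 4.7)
Set $R := \Res_G(H)$ and write $\pi : G \to G/N$ for the quotient map; since $\pi$ is open and continuous, $\overline{\pi(R)} = \overline{RN}/N$, so the stated identity is equivalent to $\Res_{G/N}(H) = \overline{\pi(R)}$. The easy inclusion $\Res_{G/N}(H) \supseteq \overline{\pi(R)}$ is routine: any open $H$-invariant subgroup $W$ of $G/N$ pulls back via $\pi$ to an open $H$-invariant subgroup of $G$ containing $N$, hence containing $R$; so $W \supseteq \pi(R)$, and since $W$ is closed, $W \supseteq \overline{\pi(R)}$. Intersecting over all such $W$ gives the inclusion.

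For the reverse inclusion, the plan is to produce arbitrarily small open $H$-invariant subgroups of $G/N$ that contain $\overline{\pi(R)}$, then intersect. By Theorem~\ref{thm:relative_discrete:general}(ii), $\N_G(R)$ is open in $G$, so its image $\pi(\N_G(R)) = \N_G(R)N/N$ is an open subgroup of $G/N$ containing $\overline{\pi(R)}$. Since $N$ is normal in $G$, one checks that $RN = NR$ is a subgroup and that both $\N_G(R)$ and $N$ normalize $RN$: for $n \in N$ and $r \in R$ we have $nrn^{-1} = r(r^{-1}nr)n^{-1}$ with $r^{-1}nr \in N$, so $nrn^{-1} \in RN$. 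Hence $\N_G(R)N$ normalizes $RN$ and therefore its closure $\overline{RN}$, so $\overline{\pi(R)}$ is a closed normal subgroup of $\pi(\N_G(R))$. The continuous surjective homomorphism $\N_G(R) \twoheadrightarrow \pi(\N_G(R))/\overline{\pi(R)}$ has kernel $\N_G(R) \cap \overline{RN}$, which contains $R$; hence $\pi(\N_G(R))/\overline{\pi(R)}$ arises as a quotient of $\N_G(R)/R$ by a closed $H$-invariant normal subgroup.

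By Theorem~\ref{thm:relative_discrete:general}(v), $H$ has SIN action on $\N_G(R)/R$, and the SIN property descends through quotients by closed $H$-invariant subgroups, so $H$ acts on the \tdlc group $\pi(\N_G(R))/\overline{\pi(R)}$ with SIN. Combined with Van Dantzig's theorem, this produces arbitrarily small open $H$-invariant subgroups $W/\overline{\pi(R)}$ of $\pi(\N_G(R))/\overline{\pi(R)}$; each such $W$ is open in $G/N$, is $H$-invariant, and contains $\overline{\pi(R)}$. Intersecting over all such $W$ yields $\overline{\pi(R)}$, bounding $\Res_{G/N}(H)$ above by $\overline{\pi(R)}$ and completing the proof. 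The main delicate point is the normality verification in the second paragraph: without the fact that $\overline{RN}$ is normal in $\N_G(R)N$ (which essentially uses the normality of $N$ in $G$) one cannot form the quotient group $\pi(\N_G(R))/\overline{\pi(R)}$ nor invoke the SIN descent.
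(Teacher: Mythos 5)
Your argument is correct. It reaches the same conclusion as the paper but by a noticeably different route. The paper's proof reduces to the case $G = ON$ with $O = \N_G(G^\dagger_H)$, sets $T = \overline{G^\dagger_H}$ and $R' = \overline{TN}$, uses the cocompactness of $T$ in $\Res_G(H)$ to see that $\overline{\Res_G(H)N} = \Res_G(H)R'$ is closed, and then pushes the presentation of $\Res_G(H)/T$ as $\nub_{O/T}(H)$ --- an intersection of compact open $H$-invariant subgroups of $O/T$ --- through the quotient map $O/T \rightarrow OR'/R'$ via the compactness statement Lemma~\ref{lem:intersection:quotient}. You instead quotient by $N$ directly, treating parts (ii) and (v) of Theorem~\ref{thm:relative_discrete:general} as black boxes: you verify that $\overline{\Res_G(H)N}$ is normalized by $\N_G(\Res_G(H))N$ (which genuinely needs $N \unlhd G$), exhibit $\N_G(\Res_G(H))N/\overline{\Res_G(H)N}$ as a Hausdorff quotient of $\N_G(\Res_G(H))/\Res_G(H)$ by a closed $H$-invariant normal subgroup, and use that SIN actions pass to such quotients. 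Both proofs ultimately rest on the same structural input (the SIN action on $\N_G(\Res_G(H))/\Res_G(H)$, which the paper derives from the compactness of $\Res_G(H)/\overline{G^\dagger_H}$), but your version is more modular and avoids the eigenfactor/nub bookkeeping. Two small points you assert without proof but which do hold and are worth making explicit: $\overline{\Res_G(H)N} \subseteq \N_G(\Res_G(H))N$, because the latter is a subgroup containing the open subgroup $\N_G(\Res_G(H))$ and is therefore open, hence closed; and the passage from a SIN action on a \tdlc group to arbitrarily small invariant compact \emph{open subgroups} uses Van Dantzig together with the observation that $\bigcap_{h \in H} hUh\inv$ is open whenever it contains an $H$-invariant identity neighbourhood, as noted in the paper's discussion of smooth uniscalar actions.
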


\begin{proof}
Let $O = \N_G(G^\dagger_H)$, and note that $O$ is open in $G$ by Corollary~\ref{almostflat:titscore}.  We have $\Res_{G/N}(H) \le UN$ for any open $H$-invariant subgroup $U$ of $G$, since $UN/N$ is also open and $H$-invariant.  In particular, $\Res_{G/N}(H)$ is contained in $ON$; similarly, $\Res_G(H) \le O$.  Moreover, $H$ is an almost finite-rank flat subgroup of $ON$.  Thus we may assume $G = ON$.

Let $T = \overline{G^\dagger_H}$.  Then $R = \overline{TN}$ is normal in $G$.  By Theorem~\ref{thm:relative_discrete:general}(iv), $T$ is cocompact and normal in $\Res_G(H)$, and hence $R$ is cocompact in $\Res_G(H)R$; in particular, $\Res_G(H)R$ is closed in $G$.  We also see that
\[
\overline{\Res_G(H)N} = \Res_G(H)R \text{ and } \Res_{G/N}(H) = \Res_{G/R}(H).
\]
Certainly $\Res_{G/R}(H) \ge \Res_G(H)R$, since any $H$-invariant open subgroup of $G/R$ is the image of an $H$-invariant open subgroup of $G$.  To finish the proof, it remains to prove that $\Res_G(H)R \ge \Res_{G/R}(H)$.
  
By Theorem~\ref{thm:relative_discrete:general}, $H$ is uniscalar and flat on $O/T$ and $\Res_G(H)/T = \nub_{O/T}(H)$.  In particular, $\Res_G(H)/T$ is an intersection of compact open subgroups of $O/T$.

Considering the natural homomorphism from $O/T$ to $OR/R$, we see by Lemma~\ref{lem:intersection:quotient} that
\[
\Res_G(H)R/R = \bigcap_{U \in \mc{U}}(UR/R),
\]
where $\mc{U}$ is the set of $H$-invariant open subgroups $U$ of $O$ such that $U/T$ is compact.  Given $U \in \mc{U}$, then $UR/R$ is an $H$-invariant open subgroup of $G/R$, so $UR \ge \Res_{G/R}(H)$.  Hence
\[
\Res_G(H)R \ge \Res_{G/R}(H),
\]
as required.
\end{proof}

\subsection{The Mautner phenomenon}\label{sec:Mautner}

The \emph{Mautner phenomenon} is a collection of related results of the following form: given a suitable action of a group $G$ on a set $X$, and a point ${x \in X}$ that is fixed by some subgroup $H \le G$, then the stabilizer of $x$ in $G$ necessarily contains not just $H$, but a much larger subgroup (often $G$ itself) that depends on the dynamics of the conjugation action of $H$ on $G$.  The concept originates in the ergodic theory of flows on manifolds, and also plays an important role in the representation theory of locally compact groups: see for instance \cite{Mautner}, \cite{Moore} and \cite{Wang}.  We can define the phenomenon for topological groups in general terms as follows.

\begin{defn}Let $G$ be a group acting on a Hausdorff topological space $X$, and let $x \in X$ be a fixed point of the action.  Then $x \in X$ is an \defbold{isolated point} of the action of $G$ if for all $y \in X \setminus \{x\}$, the closure of the $G$-orbit of $y$ does not contain $x$; in other words, no orbit of the action of $G$ on $X$ accumulates at $x$.

Let $G$ be a topological group and let $H \le G$.  We say that $H$ \defbold{exhibits the Mautner phenomenon in $G$}, or more briefly $H$ is an \defbold{MP-subgroup}\index{MP-subgroup} of $G$, if the following condition holds:

$(*)$ Let $X$ be a Hausdorff topological space admitting a $G$-action by homeomorphisms such that the map $G \rightarrow X; g \mapsto gx$ is continuous for all $x \in X$.  Suppose $x \in X$ is an isolated point of the action of $H$.  Then $x$ is a fixed point of the action of $G$.\end{defn}

We can extract some more familiar versions of the Mautner phenomenon from this definition. 
 
\begin{prop}\label{prop:Mautner:specific}
Let $G$ be a topological group and let $H$ be an MP-subgroup of $G$.  Then the following holds.
\begin{enumerate}[(i)]
\item Let $X$ be a metrizable space on which $G$ acts continuously, and suppose $H$ acts distally with respect to some metric $d$ for $X$ (that is, $\mathrm{\inf}\{d(hx,hy) \mid h \in H\} > 0$ for any pair $(x,y)$ of distinct points).  Then every point fixed by $H$ is fixed by $G$.
\item Let $X$ be a topological space admitting a Borel probability measure, such that $G$ acts continuously and ergodically by measure-preserving maps.  Then the action of $H$ on $X$ is ergodic.
\end{enumerate}
\end{prop}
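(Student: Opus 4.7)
The plan is to apply the defining condition $(*)$ of MP-subgroups in each case by locating an $H$-fixed point that is dynamically isolated. The two parts differ mainly in where the auxiliary space comes from: for (i) we use $X$ itself, and for (ii) we build a Hilbert-space model via the Koopman representation.

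For part (i), I would let $x \in X$ be fixed by $H$ and show directly that $x$ is an isolated point of the $H$-action. For any $y \in X \setminus \{x\}$, the distality hypothesis applied to the pair $(x,y)$ gives
\[
\inf_{h \in H} d(hy, x) = \inf_{h \in H} d(hy, hx) > 0,
\]
so the $H$-orbit of $y$ is uniformly bounded away from $x$ in $d$, hence in the metric topology, and in particular does not accumulate at $x$. Condition $(*)$ then forces $x$ to be fixed by $G$.

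For part (ii), I would pass to the Koopman representation $\pi \colon G \to U(L^2(X,\mu))$ defined by $(\pi(g)f)(x) = f(g^{-1}x)$. The space $L^2(X,\mu)$ is Hausdorff and metric, $G$ acts on it by isometric homeomorphisms, and each orbit map $g \mapsto \pi(g)f$ is norm-continuous: the standard density argument (continuous compactly supported functions are $L^2$-dense and have continuous orbit maps, since the $G$-action on $X$ is continuous) gives strong continuity of $\pi$, so the hypothesis of $(*)$ applied to $L^2(X,\mu)$ is satisfied. Suppose for contradiction that $H$ is not ergodic on $X$ and pick an $H$-invariant measurable $A \subseteq X$ with $0 < \mu(A) < 1$; then $f = \mathbf{1}_A$ is a non-constant element of $L^2(X,\mu)$ fixed by $\pi(H)$. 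For every $f' \in L^2(X,\mu) \setminus \{f\}$ and every $h \in H$, unitarity of $\pi(h)$ together with $\pi(h)f = f$ gives
\[
\|\pi(h)f' - f\| = \|\pi(h)(f' - f)\| = \|f' - f\| > 0,
\]
so no $H$-orbit in $L^2(X,\mu)$ accumulates at $f$, meaning $f$ is isolated. By $(*)$, $\pi(g)f = f$ for all $g \in G$, so $\mathbf{1}_A = \mathbf{1}_{gA}$ as $L^2$-classes, i.e.\ $A$ is $G$-invariant modulo null sets, contradicting ergodicity of the $G$-action.

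The only real technical point, and what I expect to be the main obstacle, is the justification of strong continuity of the Koopman representation under the hypotheses of (ii): the proposition states only loosely that ``$G$ acts continuously and ergodically by measure-preserving maps'', and one needs some regularity (e.g.\ $X$ locally compact Hausdorff with Radon $\mu$, or $X$ Polish) to invoke the density argument. I would make this regularity explicit in the write-up. Once strong continuity is secured, the isometry of each $\pi(h)$ trivialises the isolation condition and both statements become clean consequences of $(*)$.
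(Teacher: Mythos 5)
Your proposal is correct and follows essentially the same route as the paper: part (i) is the same direct verification that an $H$-fixed point is isolated under the distality hypothesis, and part (ii) is the same $L^2$ argument with the indicator function of an invariant set, the only cosmetic difference being that the paper funnels (ii) through part (i) (using that isometries fixing $\phi_Y$ automatically give metric distality) whereas you apply $(*)$ directly. Your remark about needing to justify strong continuity of the Koopman representation is a fair observation about a point the paper asserts without proof, but it is a technical refinement rather than a different argument.
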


\begin{proof}
(i)
Let $x \in X$ be a fixed point of $H$ and let $y \in X \setminus \{x\}$.  Then since $(x,y)$ is not a proximal pair for $H$, the $H$-orbit of $y$ does not accumulate at $x$.  Thus $x$ is an isolated fixed point of $H$, so $x$ is fixed by $G$.

(ii)
Assume for a contradiction that there exists a measurable subset $Y$ of $X$ such that $0 < \mu(Y) < 1$ and $\mu(hY \setminus Y) = 0$ for all $h \in H$, and consider the space $L^2(X)$ of square-integrable functions from $X$ to $\bC$ modulo essentially zero functions.  Then the indicator function $\phi_Y$ of $Y$ is (a representative of) a non-zero element of $L^2(X)$ that is fixed by $H$.  Now $L^2(X)$ is a normed vector space, so in particular a metric space, on which $G$ acts continuously by isometries, so $\phi_Y$ is fixed by $G$ by part (i).  But then $\mu(gY \setminus Y) = 0$ for all $g \in G$, so the action of $G$ on $X$ is not ergodic, a contradiction.
\end{proof}

A natural criterion for the Mautner phenomenon can be expressed in terms of a $1$-distal residual.  (Note that if $H \le D \le G$, then the translation action of $H$ on $G/D$ is the same as the conjugation action of $H$ on $G/D$.)

 \begin{thm}\label{thm:Mautner}Let $G$ be a topological group and let $H$ be a subgroup of $G$.  Let $D = \Dist^*_{G/H}(H)$.  Then $H$ is an MP-subgroup of $D$.  Moreover, $H$ is an MP-subgroup of $G$ if and only if $D = G$.\end{thm}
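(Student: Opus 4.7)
The plan is to reformulate the Mautner conclusion in terms of the point stabilizer. Given a $D$-space $X$ as in $(*)$ and a point $x \in X$ that is isolated for the $H$-action, set $D_x := \{g \in D : gx = x\}$. Since $X$ is Hausdorff and the orbit map $D \to X,\; g \mapsto gx$ is continuous by hypothesis, $D_x$ is closed in $D$. It contains $H$ because $H$ fixes $x$, and it is $H$-invariant because $hD_xh^{-1}$ is the stabilizer of $hx = x$ for every $h \in H$. Now suppose $(h_i)_{i \in I}$ is a net in $H$ and $g \in D$ with $h_igD_x \to D_x$ in $D/D_x$; composing with the continuous injection $D/D_x \to X$ gives $h_igx \to x$ in $X$. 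Since $x$ is isolated for the $H$-action, this forces $gx = x$, i.e.\ $g \in D_x$. Hence $H$ acts distally at $1$ on $D/D_x$.

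To conclude $D_x = D$, I would show that $\Dist^*_{D/H}(H) = D$. Suppose for contradiction there were a closed, $H$-invariant subgroup $E$ of $D$ with $H \le E \lneq D$ such that $H$ acts distally at $1$ on $D/E$. Then $E$ is also closed and $H$-invariant in $G$, and I can apply Lemma~\ref{lem:distal_extension}(ii) to the descending chain $G \supseteq D \supseteq E$: the hypothesis gives distality at $1$ on $D/E$, while $H$ acts distally at $1$ on $G/D$ by the definition of $D = \Dist^*_{G/H}(H)$. The lemma then yields distality at $1$ of $H$ on $G/E$, contradicting the minimality of $D$. So $\Dist^*_{D/H}(H) = D$, and the preceding paragraph then gives $D_x \ge \Dist^*_{D/H}(H) = D$, proving that $x$ is fixed by all of $D$. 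This establishes that $H$ is an MP-subgroup of $D$.

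For the second assertion, if $D = G$ then the first part immediately gives that $H$ is MP in $G$. Conversely, assume $H$ is MP in $G$, and take $X = G/D$ with its $G$-action by left translation (the orbit maps $g \mapsto gaD$ are continuous, being compositions of right translation and the quotient map, and $X$ is Hausdorff since $D$ is closed). The trivial coset $x_0 = D$ is fixed by $H$, and by the very definition of $D$ via distality at $1$ of $G/D$, $x_0$ is an isolated fixed point of the $H$-action. The MP hypothesis then forces $x_0$ to be fixed by $G$, i.e.\ $G \le D$, whence $D = G$.

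The only non-routine step is the identification $\Dist^*_{D/H}(H) = D$; this is not quite a tautology because the defining property of $D$ involves distality at $1$ on $G/D$ rather than on $D/H$, and the bridge between the two is precisely the extension property for distality at $1$ given by Lemma~\ref{lem:distal_extension}(ii).
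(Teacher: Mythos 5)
Your proposal is correct and follows essentially the same route as the paper: both arguments reduce to the stabilizer $D_x$, use continuity of the induced map $D/D_x \rightarrow X$ to relate isolation of $x$ to distality at $1$ on $D/D_x$, and invoke Lemma~\ref{lem:distal_extension}(ii) on the chain $G \supseteq D \supseteq D_x$ --- the paper merely phrases this contrapositively, assuming $D_x < D$ and extracting a proximal pair, where you argue directly via the identity $\Dist^*_{D/H}(H) = D$. The converse direction is likewise the paper's argument specialized to the single coset space $G/D$.
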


\begin{proof}
Suppose $H$ is an MP-subgroup of $G$, and let $R$ be a closed subgroup of $G$ such that $H \le R \le G$ and $H$ acts distally at $1$ on $G/R$ by translation.  Then the map $G \rightarrow G/R; g \mapsto gxR$ is continuous for all $x \in G$, and $R$ is an isolated point of the action of $H$ on $G/R$.  Hence $R$ is a fixed point of the action of $G$ on $G/R$ by translation, in other words $R = G$, and hence $\Dist^*_{G/H}(H) = G$.

Let $D = \Dist^*_{G/H}(H)$.  It remains to show that $H$ is an MP-subgroup of $D$.

Let $X$ be a Hausdorff topological space admitting a $D$-action by homeomorphisms such that the map $D \rightarrow X; g \mapsto gx$ is continuous for all $x \in X$.  Suppose $x \in X$ is an isolated point of the action of $H$.  Then the stabilizer $D_x$ is a closed subgroup of $D$ such that $H \le D_x$.

Suppose $D_x < D$.  Then by hypothesis, $H$ does not act distally at $1$ on $G/D_x$.  Since $H$ acts distally at $1$ on $G/D$, it follows by Lemma~\ref{lem:distal_extension} that $H$ does not act distally at $1$ on $D/D_x$, that is, there exists $g \in D \setminus D_x$ such that the set $\{hgD_x \mid h \in H\}$ accumulates at $D_x$.  Then there are nets $(h_i)_{i \in I}$ and $(k_i)_{i \in I}$ in $D_x$ such that $(h_igk_i)_{i \in I}$ converges to the identity, and thus $(h_igk_ix)_{i \in I}$ converges to $x$.  Since $x$ is fixed by $D_x$, in fact $(h_iy)_{i \in I}$ converges to $x$, where $y = gx$.  But $x$ is an isolated point of $H$, so we must have $y = x$.  Thus $g \in D_x$, a contradiction.  Hence our assumption that $D_x < D$ was false, in other words, $x$ is fixed by $D$, proving that $H$ is an MP-subgroup of $D$.
\end{proof}

Theorem~\ref{thmintro:Mautner} now follows.  We also have the following sufficient conditions for $H$ to be an MP-subgroup.

\begin{cor}Let $G$ be a \tdlc group and let $H$ be a subgroup of $G$.  Then $H$ is an MP-subgroup of $\overline{\Dist^*_G(H)H}$ and of $\overline{G^\dagger_HH}$.\end{cor}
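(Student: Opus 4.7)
The plan is to verify the Mautner condition directly for each of the two groups, leveraging the dynamical properties that define $G^\dagger_H$ and $\Dist^*_G(H)$. In both cases, one fixes a Hausdorff space $X$ on which the relevant group acts continuously and with an isolated $H$-fixed point $x$, and aims to show that every element of the group stabilizes $x$; once the stabilizer of $x$ contains both $H$ and the other generating piece, the fact that this stabilizer is closed forces it to contain $\overline{G^\dagger_H H}$ or $\overline{\Dist^*_G(H) H}$ as appropriate.

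For $D'' = \overline{G^\dagger_H H}$, the argument is direct. Fix $g \in H \cup H^{-1}$ and $y \in \con(g)$; then $g^n y g^{-n} \to 1$, and since $x$ is $H$-fixed we have $g^n y g^{-n} \cdot x = g^n \cdot (y \cdot x)$. By continuity of the action at the identity, the left side converges to $x$, so the $\langle g \rangle$-orbit of $y \cdot x$ accumulates at $x$. Isolation of $x$ among $H$-orbits then forces $y \cdot x = x$. Hence $\con(g)$ fixes $x$; taking closures and the subgroup they generate, all of $G^\dagger_H$ fixes $x$, and $D''$ follows since the stabilizer of $x$ is closed and contains $H$.

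For $D' = \overline{\Dist^*_G(H) H}$, set $M = \Dist^*_G(H)$ and let $F$ be the stabilizer of $x$ in $M$, a closed $H$-invariant subgroup of $M$ ($H$-invariance uses that $x$ is $H$-fixed). I aim to show $F = M$ by invoking the minimality of $M$ applied to $F$ regarded as a closed $H$-invariant subgroup of $G$. Concretely, I would show that the conjugation action of $H$ on $G/F$ is distal at the trivial coset. For a putative accumulation $h_i g h_i^{-1} F \to F$ with $g \in G \setminus F$, the continuous map $G/F \to G/M$ forces $h_i g h_i^{-1} M \to M$ in $G/M$; the defining distal-at-$1$ property of $M$ yields $g \in M$, so the orbit in fact lies in $M/F$. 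But then the continuous $H$-equivariant orbit map $M/F \to X$ sending $mF \mapsto m \cdot x$ pushes the accumulation to $h_i(m \cdot x) \to x$, contradicting isolation provided $m \cdot x \neq x$, that is, $m \notin F$. Hence the action on $G/F$ is distal at $1$, so by minimality of $\Dist^*_G(H)$ we get $F \supseteq M$, and therefore $F = M$.

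The main obstacle is exactly this last step: the isolation hypothesis only directly produces distality within $M/F$, whereas the minimality of $M$ requires distality on the full quotient $G/F$. The two-stage argument—first projecting to $G/M$ to confine a hypothetical accumulating orbit to $M$, then exploiting the isolation-derived rigidity inside $M/F$—is what bridges the gap between these two viewpoints. By contrast, the $G^\dagger_H$ case avoids this subtlety entirely because the dynamics of $\con(g)$ already accumulate at the identity intrinsically, so the contraction elements themselves directly inherit the fixed-point property via continuity, with no need to pass through a quotient.
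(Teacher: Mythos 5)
Your proposal is correct, and it is essentially the paper's argument: the corollary is stated as a consequence of Theorem~\ref{thm:Mautner}, whose proof is exactly your scheme (the stabilizer of $x$ is a closed $H$-invariant subgroup; if it were proper, an orbit accumulating at the trivial coset would push through the orbit map and contradict isolation), and your two-stage projection through $G/M$ in the $\Dist^*$ case is precisely the mechanism of Lemma~\ref{lem:distal_extension}(ii) applied to the chain $G \ge \Dist^*_G(H) \ge F$. The only cosmetic difference is that you verify the MP property directly in each case rather than first checking $\Dist^*_{K/H}(H)=K$ and then citing Theorem~\ref{thm:Mautner}.
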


We recall the basic examples Example~\ref{ex:tree} and Example~\ref{ex:linear}, where the relative Tits cores were quite large.

\begin{cor}
\begin{enumerate}[(i)]
\item Let $G$ be the automorphism group of a locally finite regular tree of degree at least $3$, and let $g \in G$ be hyperbolic.  Then $\langle g \rangle$ is an MP-subgroup of $G^+$ if $g \in G^+$, and $\langle g \rangle$ is an MP-subgroup of $G$ if $g \not\in G^+$.
\item Let $G = \mathrm{SL}_n(\bQ_p)$ and let $g \in G$ such that $\con_G(g) \neq \triv$.  Then $\langle g \rangle$ is an MP-subgroup of $\mathrm{SL}_n(\bQ_p)$.
\end{enumerate}
\end{cor}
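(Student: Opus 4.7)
The plan is to apply the immediately preceding corollary, which asserts that $H$ is an MP-subgroup of $\overline{G^\dagger_H H}$, and then identify this group in each of the two specific settings by invoking the relative Tits core computations already carried out in Examples~\ref{ex:tree} and \ref{ex:linear}. The whole argument will reduce to a bookkeeping exercise about the size of $G^\dagger_g$; no new dynamics will be needed.

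For part (i), let $T$ be the tree and let $g \in G = \Aut(T)$ be hyperbolic. Example~\ref{ex:tree} shows that $G^\dagger_g = G^+$, the simple index-two subgroup of $G$ generated by directed edge stabilizers. Since $G^+$ is clopen, the subgroup $\overline{G^\dagger_g \langle g \rangle} = G^+ \langle g \rangle$ is already closed. If $g \in G^+$, then $G^+ \langle g \rangle = G^+$, so the previous corollary says $\langle g \rangle$ is an MP-subgroup of $G^+$. If $g \not\in G^+$, then $G^+ \langle g \rangle$ contains both the index-two subgroup $G^+$ and an element outside it, so $G^+ \langle g \rangle = G$, and we conclude that $\langle g \rangle$ is an MP-subgroup of $G$.

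For part (ii), let $G = \mathrm{SL}_n(\bQ_p)$ and $g \in G$ with $\con_G(g) \ne \triv$. Example~\ref{ex:linear} establishes, via the Borel--Tits result that $\langle U, U'\rangle = G$ for opposite proper parabolic unipotent radicals, that $G^\dagger_g = G$. Hence $\overline{G^\dagger_g \langle g \rangle} = G$, and the preceding corollary gives that $\langle g \rangle$ is an MP-subgroup of $\mathrm{SL}_n(\bQ_p)$.

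There is no real obstacle here: both parts are immediate specializations of the MP-subgroup criterion once the relative Tits cores are known, and those have already been worked out in the referenced examples. The only point requiring a moment's care is the observation in (i) that $G^+$, being open of finite index, makes the product $G^+ \langle g \rangle$ automatically closed, so no genuine closure is needed.
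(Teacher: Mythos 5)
Your proof is correct and follows exactly the route the paper intends: the corollary is stated without proof immediately after the result that $H$ is an MP-subgroup of $\overline{G^\dagger_H H}$, with the remark that Examples~\ref{ex:tree} and \ref{ex:linear} compute the relative Tits cores, and your identification of $\overline{G^\dagger_g\langle g\rangle}$ in each case (including the observation that $G^+\langle g\rangle$ is automatically closed since $G^+$ is open of finite index) is precisely the intended bookkeeping.
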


Under similar hypotheses to Theorem~\ref{thm:relative_discrete:general}, we can show that the Mautner phenomenon is controlled by the subgroup $\overline{\Res_G(H)H}$.  We first prove a lemma.

\begin{lem}\label{lem:distal_coset:SIN}Let $G$ be a \tdlc group and let $H$ be a closed subgroup of $G$.  Suppose that the action of $H$ on $G$ by conjugation has SIN.  Then the translation action of $H$ on $G/H$ is distal.\end{lem}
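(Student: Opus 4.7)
The plan is to exploit the SIN hypothesis to obtain a convenient base of neighbourhoods of the trivial coset in $G/H$. By Van Dantzig's theorem together with the SIN assumption on the conjugation action, the $H$-invariant compact open subgroups of $G$ form a base $\mathcal{U}$ of identity neighbourhoods. For each $U \in \mathcal{U}$ the product $HU = UH$ is an open subgroup of $G$, and the images $UH/H$ form a neighbourhood base of the trivial coset $H$ in $G/H$ (and, translated, give a neighbourhood base at every point).

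Next, I would unpack distality directly. Suppose $(gH, g'H)$ is a proximal pair for the translation action of $H$, so there is a net $(h_i)$ in $H$ and an element $x \in G$ with $h_i g H \to x H$ and $h_i g' H \to x H$ in $G/H$. Fix $U \in \mathcal{U}$. For $i$ sufficiently large we may write $h_i g = x u_i k_i$ and $h_i g' = x u'_i k'_i$ with $u_i, u'_i \in U$ and $k_i, k'_i \in H$. Forming the ratio and using $HU = UH$:
\[
g^{-1} g' = (h_i g)^{-1}(h_i g') = k_i^{-1} u_i^{-1} u'_i k'_i \in H U H = HU.
\]
Since $U \in \mathcal{U}$ was arbitrary, $g^{-1} g' \in \bigcap_{U \in \mathcal{U}} HU$.

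The proof then concludes by verifying $\bigcap_{U \in \mathcal{U}} HU = H$. If $y$ lies in this intersection, then for every $U \in \mathcal{U}$ there exist $h \in H$ and $u \in U$ with $y = hu$, so $yU \cap H \neq \emptyset$; as the sets $yU$ with $U \in \mathcal{U}$ form a neighbourhood base at $y$, this forces $y \in \overline{H} = H$ (using that $H$ is closed). Hence $g^{-1}g' \in H$, i.e.\ $gH = g'H$, and the action is distal. There is no real obstacle here: the only mild subtlety is interpreting net convergence in $G/H$ as membership of coset representatives in sets of the form $xUH$, which is immediate from the openness of the quotient map.
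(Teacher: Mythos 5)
Your proof is correct and follows essentially the same route as the paper: take a proximal pair, use an $H$-normalized compact open subgroup $U$ to deduce $g^{-1}g' \in HUH = UH$, and let $U$ shrink, using closedness of $H$ to conclude. The only cosmetic difference is that you spell out the verification $\bigcap_{U}HU = H$, which the paper leaves implicit.
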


\begin{proof}
Let $x,y,z \in G$ and suppose there is a net $(h_i)_{i \in I}$ in $H$ such that $(h_ixH,h_iyH)$ converges to $(zH,zH)$.  Let $U$ be a compact open subgroup of $G$ normalized by $H$.  For $i$ large enough we have $h_ix,h_iy \in zUH$, so $x\inv y \in HUH = UH$.  Since $U$ can be made arbitrarily small and $H$ is closed, in fact $x\inv y \in H$, that is, $xH = yH$.
\end{proof}

\begin{prop}Let $G$ be a \tdlc group and let $H$ be a closed subgroup of $G$ that is compactly generated and almost flat on $G$.  Suppose either that $G$ is metrizable, or there is a polycyclic subgroup $K$ of $H$ such that $\overline{K}$ is cocompact in $H$.

Then $\Dist^*_{G/H}(H) = \overline{\Res_G(H)H}$.  In particular, $H$ is an MP-subgroup of $G$ if and only if $G = \overline{\Res_G(H)H}$.
\end{prop}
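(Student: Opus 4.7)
The plan is to establish the equality by proving the two inclusions $\Dist^*_{G/H}(H) \le \overline{\Res_G(H)H}$ and $\overline{\Res_G(H)H} \le \Dist^*_{G/H}(H)$, then deduce the MP-subgroup criterion from Theorem~\ref{thm:Mautner}. Throughout, I would write $R := \Res_G(H)$, $D := \overline{RH}$, $E := \Dist^*_{G/H}(H)$, and $O := \N_G(R)$. By Theorem~\ref{thm:relative_discrete:general}(ii), $O$ is open in $G$ and $R \unlhd O$, and since $R$ is $H$-invariant we have $H \le O$ and consequently $D \le O$.

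For the first inclusion $E \le D$, the key input is Theorem~\ref{thm:relative_discrete:general}(v), which asserts that the conjugation action of $H$ on $O/R$ has SIN. Since normalizing a given open subgroup is a closed condition on the acting group, the conjugation action of the closure $\overline{HR/R} = D/R$ on $O/R$ also has SIN. I would then apply Lemma~\ref{lem:distal_coset:SIN} to the closed subgroup $D/R$ of $O/R$ to conclude that the translation action of $D/R$ on $(O/R)/(D/R) \cong O/D$ is distal; restricting to $H$ (whose image in $O/R$ lies in $D/R$) shows that $H$ acts distally at $1$ on $O/D$. To upgrade to $G/D$: if an $H$-orbit on $G/D$ accumulated at the trivial coset, it would eventually enter the open neighbourhood $OD/D = O/D$, and since $H \le O$ this would force the base point to lie in $O/D$; distality at $1$ on $O/D$ then yields that the base point already lies in $D$. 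Hence $H$ acts distally at $1$ on $G/D$, and the minimality of $E$ gives $E \le D$.

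For the reverse inclusion $D \le E$, the strategy is to identify $\Dist^*_G(H)$ with $R$ and then use minimality. Since $H$ acts distally at $1$ on $G/E$, the unconstrained residual satisfies $\Dist^*_G(H) \le E$, so $\overline{\Dist^*_G(H) H} \le E$. The identification $\Dist^*_G(H) = R$ splits over the two hypotheses: in the metrizable case, Theorem~\ref{thm:relative_discrete:general}(vi) gives $\Dist^*_G(H) = \Res^\infty_G(H)$, and the compact generation of $H$ together with Theorem~\ref{thm:relative_discrete:general}(vii) yields $\Res^\infty_G(H) = R$; in the polycyclic case, Corollary~\ref{corintro:relative_discrete:polycyclic} gives $R = \overline{G^\dagger_H}$, which combined with the universal chain $\overline{G^\dagger_H} \le \Dist^*_G(H) \le \Res_G(H)$ again forces $\Dist^*_G(H) = R$. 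In either case, $D = \overline{RH} = \overline{\Dist^*_G(H) H} \le E$.

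Combining both inclusions gives $\Dist^*_{G/H}(H) = \overline{\Res_G(H)H}$, and the MP-subgroup claim follows at once from Theorem~\ref{thm:Mautner}, which characterizes $H$ as an MP-subgroup of $G$ precisely when $\Dist^*_{G/H}(H) = G$. The main obstacle is the identification $\Dist^*_G(H) = R$, which is where the metrizability or polycyclic hypothesis is consumed and where two entirely different arguments are stitched together; by contrast, the SIN-bootstrap for $D/R$ and the upgrade from $O/D$ to $G/D$ are routine once Theorem~\ref{thm:relative_discrete:general} is in hand.
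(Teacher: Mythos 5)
Your proposal is correct and follows essentially the same route as the paper's proof: identify $\Dist^*_G(H)$ with $\Res_G(H)$ via Theorem~\ref{thm:relative_discrete:general} in the metrizable case and via Corollary~\ref{corintro:relative_discrete:polycyclic} plus the chain $\overline{G^\dagger_H} \le \Dist^*_G(H) \le \Res_G(H)$ in the polycyclic case to get one inclusion, and use the SIN action on $\N_G(R)/R$ together with Lemma~\ref{lem:distal_coset:SIN} and the openness of $\N_G(R)$ to get the other. Your extra care in passing the SIN property to the closure $\overline{RH}/R$ before invoking Lemma~\ref{lem:distal_coset:SIN}, and in upgrading distality at $1$ from $\N_G(R)/\overline{RH}$ to $G/\overline{RH}$, only makes explicit steps the paper leaves implicit.
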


\begin{proof}
Let $R = \Res_G(H)$.  We have $\overline{G^\dagger_H} \le \Dist^*_G(H) \le R$.  If $G$ is metrizable then $\Dist^*_G(H) = R$ by Theorem~\ref{thm:relative_discrete:general}.  If instead $H$ has a polycyclic subgroup with cocompact closure, then $R = \overline{G^\dagger_H}$ by Corollary~\ref{corintro:relative_discrete:polycyclic}, also ensuring that $\Dist^*_G(H) = R$.  So certainly our hypotheses ensure that $\Dist^*_{G/H}(H) \ge \overline{RH}$.

On the other hand, by Corollary~\ref{discrete_smooth}, $\N_G(R)$ is open and the action of $H$ on $\N_G(R)/R$ is uniscalar and smooth, in other words, the action of $H$ on $\N_G(R)/R$ has SIN.  It follows via Lemma~\ref{lem:distal_coset:SIN} that $H$ acts distally on $\N_G(R)/\overline{RH}$ by translation.  Since $\N_G(R)$ is open in $G$, we see that the action of $H$ on $G/\overline{RH}$ is distal at $1$.  Thus $\Dist^*_{G/H}(H) = \overline{RH}$ as required.  The final conclusion follows from Theorem~\ref{thm:Mautner}.
\end{proof}

We note the following special case for clarity.

\begin{cor}
Let $G$ be a \tdlc group and let $H$ be a polycyclic subgroup of $G$.  Then $H$ is an MP-subgroup of $G$ if and only if $G = \overline{G^\dagger_HH}$.
\end{cor}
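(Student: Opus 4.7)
The plan is to deduce this corollary directly from the preceding proposition, using Corollary~\ref{corintro:relative_discrete:polycyclic} to simplify the discrete residual and Theorem~\ref{thm:Mautner} to translate the conclusion. The main point is simply to pass from a possibly non-closed polycyclic $H$ to its closure $\overline{H}$ without losing any of the relevant invariants.

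First I would verify that $\overline{H}$ meets the hypotheses of the preceding proposition. Since $H$ is polycyclic it is finitely generated; and since $\overline{H}$ is a closed subgroup of the \tdlc group $G$, it has a compact open subgroup $U$, and density of $H$ in $\overline{H}$ gives $\overline{H} = UH$, so $\overline{H}$ is generated by $U$ together with a finite generating set for $H$ and is therefore compactly generated. Moreover, $H$ itself is a polycyclic subgroup of $\overline{H}$ whose closure equals $\overline{H}$ and is hence trivially cocompact in $\overline{H}$; in particular $\overline{H}$ is almost flat on $G$, via the closure of any flat finite-index subgroup of $H$ supplied by Theorem~\ref{thm:nilpotent:flat}(ii). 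The second clause of the preceding proposition therefore applies, yielding
$$\Dist^*_{G/\overline{H}}(\overline{H}) = \overline{\Res_G(\overline{H})\overline{H}}.$$

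Next I would identify the right-hand side with $\overline{G^\dagger_H H}$. The set of open subgroups of $G$ normalised by $H$ coincides with the set normalised by $\overline{H}$, so $\Res_G(H) = \Res_G(\overline{H})$; and Corollary~\ref{corintro:relative_discrete:polycyclic} (applied with polycyclic subgroup $K = H \le \overline{H}$) says that the open $H$-invariant subgroups of $G$ project to a base of neighbourhoods of the identity coset in $G/\overline{G^\dagger_H}$, which by Hausdorffness forces $\Res_G(H) = \overline{G^\dagger_H}$. Likewise any closed subgroup of $G$ containing $H$ also contains $\overline{H}$, and $H$-orbits and $\overline{H}$-orbits have the same closures in a Hausdorff $G$-space, so $\Dist^*_{G/H}(H) = \Dist^*_{G/\overline{H}}(\overline{H})$. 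Combining these identifications gives
$$\Dist^*_{G/H}(H) = \overline{\,\overline{G^\dagger_H}\,\overline{H}\,} = \overline{G^\dagger_H H},$$
and then Theorem~\ref{thm:Mautner} concludes that $H$ is an MP-subgroup of $G$ precisely when $\overline{G^\dagger_H H} = G$.

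There is no real obstacle here; the corollary is essentially a bookkeeping exercise reducing to previously established cases, and the only care-point is the routine verification that passing from $H$ to $\overline{H}$ preserves the relative Tits core, the discrete residual and the $1$-distal residual.
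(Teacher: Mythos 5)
Your proposal is correct and matches what the paper intends: the corollary is stated as an immediate special case of the preceding proposition, obtained by applying it to $\overline{H}$, using Corollary~\ref{corintro:relative_discrete:polycyclic} to identify $\Res_G(H)$ with $\overline{G^\dagger_H}$, and invoking Theorem~\ref{thm:Mautner}. Your care in checking that $G^\dagger$, $\Res_G$ and $\Dist^*$ are unchanged on passing from $H$ to $\overline{H}$ is exactly the (routine) content the paper leaves implicit.
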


\subsection{Subgroups of finite covolume}\label{sec:covolume}

We now derive Theorem~\ref{thmintro:covolume} and its corollary, starting with two lemmas.

\begin{lem}\label{lem:Mautner:covolume}
Let $G$ be a \tdlc group and let $H$ and $K$ be closed subgroups of $G$ such that $K$ has finite covolume in $G$.
\begin{enumerate}[(i)]
\item If $H$ is an MP-subgroup of $G$, then $H$ acts ergodically on $G/K$ by left translation.
\item If $H$ acts ergodically on $G/K$, then given any non-empty open subset $U$ of $G$, we have
\[
G = \overline{HUK}.
\]
If in addition $G$ is metrizable, then the set
\[
\bigcap_{U \in \mc{U}}HUK
\]
is dense in $G$, where $\mc{U}$ is the set of compact open subgroups of $G$.
\end{enumerate}
\end{lem}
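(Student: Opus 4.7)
The plan is to handle (i) by a direct application of Proposition~\ref{prop:Mautner:specific}(ii), and (ii) by combining ergodicity with a standard full-support observation, followed by a Baire category argument in the metrizable case.

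For (i), I would take $X = G/K$, equipped with its $G$-invariant Borel probability measure $\mu$ that exists because $K$ has finite covolume. The translation action of $G$ on $X$ is continuous and (being transitive) ergodic, so Proposition~\ref{prop:Mautner:specific}(ii) applies to show that the restriction of this action to $H$ is already ergodic.

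For the first part of (ii), fix a non-empty open set $U \subseteq G$ and observe that $HUK/K$ is a non-empty open $H$-invariant subset of $G/K$, being the union of the $H$-translates of the open set $UK/K$. Because the $G$-action on $G/K$ is transitive, $\mu$ has full support, so $\mu(UK/K) > 0$; ergodicity of $H$ then forces $\mu(HUK/K) = 1$, and its complement is closed of measure zero, hence has empty interior (again using full support). Thus $HUK/K$ is dense in $G/K$, and since the quotient map $G \rightarrow G/K$ is open this yields $\overline{HUK} = G$.

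For the final claim, when $G$ is metrizable I would use Lemma~\ref{basic:metrizable} and van Dantzig's theorem to pick a countable descending base $(V_n)_{n \in \bN}$ of compact open identity neighbourhoods. Each $HV_nK$ is open and dense by the previous paragraph, and $G$ is a locally compact Hausdorff, hence Baire, space, so $\bigcap_n HV_nK$ is dense in $G$. Any compact open subgroup $U$ contains $V_n$ for $n$ large enough, so $HV_nK \subseteq HUK$, and therefore $\bigcap_{U \in \mc{U}} HUK \supseteq \bigcap_n HV_nK$ is dense as well. The only point that requires any care is the full-support step, ensuring that the closed $H$-invariant complement of $HUK/K$ genuinely has empty interior; the remainder of the argument is essentially formal.
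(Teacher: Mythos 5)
Your proposal is correct and follows essentially the same route as the paper: part (i) is the same direct application of Proposition~\ref{prop:Mautner:specific}(ii) to $G/K$ with its invariant probability measure, and part (ii) rests on the same two facts (non-empty open subsets of $G/K$ have positive measure, plus ergodicity giving the complement of $HUK/K$ measure zero), followed by the same Baire category argument over a countable base of compact open subgroups. The only cosmetic difference is that you package the density step as ``$\mu$ has full support'' where the paper argues by direct contradiction with a set $gVK/K$; both hinge on the identical underlying fact.
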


\begin{proof}
(i)
We observe that the coset space $G/K$ is a Borel probability space, on which $G$ acts continuously and ergodically (indeed, transitively) by measure-preserving maps.  Thus by Proposition~\ref{prop:Mautner:specific}, $H$ acts ergodically on $G/K$.

(ii)
Suppose that $H$ acts ergodically on $G/K$, and let $U$ be a non-empty open subset of $G$.  Then $UK/K$ is a subspace of $G/K$ of positive measure, so the $H$-invariant subspace $HUK/K$ has a complement of zero measure.  Let $V$ be a compact open subgroup of $G$, and suppose there is $g \in G$ such that $gV \cap HUK = \emptyset$.  Then $gVK \cap HUK = \emptyset$, since $HUK$ is invariant under right translation by $K$.  In other words, $gVK/K$ is disjoint from $HUK/K$ in the coset space $G/K$.  But then $gVK/K$ has zero measure, which is absurd.  This contradiction implies that $HUK$ is dense in $G$.

Now suppose in addition that $G$ is metrizable.  Then $\bigcap_{U \in \mc{U}}HUK = \bigcap_{U \in \mc{U}'}HUK$ where $\mc{U}'$ is a countable set of compact open subgroups of $G$, since $G$ has a base of identity neighbourhoods consisting of countably many compact open subgroups (this follows from Van Dantzig's Theorem together with Lemma~\ref{basic:metrizable}).  The last conclusion follows by the Baire Category Theorem.
\end{proof}

\begin{lem}[See \cite{Raghunathan} Lemma~1.6]\label{lem:covolume:monotone}
Let $G$ be a locally compact group, let $H$ be a closed subgroup of $G$ and let $K$ be a closed subgroup of $H$.  Then $K$ has finite covolume in $G$ if and only if $K$ has finite covolume in $H$ and $H$ has finite covolume in $G$.
\end{lem}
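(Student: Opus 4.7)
The plan is to exploit the tower $K \le H \le G$ together with the classical Weil integration formula for quotients of locally compact groups. Throughout, a subgroup having finite covolume means that the homogeneous space of cosets carries a finite (Radon) invariant measure under left translation.

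First I would handle the easier direction: assume that $H$ has finite covolume in $G$ and $K$ has finite covolume in $H$, with witnessing measures $\nu$ on $G/H$ and $\lambda$ on $H/K$. Fixing a Borel section $\sigma : G/H \to G$ (which exists since $G$ is locally compact and the quotient map $G \to G/H$ admits a Borel cross-section by standard selection results), I would define a measure $\mu$ on $G/K$ by the Weil-type formula
\[
\int_{G/K} f \, d\mu \;:=\; \int_{G/H} \int_{H/K} f\bigl(\sigma(gH) \cdot hK\bigr) \, d\lambda(hK) \, d\nu(gH).
\]
The inner integrand is independent of the choice of $\sigma$ because $\lambda$ is $H$-invariant (any two sections differ by a function into $H$, which gets absorbed), and $\mu$ is $G$-invariant because $\nu$ is $G$-invariant. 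Finiteness is immediate: $\mu(G/K) = \nu(G/H)\,\lambda(H/K) < \infty$.

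For the reverse direction, suppose $\mu$ is a finite $G$-invariant measure on $G/K$. The map $\pi : G/K \to G/H$, $gK \mapsto gH$, is a continuous $G$-equivariant surjection, so $\pi_*\mu$ is a finite $G$-invariant measure on $G/H$, giving that $H$ has finite covolume in $G$. To extract a finite $H$-invariant measure on $H/K$, I would disintegrate $\mu$ along $\pi$ as $\mu = \int_{G/H} \mu_{gH} \, d(\pi_*\mu)(gH)$, using that the spaces involved are locally compact (and, in the metrizable case, standard Borel, which suffices for disintegration). Each fiber $\pi^{-1}(gH)$ is the image of $H/K$ under $hK \mapsto \sigma(gH)hK$, so pulling $\mu_{gH}$ back along this bijection gives a measure $\lambda_{gH}$ on $H/K$. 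The $G$-invariance of $\mu$ combined with the essential uniqueness of disintegrations forces $\lambda_{gH}$ to be translation-equivariant in $gH$ up to the $K$-identification, hence equal almost everywhere (after normalization) to a single $H$-invariant measure $\lambda$ on $H/K$; total finiteness of $\mu$ then forces $\lambda(H/K) < \infty$.

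The main obstacle I expect is the disintegration step and the rigorous identification of the fiber measures with a single translation-invariant $\lambda$ on $H/K$: one has to rule out that the fiber measures vary non-trivially with $gH$ and ensure the $H$-invariance passes through the Borel section $\sigma$. This is where the uniqueness clause of the disintegration theorem, together with the $G$-invariance of $\mu$ applied fiberwise, is decisive. Since this is precisely the content of \cite[Lemma~1.6]{Raghunathan}, in the actual write-up I would simply cite that reference and remark that the proof given there, although phrased for Lie groups, goes through verbatim for locally compact groups.
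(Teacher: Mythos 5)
The paper offers no proof of this lemma at all: it is stated with the citation to \cite[Lemma~1.6]{Raghunathan} and used as a black box, which is exactly where your write-up ends up. Your sketch of the standard Weil-formula/disintegration argument is a correct outline of how that cited result is proved, so the proposal matches the paper's approach.
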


\begin{proof}[Proof of Theorem~\ref{thmintro:covolume}]
Let $D = \Dist^*_{G/H}(H)$.  Since $H$ has finite covolume in $G$, it also has finite covolume in $D$ by Lemma~\ref{lem:covolume:monotone}.  By Theorem~\ref{thm:Mautner}, $H$ is an MP-subgroup of $D$, and hence by Lemma~\ref{lem:Mautner:covolume}, the set $\bigcap_{V \in \mc{V}}HVH$ is dense in $D$, where $\mc{V}$ is the set of compact open subgroups of $D$.  So certainly $K(H)$ contains a dense subset of $D$.  On the other hand, the action of $H$ on $G/D$ is distal at $1$, in other words, the set of $H$-invariant open neighbourhoods of the trivial coset in $G/D$ has trivial intersection.  Let $O/D$ be such a neighbourhood; in other words, $O$ is an $H$-invariant open subset of $G$ that is a union of left cosets of $D$, such that $D \subseteq U$.  Then $O = HOD$ and $O$ is an identity neighbourhood in $G$, so $K(H) \subseteq O$.  Since the intersection of all such sets $O$ is just $D$, we conclude that $K(H) \subseteq D$, and thus $\overline{K(H)} = D$.

By Theorem~\ref{thmintro:TitsCore:cocompact}, we have $G^\dagger = G^\dagger_H$, and it is clear that $G^\dagger_H \le \Dist^*_G(H) \le D$.  Thus $G^\dagger \le D$, completing the proof of (i).

For (ii), we see from Theorem~\ref{thm:Mautner} and Lemma~\ref{lem:Mautner:covolume} that $H$ acts ergodically on $D/H$.  It remains to show that $D$ is the unique largest subgroup of $G$ such that $H \le D$ and $H$ acts ergodically on $D/H$.  So suppose $E$ is another closed subgroup of $G$ such that $H \le E$ and $H$ acts ergodically on $E/H$.  

By Lemma~\ref{lem:covolume:monotone}, $H$ has finite covolume in $E$, so by Lemma~\ref{lem:Mautner:covolume}, the set $L = \bigcap_{W \in \mc{W}}HWH$ is dense in $E$, where $\mc{W}$ is the set of compact open subgroups of $E$.  Suppose that $H$ is not an MP-subgroup of $E$.  Then by Theorem~\ref{thm:Mautner}, there exists a proper closed subgroup $F$ of $E$ such that $H \le F$ and $H$ acts distally at $1$ on $E/F$.  Since $G$ is metrizable, we see that $E/F$ is a locally compact Hausdorff metrizable space, so the fact that no $H$-orbit accumulates at $F$ ensures that there is a proper $H$-invariant neighbourhood $O/F$ of $F$ in $E/F$ that is not dense in $E/F$: for instance, if we specify a metric on $E/F$ compatible with the topology, and $B_{n}$ is the open ball of radius $1/n$ around $F$ with respect to this metric, then by the Baire Category Theorem, there exists $n \in \bN$ such that the set $\bigcup_{h \in H}hB_{n}$ is not dense.  Now $O$ is a neighbourhood of the identity in $E$, so there is $W \in \mc{W}$ such that $W \subseteq O$.  Since $O$ is invariant under left translation by $H$ and right translation by $F \ge H$,  we have $O = HOH$.  Hence $L \subseteq HWH \subseteq O$; in particular, $L$ is not dense in $E$, a contradiction.  Thus in fact $H$ must be an MP-subgroup of $E$.

It now follows by Theorem~\ref{thm:Mautner} that $E = \Dist^*_{E/H}(H)$.  In particular, if $D \not\ge E$, then $H$ does not act distally on $E/(D \cap E)$, that is, there exists $x \in E \setminus D$ such that the $H$-orbit of $x(D \cap E)$ accumulates at the trivial coset.  But then the $H$-orbit of $xD$ accumulates at the trivial coset and $xD$ is a non-trivial element of $G/D$, so $H$ does not act distally on $G/D$, contradicting the definition of $D$.  Thus $D \ge E$, proving (ii).
\end{proof}

\begin{proof}[Proof of Corollary~\ref{corintro:covolume:charsimple}]
By Theorem~\ref{thmintro:covolume}(i), we have $G^\dagger \le \Dist^*_{G/H}(H)$.  Since $G^\dagger$ is dense in $G$ and $\Dist^*_{G/H}(H)$ is closed, it follows that $\Dist^*_{G/H}(H) = G$.  Hence $H$ is an MP-subgroup of $G$ by Theorem~\ref{thm:Mautner}, so $H$ acts ergodically on $G/H$ by Lemma~\ref{lem:Mautner:covolume}.\end{proof}

\begin{rem}
The role of the set $K(H)$ has been previously studied by H. Keynes (\cite{Keynes}) under somewhat different assumptions: $H$ is not necessarily closed and $G$ is not necessarily a \tdlc group, but there exists a compact subset $X$ of $G$ such that $G = XH$.  Keynes shows in this case (\cite[Theorem~2.3]{Keynes}) that the pair $(xH,yH)$ is proximal under the action of $G$ on $G/H$ by left translation if and only if $x^{-1}y \in K(H)$.
\end{rem}

\section{Open envelopes}

\subsection{Reduced envelopes of an almost flat subgroup}\label{sec:reduced_envelope}

Within the class of subgroups normalized by the almost finite-rank flat subgroup $H$, we can consider the open subgroups of $G$ that actually contain $H$.

\begin{defn}Let $G$ be a \tdlc group and let $X \subseteq G$.  An \defbold{envelope}\index{envelope} of $X$ in $G$ is an open subgroup of $G$ that contains $X$.  Say an envelope $E$ of $X$ is \defbold{reduced}\index{envelope!reduced envelope} if, whenever $E_2$ is an envelope of $X$ in $G$, then $|E:E \cap E_2|$ is finite.\end{defn}

The following observations are immediate from the definitions, together with Van Dantzig's theorem.

\begin{lem}Let $G$ be a \tdlc group, let $X \subseteq G$ and let $H = \langle X \rangle$.
\begin{enumerate}[(i)]
\item Let $E$ be an envelope for $X$.  Then $\overline{\Res_G(H)H}$ is a subgroup of $E$.
\item Suppose that $X$ has a reduced envelope $E$ in $G$.  Then all reduced envelopes of $X$ in $G$ are commensurate to $E$, and there is a reduced envelope $E_2 \le E$ of the form $E_2 = \langle U, X \rangle$, where $U$ is a compact open subgroup of $G$.
\end{enumerate}
\end{lem}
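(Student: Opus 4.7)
The plan is to prove each part by a short direct argument; neither part requires any substantial new idea, since both follow from the definitions together with Van Dantzig's theorem and the characterization of $\Res_G(H)$ as an intersection of open $H$-invariant subgroups.

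For (i), the first observation is that any envelope $E$ of $X$ is a subgroup containing $X$, hence $E \ge \langle X \rangle = H$. Since $H \le E$, we have $hEh^{-1} = E$ for every $h \in H$, so $E$ is $H$-invariant (under conjugation). Thus $E$ is an open $H$-invariant subgroup of $G$, and by definition of the discrete residual of the conjugation action, $\Res_G(H) \le E$. Combined with $H \le E$, this yields $\Res_G(H)H \le E$, and since an open subgroup is closed, $\overline{\Res_G(H)H} \le E$ as claimed.

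For (ii), commensurability of reduced envelopes is immediate from the definition: if $E$ and $E'$ are both reduced envelopes of $X$, then $|E:E\cap E'|$ and $|E':E\cap E'|$ are both finite, so $E$ and $E'$ are commensurate. For the existence of a reduced envelope of the specified form inside $E$, I would use Van Dantzig's theorem to pick a compact open subgroup $U$ of $G$ contained in $E$, and set $E_2 := \langle U, X \rangle$. Then $E_2 \le E$ (since $U, X \subseteq E$) and $E_2$ is open (since it contains $U$), so $E_2$ is an envelope of $X$.

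It remains to verify that $E_2$ is reduced. Let $E_3$ be any envelope of $X$; then $E \cap E_3$ is also an envelope of $X$, so by reducedness of $E$ we have $|E:E\cap E_3| < \infty$. The natural map
\[
E_2/(E_2 \cap E_3) \longrightarrow E/(E \cap E_3), \qquad e(E_2 \cap E_3) \mapsto e(E \cap E_3),
\]
is well-defined (since $E_2 \le E$) and injective: if $e,e' \in E_2$ satisfy $e^{-1}e' \in E \cap E_3$, then $e^{-1}e' \in E_2 \cap (E \cap E_3) = E_2 \cap E_3$. Hence $|E_2:E_2 \cap E_3| \le |E:E \cap E_3| < \infty$, so $E_2$ is reduced.

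No step here looks like a real obstacle; the only thing to be slightly careful about is the inclusion $H \le E$ ensuring that $E$ qualifies as an $H$-invariant open subgroup used to bound $\Res_G(H)$, and the standard index-descent argument $A \le B \Rightarrow |A:A \cap C| \le |B:B\cap C|$ used in the reducedness check.
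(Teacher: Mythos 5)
Your proof is correct and follows exactly the route the paper intends: the paper states this lemma as "immediate from the definitions, together with Van Dantzig's theorem" and gives no further argument, and your write-up supplies precisely those routine verifications (the inclusion $H \le E$ making $E$ an open $H$-invariant subgroup for part (i), and Van Dantzig plus the standard index inequality $|A:A\cap C| \le |B:B\cap C|$ for $A \le B$ in part (ii)).
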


We now prove the theorem on reduced envelopes from the introduction.

\begin{proof}[Proof of Theorem~\ref{thmintro:reduced_envelope}]
By Corollary~\ref{cor:titscore:uzero}, the product $G^\dagger_KU_0$ is the group generated by all $K$-conjugates of $U$.  Hence
\[
\langle K, U \rangle = G^\dagger_KU_0K = \overline{(G^\dagger_KK)}U_0.
\]
Clearly $\langle K, U \rangle$ is an envelope for $K$ in $G$.  Since every envelope for $K$ contains $\overline{G^\dagger_KK}$, which is a cocompact subgroup of $\langle K,U \rangle$, we see that $\langle K,U \rangle$ is reduced, proving (i).

Now consider $H \le G$ such that $K$ is cocompact in $\overline{H}$.  Then $G^\dagger_H = G^\dagger_K$ by Theorem~\ref{thmintro:TitsCore:cocompact}.  Let $O = \N_G(G^\dagger_H)$.  Since the action of $H$ on $O/\overline{G^\dagger_H}$ is uniscalar and flat, there exists an $H$-invariant subgroup $A$ of $G$ such that $\overline{G^\dagger_H} \le A$ and $A/\overline{G^\dagger_H}$ is a compact open subgroup of $O/\overline{G^\dagger_H}$.  Thus $E = AH$ is an envelope for $H$ in $G$.  Since $\overline{G^\dagger_HH}$ is cocompact in $E$, in fact $E$ is a reduced envelope for $H$.  Since $K$ is cocompact in $\overline{H}$, the open subgroup $AK$ has finite index in $E$; since every envelope of $K$ contains $\overline{G^\dagger_KK} = \overline{G^\dagger_HK}$, we see that $AK$ is a reduced envelope for $K$, so $E$ is a reduced envelope for $K$.  Finally, observe that any reduced envelope $E$ for $H$ must contain $\overline{G^\dagger_HH}$, and in turn $\overline{G^\dagger_HH}$ contains the cocompact subgroup $\overline{G^\dagger_KK}$, so $\overline{G^\dagger_HH}$ is a cocompact subgroup of $E$.  This completes the proof of (ii).
\end{proof}

We observe that up to taking closures, the relative Tits core of an almost finite-rank flat subgroup is realized as the Tits core of any reduced envelope.

\begin{prop}\label{reduced_envelope:titscore}Let $G$ be a \tdlc group and let $H$ be an almost finite-rank flat subgroup of $G$.  Let $E$ be a reduced envelope for $H$ in $G$.  Then
\[
\overline{G^\dagger_H} = \overline{E^\dagger_H} = \overline{E^\dagger}.
\]
\end{prop}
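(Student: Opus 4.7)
Write $T = \overline{G^\dagger_H}$. The identity $\overline{E^\dagger_H} = \overline{G^\dagger_H}$ is essentially immediate: since $E$ is an open envelope of $H$, any contraction $\con_G(h)$ with $h \in H$ is contained in $E$ (its elements accumulate at $1 \in E$, and conjugating back by $h \in E$ keeps them inside $E$), and closures in $E$ and in $G$ coincide as $E$ is clopen. So in fact $E^\dagger_H = G^\dagger_H$ without taking closures. The real content is therefore the inequality $\overline{E^\dagger} \le T$, the reverse inclusion being trivial from $E^\dagger \ge E^\dagger_H$.

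My plan is to prove that $E/T$ is anisotropic in its conjugation action on itself and then to deduce $\overline{\con_G(e)} \le T$ for every $e \in E$ via Corollary~\ref{cor:anisotropic_quotient}. First I would check that $T$ is normal in $E$: by Corollary~\ref{almostflat:titscore} the normalizer $O := \N_G(G^\dagger_H)$ is open, and by Theorem~\ref{thmintro:reduced_envelope}(ii) the group $\overline{G^\dagger_H H}$ is cocompact in $E$ and clearly contained in $O$, so $E \le O$ and $E/T$ makes sense as an open subgroup of $O/T$.

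The key cocompactness transfer goes as follows. By Theorem~\ref{thm:relative_discrete:general}(iii), $H$ is anisotropic and flat on $O/T$, so the image $HT/T$ is flat, uniscalar and smooth on $O/T$. Closure preserves all three properties (flatness by Lemma~\ref{flat:closure}, uniscalarity and smoothness by continuity of conjugation), so $\overline{HT}/T$ is flat, uniscalar and smooth on $O/T$, and hence also on the open subgroup $E/T$ by Lemma~\ref{openinvariant:flat}. Since $G^\dagger_H \le T$, we have $\overline{G^\dagger_H H} \subseteq \overline{HT}$, and cocompactness of the former in $E$ then forces $\overline{HT}/T$ to be cocompact in $E/T$. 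Applying Lemma~\ref{cocompact:uniscalar_flat} with ambient group $E/T$ and cocompact subgroup $\overline{HT}/T$ then yields that $E/T$ itself is flat, uniscalar and smooth on $E/T$, meaning it has arbitrarily small conjugation-invariant compact open subgroups, i.e.\ is anisotropic. Corollary~\ref{cor:anisotropic_quotient} (applied to the inner automorphism of $G$ by each $e \in E$, acting on $O/T$, noting that anisotropy on the open subgroup $E/T$ is equivalent to anisotropy on $O/T$) now gives $G^\dagger_e \le T$ for all $e \in E$, whence $E^\dagger \le T$.

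The main point requiring care is the cocompactness transfer through the quotient by $T$: verifying that cocompactness of $\overline{G^\dagger_H H}$ in $E$ descends to cocompactness of $\overline{HT}/T$ in $E/T$, and that the flat/uniscalar/smooth properties of $HT/T$ on $O/T$ persist under closure and restriction so that Lemma~\ref{cocompact:uniscalar_flat} applies cleanly. Once these compatibilities are in place, everything else is routine manipulation with the invariance properties of relative Tits cores established earlier.
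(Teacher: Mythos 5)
Your first step is fine: $E^\dagger_H = G^\dagger_H$ holds exactly as you say, and the reverse inclusion $\overline{E^\dagger} \ge \overline{G^\dagger_H}$ is trivial. The problem is in the main step. Theorem~\ref{thm:relative_discrete:general}(iii) gives only that $H$ is \emph{anisotropic and flat} on $O/T$; you then assert that $HT/T$ is therefore \emph{smooth} on $O/T$, and this inference is false. Anisotropy of a group of automorphisms means each element individually admits arbitrarily small invariant compact open subgroups; smoothness requires arbitrarily small subgroups tidy for (here, invariant under) \emph{all} of $H$ simultaneously, and Example~\ref{ex:badnub} is precisely an anisotropic, uniscalar, flat group that is not smooth. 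Worse, part (iv) of the same theorem identifies $\nub_{O/T}(H)$ as $\Res_G(H)/T$, which is nontrivial whenever $\nub_G(K_{\us}) \not\le \overline{G^\dagger_H}$; so $HT/T$ is genuinely not smooth on $O/T$ in general, $E/T$ is not a SIN group, and your chain ``smooth $\Rightarrow$ SIN $\Rightarrow$ every $e \in E$ anisotropic on $O/T$'' collapses. What survives of your argument is only that $E/T$ is flat and uniscalar on itself (via Lemma~\ref{cocompact:uniscalar_flat} without the smoothness clause), but uniscalarity of $e$ on $O/T$ gives merely that $\con(e)T/T$ is relatively compact, which is not enough for Corollary~\ref{cor:anisotropic_quotient} to yield $G^\dagger_e \le T$. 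There is also a secondary slip: from ``$\overline{G^\dagger_H H}$ is cocompact in $E$ and contained in $O$'' you conclude $E \le O$, but that only gives $|E : E \cap O| < \infty$; you must first replace $E$ by the commensurate reduced envelope $E \cap O$ (harmless, since $E^\dagger = (E\cap O)^\dagger$ by Theorem~\ref{thmintro:TitsCore:cocompact}).

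The paper avoids the whole issue by never trying to show that \emph{every} element of $E$ acts anisotropically modulo $T$. Instead it sets $S = G^\dagger_H H$, observes that elements of $T$ act trivially by conjugation on $E/T$ so that every $s \in S$ acts on $E/T$ exactly as some $h \in H$ does, hence anisotropically, giving $G^\dagger_S \le T$ and so $\overline{G^\dagger_{\overline S}} = \overline{G^\dagger_H}$. Since $\overline{S}$ is cocompact in $E$ by Theorem~\ref{thmintro:reduced_envelope}(ii), Theorem~\ref{thmintro:TitsCore:cocompact} gives $E^\dagger = E^\dagger_{\overline S}$, and the conclusion follows. If you want to rescue your outline, this cocompactness transfer of the Tits core is the missing ingredient: restrict the anisotropy claim to the cocompact subgroup $\overline{G^\dagger_H H}$ rather than attempting to establish it (or SIN) for all of $E/T$.
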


\begin{proof}
Let $K$ be a cocompact subgroup of $\overline{H}$ such that $K$ is flat on $G$ and $K/K_{\us}$ is finitely generated.  By Theorem~\ref{thmintro:reduced_envelope}, $E$ is a reduced envelope for $K$.  By Theorem~\ref{thmintro:TitsCore:cocompact} we have $G^\dagger_H = G^\dagger_K$, so we may assume that $H=K$.  Clearly $G^\dagger_H = E^\dagger_H$, since $E$ is an open $H$-invariant subgroup of $G$.  Since the Tits core is invariant on passing to an open subgroup of finite index, the choice of $E$ is inconsequential, and we can arrange for $E$ to normalize $G^\dagger_H$ and contain a tidy subgroup for $H$, so that $H$ is flat on $E$.  Thus we may assume $G=E$ and that $G^\dagger_H$ is normal in $G$.

Let $S = G^\dagger_HH$.  Then $G^\dagger_S \le \overline{G^\dagger_H}$, since the action of $S$ on $G/\overline{G^\dagger_H}$ is anisotropic, and also $G^\dagger_H \le G^\dagger_S$ since $H \le S$.  So $\overline{G^\dagger_S} = \overline{G^\dagger_H}$, and in fact $\overline{G^\dagger_{\overline{S}}} = \overline{G^\dagger_H}$.

By Theorem~\ref{thmintro:reduced_envelope}, $\overline{S}$ is cocompact in $G$.  Hence $G/\overline{G^\dagger_H}$ is anisotropic by Theorem~\ref{thmintro:TitsCore:cocompact}, so $G^\dagger \le \overline{G^\dagger_H}$, and hence $\overline{G^\dagger} = \overline{G^\dagger_H}$.
\end{proof}

\subsection{Compact normal subgroups of reduced envelopes}\label{sec:compactnormal}

We can use discrete residuals of the $H$-action to restrict the action of the relative Tits core of $H$ on compact subgroups.

\begin{prop}\label{commutator_res}Let $G$ be a \tdlc group, let $H \le G$, and let $K$ be a compact subgroup of $G$ that is normalized by $G^\dagger_H$ and $H$.
\begin{enumerate}[(i)]
\item Every open subgroup of $K$ that is normalized by $H$ is also normalized by $G^\dagger_H$, so $\Res_K(H) = \Res_K(\overline{G^\dagger_HH})$.
\item The commutator group $[\overline{G^\dagger_H},K]$ is contained in $\Res_K(H)$.
\item Suppose that $H$ is almost finite-rank flat and let $E$ be a reduced envelope for $H$.  If $K$ is normalized by $E$, then $\Res_K(H) = \Res_K(E)$.
\end{enumerate}
\end{prop}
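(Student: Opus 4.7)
The plan is to prove the three parts in order, with (i) supplying the key normalization property used throughout.

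For part (i), fix an $H$-invariant open subgroup $V$ of $K$. Since $V$ has finite index in the compact group $K$, both $V$ and $K \setminus V$ are compact and disjoint in $G$, and the normalizer $\N_G(V)$ is closed in $G$ as the stabilizer of a closed set under continuous conjugation. The crucial local step is to produce an open neighbourhood $W$ of $1$ in $G$ such that every $g \in W \cap \N_G(K)$ already normalizes $V$: joint continuity of conjugation together with compactness of $V$ and $K \setminus V$ yields an open $W \ni 1$ with $gVg^{-1} \cap (K \setminus V) = \emptyset$ for all $g \in W$; if in addition $g \in \N_G(K)$, then $gVg^{-1} \subseteq K$ avoids $K \setminus V$, so $gVg^{-1} \subseteq V$, and since $g$ induces an index-preserving automorphism of $K$, in fact $gVg^{-1} = V$. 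For $h \in H$ and $c \in \con_G(h)$, the sequence $h^n c h^{-n}$ lies in $\N_G(K)$ and converges to $1$, hence lies in $W$ for $n$ large; since $h \in \N_G(V)$ by hypothesis, we recover $c = h^{-n}(h^n c h^{-n})h^n \in \N_G(V)$. Thus $G^\dagger_H \subseteq \N_G(V)$, and closedness of $\N_G(V)$ then gives $\overline{G^\dagger_H H} \subseteq \N_G(V)$; the equality $\Res_K(H) = \Res_K(\overline{G^\dagger_H H})$ follows since the two families of invariant open subgroups coincide.

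For part (ii), I pass from $V$ to its $K$-normal core $V_0 = \bigcap_{k \in K} k V k^{-1}$, which is open (since $\N_K(V)$ has finite index in $K$ by Lemma~\ref{lem:compact_anisotropic}(ii)), normal in $K$, $H$-invariant (using that $H$ normalizes both $K$ and $V$), and by (i) also $\overline{G^\dagger_H}$-invariant. Then $K/V_0$ is a finite discrete group and conjugation induces a continuous homomorphism $\Phi : \overline{G^\dagger_H H} \to \Aut(K/V_0)$ with open kernel. For $h \in H$ and $c \in \con_G(h)$, the convergence $h^n c h^{-n} \to 1$ forces $\Phi(h)^n \Phi(c) \Phi(h)^{-n} = 1$ for $n$ large in the finite group $\Aut(K/V_0)$, whence $\Phi(c) = 1$ and $[c, K] \subseteq V_0 \subseteq V$. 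Since $V_0$ is normal in $\overline{G^\dagger_H} K$, the commutator identity $[xy,k] = [x,k]^y [y,k]$ extends this to $[G^\dagger_H, K] \subseteq V$; continuity of the commutator map and closedness of $V$ then give $[\overline{G^\dagger_H}, K] \subseteq V$. Intersecting over all $H$-invariant open subgroups $V \le K$ yields $[\overline{G^\dagger_H}, K] \le \Res_K(H)$.

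For part (iii), take an $H$-invariant open subgroup $V$ of $K$. By (i), $\overline{G^\dagger_H H} \subseteq \N_G(V)$, and by Theorem~\ref{thmintro:reduced_envelope}(ii), $\overline{G^\dagger_H H}$ is cocompact in $E$. At the same time, $\N_E(V)$ is open in $E$ by Lemma~\ref{lem:compact_anisotropic}(ii), and since it contains the cocompact subgroup $\overline{G^\dagger_H H}$, it has finite index in $E$. Hence the $E$-normal core $V' = \bigcap_{e \in E} e V e^{-1}$ is a finite intersection, so it is open in $K$, $E$-invariant, and contained in $V$; thus $\Res_K(E) \subseteq V' \subseteq V$, and intersecting over $V$ gives $\Res_K(E) \le \Res_K(H)$. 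The reverse inclusion $\Res_K(H) \le \Res_K(E)$ is immediate from $H \subseteq E$.

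The main technical obstacle is the local normalization step in part (i): because $V$ is only open in $K$ and not in $G$, its $G$-normalizer is a priori merely closed, and there is no immediate reason why contraction group elements should lie in $\N_G(V)$. The resolution is to restrict to elements of $\N_G(K)$ near the identity, where the clopenness of $V$ inside the compact group $K$ forces conjugation to preserve $V$, and then transport this normalization backwards along the contracting sequence $h^n c h^{-n} \to 1$ using that $h$ itself already lies in $\N_G(V)$. Once (i) is in hand, (ii) reduces to a finite-group-action argument on $K/V_0$ combined with the commutator identity, and (iii) combines (i) with the cocompactness statement of Theorem~\ref{thmintro:reduced_envelope}(ii).
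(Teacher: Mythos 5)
Your proof is correct, and its overall architecture matches the paper's: part (i) is the key normalization statement, part (ii) follows from the contracting dynamics $h^nch^{-n}\to 1$, and part (iii) from the cocompactness of $\overline{G^\dagger_HH}$ in $E$ supplied by Theorem~\ref{thmintro:reduced_envelope}(ii). Where you diverge is in the execution of (i) and (ii). For (i), the paper first replaces $G$ by $\N_G(K)$ (noting $G^\dagger_H=(\N_G(K))^\dagger_H$) so that $K$ may be assumed normal in $G$; then Lemma~\ref{lem:compact_anisotropic}(ii) makes $\N_G(V)$ an \emph{open} $H$-invariant subgroup of $G$, and since every such subgroup contains $\con(h)$ for all $h\in H$, it contains $G^\dagger_H$. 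You instead stay in $G$, prove the local statement that every element of $W\cap\N_G(K)$ normalizes $V$ for a suitable identity neighbourhood $W$, and transport $\con(h)$ into $\N_G(V)$ along the orbit $h^nch^{-n}\to 1$ using that $h$ itself normalizes $V$; this is valid (note $h^nch^{-n}\in\con(h)\subseteq G^\dagger_H\subseteq\N_G(K)$, so the intersection with $\N_G(K)$ is legitimate) and has the mild advantage of making explicit that openness of the normalizer is only needed relative to $\N_G(K)$. For (ii), the paper chooses a compact open subgroup $V$ of $G$ with $[V,K]\le L$ and computes $h^n[k,u]h^{-n}=[h^nkh^{-n},h^nuh^{-n}]\in[K,V]\le L$; you route the same idea through the finite quotient $K/V_0$ and the induced map to $\Aut(K/V_0)$, which costs you the extra (but correct) verification that this map has open kernel in $\overline{G^\dagger_HH}$. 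Part (iii) is essentially identical to the paper's argument.
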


\begin{proof}We note that $G^\dagger_H = (\N_G(K))^\dagger_H$.  Thus without loss of generality, we may assume that $K$ is normal in $G$.

Let $L$ be an open subgroup of $K$.  Then $\N_{G}(L)$ is open in $G$ by Lemma~\ref{lem:compact_anisotropic}.  Thus if $L$ is normalized by $H$, then $\N_{G}(L)$ is an open $H$-invariant subgroup of $G$, so $\Res_G(H) \le \N_G(L)$ and in particular $G^\dagger_H \le \N_G(L)$.  Thus every open subgroup of $K$ that is normalized by $H$ is also normalized by $G^\dagger_H$.  Since the normalizer of any closed subgroup is closed, we have $\Res_K(\overline{G^\dagger_HH}) = \Res_K(G^\dagger_HH)$.  Hence $\Res_K(H) = \Res_K(\overline{G^\dagger_HH})$, completing the proof of (i).

Let $L$ be an open subgroup of $K$ that is normalized by $H$.  Since $\N_G(L)$ is open, there is a compact open subgroup $V$ of $G$ such that $[V,K] \le L$.

Let $h \in H$, let $u \in \con(h)$ and let $k \in K$.  Then for $n$ sufficiently large we have $h^nuh^{-n} \in V$, so
\[
h^n[k,u]h^{-n} = [h^nkh^{-n},h^nuh^{-n}] \in [K,V] \le L.
\]
Since $L$ is normalized by $h$, in fact $[k,u] \le L$, so $[K,\con(h)] \le L$.  In other words, $\con(h) \le \CC_G(K/L)$.  Since $h \in H$ was arbitrary and $\CC_G(K/L)$ is closed, it follows that $\overline{G^\dagger_H} \le \CC_G(K/L)$, in other words $[\overline{G^\dagger_H},K] \le L$.  Applying this argument to all open $H$-invariant subgroups $L$ of $K$, we conclude that $[\overline{G^\dagger_H},K] \le\Res_K(H)$, proving (ii).

Now suppose that $H$ is almost finite-rank flat.  Let $E$ be a reduced envelope for $H$; suppose $K$ is normal in $E$ and let $L$ be an open subgroup of $K$ that is normalized by $H$.  By part (i), the group $\N_E(L)$ contains $\overline{G^\dagger_HH}$ and thus is cocompact in $E$ by Theorem~\ref{thmintro:reduced_envelope}; moreover, $\N_E(L)$ is open in $E$.  Thus $L$ has only finitely many $E$-conjugates.  Since $L$ has finite index in $K$, we conclude that the intersection of all $E$-conjugates of $L$ is open in $K$, so $L \ge \Res_K(E)$.  Thus $\Res_K(H) \ge \Res_K(E)$; clearly also $\Res_K(E) \ge \Res_K(H)$, so in fact $\Res_K(E) = \Res_K(H)$, proving (iii).
\end{proof}

For the rest of this subsection, assume that $H$ is flat and $H/H_{\us}$ is finitely generated.  

By Theorem~\ref{thm:compact_invariant:tidy}, every $H$-invariant compact subgroup $K$ of $G$ is contained in a tidy subgroup $U$ for $H$, so in fact $K \le U_0$, where $U_0$ is the intersection of all $H$-conjugates of $U$.  We have good control over $\Res_{U_0}(H)$ thanks to the following:

\begin{lem}[See \cite{WillisFlat} Lemma~4.11 and Lemma~6.2]\label{lem:uzero:tidy}Let $G$ be a \tdlc group, let $H$ be a flat subgroup of $G$ such that $H/H_{\us}$ is finitely generated, and let $U$ and $V$ be compact open subgroups of $G$ that are tidy for $H$.  Let $U_0 = \bigcap_{h \in H}hUh\inv$ and let $V_0 = \bigcap_{h \in H}hVh\inv$.  Then
\[ U \cap V_0 = V \cap U_0.\]
In particular, $U_0 \cap V_0$ is an open $H$-invariant subgroup of $U_0$.
\end{lem}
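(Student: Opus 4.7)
The plan is to prove the symmetric inclusion $U \cap V_0 \subseteq V \cap U_0$ directly; the reverse inclusion then follows by interchanging the roles of $U$ and $V$. The key dynamical input is Lemma~\ref{tidy:orbit_dynamics}, which says that an element of a tidy subgroup with bounded forward orbit under a cyclic group actually has its entire forward orbit inside the tidy subgroup. The finite-rank hypothesis on $H/H_{\us}$ is not strictly required for this argument, but it fits the eigenfactor framework in which the result is used.

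In detail, I would argue as follows. Let $x \in U \cap V_0$. By definition $V_0 = \bigcap_{h \in H} hVh\inv$, so $h\inv x h \in V$ for every $h \in H$; equivalently, $h^n x h^{-n} \in V$ for every $h \in H$ and every $n \in \bZ$. In particular, the $\langle h \rangle$-orbit of $x$ under conjugation is contained in the compact set $V$, so is relatively compact in $G$. Now apply Lemma~\ref{tidy:orbit_dynamics}(i) to the tidy subgroup $U$ and the automorphism $\alpha \colon g \mapsto hgh\inv$: the element $x$ lies in $U$ and has bounded forward $\alpha$-orbit, hence $x \in U_-$; applying the same lemma to $\alpha\inv$, we obtain $x \in U_+$ as well. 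By definition of $U_\pm$, this means $h^n x h^{-n} \in U$ for every $n \in \bZ$, and in particular $hxh\inv \in U$. Since $h \in H$ was arbitrary, $x \in U_0$; combined with $x \in V_0 \subseteq V$, we conclude $x \in V \cap U_0$, as required.

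For the ``in particular'' statement, substitute the just-proved equality to obtain
\[
U_0 \cap V_0 = U_0 \cap (U \cap V_0) = U_0 \cap (V \cap U_0) = U_0 \cap V,
\]
which is the intersection of $U_0$ with the open subgroup $V$ of $G$, hence open in $U_0$. $H$-invariance is immediate from $H$-invariance of both $U_0$ and $V_0$. The main ``obstacle'' here is really only conceptual: one must recognize that bounded orbit under each individual cyclic subgroup, rather than a joint tidy-above/tidy-below condition for the full group action, already forces the orbit into $U$ via Lemma~\ref{tidy:orbit_dynamics}; the actual verification is a short application of that lemma in both time directions.
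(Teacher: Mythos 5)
Your proof is correct. The paper itself gives no argument for this lemma --- it is imported verbatim from \cite{WillisFlat} (Lemma~4.11 and Lemma~6.2) --- so what you have done is supply a self-contained derivation from Lemma~\ref{tidy:orbit_dynamics}, which is the same dynamical mechanism underlying Willis's original proof. The key step checks out: for $x \in U \cap V_0$ and $h \in H$, the full conjugation orbit $\{h^n x h^{-n} \mid n \in \bZ\}$ lies in the compact set $V$, so applying Lemma~\ref{tidy:orbit_dynamics}(i) to $h$ and to $h\inv$ (both of which $U$ is tidy for, since $U$ is tidy for $H$) gives $x \in U_+ \cap U_-$, hence $h^{-1}xh \in U$ for every $h$, i.e.\ $x \in U_0$; symmetry finishes the equality, and the ``in particular'' clause follows as you say. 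Your side remark is also accurate: the equality $U \cap V_0 = V \cap U_0$ uses only that $U$ and $V$ are tidy for every element of $H$, not that $H/H_{\us}$ is finitely generated; that hypothesis is carried along because the lemma is invoked within the eigenfactor framework of \S\ref{sec:compactnormal}, where it is needed elsewhere.
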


Given a flat group $H$ of finite rank, define the \defbold{residual nub}\index{nub!residual nub}\index{rnub@$\snub_G(H)$} $\snub_G(H)$ to be the group
\[
\bigcap_{r \in G^\dagger_H}r\Res_{U_0}(H)r\inv,
\]
where $U_0$ is as in Lemma~\ref{lem:uzero:tidy}.  By Lemma~\ref{lem:uzero:tidy}, $U_0$ only depends up to an open subgroup on the choice of $U$, and hence $\snub_G(H)$ does not depend on the choice of $U$; in particular, $\snub_G(H) \le U$ for every tidy subgroup $U$ for $H$, and thus $\snub_G(H) \le \nub_G(H)$.  The residual nub has some further properties with regard to compact normal subgroups of reduced envelopes.

\begin{prop}\label{uzero_res:nub}Let $G$ be a \tdlc group, let $H$ be a flat subgroup of $G$ such that $H/H_{\us}$ is finitely generated and let $K$ be a compact $H$-invariant subgroup of $G$.  \begin{enumerate}[(i)]
\item We have $\Res_K(H) \le \Res_{U_0}(H) \le \nub_G(H)$, where $U_0$ is the intersection of $H$-conjugates of any tidy subgroup for $H$ on $G$.
\item If $K$ is normalized by $G^\dagger_HH$, then $\Res_K(\overline{G^\dagger_HH}) \le \snub_G(H)$.
\item Let $U$ be a tidy subgroup for $H$, let $E = \langle H,U \rangle$, and suppose $K$ is normalized by $E$.  Then $\snub_G(H)$ is normal in $E$ and the action of $E$ on $K\snub_G(H)/\snub_G(H)$ by conjugation has SIN.
\item Let $L$ be a uniscalar normal subgroup of $H$.  Then $\snub_G(L) = \nub_G(L)$, and $\snub_G(L)$ is normal in $E$.
\end{enumerate}
\end{prop}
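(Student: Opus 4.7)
The plan is to handle the four parts in order, with (i) doing most of the technical work via Lemma~\ref{lem:uzero:tidy} and (ii)--(iv) following by short arguments built on (i) together with prior structural results.

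For (i), I first use Theorem~\ref{thm:compact_invariant:tidy} to find a tidy subgroup $V$ for $H$ containing $K$, so that $K \le V_0 := \bigcap_{h \in H} hVh^{-1}$. For any open $H$-invariant $W \le V_0$, the intersection $W \cap K$ is open $H$-invariant in $K$, forcing $\Res_K(H) \le \Res_{V_0}(H)$. Lemma~\ref{lem:uzero:tidy}, applied to $V$ and any other tidy $U$, yields that $V_0 \cap U_0 = U \cap V_0 = V \cap U_0$ is open in both $V_0$ and $U_0$, and (after applying the identity to $hUh^{-1}$ in place of $U$) also $H$-invariant; hence $\Res_{V_0}(H) = \Res_{U_0}(H)$. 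For the second inclusion, the same reasoning shows that as $W$ ranges over tidy subgroups for $H$, the subgroups $W \cap U_0$ are open, $H$-invariant, and intersect in $\nub_G(H) \cap U_0 = \nub_G(H)$, giving $\Res_{U_0}(H) \le \nub_G(H)$.

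For (ii), Proposition~\ref{commutator_res}(i) collapses $\Res_K(\overline{G^\dagger_H H})$ to $\Res_K(H)$, which is then $G^\dagger_H$-invariant. Combining with (i) yields, for each $r \in G^\dagger_H$,
\[
\Res_K(H) = r\Res_K(H)r^{-1} \le r\Res_{U_0}(H)r^{-1};
\]
intersecting over $r$ gives the claimed inclusion into $\snub_G(H)$.

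For (iii), the factorization $U \le G^\dagger_H U_0$ from Corollary~\ref{cor:titscore:uzero}(ii) gives $E = \langle G^\dagger_H, U_0, H\rangle$, so it suffices to check that each of these three subgroups normalizes $\snub_G(H)$. Normalization by $G^\dagger_H$ is an immediate reindexing of the defining intersection, and normalization by $H$ follows because $H$ stabilizes both $G^\dagger_H$ and $R := \Res_{U_0}(H)$. For $U_0$, I first note that $R$ is actually normal in $U_0$: each open $H$-invariant $W \le U_0$ has a core $\bigcap_{u \in U_0} uWu^{-1}$ which remains open and $H$-invariant since $H$ normalizes $U_0$. Combining this with $U_0 \le U \le \N_G(G^\dagger_H)$ from Proposition~\ref{prop:twosided:stable} and a reindexing as in (ii) yields the desired normalization. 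The SIN claim then follows from (ii) and Proposition~\ref{commutator_res}(iii): they give $\Res_K(E) = \Res_K(H) \le K \cap \snub_G(H)$, and since $E$ acts residually discretely on $K/\Res_K(E)$, the further quotient $K\snub_G(H)/\snub_G(H) \cong K/(K \cap \snub_G(H))$ inherits SIN.

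For (iv), with $L$ uniscalar I take $U$ to be any $L$-invariant compact open subgroup, so $U_0 = U$. Every open $L$-invariant subgroup of $U$ is itself tidy for $L$ by uniscalarity, hence contains $\nub_G(L)$, while (i) gives the reverse, so $\Res_{U_0}(L) = \nub_G(L)$. Since uniscalarity of each $l \in L$ gives $\overline{\con(l)} = \nub(l) \le \nub_G(L)$ (via the proof of Lemma~\ref{discrete_residual:uniscalar}), we obtain $G^\dagger_L \le \nub_G(L)$, so each conjugate $r\nub_G(L)r^{-1}$ for $r \in G^\dagger_L$ equals $\nub_G(L)$, giving $\snub_G(L) = \nub_G(L)$. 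Normality in $E$ comes from Corollary~\ref{cor:nub:normalizer}(iii) (the tidy subgroup $U$ for $H$ is also tidy for the uniscalar $L$, so it normalizes $\nub_G(L)$) and from $L \trianglelefteq H$ ($H$ permutes tidy subgroups for $L$). The main obstacle I anticipate is the $U_0$-normalization step in (iii), which rests on the non-obvious observation that $\Res_{U_0}(H)$ is itself normal in $U_0$.
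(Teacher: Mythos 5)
Your proposal is correct and follows essentially the same route as the paper: Theorem~\ref{thm:compact_invariant:tidy} plus Lemma~\ref{lem:uzero:tidy} for (i), Proposition~\ref{commutator_res} and a conjugation/reindexing argument for (ii) and the SIN claim in (iii), and Corollary~\ref{cor:nub:normalizer} for (iv). The only real differences are that you make explicit (via the core argument) the fact that $\Res_{U_0}(H)$ is normal in $U_0$, which the paper leaves implicit in deducing $\bigcap_{r \in G^\dagger_H} r\Res_{U_0}(H)r\inv = \bigcap_{e \in E} e\Res_{U_0}(H)e\inv$, and that in (iv) you get $\snub_G(L)=\nub_G(L)$ directly from $G^\dagger_L \le \nub_G(L)$ rather than via normality in $E$; both are sound.
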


\begin{proof}
By Theorem~\ref{thm:compact_invariant:tidy}, there is a tidy subgroup $U$ for $H$ such that $K \le U$, and hence $K \le U_0$ where $U_0 = \bigcap_{h \in H}hUh\inv$.  Certainly $\Res_{K}(H) \le \Res_{U_0}(H)$ in this case.  In turn, we see from Lemma~\ref{lem:uzero:tidy} that $\Res_{U_0}(H) = \Res_{V_0}(H)$, where $V_0$ is the intersection of $H$-conjugates of any given tidy subgroup $V$ for $H$ on $G$.  In particular, $\Res_{U_0}(H) \le V$ for all tidy subgroups $V$ for $H$ on $G$, and hence $\Res_{U_0}(H) \le \nub_G(H)$, proving (i).

Now suppose $K$ is normalized by $G^\dagger_HH$.  Then $\Res_K(H) = \Res_K(\overline{G^\dagger_HH})$ by Proposition~\ref{commutator_res}, so $\Res_K(\overline{G^\dagger_HH}) \le \Res_{U_0}(H)$ by part (i).  Moreover, clearly $\Res_K(\overline{G^\dagger_HH})$ is also $G^\dagger_H$-invariant.  Hence $\Res_K(\overline{G^\dagger_HH}) \le \snub_G(H)$, proving (ii).

Let $E = \langle H,U \rangle$ where $U$ is tidy for $H$.  We have $E = G^\dagger_HU_0H$ by Theorem~\ref{thmintro:reduced_envelope}.  In particular, it follows that
\[
\bigcap_{r \in G^\dagger_H}r\Res_{U_0}(H)r\inv = \bigcap_{r \in E}r\Res_{U_0}(H)r\inv,
\]
and hence $\snub_G(H)$ is normal in $E$.

Suppose that $K$ is normalized by $E$.  Then $K\snub_G(H)$ is also a compact subgroup of $G$ normalized by $E$, so we may assume that $K \ge \snub_G(H)$.  We have $\Res_K(H) = \Res_K(E)$ by Proposition~\ref{commutator_res}.  The same argument as for part (ii) shows that $\Res_K(E) \le \snub_G(H)$.  The open $E$-invariant subgroups of $K/\Res_K(E)$ have trivial intersection; via Lemma~\ref{lem:intersection:quotient}, we conclude that the open $E$-invariant subgroups of $K/\snub_G(H)$ have trivial intersection.  By a compactness argument, the open $E$-invariant subgroups of $K/\snub_G(H)$ form a base of identity neighbourhoods, so the action of $E$ on $K/\snub_G(H)$ has SIN, completing the proof of (iii).

Finally, let $L$ be a uniscalar normal subgroup of $H$.  Given a tidy subgroup $U$ for $H$, then $U$ is normalized by $L$.  We have $\Res_U(L) = \Res_G(L)$, and in turn $\Res_G(L)$ is just the intersection of all compact open $L$-invariant subgroups, so $\Res_G(L) = \nub_G(L)$.  Now $\nub_G(L)$ is normalized by $H$, since $L$ is normalized by $H$, and also $\nub_G(L)$ is normalized by $U$ by Corollary~\ref{cor:nub:normalizer}.  Hence $\nub_G(L)$ is normal in $E$; in particular, $\nub_G(L)$ is normalized by $G^\dagger_H$.  It now follows from the definition of the residual nub that $\snub_G(L) = \nub_G(L)$, completing the proof of (iv).
\end{proof}

Combining Proposition~\ref{commutator_res} with Proposition~\ref{uzero_res:nub}, we obtain a restriction on compact $(G^\dagger_HH)$-invariant subgroups of $G$ as follows.

\begin{prop}\label{prop:commutator_nub}Let $G$ be a \tdlc group and let $H$ be a flat subgroup of $G$ such that $H/H_{\us}$ is finitely generated.
\begin{enumerate}[(i)]
\item Let $K$ be a compact $(G^\dagger_HH)$-invariant subgroup of $G$.  Then
\[
[\overline{G^\dagger_H},K] \le \snub_G(H).
\]
\item Let $R = \overline{G^\dagger_H\snub_G(H)}$.  Then every compact normal $H$-invariant subgroup of $R/\snub_G(H)$ is central in $R/\snub_G(H)$.
\end{enumerate}
\end{prop}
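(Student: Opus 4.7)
The plan is to prove both parts by chaining together the preceding results, with (i) providing the key commutator estimate and (ii) following via a density argument.

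For part (i), I would combine Proposition~\ref{commutator_res} with Proposition~\ref{uzero_res:nub}(ii). Since $K$ is compact and normalized by both $G^\dagger_H$ and $H$, Proposition~\ref{commutator_res}(ii) gives $[\overline{G^\dagger_H}, K] \le \Res_K(H)$, and Proposition~\ref{commutator_res}(i) identifies $\Res_K(H) = \Res_K(\overline{G^\dagger_H H})$. The hypothesis that $K$ is $(G^\dagger_H H)$-invariant then lets me apply Proposition~\ref{uzero_res:nub}(ii) to conclude that $\Res_K(\overline{G^\dagger_H H}) \le \snub_G(H)$. Chaining these three inclusions gives the desired bound.

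For part (ii), I would first note that $\snub_G(H)$ is normal in $R$: it is invariant under $G^\dagger_H$ by construction and trivially invariant under itself, so its normalizer in $G$ is a closed subgroup containing $G^\dagger_H \snub_G(H)$ and hence contains $R$. Thus $R/\snub_G(H)$ is a well-defined topological group, and since $\snub_G(H)$ is compact, the quotient map $R \to R/\snub_G(H)$ is closed, so the image of $G^\dagger_H \snub_G(H)$ is dense in $R/\snub_G(H)$. Now given a compact normal $H$-invariant subgroup $\Lambda$ of $R/\snub_G(H)$, I lift to the preimage $K \le R$; this $K$ is closed, contains $\snub_G(H)$, is compact (as its fibres over $\Lambda$ are all translates of the compact group $\snub_G(H)$), and is normal in $R$. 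In particular $K$ is normalized by $G^\dagger_H \subseteq R$ and by $H$, so part (i) applies and yields $[\overline{G^\dagger_H}, K] \le \snub_G(H)$. This says the image of $G^\dagger_H$ in $R/\snub_G(H)$ centralizes $\Lambda$; the centralizer of $\Lambda$ is closed and contains this dense image, so it must be all of $R/\snub_G(H)$.

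The argument is essentially bookkeeping on top of the prior results; there is no serious obstacle, but one must be careful to verify that $\snub_G(H)$ is genuinely normal in $R$ (so that $R/\snub_G(H)$ makes sense as a topological group), and that the lift $K$ really does inherit $G^\dagger_H$-invariance from the normality of $\Lambda$ in $R/\snub_G(H)$. Both of these reduce to the inclusion $G^\dagger_H \subseteq R$ together with the fact that $\snub_G(H)$ is $G^\dagger_H$-invariant by definition.
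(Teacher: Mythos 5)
Your proof is correct and follows the paper's argument essentially verbatim: part (i) is exactly the paper's chain $[\overline{G^\dagger_H},K]\le\Res_K(H)=\Res_K(\overline{G^\dagger_HH})\le\snub_G(H)$ via Propositions~\ref{commutator_res} and \ref{uzero_res:nub}, and part (ii) is the paper's ``follows immediately from (i), noting that if $K/\snub_G(H)$ is compact, then $K$ is compact'' with the routine details (normality of $\snub_G(H)$ in $R$, density of the image of $G^\dagger_H$, closedness of the centralizer) written out. No gaps.
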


\begin{proof}By Proposition~\ref{commutator_res}, we have $[\overline{G^\dagger_H},K] \le \Res_K(H)$, and by Proposition~\ref{uzero_res:nub} we have $\Res_K(H) \le \snub_G(H)$.  Hence $[\overline{G^\dagger_H},K] \le \snub_G(H)$, proving (i).

(ii) follows immediately from (i), noting that if $K/\snub_G(H)$ is compact, then $K$ is compact.
\end{proof}

The possibilities for $\snub_G(H)$ are mysterious at present, although we note that in some situations, the fact that $\snub_G(H)$ has open normalizer is a useful restriction.

\begin{prop}Let $G$ be a \tdlc group and let $H$ be a flat subgroup of $G$ such that $H/H_{\us}$ is finitely generated.  Suppose that $G^\dagger_H \neq \triv$ and that there is a tidy subgroup $U$ for $H$ such that $U$ is just infinite and not virtually abelian.
\begin{enumerate}[(i)]
\item We have $\snub_G(H) = \triv$.
\item Let $K$ be a compact subgroup of $G$ with open normalizer, such that $H \le \N_G(K)$.  Then $K \cap U = \triv$.
\end{enumerate}
\end{prop}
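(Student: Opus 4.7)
The plan is first to observe that (ii) implies (i) directly, then to prove (ii) by contradiction by combining the commutator bound of Proposition~\ref{prop:commutator_nub}(i) with the hypothesis that $U$ is just infinite and not virtually abelian. For the implication (ii)$\Rightarrow$(i), take $K = \snub_G(H)$: by Proposition~\ref{uzero_res:nub}(iii) this $K$ is a normal subgroup of the open subgroup $E = \langle H,U\rangle$, so $\N_G(K)$ is open and contains $H$; since $K \subseteq U_0 \subseteq U$, the conclusion $K \cap U = \triv$ of (ii) becomes $\snub_G(H) = \triv$.

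For (ii), suppose for contradiction that $L := K \cap U \neq \triv$. First I would verify that $\overline{G^\dagger_H} \le \N_G(K)$: since $\N_G(K)$ is open and $H$-invariant, for any $\alpha \in H$ and $x \in \con(\alpha)$ the iterates $\alpha^n(x)$ lie in $\N_G(K)$ for all large $n$, and applying $\alpha^{-n}$ to $\alpha^n(x)K\alpha^n(x)^{-1} = K$ forces $xKx^{-1}= K$. Thus $K$ is $G^\dagger_HH$-invariant, and Proposition~\ref{prop:commutator_nub}(i) gives $[\overline{G^\dagger_H},K] \le \snub_G(H)$. Now set $J := \overline{G^\dagger_H} \cap U$ and let $N$ be the closed normal closure of $L$ in $U$. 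Since $U$ normalizes $\overline{G^\dagger_H}$ by Proposition~\ref{prop:twosided:stable}, $J$ is a closed normal subgroup of $U$; and $J \neq \triv$ because $G^\dagger_H \neq \triv$ supplies some $\alpha \in H$ with $\con(\alpha)\ne\triv$, whose elements accumulate at the identity and therefore meet $U$. By just infinite, both $J$ and $N$ are open in $U$.

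The next step is the commutator estimate $[J,N] \le \snub_G(H)$. Since $J$ normalizes both $U$ and $K$ (the latter via $\overline{G^\dagger_H}$), it normalizes $L$, so $[J,L] \le L$; combined with Step~2 this gives $[J,L] \le L \cap \snub_G(H)$. Using the $U$-normality of $J$ and of $\snub_G(H)$, one has $[J, uLu^{-1}] = u[J,L]u^{-1} \le \snub_G(H)$ for every $u \in U$; standard commutator identities together with continuity and the closedness of $\snub_G(H)$ then extend this to $[J,N] \le \snub_G(H)$. In particular $(J \cap N)' \le \snub_G(H)$, with $J \cap N$ open in $U$. If $\snub_G(H) = \triv$, this forces $J \cap N$ to be abelian of finite index in $U$, contradicting the hypothesis that $U$ is not virtually abelian.

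The hard part will be handling the remaining case $\snub_G(H) \neq \triv$. In that case just infinite makes $\snub_G(H)$ an $H$-invariant compact open subgroup of $G$, which forces every $\alpha \in H$ to have scale $1$, so $H$ is uniscalar; by Lemma~\ref{discrete_residual:uniscalar}, $\overline{G^\dagger_H} = \nubl_G(H) \le \nub_G(H)$, and since $\nub_G(H)$ (being the intersection of the $H$-invariant compact open subgroups) has no proper open $H$-invariant subgroup, ergodicity of the $H$-action on $\nub_G(H)$ forces $\overline{G^\dagger_H} = \nub_G(H) = \snub_G(H)$. One is then reduced to an arrangement in which $\overline{G^\dagger_H}$ is simultaneously open in $U$, non-abelian (by the not-virtually-abelian hypothesis), and subject to the constraint $[\overline{G^\dagger_H},K] \le \overline{G^\dagger_H} \cap K$; tracing through this with Proposition~\ref{prop:commutator_nub}(ii) and the ergodic $H$-action should yield the needed contradiction. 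The main obstacle is disentangling this second case without circularity, since it is precisely the configuration that (i) is supposed to rule out.
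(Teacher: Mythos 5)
Your reduction of (ii) to (i) (taking $K=\snub_G(H)$, which is normal in the open subgroup $E=\langle H,U\rangle$ and contained in $U$) is valid, and your commutator argument for (ii) under the assumption $\snub_G(H)=\triv$ is sound and essentially equivalent to the paper's: the paper shows $[\overline{G^\dagger_H},K]=\triv$ and then invokes the absence of non-trivial elements with open centralizer in a just infinite, non-virtually-abelian profinite group, while you exhibit an open abelian subgroup $J\cap N$ directly; both work. The problem is that you have not actually proved (i), and your whole scheme circles back to needing it: your ``hard case'' $\snub_G(H)\neq\triv$ is precisely the content of part (i), and the sketch you give there has a genuine gap. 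The step ``ergodicity of the $H$-action on $\nub_G(H)$ forces $\overline{G^\dagger_H}=\nub_G(H)$'' is unjustified: Lemma~\ref{discrete_residual:uniscalar} only gives $\overline{G^\dagger_H}=\nubl_G(H)$ in the uniscalar case, and the equality $\nubl_G(H)=\nub_G(H)$ is exactly the open problem of Question~\ref{que:ergodic_nub}(ii) (cf.\ Example~\ref{ex:badnub}); ergodicity on the nub does not yield it. After that you only assert that a contradiction ``should'' follow, so the case is left open.

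The missing idea is much more elementary and does not go through ergodicity or Proposition~\ref{prop:commutator_nub}(ii) at all. By Proposition~\ref{uzero_res:nub}, $\snub_G(H)$ is a closed normal subgroup of $E=\langle H,U\rangle$, hence of $U$, and it is contained in $U_0=\bigcap_{h\in H}hUh\inv$. Split on whether $U_0$ is open in $U$. If $U_0$ is not open, then $\snub_G(H)$ is a closed normal subgroup of $U$ that is not open, so just infiniteness forces $\snub_G(H)=\triv$ immediately. If $U_0$ is open, then $U_0$ is an $H$-invariant compact open subgroup, so $H$ is uniscalar; since a just infinite profinite group has only finitely many open subgroups of any given index, the subgroups $V_n$ obtained by intersecting all open subgroups of $U_0$ of index at most $n$ are $H$-invariant, open, and form a base of identity neighbourhoods (this is the argument of Corollary~\ref{corintro:relative_discrete:hji}(a)), whence $H$ has SIN action, $\nub_G(H)=\triv$ and so $\snub_G(H)=\triv$ (in fact this case cannot occur under your standing hypothesis, since SIN action forces $G^\dagger_H=\triv$). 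Without this two-case analysis your argument does not close.
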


\begin{proof}Let $U_0 = \bigcap_{h \in H}hUh\inv$.  We divide into two cases: either $U_0$ is open in $U$, or $U_0$ is not open in $U$.

If $U_0$ is open in $U$, then $U_0$ is itself a tidy subgroup for $H$ and $H$ is uniscalar.  As in the proof of Corollary~\ref{corintro:relative_discrete:hji}, we deduce that in fact $H$ has SIN action on $G$.  In particular $\nub_G(H) = \triv$, so $\snub_G(H) = \triv$.

If instead $U_0$ is not open in $U$, then $\snub_G(H)$ is a subgroup of $U$ that is closed but not open; moreover, $\snub_G(H)$ is normal in $U$ by Proposition~\ref{uzero_res:nub}(iii).  Since $U$ is just infinite, we conclude that $\snub_G(H)$ must be trivial.  This completes the proof of (i).

Let $K$ be a compact subgroup of $G$, such that $\N_G(K)$ is open in $G$ and $\N_G(K) \ge H$.  Then certainly $\N_G(K) \ge \overline{G^\dagger_H}$.  By Proposition~\ref{prop:commutator_nub} and part (i), we see that in fact $K$ commutes with $\overline{G^\dagger_H}$.  Now $\overline{G^\dagger_H}$ is a non-trivial, hence non-discrete, subgroup of $G$, and hence $\overline{G^\dagger_H} \cap U$ is infinite; moreover, $\overline{G^\dagger_H} \cap U$ is normal in $U$ by Proposition~\ref{prop:twosided:stable}, so $\overline{G^\dagger_H} \cap U$ is open in $U$.  Thus $K$ is centralized by an open subgroup of $U$.  Since $U$ is just infinite and not virtually abelian, it is easily verified that $U$ does not have any non-trivial finite conjugacy classes, that is, no element of $U \setminus \triv$ has open centralizer in $U$.  Thus $K \cap U = \triv$, proving (ii).
\end{proof}

The group $\snub(H)$ is also relevant for describing the structure of a compactly generated group that has flat action on itself.  (Note that if $G$ is any \tdlc group and $H$ is a compactly generated flat subgroup of $G$, then $H$ is flat on itself by Corollary~\ref{cor:finiterank:subgroup}.)

\begin{prop}\label{prop:flatitself}Let $G$ be a compactly generated \tdlc group such that $G$ is flat on itself.  Let $U$ be a compact open subgroup of $G$ that is tidy for $G$ and let $U_0$ be the core of $U$ in $G$.  Then the following holds:
\begin{enumerate}[(i)]
\item $\snub(G) = \Res_{U_0}(G)$ is the largest compact normal subgroup of $G$ on which $G$ acts ergodically.
\item The factor $G_{\us}/\nub(G)$ is a SIN group.
\item $G$ has SIN action on $\nub(G)/\snub(G)$.
\item We have $\nub(G) = \nub(G_{\us})\snub(G)$.
\end{enumerate}
\end{prop}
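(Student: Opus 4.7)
The plan is to treat the four parts in turn, with the guiding observation that the core $U_0 = \bigcap_{g \in G}gUg^{-1}$ is a compact normal subgroup of $G$, so that $\Res_{U_0}(G)$ is already $G$-invariant and the defining intersection of $\snub(G)$ over $G^\dagger$-conjugates collapses to $\snub(G) = \Res_{U_0}(G)$. To identify this with $\nub_{U_0}(G)$ and to obtain ergodicity, I will apply Theorem~\ref{thm:distal} to the conjugation action of $H = G$ on $K = U_0$ with $N = \{1\}$; the compact generation of $G$ provides the hypothesis that $H/N$ is compactly generated, so we obtain $\Res_{U_0}(G) = \nub_{U_0}(G) = \Res^\infty_{U_0}(G) = \Dist_{U_0}(G)$ together with the ergodicity of the $G$-action on $\snub(G)$.

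For the maximality statement in (i), take a compact normal subgroup $K$ of $G$ admitting an ergodic $G$-action. Since $G_{\us}$ is open in $G$ (Lemma~\ref{flat:closure}) and $G/G_{\us}$ is a finitely generated discrete group, the image of $K$ in $G/G_{\us}$ is finite; because $G_{\us}$ normalizes $U$, this gives $|K : \N_K(U)| < \infty$. Theorem~\ref{thm:compact_invariant:tidy}(ii) then produces a tidy subgroup $W = VK$ for $G$ that contains $K$, where $V = \bigcap_{k \in K}kUk^{-1}$, and normality of $K$ forces $K$ into the core $W_0$ of $W$. For any $G$-invariant open subgroup $V' \le W_0$, $K \cap V'$ is open and $G$-invariant in $K$, hence equals $K$ by ergodicity, so $K \le \Res_{W_0}(G)$; the $U$-independence of $\snub$ together with the $G$-invariance of $W_0$ then gives $\Res_{W_0}(G) = \snub(G)$ as required.

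For (ii), any tidy subgroup $V$ for $G$ satisfies $V \le G_{\us}$ (each element of $V$ normalizes $V$ and is therefore uniscalar), $V$ is $G_{\us}$-invariant (because $G_{\us}$ is uniscalar), and $V \supseteq \nub(G)$; hence $\{V/\nub(G)\}$ is a base of $G_{\us}$-invariant identity neighborhoods in $G_{\us}/\nub(G)$. Part (iii) is immediate from Proposition~\ref{uzero_res:nub}(iii) applied with $H = G$, $K = \nub(G)$, and envelope $E = \langle G, U\rangle = G$. For (iv), Corollary~\ref{flat:nubdecomp:special}(i) applies (as $G/G_{\us}$ is finitely generated) and yields $\nub(G) = \nub(G_{\us})\prod_{\alpha \in X}\nub(\alpha)$ for some finite $X \subseteq G$, so it suffices to show $\nub(\alpha) \le \snub(G)$ for every $\alpha \in G$. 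The normality of $\Inn(G)$ in $\Aut(G)$ ensures that every $G$-conjugate of $U$ remains tidy for $G$, and in particular tidy for $\alpha$, so $\nub(\alpha) \le gUg^{-1}$ for all $g$ and hence $\nub(\alpha) \le U_0$; for any $G$-invariant open subgroup $V' \le U_0$, the intersection $\nub(\alpha) \cap V'$ is an $\alpha$-invariant open subgroup of $\nub(\alpha)$, which must equal $\nub(\alpha)$ by the ergodicity of $\alpha$ on its nub (Theorem~\ref{basic:nub_characterizations}); thus $\nub(\alpha) \le \Res_{U_0}(G) = \snub(G)$.

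The hard part will be the maximality claim in (i): one must carefully produce a tidy subgroup whose core both contains the given compact normal ergodic $K$ and still computes $\snub(G)$. Both ingredients — commensurability of $K$ with $U$ via the finitely generated free abelian quotient $G/G_{\us}$, and the $U$-independence of $\snub$ — are essential here, whereas the remaining parts are largely direct orchestrations of Proposition~\ref{uzero_res:nub}, Corollary~\ref{flat:nubdecomp:special}, and Theorem~\ref{basic:nub_characterizations}.
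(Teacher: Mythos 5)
Your proof is correct and follows essentially the same route as the paper: Theorem~\ref{thm:distal} applied to the conjugation action on the compact normal subgroup $U_0$ for the identification and ergodicity in (i), the nub decomposition of Corollary~\ref{flat:nubdecomp:special} together with the ergodicity of cyclic nubs for (iv), and the same SIN arguments for (ii) and (iii). The only cosmetic difference is that for the maximality claim in (i) you re-derive inline (via Theorem~\ref{thm:compact_invariant:tidy} and the $U$-independence from Lemma~\ref{lem:uzero:tidy}) what the paper obtains by directly citing Proposition~\ref{uzero_res:nub}(i), and your spelled-out argument that $\nub(\alpha)\le\Res_{U_0}(G)$ in (iv) fills in a step the paper leaves terse.
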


\begin{proof}
It is clear that $\Res_{U_0}(G)$ is normal in $G$, so $\snub(G) = \Res_{U_0}(G)$.  Let $R$ be a compact normal subgroup on which $G$ acts ergodically.  Then $R = \Res_R(G)$; we have $\Res_R(G) \le \Res_{U_0}(G)$ by Proposition~\ref{uzero_res:nub}, so $R \le \snub(G)$.  On the other hand, $G$ acts ergodically on $\Res_{U_0}(G)$ by Theorem~\ref{thm:distal}.  Thus $\snub(G)$ is characterized as in (i).

Given a tidy subgroup $V$ for $G$, then $V$ is normalized by $G_{\us}$.  In fact $V$ is itself uniscalar (since it is compact), so $V \unlhd G_{\us}$.  By a compactness argument, we see that the open normal subgroups of $G_{\us}/\nub(G)$ form a base of identity neighbourhoods, so $G_{\us}/\nub(G)$ is a SIN group, proving (ii).  The action of $G$ on $U_0/\snub(G)$ is residually discrete, hence a SIN action by compactness, so the action on $\nub(G)/\snub(G)$ also has SIN, proving (iii).

Given $g \in G$, then $g$ acts ergodically on $\nub(g)$.  Part (i) then ensures that $\nub(g) \le \snub(G)$ for all $g \in G$.  Applying Theorem~\ref{thmintro:flat:nubdecomp}, we see that $\nub(G) = \nub(G_{\us})\snub(G)$, proving (iv).
\end{proof}

\subsection{Cocompact envelopes and subnormal subgroups}\label{sec:cocompact_envelope}

\begin{defn}Let $G$ be a \tdlc group and let $H$ be a subgroup of $G$.  Say $H$ is \defbold{almost open}\index{almost open} if there exists an open subgroup $L$ of $G$ such that $H \le L$ and $\overline{H}$ is cocompact in $L$; call such an $L$ a \defbold{cocompact envelope}\index{envelope!cocompact envelope} of $H$.\end{defn}

Here are some easy observations on this definition, given the results we have so far.

\begin{lem}\label{lem:cocompact_envelope}
Let $G$ be a \tdlc group and let $H \le G$.
\begin{enumerate}[(i)]
\item If $H$ is almost open in $G$, then every cocompact envelope of $H$ is reduced and \emph{vice versa}.
\item If $H$ is almost open in $G$ and $K \le G$ is such that $\overline{K}$ is a cocompact subgroup of $\overline{H}$, then $K$ is almost open in $G$.
\item Suppose $K \le H$ such that $K$ is closed and normal in $G$.  Then $H$ is almost open in $G$ if and only if $H/K$ is almost open in $G/K$.
\item If $H$ is almost finite-rank flat, then $H$ has a cocompact envelope if and only if $\overline{H}$ is cocompact in $\overline{G^\dagger_HH}$.
\end{enumerate}
\end{lem}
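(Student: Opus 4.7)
The plan is to derive all four parts directly from the definitions together with the transitivity of cocompactness and Theorem~\ref{thmintro:reduced_envelope}. For (i), the forward implication is immediate: if $L$ is a cocompact envelope of $H$ and $E_2$ is any envelope, then $E_2 \supseteq \overline{H}$, so $L \cap E_2$ is an open subgroup of $L$ containing the cocompact subgroup $\overline{H}$, forcing $|L : L \cap E_2| < \infty$. For the converse, fix a cocompact envelope $L$ (which exists by hypothesis) and let $E$ be a reduced envelope of $H$. Then $|E : L \cap E| < \infty$, and $\overline{H} \le L \cap E \le L$ with $\overline{H}$ cocompact in $L$; hence $\overline{H}$ is cocompact in $L \cap E$, and since $L \cap E$ has finite index in $E$, a finite covering argument shows $E/\overline{H}$ is compact.

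For (ii), if $L$ is a cocompact envelope of $H$ and $\overline{K} \le \overline{H}$ with $\overline{H}/\overline{K}$ compact, then by transitivity of cocompactness ($\overline{K}$ cocompact in $\overline{H}$ cocompact in $L$), the same $L$ serves as a cocompact envelope of $K$. Part (iii) will follow from the lattice isomorphism between open subgroups of $G$ containing $K$ and open subgroups of $G/K$: since $K$ is closed in $G$, we have $\overline{H}/K = \overline{H/K}$, and whenever $K \le \overline{H} \le L$ the quotient map induces a homeomorphism $L/\overline{H} \cong (L/K)/(\overline{H}/K)$, so cocompactness of $\overline{H}$ in $L$ is equivalent to cocompactness of $\overline{H/K}$ in $L/K$.

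For (iv), I would invoke Theorem~\ref{thmintro:reduced_envelope}(ii), which supplies a reduced envelope $E$ of $H$ in $G$ such that $\overline{G^\dagger_H H}$ is a cocompact subgroup of $E$. If $H$ has a cocompact envelope, then by (i) that envelope is reduced and $\overline{H}$ is cocompact in $E$; since $\overline{H} \le \overline{G^\dagger_H H} \le E$ and $E/\overline{H}$ is compact, the closed subset $\overline{G^\dagger_H H}/\overline{H}$ is compact, so $\overline{H}$ is cocompact in $\overline{G^\dagger_H H}$. Conversely, if $\overline{H}$ is cocompact in $\overline{G^\dagger_H H}$, combining with the cocompactness of $\overline{G^\dagger_H H}$ in $E$ gives $\overline{H}$ cocompact in $E$, so $E$ is a cocompact envelope of $H$.

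There is no substantial obstacle here: each assertion is a quick consequence of the definitions, the transitivity of the ``cocompact subgroup'' relation, and the prior structural results on reduced envelopes. The only mild care required is keeping track of closures and of the inclusion $K \le \overline{H}$ in (iii), which is automatic because $K$ is closed.
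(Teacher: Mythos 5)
Your proposal is correct and fills in exactly the routine verifications the paper leaves implicit: the published proof simply states that parts (i)--(iii) are clear from the definitions and that (iv) follows immediately from Theorem~\ref{thmintro:reduced_envelope}. Your arguments (compact-discrete quotients for reducedness, transitivity of cocompactness, the lattice correspondence modulo a closed normal subgroup, and the cocompactness of $\overline{G^\dagger_HH}$ in a reduced envelope) are precisely the intended ones.
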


\begin{proof}
Parts (i), (ii) and (iii) are clear from the definitions.  Part (iv) follows immediately from Theorem~\ref{thmintro:reduced_envelope}.
\end{proof}

Now consider the situation that the compactly generated almost flat subgroup $H$ of $G$ is subnormal in some open subgroup.  This can only occur under special circumstances, and in particular we find that $H$ is almost open in $G$, as stated in Theorem~\ref{thmintro:subnormal_envelope}.

\begin{proof}[Proof of Theorem~\ref{thmintro:subnormal_envelope}]
Let $K$ be a cocompact closed flat subgroup of $H$.  Note that since $H$ is compactly generated, its cocompact subgroup $K$ is also compactly generated, and hence $K/K_{\us}$ is finitely generated.

Let $E$ be a reduced envelope of $H$.  Then $H$ is subnormal in $E$, so by Corollary~\ref{cor:subnormal:sametitscore}, we have $E^\dagger_H = H^\dagger$.

By Theorem~\ref{thmintro:reduced_envelope}, the group $\overline{E^\dagger_H H}$ is cocompact in some, hence every, reduced envelope for $H$, so $\overline{E^\dagger_H H}$ is cocompact in $E$.  Since $E^\dagger_H \le H$, it follows that $H$ is cocompact in $E$, proving (i).  Hence $E^\dagger_H = E^\dagger$ by Theorem~\ref{thmintro:TitsCore:cocompact}.  We have now proved that $E^\dagger = H^\dagger$.  Note also that $K$ is cocompact in $E$, so $E$ is itself compactly generated and almost flat.

Let
\[
E = H_0 \unrhd H_1 \unrhd \dots \unrhd H_n = H
\]
be a descending subnormal series from $E$ to $H$.  Then it is clear that $\Res_{E}(H) \le H_1$; since $E$ is open in $G$, in fact $\Res_G(H) \le H_1$.  We then see that $\Res^i_G(H) \le H_{i+1}$ for all $i < n$, so in fact $\Res^\infty_G(H) \le \Res(H)$.  Indeed, $\Res^\infty_G(H) = \Res^\infty(H)$.  Since $H$ is cocompact in $E$, Theorem~\ref{thm:relative_discrete:general} ensures that in fact
\[
\Res^\infty_G(H) = \Res_G(H) = \Res_G(K) \text{ and } \Res^\infty_G(E) = \Res_G(E) = \Res_G(K)
\]
since $E$ is open, $\Res_G(E) = \Res(E)$ and $\Res^\infty_G(E) = \Res^\infty(E)$.  So we have
\[
\Res^\infty(E) = \Res(E) = \Res(H).
\]
By Corollary~\ref{cor:finiterank:subgroup}, the action of $K$ on $H$ is flat of finite rank.  Thus by applying Theorem~\ref{thm:relative_discrete:general} to the action of $H$ on itself, we obtain $\Res^\infty(H) = \Res(H)$, completing the proof of (ii).
\end{proof}

\subsection{Faithful weakly decomposable groups}\label{sec:weakdecomp}

We apply the results of \S\ref{sec:compactnormal} to a class of \tdlc groups considered in \cite{CRW-Part1} and \cite{CRW-Part2}.

\begin{defn}An action of a \tdlc group $G$ on a Boolean algebra $\mc{A}$ is \defbold{(non-degenerate) faithful weakly decomposable}\index{faithful weakly decomposable} if it is faithful, such that for all $\alpha \in \mc{A} \setminus \{0\}$, the stabilizer in $G$ of $\alpha$ is open, and the pointwise stabilizer of the set $\{\beta \in \mc{A} \mid \alpha \wedge \beta = 0\}$ is non-discrete.  We say $G$ is \defbold{faithful weakly decomposable} if it has a non-degenerate faithful weakly decomposable action on some Boolean algebra.\end{defn}

The faithful weakly decomposable property implies several structural properties of $G$, as described in \cite[\S5]{CRW-Part1}.  In particular, if $G$ is faithful weakly decomposable, it follows from results in \cite{CRW-Part1} that $G$ has trivial quasi-center, and given $\triv \not= K \le G$ such that $\N_G(K)$ is open, then $K$ is not abelian.  The Boolean algebra $\mc{A}$ can always be taken to be the \defbold{(global) centralizer lattice} of $G$, that is the set
\[
\{\CC_G(K) \mid K \le G, \; \N_G(K) \text{ is open}\};
\]
given $K,L \le G$ such that $\N_G(K)$ and $\N_G(L)$ are open, if $\CC_G(K)$ and $\CC_G(L)$ have an open subgroup in common, then $\CC_G(K) = \CC_G(L)$.

Moreover, if $H$ is a closed subgroup of $G$ such that $\N_G(H)$ is open, then $H$ is also faithful weakly decomposable (by \cite[Proposition~5.22]{CRW-Part1}).

The faithful weakly decomposable property also has implications for the local dynamics of $G$, as investigated in \cite[\S6]{CRW-Part2}.  In particular, we recall the following.

\begin{prop}\label{Part2:skewering}Let $G$ be a \tdlc group acting on a Boolean algebra $\mc{A}$.  Suppose the action is non-degenerate faithful weakly decomposable.
\begin{enumerate}[(i)]
\item (See \cite[Theorem~6.11]{CRW-Part2}) Suppose that $G$ is compactly generated and that there is an identity neighbourhood in $G$ that contains no non-trivial compact normal subgroups of $G$.  Then there exists $g \in G$ and $\alpha \in \mc{A}$ such that $g\alpha < \alpha$.
\item (See \cite[Proposition~6.7]{CRW-Part2}) Let $g \in G$, and suppose there exists $\alpha \in \mc{A}$ such that $g\alpha < \alpha$.  Then $\nub(g)$ is non-trivial; in other words, $\con(g)$ is not closed.
\end{enumerate}
\end{prop}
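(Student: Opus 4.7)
The plan is to establish (ii) first as the core local/topological statement, and then treat (i) via a contrapositive that uses (ii) together with the compact generation hypothesis to produce arbitrarily small compact normal subgroups. For (ii), given $g \in G$ and $\alpha \in \mc{A} \setminus \{0\}$ with $g\alpha < \alpha$, I write $P_\gamma := \Fix_G(\{\delta \in \mc{A} : \delta \wedge \gamma = 0\})$ for the rigid stabilizer of $\gamma$. The non-degenerate faithful weakly decomposable hypothesis makes each $P_\gamma$ a closed non-discrete subgroup of $G$. A direct check shows $g P_\gamma g^{-1} = P_{g\gamma}$, so conjugation by $g$ induces an endomorphism of $P_\alpha$ with image $P_{g\alpha} \le P_\alpha$; setting $\beta := \alpha \wedge \neg g\alpha \neq 0$ one obtains that $P_\beta \le P_\alpha$ is non-discrete and every element of $P_\beta$ fixes $g^n\alpha$ for all $n \ge 1$.

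Next I would intersect with a compact open subgroup $U$ of $G$ that is tidy for $g$ to obtain the infinite compact subgroup $V := P_\alpha \cap U$, and examine the descending chain $V_n := V \cap g^n V g^{-n}$. By Theorem~\ref{basic:nub_characterizations}, $\nub(g) = \overline{\con(g)} \cap \overline{\con(g^{-1})}$ is the largest compact subgroup of $G$ admitting an ergodic action of $g$. The dichotomy I aim to exploit is this: either $V_\infty := \bigcap_n V_n$ is nontrivial, giving a compact $g$-invariant subgroup with no proper $g$-invariant open subgroup and hence contained in $\nub(g)$; or the strict descent supplies, for infinitely many $n$, an element $u_n \in V_{n-1} \setminus V_n$ whose conjugates by positive and negative powers of $g$ accumulate at nontrivial elements of $\overline{\con(g)}$ and $\overline{\con(g^{-1})}$ respectively, again forcing $\nub(g) \neq \triv$ via Theorem~\ref{basic:closed_contraction}. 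For (i), the plan is contrapositive: assume that for every $g \in G$ and every $\alpha \in \mc{A} \setminus \{0\}$ one has $g\alpha \not< \alpha$. This forces the $G$-orbit of every $\alpha$ to have a lower-bounded lattice-theoretic structure, and combining this with a compact generating set $X$ for $G$, the openness of stabilizers, and the non-discreteness of rigid stabilizers, one builds inside any prescribed identity neighbourhood an intersection of $X$-translates of some $P_\gamma$ that is a non-trivial compact $G$-invariant subgroup, contradicting the hypothesis.

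The main obstacle will be the execution of (ii): the rigid stabilizer $P_\alpha$ is closed but typically not compact, so a priori the descending chain $g^n P_\alpha g^{-n}$ could intersect trivially inside any given compact open neighbourhood, and the combinatorial strict descent $g\alpha < \alpha$ need not automatically translate into strict topological contraction on $V$. The delicate point is to leverage the non-discreteness of $P_\beta \le P_\alpha$ — which guarantees genuine local bulk inside $V$ — together with the lattice strictness to show that nontrivial elements persist under successive conjugations within a compact open set. This is the step that fundamentally couples the Boolean algebra dynamics to the internal topological dynamics of $g$ on $G$, and it is where the faithful weakly decomposable hypothesis has to do real work beyond simply producing non-discrete stabilizers.
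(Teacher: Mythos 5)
This proposition is not actually proved in the paper: both parts are quoted from \cite{CRW-Part2} (Theorem~6.11 and Proposition~6.7 there, as the statement itself indicates), so there is no internal argument to compare yours against, and a self-contained proof would have to reconstruct those external results. Judged on its own terms, your sketch has genuine gaps at exactly the points where their content lies. For part (ii), the preliminary observations ($gP_\gamma g\inv = P_{g\gamma}$, $P_{g\alpha} \le P_\alpha$, $P_\beta$ fixing every $g^n\alpha$ for $n \ge 1$) are correct, but the dichotomy on $V_n = V \cap g^nVg^{-n}$ does not work as stated. In the first branch, $V_\infty = \bigcap_n V_n$ satisfies only $gV_\infty g\inv \supseteq V_\infty$, so it need not be $g$-invariant; and even a non-trivial compact $g$-invariant subgroup lies in $\nub(g)$ only if it has no proper open $g$-invariant subgroup (Theorem~\ref{basic:nub_characterizations}), a property you assert without derivation. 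In the second branch, producing non-trivial accumulation points separately in $\overline{\con(g)}$ and in $\overline{\con(g\inv)}$ proves nothing: you need a non-trivial element of the \emph{intersection} $\overline{\con(g)} \cap \overline{\con(g\inv)}$, equivalently an element of $\overline{\con(g)} \setminus \con(g)$, and Example~\ref{ex:linear} (a diagonal element of $\mathrm{SL}_n(\bQ_p)$) shows both closures can be non-trivial while the nub is trivial. Your closing paragraph concedes that the step coupling $g\alpha < \alpha$ to non-closedness of $\con(g)$ is unresolved; that step \emph{is} the proposition, and it is where the pairwise disjointness of the elements $g^n\beta$, hence the commuting of their rigid stabilizers, must be exploited --- none of which appears in your outline.

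Part (i) is in worse shape: the contrapositive reduction is a fine first line, but the claim that one ``builds inside any prescribed identity neighbourhood \ldots\ a non-trivial compact $G$-invariant subgroup'' is a restatement of the desired conclusion, not an argument. The proof of \cite[Theorem~6.11]{CRW-Part2} uses compact generation in an essential quantitative way (via a Cayley--Abels graph together with the structure theory of the centralizer lattice from \cite{CRW-Part1}), and no mechanism of that kind is visible in your sketch. Citing the two results, as the paper does, is legitimate; as a self-contained proof, the proposal does not close.
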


We can use this result to establish a dichotomy for faithful weakly decomposable \tdlc groups, as stated in the introduction.  We begin the proof of Theorem~\ref{thmintro:weakdecomp:nonclosed} with a lemma.

\begin{lem}\label{weakdecomp:nubstar}Let $G$ be a faithful weakly decomposable \tdlc group and let $\alpha$ be an automorphism of $G$ that is isotropic on $G$.  Let $S = \overline{G^\dagger_\alpha} \rtimes \langle \alpha \rangle$, equipped with a topology such that $\overline{G^\dagger_\alpha}$ is embedded as an open subgroup of $S$.  Suppose $\snub_G(\alpha)=\triv$.  Then $S$ is compactly generated and has no non-trivial compact normal subgroup, and there exists $g \in S$ such that $\nub_{\overline{G^\dagger_\alpha}}(g)$ is non-trivial.\footnote{For the application it would suffice to restrict to the case when $\alpha$ is an inner automorphism, say conjugation by $h \in G$.  However, complications arise in the proof if one considers the closed subgroup $S = \overline{\langle G^\dagger_h, h \rangle}$ of $G$ instead of the semidirect product $\overline{G^\dagger_h} \rtimes \langle h \rangle$; in particular, it can happen that $S/\overline{G^\dagger_h}$ is an infinite compact group, and in this case it is not clear whether $S$ can have non-trivial compact normal subgroups.}\end{lem}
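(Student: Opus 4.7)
The plan is to establish the three claims in turn, writing $R := \overline{G^\dagger_\alpha}$, so that $S = R \rtimes \langle\alpha\rangle$ with $R$ open in $S$.

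\emph{Compact generation.} I will take $U$ to be a compact open subgroup of $G$ tidy for $\alpha$ and set $V := U \cap R$, a compact open subgroup of $R$ (hence of $S$), then show that $S = \langle V,\alpha\rangle$. The idea is that for $x \in \con(\alpha)$, the sequence $(\alpha^n(x))$ converges to $1$, so $\alpha^n(x) \in V$ for $n$ sufficiently large, placing $x$ in $\langle V,\alpha\rangle$; likewise $\con(\alpha^{-1}) \subseteq \langle V,\alpha\rangle$. Hence $G^\dagger_\alpha \subseteq \langle V,\alpha\rangle$, and this open---therefore closed---subgroup must contain $\overline{G^\dagger_\alpha} = R$ together with $\alpha$, so it equals $S$.

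\emph{No nontrivial compact normal subgroup.} First I would note that, since $\alpha$ is isotropic, Proposition~\ref{basic:anisotropic}(iii) prevents both contraction groups from being trivial, so $\alpha$ has infinite order and $S/R \cong \bZ$ is torsion-free. For a compact normal subgroup $K$ of $S$, I would set $K_0 := K \cap R$, a compact subgroup of $G$ normalized by $G^\dagger_\alpha \langle\alpha\rangle$. Applying Proposition~\ref{prop:commutator_nub}(i) with $H = \langle\alpha\rangle$ then yields
\[
[\overline{G^\dagger_\alpha}, K_0] \le \snub_G(\alpha) = \triv,
\]
so $K_0 \le Z(R)$. By Theorem~\ref{titscore_closure}(iv), $\N_G(G^\dagger_\alpha)$ is open in $G$, hence so is $\N_G(R)$, and then \cite[Proposition~5.22]{CRW-Part1} gives that $R$ is faithful weakly decomposable; in particular $\QZ(R) = \triv$, so $Z(R) = \triv$ and $K_0 = \triv$. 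The image of $K$ in the torsion-free group $S/R$ is compact and therefore trivial, so $K = \triv$.

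\emph{An element with nontrivial nub on $R$.} The strategy is to exhibit a non-degenerate faithful weakly decomposable action of $S$ on a Boolean algebra and apply Proposition~\ref{Part2:skewering}. The natural candidate is the centralizer lattice $\mc{A}$ of $R$, on which $R$ acts faithfully and weakly decomposably; since $\alpha$ acts on $R$ as an automorphism, it also acts on $\mc{A}$ by functoriality, and the two actions combine to give an $S$-action. The openness of stabilizers and non-discreteness of the orthogonal pointwise stabilizers transfer from $R$ to $S$ because $R$ is open in $S$. Once faithfulness of the $S$-action is secured, Proposition~\ref{Part2:skewering}(i)---whose hypotheses are supplied by Steps 1 and 2---yields $g \in S$ and $\beta \in \mc{A}$ with $g\beta < \beta$; Proposition~\ref{Part2:skewering}(ii) then gives $\nub_S(g) \neq \triv$, and Lemma~\ref{flatbelow:subgroup} together with the openness of $R$ in $S$ yields $\nub_R(g) = \nub_S(g) \neq \triv$, completing the proof.

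The hardest step will be the faithfulness of the $S$-action on $\mc{A}$. The kernel $N$ meets $R$ trivially by faithfulness of the $R$-action and so injects into $S/R \cong \bZ$. Ruling out a nontrivial $N$ will likely require exploiting $\snub_G(\alpha) = \triv$ and the isotropy of $\alpha$ directly, or else passing to the quotient $S/N$, verifying the hypotheses of Proposition~\ref{Part2:skewering} there (choosing an identity neighbourhood of $S$ disjoint from $N \setminus \{1\}$ to control compact normal subgroups modulo $N$), and lifting the resulting element back to $S$.
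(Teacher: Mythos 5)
Your first two claims are handled correctly. The compact generation argument is sound, and your route to the absence of non-trivial compact normal subgroups --- showing $K \cap R \le Z(R)$ via Proposition~\ref{prop:commutator_nub}(i) and then killing $Z(R)$ because $R$ has open normalizer in $G$, hence is itself faithful weakly decomposable with trivial quasi-centre --- is a slightly more direct packaging of what the paper does (the paper instead identifies $\CC_S(R)$ with $\QZ(S)$ and uses that $\QZ(S)$ is discrete and torsion-free).

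The gap is in the third claim, and it sits exactly where you flag it. The kernel $N$ of the $S$-action on the centralizer lattice cannot in general be ruled out: it is a discrete normal subgroup contained in $\QZ(S)$ (indeed equal to it), it embeds in $S/R \cong \bZ$, and it can genuinely be infinite cyclic --- which is why the paper devotes a separate case to $\QZ(S) > \triv$. So your first suggested route, forcing $N = \triv$ from $\snub_G(\alpha) = \triv$ and isotropy, is not a hard step but a dead end. The paper's workaround is a case division: if $\QZ(S) = \triv$ the skewering proposition applies to $S$ directly; if $\QZ(S) > \triv$, one picks $\beta \in \QZ(S)$ and $n > 0$ with $\beta\alpha^n \in R$, notes that $\beta$ centralizes $R$ so that $R^\dagger_{\beta\alpha^n} = G^\dagger_\alpha$ and $\snub_R(\beta\alpha^n) = \triv$, reruns the first two steps with $R$ in place of $S$, and applies Proposition~\ref{Part2:skewering} to $R$ itself, which is honestly faithful weakly decomposable. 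Your alternative of passing to $S/N$ can in fact be pushed through --- since $N \cap R = \triv$ and $R$ is open, the induced action of $S/N$ is non-degenerate faithful weakly decomposable, any compact normal subgroup of $S/N$ contained in the image of a compact open subgroup of $R$ corresponds to a compact normal subgroup of $S$, and $\nub_R(g) \cong \nub_{RN/N}(\bar g)$ for any lift $g$ of $\bar g$ --- but none of these verifications appear in your write-up, so as it stands the third conclusion of the lemma is not proved.
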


\begin{proof}Let $T = \overline{G^\dagger_\alpha}$.  By Corollary~\ref{almostflat:titscore}, $T$ has open normalizer in $G$.  By Proposition~\ref{prop:commutator_nub}, every compact $\langle \alpha , G^\dagger_\alpha \rangle$-invariant subgroup of $G$ commutes with $T$.

Let $\mc{A}$ be the global centralizer lattice of $G$, on which the action of $G$ is faithful weakly decomposable by \cite[Theorem~5.18]{CRW-Part1}.  As explained in \cite[Proposition~5.22]{CRW-Part1}, whenever $L$ is a closed subgroup of $G$ such that $\N_G(L)$ is open, there is a principal ideal $\mc{I}$ of $\mc{A}$, which can be regarded as a Boolean algebra in its own right, such that the action of $L$ on $\mc{I}$ is faithful weakly decomposable.  In particular, this argument applies to $L = T$.  Moreover $\mc{I}$ is obtained from $\mc{A}$ in a canonical way, so the action extends to an action of $S$ on $\mc{I}$.  The latter action is not necessarily faithful, but clearly the kernel $K$ of the action is discrete so $K \le \QZ(S)$, and in fact $\QZ(S)$ acts trivially on $\mc{I}$, so $K = \QZ(S)$ and $T \cap \QZ(S) = \triv$.  Thus $\QZ(S)$ is a discrete subgroup of $S$; also $\QZ(S)$ is isomorphic to a subgroup of $\langle \alpha \rangle$, so $\QZ(S)$ is torsion-free, and hence $\QZ(S)$ has trivial intersection with every compact subgroup of $S$.  Since $T$ is open in $S$, we have $\QZ(S) \ge \CC_S(T)$.  Since $T$ and $\QZ(S)$ normalize each other and have trivial intersection, in fact $\QZ(S) = \CC_S(T)$.

Observe that $\alpha$ does not leave invariant any proper open subgroup of $T$, so $S = \langle \alpha, U \rangle$, where $U$ is any compact open subgroup of $T$; in particular $S$ is compactly generated.  Given a compact normal subgroup $N$ of $S$, then $N$ commutes with $T$, so $N \le \QZ(S)$ and hence $N$ is trivial.

If $\QZ(S)=\triv$, then $S$ is faithful weakly decomposable and there is an identity neighbourhood (namely $T$) that contains no non-trivial compact normal subgroup of $S$, so by Proposition~\ref{Part2:skewering}, there exists $g \in S$ such that $\nub_S(g)$ is non-trivial; clearly $\nub_S(g) = \nub_{T}(g)$.  If instead $\QZ(S) > \triv$, then there exists $\beta \in \QZ(S)$ and $n > 0$ such that $\beta\alpha^n \in T$.  Since $\beta$ centralizes $T$, we see that 
\[
T^\dagger_{\beta\alpha^n} = G^\dagger_{\alpha^n} = G^\dagger_{\alpha} \text{ and } \snub_T(\beta\alpha^n) \le \snub_G(\alpha^n) = \snub_G(\alpha) = \triv.
\]
The same argument as used for $S$ now shows that $T$ is compactly generated and has no non-trivial compact normal subgroups.  Hence by Proposition~\ref{Part2:skewering}, there exists $g \in T$ such that $\nub_T(g)$ is non-trivial.
\end{proof}

\begin{proof}[Proof of Theorem~\ref{thmintro:weakdecomp:nonclosed}]
It is clear that (i) and (ii) are mutually exclusive.  We may suppose that (ii) fails, that is, every contraction group in $G$ is closed.

By Proposition~\ref{Part2:skewering}, it follows that $G$ has arbitrarily small non-trivial compact normal subgroups.

Let us now suppose there exists $h \in G$ such that $\con(h) \not= \triv$, so $h$ is isotropic on $G$.  Since $\con(h)$ is closed, $\nub(h) = \triv$, so certainly $\snub_G(h)$ is trivial.  Thus by Lemma~\ref{weakdecomp:nubstar}, there exists $g \in S$ such that $\nub_{\overline{G^\dagger_h}}(g)$ is non-trivial, where $S = \overline{G^\dagger_h} \rtimes \langle h\rangle$.  Let $g' \in G$, with the same action on $\overline{G^\dagger_h}$ as $g$ has.  Then $\nub_{\overline{G^\dagger_h}}(g')$ is a non-trivial compact subgroup of $G$ on which $g'$ acts ergodically, so $g'$ has non-trivial nub on $G$.  But then $\con(g')$ is not closed, a contradiction.  Thus $G$ is anisotropic, so (i) holds.
\end{proof}


\end{document}